\theoremstyle{definition}
\newtheorem{defn}{Definition}[section]
\newtheorem{notation}[defn]{Notation}
\theoremstyle{plain}
\newtheorem{introthm}{Theorem}
\newtheorem{prop}[defn]{Proposition}
\newtheorem{lem}[defn]{Lemma}
\newtheorem{cor}[defn]{Corollary}
\newtheorem{thm}[defn]{Theorem}
\theoremstyle{remark}
\newtheorem{rmk}[defn]{Remark}
\newtheorem*{introconvention}{Conventions}
\newtheorem{no}[subsubsection]{}
\newcommand{\eq}[2]{\begin{equation}\label{#1}#2 \end{equation}}
\newcommand{\ml}[2]{\begin{multline}\label{#1}#2 \end{multline}}
\newcommand{\mlnl}[1]{\begin{multline*}#1 \end{multline*}}
\newcommand{\arir}{\ar@{^{(}->}}
\newcommand{\aril}{\ar@{_{(}->}}
\newcommand{\are}{\ar@{>>}}
\newcommand{\xr}[1] {\xrightarrow{#1}}
\newcommand{\xl}[1] {\xleftarrow{#1}}
\newcommand{\lra}{\longrightarrow}
\newcommand{\inj}{\hookrightarrow}
\newcommand{\surj}{\twoheadrightarrow}
\newcommand{\ul}{\underline}
\newcommand{\ol}{\overline}
\newcommand{\codim}{{\rm codim}}
\newcommand{\Pic}{{\rm Pic}}
\newcommand{\Spec}{{\rm Spec \,}}
\newcommand{\Proj}{{\rm Proj\,}}
\newcommand{\Tr}{{\rm Tr}}
\newcommand{\gr}{\mathrm{gr}}
\newcommand{\Nm}{{\rm Nm }}
\newcommand{\id}{{\rm id}}
\newcommand{\CH}{\mathrm{CH}}
\newcommand{\dlog}{\mathrm{\,dlog\,}}
\newcommand{\Nis}{\mathrm{Nis}}
\newcommand{\Zar}{\mathrm{Zar}}
\newcommand{\et}{\text{\'et}}
\newcommand{\Sm}{\mathrm{Sm}}
\newcommand{\Frac}{\mathrm{Frac}}
\newcommand{\Ker}{{\rm Ker}}
\newcommand{\im}{{\rm Im}}
\newcommand{\Coker}{\mathop{\rm Coker}}
\newcommand{\sC}{{\mathcal C}}
\newcommand{\sD}{{\mathcal D}}
\newcommand{\sF}{{\mathcal F}}
\newcommand{\sH}{{\mathcal H}}
\newcommand{\sI}{{\mathcal I}}
\newcommand{\sK}{{\mathcal K}}
\newcommand{\sM}{{\mathcal M}}
\newcommand{\sO}{{\mathcal O}}
\newcommand{\sQ}{{\mathcal Q}}
\newcommand{\sR}{{\mathcal R}}
\newcommand{\sZ}{{\mathcal Z}}
\newcommand{\A}{{\mathbb A}}
\newcommand{\C}{{\mathbb C}}
\newcommand{\F}{{\mathbb F}}
\newcommand{\N}{{\mathbb N}}
\renewcommand{\P}{{\mathbb P}}
\newcommand{\Q}{{\mathbb Q}}
\newcommand{\W}{{\mathbb W}}
\newcommand{\Z}{\mathbb{Z}}
\newcommand{\fV}{\mathfrak{V}}
\newcommand{\fm}{\mathfrak{m}}
\newcommand{\fn}{\mathfrak{n}}
\newcommand{\e}{\epsilon}
\begin{document}
\title{Higher Chow groups with modulus and relative Milnor {$K$}-theory}

\begin{abstract}
Let $X$ be a smooth variety over a field $k$ and $D$ an effective divisor whose support has simple normal crossings.
We construct an explicit cycle map from the Nisnevich motivic complex $\Z(r)_{X|D,\Nis}$ of the pair $(X,D)$
to a shift of the relative Milnor $K$-sheaf $\sK^M_{r,X|D,\Nis}$ of $(X,D)$. We show that
this map induces an isomorphism $H^{i+r}_{\sM,\Nis}(X|D,\Z(r))\cong H^i(X_\Nis, \sK^M_{r, X|D,\Nis})$,
for all $i\ge \dim X$. This generalizes the well-known isomorphism in the case $D=0$. 
We use this to prove a certain Zariski descent property for the motivic cohomology
of the pair $(\A^1_k, (m+1)\{0\})$.
\end{abstract}

\author{Kay R\"ulling and Shuji Saito}

\address{Kay R\"ulling, Freie Universit\"at Berlin, Arnimallee 7, 14195 Berlin, Germany}
\email{kay.ruelling@fu-berlin.de}
\address{Shuji Saito, Interactive Research Center of Science, Graduate School of Science and Engineering, 
               Tokyo Institute of Technology, Ookayama, Meguro, Tokyo 152-8551, Japan}
\email{sshuji@msb.biglobe.ne.jp}

\thanks{The first  author was supported by the ERC Advanced Grant 226257 and the DFG Heisenberg Grant RU 1412/2-1.
            The second author is supported by JSPS Grant-in-aid (B) \#22340003. }

\maketitle

\tableofcontents

\section*{Introduction}

Recently several attempts have been made to introduce a theory of motivic cohomology with modulus.
The first attempt was due to S. Bloch and H. Esnault (\cite{BE}, \cite{BE2}) who introduced additive 
higher Chow groups of a field $k$. It is conceived as motivic cohomology for $k[t]/(t^2)$, or
an additive version of Bloch's higher Chow group for a pair $(\A^1_k,2\cdot \{0\})$ of the affine line $\A^1_k$ over $k$ 
with modulus $2\cdot \{0\}$, where $\{0\}$ denotes the origin of $\A^1_k$ regarded as a divisor.
They showed that the part given by zero-cycles of these groups coincide with the absolute differential forms
of $k$. The first author \cite{Kay} generalized this computation to the case $k[t]/(t^{m+1})$ for arbitrary 
$m\geq 1$ and proved that these groups give a cycle theoretic description of the big de Rham-Witt complex 
of Hesselholt-Madsen \cite{HeMa01} of $k$. 
Park \cite{Pa} extended the definition of Bloch-Esnault to introduce additive higher Chow groups 
${\rm TCH}^r(X,n;m)$ for a $k$-scheme $X$. The groups studied by Bloch-Esnault and R{\"u}lling correspond 
to the case $X=\Spec k$ and $r=n$. 
Motivated by a work \cite{KeS} of Kerz and the second author,
Park's definition is extended in \cite{BS14} to higher Chow groups $\CH^r(X|D,n)$ for a pair $(X,D)$ 
of an equidimensional $k$-scheme $X$ and an effective Cartier divisor $D$ on $X$. 
For $(X,D)=(Y\times \A^1_k, (m+1)\cdot(Y\times \{0\}))$, with $Y$ an equidimensional $k$-scheme and $m\ge 1$,  
we have 
\begin{equation}\label{TCHCH}
\CH^r(X|D,n)={\rm TCH}^r(Y, n+1;m).
\end{equation}
The definition of $\CH^r(X|D,n)$ is given by
\begin{equation}\label{CHZXD}
 \CH^r(X|D,n) = H_n(z^r(X|D,\bullet)),
\end{equation}
where $z^r(X|D,\bullet)$ is the \emph{cycle complex with modulus}, which is a subcomplex of the cubical version of 
Bloch's cycle complex $z^r(X,\bullet)$ consisting of those cycles satisfying a certain modulus condition.
In particular we have a natural map
\[ \CH^r(X|D,n)\to \CH^r(X,n),\]
where $\CH^r(X,n)=H_n(z^r(X,\bullet))$ is Bloch's higher Chow group (see \S\ref{ccmod} 
for other basic properties of $\CH^r(X|D,n)$). 
As in the case of Bloch's cycle complex, $z^r(X|D,\bullet)$ gives rise to a complex $z^r(-|D,\bullet)$ of 
\'etale sheaves on $X$. In case $X$ is smooth over $k$ we define the $r$-th motivic complex of $(X,D)$ to be
the following complex of Zariski sheaves on $X$
\[\Z(r)_{X|D}:=z^r(-|D, 2r-\bullet).\]
We denote by 
\[\Z(r)_{X|D,\Nis}\]
 the corresponding complex on $X_\Nis$.
The motivic cohomology of $(X,D)$ is by definition (see \cite[Def 2.10]{BS14})
\begin{equation}\label{HMXD}
H^i_{\sM}(X|D, \Z(r)):= H^i(X_\Zar, \Z(r)_{X|D}).
\end{equation}
If $D=0$ we get back Bloch's definition of the motivic complex
and motivic cohomology. We simply write $\Z(r)_X$ and $H^i_\sM(X,\Z(r))$ instead of
$\Z(r)_{X|0}$ and $H^i_\sM(X|0,\Z(r))$.
We define the motivic Nisnevich cohomology  of $(X,D)$ to be 
\[H^i_{\sM,\Nis}(X|D,\Z(r)):=H^i(X_\Nis, \Z(r)_{X|D}).\]
\medbreak

An important property of the classical motivic complex is the cycle map to the Milnor $K$-sheaf:
\begin{equation}\label{phiX}
 \phi^r_X : \tau_{\geq r} \Z(r)_X \to \sK^M_{r, X}[-r],
\end{equation}
which is a map in $\sD^b(X_\Zar)$ the derived category of bounded complexes of Zariski sheaves on $X$
(see \ref{MilnorK} for the definition of the Milnor $K$-sheaf $\sK^M_{r, X}$).
By the Gersten resolution for higher Chow groups, one knows that $\phi^r_X$ is actually an isomorphism.
In fact one can realize $\phi^r_X$ as an explicit morphism of  actual complexes from $\tau_{\geq r} \Z(r)_X$
to the Gersten complex of $\sK^M_{r,X}[-r]$.
This construction is well known to the experts but due to the lack of a reference, 
we include its review in \S\ref{reviewcyclemap}. 
The first main result of this paper is a construction of the relative version of \eqref{phiX}:
\begin{equation}\label{phiXD}
 \phi^r_{X|D} : \tau_{\geq r} \Z(r)_{X|D} \to \sK^M_{r, X|D}[-r],
\end{equation}
where $\sK^M_{X|D}$ is the relative Milnor $K$-sheaf for $(X,D)$, which is a subsheaf of $\sK^M_{r, X}$
(see Definition \ref{defn:MilnorModulus}).
Unfortunately we can construct it  as a morphism in $\sD^b(X_\Zar)$ only assuming $D_{\rm red}$ is smooth.
If $D_{\rm red}$ is a simple normal crossing divisor (SNCD) on $X$, 
we can construct a natural map in $\sD^b(X_\Nis)$:
\begin{equation}\label{phiXDNis}
\phi^r_{X|D,\Nis}: \tau_{\ge r}\Z(r)_{X|D,\Nis}\to\sK^M_{r, X|D,\Nis}[-r]
\end{equation}
fitting in the following commutative diagram 
\[\xymatrix{ \tau_{\ge r}\Z(r)_{X|D,\Nis}\ar[r]^-{\phi^r_{X|D,\Nis}}\ar[d] & \sK^M_{r, X|D,\Nis}[-r]\ar[d]\\
                   \tau_{\ge r}\Z(r)_{X,\Nis}\ar[r]^{\simeq}_{\phi^r_{X,\Nis}} & \sK^M_{r, X,\Nis}[-r],   }\]
where $\phi^r_{X,\Nis}$ is the Nisnevich sheafication of $\phi^r_{X}$.
In fact we show that the Cousin complex of $\sK^M_{r, X|D,\Nis}$ is a resolution (see Corollary \ref{cor:CousinComplex-relK-Nis})
and we can realize $\phi^r_{X|D,\Nis}$ as an explicit morphism of complexes from
$\tau_{\ge r}\Z(r)_{X|D,\Nis}$ to  the Cousin complex of $\sK^M_{r, X|D,\Nis}[-r]$.
The inclusion $\sK^M_{r,X|D,\Nis}\inj \sK^M_{r,X,\Nis}$ induces a natural map from the Cousin complex of
$\sK^M_{r,X|D,\Nis}$ to the Gersten complex of $\sK^M_{r,X,\Nis}$ and there is a diagram of morphisms between
actual complexes underlying the above diagram.
We will prove the following.

\begin{introthm}[Theorem \ref{thm:rel-cycle-iso}]\label{thm:rel-cycle-iso.intro} 
Let $X$ be a smooth equidimensional scheme of dimension $d=\dim X$ and $D$ an effective
divisor such that $D_{\rm red}$ is a simple normal crossing divisor.  
Then:
\begin{enumerate}[(i)]
\item $H^i_{\sM}(X|D,\Z(r))=0=H^i_{\sM,\Nis}(X|D,\Z(r))$ for $i>d+r$.
\item The cycle map $\Z(r)_{X|D,\Nis}\to \tau_{\ge r}\Z(r)_{X|D,\Nis}
                           \xr{\phi^r_{X|D,\Nis}} \sK^M_{r,X|D,\Nis}[-r]$
        induces an isomorphism
        \[\phi^{d,r}_{X|D,\Nis}: H^{d+r}_{\sM,\Nis}(X|D,\Z(r))\xr{\simeq} H^d(X_{\Nis}, \sK^M_{r,X|D,\Nis}).\]
      If moreover $D_{\rm red}$ is smooth, then all maps in the following commutative diagram are isomorphisms
     \[\xymatrix{ H^{d+r}_{\sM}(X|D,\Z(r))\ar[r]^{\simeq}
                                                                 \ar[d]_{\phi^{d,r}_{X|D}}^\simeq & 
                       H^{d+r}_{\sM,\Nis}(X|D,\Z(r))\ar[d]^{\phi^{d,r}_{X|D,\Nis}}_\simeq\\
                      H^d(X_\Zar, \sK^M_{r,X|D})\ar[r]^-{\simeq} & 
                                                    H^d(X_\Nis, \sK^M_{r,X|D,\Nis}).
                         }\]
\end{enumerate}
\end{introthm}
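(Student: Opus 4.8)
The plan is to localise the whole statement at the top cohomological degree $d+r$, where the explicit form of the cycle map lands in the Cousin complex, and to identify the resulting cokernel with the top Nisnevich cohomology of the relative Milnor $K$-sheaf. First I would pass from $\Z(r)_{X|D,\Nis}$ to its canonical truncation for free: in the triangle $\tau_{<r}\Z(r)_{X|D,\Nis}\to\Z(r)_{X|D,\Nis}\to\tau_{\ge r}\Z(r)_{X|D,\Nis}\to$, the left term has cohomology sheaves in degrees $\le r-1$, so — using only that $X_\Nis$ has cohomological dimension $\le d$ — its hypercohomology vanishes in degrees $\ge d+r$; hence $H^{d+r}_{\sM,\Nis}(X|D,\Z(r))\xr{\simeq}H^{d+r}(X_\Nis,\tau_{\ge r}\Z(r)_{X|D,\Nis})$, with no further input. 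On the target side, Corollary \ref{cor:CousinComplex-relK-Nis} says the Cousin complex $C^\bullet(\sK^M_{r,X|D,\Nis})$ is a flasque resolution, so its hypercohomology is concentrated in the single degree $d+r$, equal to $H^d(X_\Nis,\sK^M_{r,X|D,\Nis})$. Thus statement (ii) is reduced to showing that the explicit map of complexes $\phi^r_{X|D,\Nis}$ induces an isomorphism on $H^{d+r}$, and statement (i) to the vanishing of $H^{>d+r}$ of the source.

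Next I would run the coniveau spectral sequence $E_1^{p,q}=\bigoplus_{x\in X^{(p)}}H^{p+q}_x(\tau_{\ge r}\Z(r)_{X|D,\Nis})\Rightarrow H^{p+q}(X_\Nis,\tau_{\ge r}\Z(r)_{X|D,\Nis})$ and establish the semipurity vanishing $E_1^{p,q}=0$ for $q>r$. Passing to the henselisations $\sO^h_{X,x}$, this reduces to a vanishing of higher Chow groups with modulus of henselian local rings below the critical degree: away from $D$ the modulus condition is vacuous and one is in the classical situation handled by $\phi^r_{X,\Nis}$, while along the strata of $D_{\rm red}$ one argues by induction on codimension. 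Granting semipurity, the bound $X^{(p)}=\emptyset$ for $p>d$ forces $p+q\le d+r$, which gives (i); moreover the only graded piece of $H^{d+r}$ is $E_2^{d,r}$, so $H^{d+r}(X_\Nis,\tau_{\ge r}\Z(r)_{X|D,\Nis})$ is the cokernel of the last differential $E_1^{d-1,r}\to E_1^{d,r}$ of the $q=r$ row. By construction $\phi^r_{X|D,\Nis}$ maps this $q=r$ coniveau complex to the Cousin complex of $\sK^M_{r,X|D,\Nis}$, and I would check that it is an isomorphism on the two top terms $p=d-1,d$: at a point $x\in X^{(p)}$ of codimension $d-1$ or $d$ this is a relative Nesterenko--Suslin--Totaro isomorphism identifying the top modulus cycle group with the relative Milnor $K$-group of the residue data. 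Since a map of complexes that is an isomorphism in degrees $d-1$ and $d$ is an isomorphism on the $d$-th cohomology (both complexes being zero in degree $d+1$ as $X^{(d+1)}=\emptyset$), this yields $\phi^{d,r}_{X|D,\Nis}$ and proves (ii). The hard part will be precisely this last step: proving semipurity for the modulus complex and, at the codimension-$(d-1)$ and $d$ points, matching the modulus condition on cycles with the defining condition of $\sK^M_{r,X|D}$. That the comparison is available only in these top codimensions is exactly why the isomorphism is asserted only in degrees $i\ge d$.

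Finally, for the refinement when $D_{\rm red}$ is smooth, the same cycle map exists in $\sD^b(X_\Zar)$ and the Zariski Cousin complex of $\sK^M_{r,X|D}$ is again a resolution, so the identical top-degree argument run over $X_\Zar$ gives $\phi^{d,r}_{X|D}\colon H^{d+r}_{\sM}(X|D,\Z(r))\xr{\simeq}H^d(X_\Zar,\sK^M_{r,X|D})$. The two horizontal comparison maps in the displayed square are isomorphisms because in each topology the degree-$d$ cohomology is computed as the cokernel of the last Cousin differential, whose terms depend only on the residue fields of the codimension-$(d-1)$ and $d$ points and hence coincide; the commutativity of the square then forces all four arrows to be isomorphisms.
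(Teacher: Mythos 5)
Your opening reduction (truncating to $\tau_{\ge r}$ via cohomological dimension $\le d$, and computing the target through the Cousin resolution of $\sK^M_{r,X|D,\Nis}$) coincides with the paper's first step, but the engine of your argument is a genuine gap, not a deferred computation. You need two Gersten-type statements for the modulus complex at points lying \emph{on} $D$: the semipurity $E_1^{p,q}=0$ for $q>r$ in the coniveau spectral sequence, and pointwise ``relative Nesterenko--Suslin--Totaro'' isomorphisms at codimension $d-1$ and $d$ points. Neither is available, and your proposed proof mechanism fails: the classical semipurity and the Gersten resolution for $\Z(r)_X$ rest on the localization theorem and moving techniques for Bloch's complex, whereas (as the introduction stresses) it is not known whether $z^r(X|D,\bullet)$ has \emph{any} reasonable localization property. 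Consequently your ``induction on codimension along the strata of $D_{\rm red}$'' has no inductive step — there is no long exact localization sequence for cycles with modulus to reduce codimension. Worse, purity at points of $D$ in the range you assert comes close to asserting that $\phi^r_{X|D,\Nis}$ is an isomorphism in $\sD^b(X_\Nis)$, which the paper pointedly does not claim: its remark after Theorem \ref{thm:rel-cycle-iso} identifies the obstruction $\sH^0_D(\sH^n(\Z(r)_{X|D,\Nis}))$, $n\ge r$, which is not known to vanish. Indeed, the paper never proves injectivity of the sheaf map $\sH^r(\Z(r)_{X|D,\Nis})\to\sK^M_{r,X|D,\Nis}$, only surjectivity.

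The paper's proof is structured precisely to avoid pointwise purity. Its two inputs are: (a) Lemmas \ref{lem:rel-cycle-local-support} and \ref{lem:rel-cycle-global-support}, which bound cohomology with supports in the \emph{whole divisor} $D$ — the key being that any cycle in $z^r(X|D,2r-b)$ is closed in $X\times\square^{2r-b}$ and the closure of its projection to $X$ meets $D$ properly, so $\sH^b(\Z(r)_{X|D})$ is a colimit of sheaves supported on closed sets $A$ with $\dim(A\cap D)\le d+r-b-1$; Grothendieck's vanishing theorem then kills $H^{d+r+1}_D(X,\tau_{\ge r}\Z(r)_{X|D})$ and everything above, which already yields part (i); and (b) Proposition \ref{prop:rel-cycle-map-surj-Nis}, giving only \emph{surjectivity} of $\sH^r(\Z(r)_{X|D,\Nis})\surj\sK^M_{r,X|D,\Nis}$ via explicit graph cycles. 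Part (ii) then follows by comparing the localization sequences of $(X,U)$ on both sides: since $\phi^r_U$ is the classical isomorphism and $H^{d+r+1}_D$ vanishes, everything reduces to showing $\sH^1_D(\sH^r(\Z(r)_{X|D,\Nis}))\xr{\simeq}\sH^1_D(\sK^M_{r,X|D,\Nis})$, and both sides are cokernels of maps out of $j_*\sK^M_{r,U,\Nis}$, so the sheaf-level surjectivity suffices. To salvage your outline you would have to replace ``semipurity at points'' by these support-in-$D$ dimension estimates and replace the pointwise isomorphisms in codimension $d-1,d$ by this surjectivity-plus-cokernel comparison; the two steps you yourself flag as ``the hard part'' are exactly where the argument must take this different shape. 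Finally, the Zariski/Nisnevich square for smooth $D_{\rm red}$ requires Corollary \ref{cor:relK-smDiv} ($\sK^M_{r,X|D}\cong R\e_*\sK^M_{r,X|D,\Nis}$), not your claim that the top Cousin terms ``depend only on the residue fields'': at points of $D$ the local cohomology of the relative sheaf is not Milnor $K$-theory of residue fields, and its Zariski--Nisnevich agreement is exactly the content of the structure theory in Propositions \ref{prop:gr0-map}, \ref{prop:higher-gr-map-char0} and Theorem \ref{thm:higher-gr-map-poschar}.
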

\medbreak

As an application of Theorem \ref{thm:rel-cycle-iso.intro}, we will prove the Zariski descent property
for additive higher Chow groups (see Theorem \ref{thm:zariski-descent.intro} below).
From \eqref{CHZXD} and \eqref{HMXD} we have a natural map
\begin{equation}\label{CHHMXD}
\CH^r(X|D,n)\to H^{2r-n}_{\sM}(X|D, \Z(r)).
\end{equation}
In the classical case where $D=0$, \eqref{CHHMXD} is an isomorphism, which is known as 
the Zariski descent property for Bloch's higher Chow groups.
It is a consequence of the Mayer-Vietoris property of Bloch's cycle complex $z^r(X,\bullet)$ which follows from 
the localization theorem for the complex.
In case $D\not=0$,  it is not clear whether $z^r(X|D,\bullet)$ has some reasonable localization property at all.
On the other hand, the higher dimensional class field theory with wild ramification suggests that
the Nisnevich descent property for higher Chow groups with modulus is related to some deep arithmetic questions.
As a consequence of \cite[Theorem III]{KeS}, we have the following result.
\renewcommand{\lim}{\operatornamewithlimits{\varprojlim}}
\newcommand\piab{\pi_1^{\rm ab}}

\begin{introthm}\label{thm:kerz-saito} 
Let $X$ be a smooth projective variety of dimension $d$ over a finite field $k$ and 
$U\subset X$ be the open complement of a SNCD on $X$. 
Write $\CH^d(X|D)=\CH^d(X|D,n)$ for $n=0$. Then the natural map 
\begin{equation}\label{kerzsaito}
\underset{D}{\lim}\; \CH^d(X|D)\to \underset{D}{\lim}\; H^{2d}_{\sM,\Nis}(X|D, \Z(d))
\end{equation}
is an isomorphism, where the limit is taken over all effective divisors $D$ on $X$ supported on $X-U$. 
\end{introthm}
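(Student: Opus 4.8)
The plan is to reduce \eqref{kerzsaito} to a Bloch--Quillen comparison for zero-cycles with modulus, which is exactly the content of \cite[Theorem III]{KeS}, and to use Theorem \ref{thm:rel-cycle-iso.intro} to pass between the relative motivic cohomology and the relative Milnor $K$-cohomology. First I would record that for effective divisors $D\le D'$ supported on $X-U$ the stronger modulus condition yields an inclusion of subcomplexes $z^d(X|D',\bullet)\inj z^d(X|D,\bullet)$ of $z^d(X,\bullet)$, hence transition maps $\CH^d(X|D')\to \CH^d(X|D)$ and compatible ones on motivic and Milnor $K$-cohomology; since the divisors supported on $X-U$ form a directed set under $\le$, both sides of \eqref{kerzsaito} are honest inverse limits.

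Next, applying Theorem \ref{thm:rel-cycle-iso.intro}(ii) with $r=d$---legitimate because $D_{\rm red}$ is an SNCD, being supported on the SNCD $X-U$---the Nisnevich cycle map gives, for each such $D$, an isomorphism
\[\phi^{d,d}_{X|D,\Nis}\colon H^{2d}_{\sM,\Nis}(X|D,\Z(d))\xr{\simeq} H^d(X_\Nis,\sK^M_{d,X|D,\Nis}).\]
I would check these are natural in $D$: this follows from the explicit realization of $\phi^{d}_{X|D,\Nis}$ as a morphism of complexes into the Cousin complex of $\sK^M_{d,X|D,\Nis}[-d]$ (Corollary \ref{cor:CousinComplex-relK-Nis}), which is manifestly compatible with the restriction maps attached to $D\le D'$. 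Taking $\underset{D}{\lim}$ therefore identifies the right-hand side of \eqref{kerzsaito} with $\underset{D}{\lim}\, H^d(X_\Nis,\sK^M_{d,X|D,\Nis})$.

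It then remains to match the source map. Composing \eqref{CHHMXD} (followed by the change of topology $H^{2d}_{\sM}\to H^{2d}_{\sM,\Nis}$) with $\phi^{d,d}_{X|D,\Nis}$ produces a map
\[\rho_D\colon \CH^d(X|D)\longrightarrow H^d(X_\Nis,\sK^M_{d,X|D,\Nis}),\]
and the crucial point is that $\rho_D$ is the classical cycle class map for zero-cycles. Indeed, in top degree both groups are quotients of the free abelian group on closed points of $U$: $\CH^d(X|D)=\CH^d(X|D,0)$ is a quotient of the group $z^d(X|D,0)$ of zero-cycles, while $H^d(X_\Nis,\sK^M_{d,X|D,\Nis})$ is the cokernel at the end of the Cousin complex $\cdots\to\bigoplus_{\codim x=d-1} K^M_1(\kappa(x))\to \bigoplus_{\codim x=d} \Z$. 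Unwinding the definition of $\phi^{d}_{X|D,\Nis}$ at the bottom of the Cousin complex shows that $\rho_D$ sends a cycle $\sum_x n_x[x]$ to the class of $(n_x)_x$, i.e.\ it coincides with the Bloch--Quillen map appearing in \cite{KeS}.

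Finally, \cite[Theorem III]{KeS} asserts that, after passing to $\underset{D}{\lim}$, the cycle class map $\rho_D$ is an isomorphism---both sides being identified, through the reciprocity of higher-dimensional class field theory, with $\piab(U)$. Combining this isomorphism with the limit of the isomorphisms $\phi^{d,d}_{X|D,\Nis}$ yields that \eqref{kerzsaito} is an isomorphism. The main obstacle is the compatibility asserted in the previous paragraph: identifying the motivically defined composite $\CH^d(X|D)\to H^{2d}_{\sM,\Nis}(X|D,\Z(d))\xr{\phi^{d,d}_{X|D,\Nis}} H^d(X_\Nis,\sK^M_{d,X|D,\Nis})$ with the cycle class map underlying \cite[Theorem III]{KeS} requires comparing two a priori distinct constructions at the level of the Cousin complex, while keeping track of the modulus conditions throughout.
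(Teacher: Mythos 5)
Your setup is sound and parallels the paper: the directed system of divisors supported on $X-U$, the application of Theorem \ref{thm:rel-cycle-iso.intro}(ii) with $r=d$, its naturality in $D$ via the explicit Cousin-complex realization of $\phi^d_{X|D,\Nis}$ (using the inclusions of Corollary \ref{cor:rel-K-in-K}), and the identification of $\CH^d(X|D)$ with the Kerz--Saito group $C(X,D)$ (which the paper gets from \cite[Theorem 3.3]{BS14}) are all as in the paper. The genuine gap is in your final step: \cite[Theorem III]{KeS} does \emph{not} assert that the cycle class map $\varprojlim_D C(X,D)\to \varprojlim_D H^d_\Nis(X,\sK^M_{d,X|D,\Nis})$ is an isomorphism. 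It asserts that the Kerz--Saito reciprocity map $\rho^{KeS}_{U}\colon \varprojlim_D C(X,D)\to \piab(U)$ is a bijection onto the subgroup of elements whose image in $\piab(\Spec k)$ is an integral power of Frobenius (note: a proper subgroup, not all of $\piab(U)$ as your parenthetical suggests). So as written you are invoking a black box that does not exist in that form; indeed, the paper explicitly observes that the comparison isomorphism \eqref{kerzsaito} and \cite[Theorem III]{KeS} are logically distinct statements, each deducible from the other only by way of a second input.

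That second input is what your proposal is missing: the Kato--Saito class field theory \cite{KS}, which identifies $\varprojlim_D H^d_\Nis(X,\sK^M_{d,X|D,\Nis})$ --- the id\`ele class group --- with the \emph{same} Frobenius-integral subgroup of $\piab(U)$ via its reciprocity map $\rho^{KaS}_{U}$. The paper's proof then squeezes \eqref{kerzsaito} in a commutative triangle between $\rho^{KeS}_{U}$ and $\rho^{KaS}_{U}$; since both are bijections onto the same subgroup, the horizontal map is an isomorphism. The commutativity of that triangle is precisely the compatibility you flag as ``the main obstacle,'' and your Cousin-complex unwinding of $\phi^{d,d}_{X|D,\Nis}$ correctly handles half of it, namely that the composite $\CH^d(X|D)\to H^d(X_\Nis,\sK^M_{d,X|D,\Nis})$ sends a zero-cycle on $U$ to its class. (Be aware, though, that your description of the degree-$d$ term of the Cousin complex as $\bigoplus_{\codim x=d}\Z$ is only valid at points of $U$, by \eqref{CousinComplex-relK0}; at points of $|D|$ the groups $H^d_x(R\e_*\sK^M_{d,X|D,\Nis})$ are more complicated.) But even granting this compatibility in full, without \cite{KS} you only know that the \emph{source} of \eqref{kerzsaito} computes the Frobenius-integral subgroup, and you have no handle on the target; adding the Kato--Saito theorem and the triangle closes the argument and recovers the paper's proof.
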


Indeed, by \cite[Theorem 3.3]{BS14} the group $\CH^d(X|D)$ is equal to the Chow group of zero-cycles 
with modulus denoted by $C(X,D)$ in \cite{KeS}, 
and by Theorem \ref{thm:rel-cycle-iso.intro}, the group $H^{2d}_{\sM,\Nis}(X|D, \Z(d))$ is isomorphic to $H^d_{\Nis}(X,\sK^M_{d,X|D,\Nis})$ which is the id\`ele class group used in the class field theory of  Kato-Saito \cite{KS}.
We have a commutative diagram
\[\xymatrix{
\underset{D}{\lim}\;C(X,D) \ar[r] \ar[rd]_{\rho^{KeS}_{U}}& \underset{D}{\lim}\;H^d_{\Nis}(X,\sK^M_{d,X|D,\Nis}) \ar[d]^{\rho^{KaS}_{U}}\\
& \piab(U)\\
}\]
where $\piab(U)$ is the abelian fundamental group of $U$ and $\rho^{KeS}_{U}$ (resp. $\rho^{KaS}_{U}$)
is the reciprocity map from \cite{KeS} (resp. \cite{KS}). The reciprocity maps $\rho^{KeS}_{U}$ and $\rho^{KaS}_{U}$ were
shown to be bijections onto the subgroup of $\piab(U)$ consisting of those elements,
 whose images in $\piab(\Spec k)$ are integral powers of the Frobenius substitution of $k$.
These clearly imply Theorem \ref{thm:kerz-saito}. On the other hand, one can deduce \cite[Theorem III]{KeS} from 
the Kato-Saito class field theory assuming \eqref{kerzsaito} is an isomorphism. 
\medbreak

Using Theorem \ref{thm:rel-cycle-iso.intro} one can find examples, where the map \eqref{CHHMXD} or its Nisnevich version
\begin{equation}\label{CHHMXDNis}
\CH^r(X|D,n)\to H^{2r-n}_{\sM,\Nis}(X|D, \Z(r))
\end{equation}
are not isomorphisms, see Remark \ref{rmk:not-iso}. These examples however  arise from the fact
that, if  $r<\dim X$ and $n=r-\dim X$, the right-hand side of the above map does not necessarily vanish.
At this moment we don't have any definitive idea on what to expect for $n\ge 0$. 
In this paper we only present the following special case. 

\begin{introthm}[Theorem \ref{thm:add-Zar-descent}]\label{thm:zariski-descent.intro} 
Let $k$ be a field of characteristic $\neq 2$. 
The natural maps 
\begin{equation}\label{zariski-descent.intro} 
\CH^r(\A^1_k|(m+1)\{0\}, r-n)\xr{\simeq} H^{r+n}_\sM(\A^1_k|(m+1)\{0\}, \Z(r)),\quad  n \ge 1,
\end{equation}
are isomorphisms.
\end{introthm}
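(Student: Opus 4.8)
The plan is to prove the isomorphism \eqref{zariski-descent.intro} by combining Theorem \ref{thm:rel-cycle-iso.intro} with an explicit computation of both sides for the curve $X=\A^1_k$ with $D=(m+1)\{0\}$. Since $\dim X = 1$ here, the range $i\ge \dim X=1$ in Theorem \ref{thm:rel-cycle-iso.intro} is exactly the range $n\ge 1$ (after the reindexing $i=n$, $d=1$) appearing in the statement, which is why the restriction $n\ge 1$ is natural and why the problematic low-degree phenomenon alluded to in Remark \ref{rmk:not-iso} is avoided. The key point is that for a smooth affine \emph{curve} the motivic cohomology sheaves are concentrated in a way that makes the cycle complex computation tractable, and that $\A^1_k$ is a Zariski-local object (its Zariski and Nisnevich topologies essentially agree on the relevant cohomology because $D_{\rm red}=\{0\}$ is a smooth divisor, so the second, stronger half of Theorem \ref{thm:rel-cycle-iso.intro}(ii) applies).

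First I would reduce the Zariski-descent statement \eqref{CHHMXD} to the comparison already supplied by the main theorem. By \eqref{CHZXD}, the left side $\CH^r(\A^1_k|(m+1)\{0\},r-n)$ is $H_{r-n}(z^r(\A^1_k|D,\bullet))$, while the right side is $H^{r+n}_\sM(\A^1_k|D,\Z(r))=H^{r+n}(X_\Zar,\Z(r)_{X|D})$. Because $D_{\rm red}=\{0\}$ is smooth, Theorem \ref{thm:rel-cycle-iso.intro}(ii) identifies the top motivic cohomology $H^{r+1}_\sM(X|D,\Z(r))\cong H^1(X_\Zar,\sK^M_{r,X|D})$ with its Nisnevich counterpart, and more generally the Zariski and Nisnevich motivic cohomologies agree in the relevant degrees. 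So the heart of the matter is to show that the natural map from the homology of the global cycle complex to Zariski (hyper)cohomology of the sheafified complex is an isomorphism in these degrees, i.e. that the ``descent spectral sequence'' for $\Z(r)_{X|D}$ degenerates appropriately for this one-dimensional $X$.

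The main technical work, and the step I expect to be the chief obstacle, is establishing that on the affine line the sheafified motivic complex $\Z(r)_{X|D}$ has cohomology sheaves concentrated in the single relevant degree, so that the hypercohomology spectral sequence collapses to the homology of global sections. Here I would use two inputs: the vanishing $H^i_\sM(X|D,\Z(r))=0$ for $i>d+r=1+r$ from Theorem \ref{thm:rel-cycle-iso.intro}(i), which kills everything above the top degree, and a direct analysis of the cycle complex $z^r(\A^1_k|D,\bullet)$ using the coordinate structure of $\A^1_k$. Since $\A^1_k$ is a smooth curve, one can compute the cohomology sheaf $\sH^{r+n}(\Z(r)_{X|D})$ via the Cousin/Gersten resolution of $\sK^M_{r,X|D,\Nis}$ (Corollary \ref{cor:CousinComplex-relK-Nis}), which for a curve is a two-term complex, hence has no higher cohomology beyond $H^1$. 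The characteristic $\ne 2$ hypothesis should enter precisely in controlling the relative Milnor $K$-sheaf computation, presumably through a symbol or differential-form identity in the relative $K$-groups that would fail at the prime $2$.

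Finally I would assemble these: the collapse of the spectral sequence gives $H^{r+n}_\sM(X|D,\Z(r))\cong H^{n}\big(\text{global sections of }\tau_{\ge r}\Z(r)_{X|D}\big)$, which by the explicit cycle-complex description equals $H_{r-n}(z^r(\A^1_k|D,\bullet))=\CH^r(\A^1_k|D,r-n)$, and one checks this identification is induced by the natural map \eqref{CHHMXD}, completing the proof. The cleanest formulation is to verify that for the affine curve $\A^1_k$ every higher Zariski cohomology group that could obstruct descent vanishes, so that global sections already compute the cohomology; the only nonzero contribution sits in the top degree $d+r$, which is matched on the nose by Theorem \ref{thm:rel-cycle-iso.intro}. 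The delicate point remains the explicit vanishing/concentration of the cohomology sheaves of $\Z(r)_{X|D}$ on $\A^1_k$, for which the smoothness of $D_{\rm red}$ and the one-dimensionality of $X$ are both essential.
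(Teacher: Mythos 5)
Your proposal has a genuine gap at its central step, and the gap is precisely the open problem that forces the paper to take a detour. You propose to show that the hypercohomology of $\Z(r)_{X|D}$ on $\A^1_k$ collapses to the homology of the global cycle complex via the descent spectral sequence. But the spectral sequence $E_2^{a,b}=H^a(X_\Zar,\sH^b(\Z(r)_{X|D}))$ converges to $H^{a+b}_\sM(X|D,\Z(r))$, \emph{not} to $H_{r-n}(z^r(\A^1_k|D,\bullet))=\CH^r(\A^1_k|D,r-n)$; to bridge the two one needs a localization/Brown--Gersten-type theorem for $z^r(-|D,\bullet)$, and the paper states explicitly that ``it is not clear whether $z^r(X|D,\bullet)$ has some reasonable localization property at all.'' Assuming the collapse identifies hypercohomology with global-sections homology is therefore circular: it presupposes the Zariski descent statement \eqref{CHHMXD} that the theorem is asserting in this special case. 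The auxiliary input you invoke is also unavailable: the concentration of the cohomology sheaves of $\Z(r)_{X|D}$ in degree $r$, with $\sH^r$ identified with $\sK^M_{r,X|D}$, is not proven anywhere in the paper --- only the surjectivity $\sH^r(\Z(r)_{X|D,\Nis})\surj\sK^M_{r,X|D,\Nis}$ is established (Proposition \ref{prop:rel-cycle-map-surj-Nis}), and the remark following Theorem \ref{thm:rel-cycle-iso} isolates the unverified obstruction $\sH^0_D(\sH^n(\Z(r)_{X|D,\Nis}))$ to the cycle map being a quasi-isomorphism.

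The paper's actual route avoids descent entirely: for $n\ge 2$ both sides vanish (the left trivially, the right by Theorem \ref{thm:rel-cycle-iso}), and for $n=1$ both sides are computed independently against a common third group. The left side is identified, up to sign, with $\W_m\Omega^{r-1}_k$ by R\"ulling's theorem \cite[Thm 3.20]{Kay} --- and \emph{this} is where characteristic $\neq 2$ enters, not a symbol identity in relative Milnor $K$-theory as you guessed (Theorem \ref{thm-DRW-MilnorK} holds over any field). The right side is identified via Theorem \ref{thm:rel-cycle-iso}, homotopy invariance, and the localization sequence for the sheaf $\sK^M_{r,A_k|(m+1)}$ (Lemma \ref{lem:add-mot-coh-MilnorK}) with $U^1K^M_r(k((T)))/U^{m+1}K^M_r(k((T)))$. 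The two computations are then matched by the new big de Rham--Witt/relative Milnor $K$ comparison of Theorem \ref{thm-DRW-MilnorK}, and commutativity of the resulting square is checked on explicit generating cycles $P,Q$ (from \ref{DRW-addChow}) by tracing boundary maps through the Cousin complex and a cone construction. Your plan contains none of these ingredients, and it cannot be repaired without either supplying them or proving an independent localization theorem for cycle complexes with modulus.
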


Notice that the right hand side of \eqref{zariski-descent.intro} is isomorphic to the Nisnevich motivic cohomology
$H^{r+n}_{\sM,\Nis}(\A^1_k|(m+1)\{0\}, \Z(r))$ by Theorem \ref{thm:rel-cycle-iso.intro}.
Notice also that the left hand side of \eqref{zariski-descent.intro} is ${\rm TCH}^r(k, r-n+1;m)$ 
(cf. \eqref{TCHCH}) which is clearly zero 
for $n\ge 2$ and the right hand side is zero by Theorem \ref{thm:rel-cycle-iso.intro}.
We prove the isomorphism \eqref{zariski-descent.intro} for $n=1$ by constructing the following commutative diagram 
for all $r,m\ge 1$
\[\xymatrix{ 
 \CH^r(\A^1_k|(m+1)\{0\}, r-1)\ar[d]^\simeq_{\alpha}
                                           \ar[r]^-{\eqref{zariski-descent.intro}} & 
                 H^{r+1}_{\sM}(\A^1_k|(m+1)\{0\},\Z(r))\ar[d]_\simeq^{\beta}\\
              \W_m\Omega^{r-1}_k\ar[r]_-\simeq^-{\gamma} & 
                     U^1K^M_r(k((T)))/U^{m+1}K^M_r(k((T))).
            }\]
where $\alpha$ is up to sign the isomorphism from \cite{Kay}, $\beta$ is an isomorphism deduced 
by using Theorem \ref{thm:rel-cycle-iso.intro}, and $\gamma$ is an isomorphism following from 
a comparison isomorphism between the big de Rham-Witt sheaves and relative Milnor $K$-sheaves established in 
Theorem \ref{thm-DRW-MilnorK}, which is reminiscent of Bloch's original construction of the $p$-typical
de Rham-Witt complex in \cite{BlochDRW}.

The following theorem was suggested by the referee.
\begin{introthm}[Theorem \ref{thm:Anvan}]\label{introthm4}
Let $k$ be a field and $X$ a smooth equidimensional $k$-scheme of dimension $d$, $D$ an effective Cartier divisor on $X$
 such that $D_{\rm red}$ is a simple normal crossing divisor. 
Then we have, for all $n\ge 2$ and all non-negative integers $m_1,\ldots, m_n$, 
                    \[H^{d+r+n}_{\sM,\Nis}(X\times_k \A^n_k| (p^*D+\sum_{i=1}^n m_i \cdot q_i^*\{0\}), \Z(r))=0,\]
where $q_i:X\times_k \A^n_k\to \A^1_k$ denotes the projection to the $i$-th factor of $\A^n_k$ and 
$p:X\times_k \A^n_k\to X$ is the projection.
\end{introthm}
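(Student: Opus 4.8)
The plan is to reduce the statement, by means of Theorem~\ref{thm:rel-cycle-iso.intro}, to the vanishing of the \emph{top} Nisnevich cohomology of a relative Milnor $K$-sheaf, and then to separate the ``absolute'' part (controlled by homotopy invariance) from the ``modulus'' part (controlled by quasi-coherent vanishing on a scheme that still carries an affine-line factor). Write $Y=X\times_k\A^n_k$ and $D'=p^*D+\sum_{i=1}^n m_i\cdot q_i^*\{0\}$, so that $\dim Y=d+n$ and the cohomological degree in question is $d+r+n=\dim Y+r$. First I would check that $D'_{\rm red}$ is a simple normal crossing divisor on $Y$: its components are the $q_i^*\{0\}\cong X\times_k\A^{n-1}_k$ together with the $p^*E$ for $E$ a component of $D_{\rm red}$, and since all of these are products with affine spaces they meet transversally. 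Hence Theorem~\ref{thm:rel-cycle-iso.intro} applies to the pair $(Y,D')$: part~(i) gives vanishing in degrees $>\dim Y+r$, while part~(ii) yields an isomorphism $H^{d+r+n}_{\sM,\Nis}(Y|D',\Z(r))\cong H^{\dim Y}(Y_\Nis,\sK^M_{r,Y|D',\Nis})$. It therefore suffices to prove that this top cohomology group vanishes.

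Next I would use the short exact sequence of Nisnevich sheaves
\[0\to \sK^M_{r,Y|D',\Nis}\to \sK^M_{r,Y,\Nis}\to Q\to 0,\]
where $Q$ denotes the quotient, a sheaf supported on the support of $D'$ (cf.\ Definition~\ref{defn:MilnorModulus}). For the absolute sheaf, $\A^1$-homotopy invariance of Milnor $K$-cohomology together with $\dim Y=d+n>d=\dim X$ gives $H^{\dim Y}(Y_\Nis,\sK^M_{r,Y,\Nis})\cong H^{\dim Y}(X_\Nis,\sK^M_{r,X,\Nis})=0$, the last group vanishing because the Gersten resolution of $\sK^M_{r,X,\Nis}$ has length $\dim X<\dim Y$. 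Feeding this into the long exact cohomology sequence, $H^{\dim Y}(Y_\Nis,\sK^M_{r,Y|D',\Nis})$ is exhibited as a quotient of $H^{\dim Y-1}(Y_\Nis,Q)$, so it is enough to show $H^{\dim Y-1}(Y_\Nis,Q)=0$.

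The structural input I rely on is that $Q$ admits a finite filtration whose graded pieces are quasi-coherent $\sO$-modules supported on the strata of $D'$: along the generic points of the components of $D'$ the successive quotients of the modulus filtration on $\sK^M_r$ are twists of Kähler differential forms, and globally this is precisely the content of the comparison with big de Rham--Witt sheaves established in Theorem~\ref{thm-DRW-MilnorK}. Granting this, it suffices to prove $H^{\dim Y-1}(Y_\Nis,\sG)=0$ for each quasi-coherent $\sG$ supported on a stratum $S$ of $D'$; since quasi-coherent cohomology is computed in the Zariski topology, I may work there, and this equals $H^{\dim S}(S,\sG)$. If $\dim S<\dim Y-1$ the group vanishes by Grothendieck's theorem. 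The remaining strata are the codimension-one components: a component $p^*E$ has the form $E\times_k\A^n_k$, and a component $q_i^*\{0\}$ is isomorphic to $X\times_k\A^{n-1}_k$. In either case, \emph{because $n\ge 2$}, the stratum $S$ still has a factor $\A^1_k$; projecting it off gives an affine morphism $\sigma:S\to S'$, so $R^{>0}\sigma_*\sG=0$ and $H^{\dim S}(S,\sG)\cong H^{\dim S}(S',\sigma_*\sG)=0$ by Grothendieck's theorem on the lower-dimensional base $S'$. Running the long exact sequences of the filtration then gives $H^{\dim Y-1}(Y_\Nis,Q)=0$, which finishes the argument.

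I expect the main obstacle to be the structural step of the third paragraph, namely identifying the graded pieces of $Q$ with quasi-coherent differential-form sheaves on the strata of $D'$. This is where the simple normal crossing hypothesis on $D'_{\rm red}$ and the relation between relative Milnor $K$-theory and the big de Rham--Witt complex (Theorem~\ref{thm-DRW-MilnorK}) genuinely enter, and it is also the exact point at which $n\ge 2$ is needed: for $n=1$ the component $q_1^*\{0\}\cong X$ retains no affine-line factor, its top quasi-coherent cohomology need not vanish, and indeed the conclusion fails (the group is the nonzero de Rham--Witt form $\W_m\Omega^{r-1}$ when $X=\Spec k$, consistently with Theorem~\ref{introthm4} requiring $n\ge 2$).
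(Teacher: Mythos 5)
Your opening reductions are exactly the paper's: check that $D'_{\rm red}$ is an SNCD, use Theorem \ref{thm:rel-cycle-iso} to pass to $H^{\dim Y}(Y_\Nis,\sK^M_{r,Y|D',\Nis})$, and then use the sequence $0\to \sK^M_{r,Y|D',\Nis}\to\sK^M_{r,Y,\Nis}\to Q\to 0$ together with homotopy invariance and cohomological-dimension bounds to reduce to $H^{\dim Y-1}(Y_\Nis,Q)=0$ (the paper first reduces to $n=2$, but that is cosmetic). The genuine gap is your structural claim that $Q$ admits a filtration with \emph{quasi-coherent} graded pieces. That is true only for the ``wild'' part $\sK^M_{r,Y|D'_{\rm red},\Nis}/\sK^M_{r,Y|D',\Nis}$, whose graded pieces are $\omega^{r-1}_{\fn,\nu}/B^{r-1}_{s+1,\fn,\nu}$ by Proposition \ref{prop:higher-gr-map-char0} and Theorem \ref{thm:higher-gr-map-poschar} (and even there, in characteristic $p$ quasi-coherence holds only via the Frobenius pushforward, cf.\ Remark \ref{rmk:Bs-Zs-free}, with the $B$-part disposed of separately for dimension reasons). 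The complementary ``tame'' part $\sK^M_{r,Y,\Nis}/\sK^M_{r,Y|D'_{\rm red},\Nis}$ has graded pieces $i_{\nu*}\sK^M_{r,D_\nu|D_{\nu,\fm},\Nis}$ by Proposition \ref{prop:gr0-map}: these are relative Milnor $K$-sheaves on the components carrying the induced modulus, and they are \emph{not} quasi-coherent. For them your key step collapses: $R^{>0}\sigma_*\sG=0$ along the affine projection $\sigma$ is a statement about quasi-coherent sheaves, and there is no homotopy invariance available for $\sK^M_{r,\cdot|\cdot}$ with nontrivial modulus (homotopy invariance holds for the absolute sheaf $\sK^M_r$ only). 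This is exactly why the paper proves the two vanishings \eqref{thm:Anvan4} and \eqref{thm:Anvan5} separately, handling the tame part by induction on the number of components of $D$, using Proposition \ref{prop:gr0-map} to peel off one reduced component at a time, homotopy invariance, and the auxiliary vanishing $H^j(\A^1_X,\sK^M_{r,\A^1_X|E_{1,X}})=0$ for all $j$ (\eqref{thm:Anvan3}); none of this is supplied by your quasi-coherent mechanism.

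Two secondary points. First, the citation of Theorem \ref{thm-DRW-MilnorK} as the structural input is misplaced: that theorem computes $\sK^M_{r+1,A_X|1}/\sK^M_{r+1,A_X|(m+1)}\cong\W_m\Omega^r_X$ for the single smooth divisor $X\times\{0\}\subset \A^1_X$ and is what the paper uses for the $n=1$ statement, Theorem \ref{thm:Anvan}, \eqref{thm:Anvan1}; for the SNCD modulus in part \eqref{thm:Anvan2} the correct inputs are the graded-piece computations cited above. Second, your endgame for the genuinely quasi-coherent pieces — exploit the residual $\A^1$-factor (present precisely because $n\ge 2$), push forward along an affine projection, and apply Grothendieck vanishing on the lower-dimensional base — is in substance the paper's proof of \eqref{thm:Anvan9}, which decomposes $\omega^{r-1}_{\fn,\nu}\cong a^*\omega^{r-1}\oplus a^*\omega^{r-2}$ on $X_\nu\times\A^1$ and concludes by Leray and flat base change; likewise your explanation of why $n=1$ fails (the group $H^d(X,\W_m\Omega^{r-1}_X)$) matches the paper. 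So the skeleton and the role of $n\ge 2$ are correct, but as written the proof has a hole over the reduced-modulus graded pieces, which require the separate $K$-theoretic induction of \eqref{thm:Anvan4}.
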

This gives another example, where \eqref{CHHMXDNis} is an isomorphism, since the
vanishing $\CH^r(X\times_k \A^n_k|(p^*D+\sum_{i=1}^n m_i \cdot q_i^*\{0\}), r-(d+n))=0$,  for $n\ge 2$, was 
proven in \cite[Thm 5.11]{KP15}.

\subsection*{Acknowledgements}
The second author wishes to thank heartily Moritz Kerz, Lars Hesselholt and Federico Binda for inspiring discussions. 
He is also very grateful to the department of mathematics of the university of Regensburg
for the financial support via the SFB 1085 ``Higher Invariants'' (Regensburg).
The first author thanks Takao Yamazaki for discussions around Remark \ref{rmk:not-iso}.
The authors thank the referee for suggesting Theorem \ref{introthm4}.

\addtocontents{toc}{\protect\setcounter{tocdepth}{2}}

\begin{introconvention}\label{convention}
A $k$-scheme is a separated  scheme of finite type over a field $k$.
A {\em simple normal crossing divisor} (SNCD) on a smooth $k$-scheme $X$ is by definition a
reduced effective Cartier divisor $E$ on $X$ such that, if $E_1,\ldots, E_n$ are the irreducible
components of $E$, then the intersections $E_{i_1}\cap\cdots\cap E_{i_r}$ are {\em smooth} over $k$
and have codimension $r$ in $X$, for all $r\in [1,n]$ and $(i_1,\ldots, i_r)\in [1,n]^r$.
\end{introconvention}

\section{Cycle complex with modulus}\label{ccmod}\setcounter{subsection}{1}
\numberwithin{equation}{defn}
We recall the definition of Chow groups with modulus from \cite[2.]{BS14}.
In this section $k$ is a field, $X$ an equidimensional $k$-scheme and $D$ an effective Cartier divisor on $X$ 
with complement $U=X\setminus |D|$.

\begin{no}\label{cubes}
Set $\P^1=\Proj k[Y_0,Y_1]$ and let $y=Y_1/Y_0$ be the standard coordinate function on $\P^1$.
We set 
\[\square=\P^1\setminus\{1\},\quad \square ^n=(\P^1\setminus\{1\})^n, \quad n>1.\]
By convention we set $\square^0=\Spec k$.  Let $q_i: (\P^1)^n\to \P^1$ 
be the projection onto the $i$-th factor. We use the coordinate system $(y_1,\ldots, y_n)$ on $(\P^1)^n$ with 
$y_i=y\circ q_i$. Let $F_i^n\subset(\P^1)^n$ be the Cartier divisor defined by $\{y_i=1\}$ and 
put $F_n=\sum_{i=1}^n F^n_i$.

A {\em face} of $\square^n$ is a subscheme $F$ defined by equations of the form 
\[y_{i_1}=\epsilon_1,\ldots, y_{i_r}=\epsilon_r,\quad 
                  r\in [1,n],\, (i_1,\ldots, i_r)\in [1,n]^r, \epsilon_{i_j}\in \{0,\infty\}.\]
We denote by $\imath_F: F\inj \square^n$ the closed immersion. For $\epsilon=0,\infty$  and $i\in [1,n]$, let 
\[\imath^n_{i,\epsilon}: \square^{n-1}\inj \square^n\]
be the inclusion of the face of codimension 1 given by $y_i=\epsilon$.
\end{no}

\begin{defn}\label{defn:modulus-cycle}
 For $r,n\ge 0$ we denote by $C^r(X|D,n)$ the set of all
integral closed subschemes $Z\subset U\times\square^n$ of codimension $r$ which satisfy the following conditions:
\begin{enumerate}
\item $Z$ intersects  $U\times F$ properly for all faces $F\subset \square^n$.
\item The case  $n=0$: The closure of $Z$ in $X$ does not meet $D$.
\item The case $n\ge 1$: Denote by $\ol{Z}\subset X\times(\P^1)^n$ the closure of $Z$ and 
             by $\nu_{\ol{Z}}: \tilde{Z}\to X\times (\P^1)^n $ the composition 
                of the normalization $\tilde{Z}\to \ol{Z}$ 
           followed by the closed immersion $\ol{Z}\inj X\times (\P^1)^n$.
          Then the following inequality between Cartier divisors  holds:
                   \eq{defn:modulus-cycle1}{\nu_{\ol{Z}}^*(D\times(\P^1)^n)\le \nu_{\ol{Z}}^*(X\times F_n).}
\end{enumerate}
An element of $C^r(X|D,n)$ is called an {\em integral relative cycle of codimension $r$ for $(X,D)$}.
\end{defn}

\begin{lem}\label{lem:mod-cond-closed-subschemes}
Let $Z'\subset Z$ be integral closed subschemes in $X\times (\P^1)^n$ intersecting
the Cartier divisors $D\times (\P^1)^n$ and $X\times F_n$ properly. Let
 $\nu_{Z}: \tilde{Z}\to X\times (\P^1)^n$ be the composition of the normalization $\tilde{Z}\to Z$ with the
natural inclusion $Z\inj X\times (\P^1)^n$ and similar with $\nu_{Z'}: Z'\to X\times (\P^1)^n$.
Then the inequality $\nu_Z^*(D\times (\P^1)^n)\le \nu_Z^*(X\times F_n)$ implies 
the corresponding inequality with $\nu_Z$ replaced by $\nu_{Z'}$.
\end{lem}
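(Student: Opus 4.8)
The plan is to reduce the asserted inequality of effective Cartier divisors on the normalizations to the regularity of a single rational function, and then to transport that regularity from $\tilde{Z}$ to $\tilde{Z'}$ through a lift of the generic point of $Z'$. Since the conclusion is local on $X\times(\P^1)^n$ and may be checked at codimension-one points of the normalizations, I would first fix an affine open $U_0\subseteq X\times(\P^1)^n$ together with local equations $f$ and $g$ for $D\times(\P^1)^n$ and $X\times F_n$; both are non-zero-divisors, and by proper intersection their restrictions to $Z$ and to $Z'$ stay non-zero-divisors. Then $\nu_Z^*(D\times(\P^1)^n)=\Div(\nu_Z^* f)$ and $\nu_Z^*(X\times F_n)=\Div(\nu_Z^* g)$, so the hypothesis says exactly that the rational function $h:=\nu_Z^*(g/f)$ has effective divisor, i.e., since $\tilde{Z}$ is normal, that $h\in\sO_{\tilde Z}$ is regular. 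The goal becomes to show that $h':=\nu_{Z'}^*(g/f)=g|_{Z'}/f|_{Z'}\in k(Z')$ is regular on $\tilde{Z'}$.

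Next I would connect the two function fields through the generic point. Let $\eta'$ be the generic point of $Z'$, viewed as a point of $Z$; since $f|_{Z'}$ is a non-zero-divisor, $f$ is a unit in $\sO_{Z,\eta'}$, so $h\in\sO_{Z,\eta'}$, and its image under $\sO_{Z,\eta'}\to\kappa(\eta')=k(Z')$ is precisely $h'$. Choose a point $\tilde\eta'\in\tilde Z$ over $\eta'$ (possible as $\nu_Z$ is finite surjective), put $W=\overline{\{\tilde\eta'\}}\subseteq\tilde Z$, and let $\tilde W$ be its normalization. Using the local homomorphism $\sO_{Z,\eta'}\to\sO_{\tilde Z,\tilde\eta'}$ and the induced map on residue fields, the restriction $h|_W$, which is regular on $W$ because $h\in\sO_{\tilde Z}$, takes at $\tilde\eta'$ the image of $h'$; pulling back to $\tilde W$ we conclude $h'\in\Gamma(\tilde W,\sO_{\tilde W})$, where $h'\in k(Z')\subseteq k(W)=\kappa(\tilde\eta')$.

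Finally I would descend regularity along the finite surjective map $\tilde W\to\tilde{Z'}$. As $W\to Z'$ is finite (a restriction of $\nu_Z$) and surjective, and $\tilde W$, $\tilde{Z'}$ are the normalizations of $Z'$ in the finite extension $\kappa(\tilde\eta')/k(Z')$ and in $k(Z')$ respectively, there is a canonical finite surjective morphism $\tilde W\to\tilde{Z'}$. For a codimension-one point $y'\in\tilde{Z'}$, pick $w\in\tilde W$ above it; then $\sO_{\tilde W,w}$ is a discrete valuation ring dominating $\sO_{\tilde{Z'},y'}$ with ramification index $e\ge 1$, so for $h'\in k(Z')$ one has $\ord_w(h')=e\cdot\ord_{y'}(h')$. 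Regularity of $h'$ on $\tilde W$ gives $\ord_w(h')\ge 0$, hence $\ord_{y'}(h')\ge 0$; as $y'$ and the covering open $U_0$ were arbitrary and $\tilde{Z'}$ is normal, $h'$ is regular on $\tilde{Z'}$, which is exactly $\nu_{Z'}^*(D\times(\P^1)^n)\le\nu_{Z'}^*(X\times F_n)$. The main obstacle is the second step: there is no morphism between $\tilde Z$ and $\tilde{Z'}$, so the comparison must be routed through the chosen lift $\tilde\eta'$ and the auxiliary normalization $\tilde W$, and one must verify carefully that evaluating $h$ at $\tilde\eta'$ genuinely recovers $h'$ under the residue field identifications.
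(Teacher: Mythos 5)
Your proof is correct, and it is essentially the paper's own proof: the paper gives no internal argument for this lemma, deferring to \cite{KP} (Prop 2.4) and \cite{BS14} (Lem 2.1), and your route---reducing the Cartier inequality to regularity of $h=\nu_Z^*(g/f)$ on the normal scheme $\tilde{Z}$, transporting it through the closure $W$ of a lift $\tilde{\eta}'$ of the generic point of $Z'$, and descending along the finite surjective map $\tilde{W}\to\tilde{Z}'$ by comparing discrete valuations at codimension-one points---is exactly the standard containment-lemma argument given in those references. The step you flagged as delicate does go through: by proper intersection $f$ is a unit in $\mathcal{O}_{Z,\eta'}$, so $h\in\mathcal{O}_{Z,\eta'}$, and the local homomorphism $\mathcal{O}_{Z,\eta'}\to\mathcal{O}_{\tilde{Z},\tilde{\eta}'}$ is compatible with the residue-field inclusion $k(Z')\hookrightarrow\kappa(\tilde{\eta}')$, so the residue of $h$ at $\tilde{\eta}'$ really is the image of $h'$.
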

\begin{proof}
This is essentially \cite[Prop 2.4]{KP}, see \cite[Lem 2.1]{BS14} for the version we need here.
\end{proof}

\begin{no}\label{cubical-modulus-cycles}
{\em Chow groups with modulus.}
Denote by $\ul{z}^r(X|D,n)$ the free abelian group on the set $C^r(X|D,n)$.
By Lemma \ref{lem:mod-cond-closed-subschemes} there is a well-defined pullback map
$(\id_X\times\imath_F)^*: \ul{z}^r(X|D,n)\to \ul{z}^r(X|D,m)$ for any
$m$-dimensional face $\square^m\cong F\subset \square^n$. 
We obtain a cubical object of abelian groups (see e.g. \cite[1.1]{Le09}):
\[\ul{n}\mapsto \ul{z}^r(X|D,n) \quad (\ul{n}=\{0,\infty\}^n, n=0,1,2,3,\ldots).\]
For each $n$ we have the subgroup $\ul{z}^r(X|D,n)_{\rm degn}$ of degenerate cycles,
i.e. those cycles which come from $\ul{z}^r(X|D,n-1)$ via pullback along 
one of the $n$ projections $U\times \square^{n}\to U\times \square^{n-1}$.
We set 
\[z^r(X|D,n):=\frac{\ul{z}^r(X|D,n)}{\ul{z}^r(X|D,n)_{\rm degn}}.\]
The $n$-th boundary operator $\partial: z^r(X|D,n)\to z^r(X|D,n-1)$ is given by 
\[\partial= \sum_{i=1}^n (-1)^i (\partial^\infty_{i}-\partial^0_i),\]
where $\partial^\epsilon_i= (\id_X \times \imath^n_{i,\epsilon})^*: z^r(X|D,n)\to z^r(X|D,n-1)$
is the pullback along the face $\{y_i=\epsilon\}$.
We get a complex $z^r(X|D,\bullet)$, which is the complex associated to the cubical object 
$\ul{n}\mapsto \ul{z}^r(X|D,n)$.
The higher Chow groups of $(X,D)$ are defined to be 
\[\CH^r(X|D,n):= H_n(z^r(X|D,n)), \quad n,r\ge 0,\]
see \cite[Def 2.5]{BS14}.  
If $D=0$ we get back Bloch's classical definition of the cycle complex and 
higher Chow groups. We simply write $z^r(X,\bullet)$ and $\CH^r(X,n)$ instead of 
$z^r(X|0,\bullet)$ and $\CH^r(X|0,n)$, respectively. 
\end{no}

\begin{no}\label{motivic-coh-mod}
{\em Motivic cohomology with modulus}. 
For an \'etale map $V\to X$ we denote by $D_V$ the pullback of $D$ to $V$. 
Then the presheaves 
\[z^r(-|D,n): X_{\et}\ni (V\to X)\mapsto z^r(V|D_V,n)\]
are sheaves for the \'etale topology, {\em a fortiori} for the Zariski and the Nisnevich topology. 
In case $X$ is smooth over $k$ the $r$-th motivic complex of $(X,D)$ is defined to be
the complex of Zariski sheaves on $X$
\[\Z(r)_{X|D}:=z^r(-|D, 2r-\bullet).\]
We denote by 
\[\Z(r)_{X|D,\Nis}\]
 the corresponding complex on $X_\Nis$.
The motivic cohomology $(X,D)$ is by definition
\[H^i_{\sM}(X|D, \Z(r)):= H^i(X_\Zar, \Z(r)_{X|D}),\]
see \cite[Def 2.10]{BS14}. If $D=0$ we get back Bloch's definition of the motivic complex
and motivic cohomology. We simply write $\Z(r)_X$ and $H^i_\sM(X,\Z(r))$ instead of
$\Z(r)_{X|0}$ and $H^i_\sM(X|0,\Z(r))$.
We define the motivic Nisnevich cohomology  of $(X,D)$ to be 
\[H^i_{\sM,\Nis}(X|D,\Z(r)):=H^i(X_\Nis, \Z(r)_{X|D}).\]
\end{no}

\begin{no}\label{Chow-mod-properties}
We give a list of properties and results:
\begin{enumerate}
\item\label{Chow-mod-properties1} 
       The modulus condition \eqref{defn:modulus-cycle1} implies that any $Z\in C^r(X|D,n)$
          is already closed in $X\times \square^n$. Therefore there is a natural map 
               \[\CH^r(X|D, n)\to \CH^r(X,n),\]
              where the right hand side is (the cubical version of) Bloch's higher Chow groups.
\item The above definition generalizes the additive higher Chow groups 
        defined by Bloch-Esnault and Park.
          In the case $(X,D)=(Y\times \A^1_k, (m+1)\cdot(Y\times \{0\}))$, 
       with $Y$ an equidimensional $k$-scheme and $m\ge 1$,  we have 
                    \[\CH^r(X|D,n)={\rm TCH}^r(Y, n+1;m).\]
\item There is a natural isomorphism 
           \[\CH^r(X|D,0)\xr{\simeq} \CH^r(X|D),\]
          where the right hand side is the group of $r$-codimensional cycles on $U$ modulo
           ``rational equivalence with modulus $D$", see \cite[3.]{BS14}.
\item Assume $X$ is normal. Then there is a natural quasi-isomorphism 
           \[\Z_{X|D}(1)\cong \Ker(\sO_X^\times\to\sO_D^\times)[-1],\]
          see \cite[1.5, Thm 4.3]{BS14}.
\item If $X$ is smooth and $D_{\rm red}$ is a simple normal crossing divisor, then there is
          a cycle map $\phi_{X|D}: \Z(r)_{X|D}\to \Omega^{\ge r}_{X/\Z}(\log D)\otimes_{\sO_X} \sO_X(-D)$
          in the derived category $\sD^-(X)$ of bounded above complexes of Zariski sheaves, see \cite[7.3]{BS14}.
          For $k=\C$, there are regulator maps  from the motivic cohomology of $(X,D)$ to a relative version
          of Deligne cohomology, Betti cohomology and a relative Abel-Jacobi map, see \cite[8., 9.]{BS14}.
\end{enumerate}
\end{no}

\section{Relative Milnor {$K$}-sheaves}
In this section $k$ is a field and $X$ a smooth connected $k$-scheme. 
We denote by $X^{(c)}$ the set of codimension $c$ points in $X$ and by $\eta$ the generic point of $X$ .

\subsection{The Gersten complex of Milnor {$K$}-sheaves}
\begin{no}\label{MilnorK}
For $r\in\Z$ we denote by $\sK^M_{r,X}$ the $r$-th Milnor $K$-sheaf on $X$.
By definition it is the Zariski sheaf which on an open $V\subset X$ is given by
\[\sK^M_{r,X}(V)= \Ker(K^M_{r}(k(\eta))\xr{\oplus\partial_x} 
                                     \bigoplus_{x\in X^{(1)}\cap V} K^M_{r-1}(k(x))),\]
where $\partial_x: K^M_{r}(k(\eta))\to K^M_{r-1}(k(x))$ denotes the tame symbol from \cite[\S 4]{BT}.
In particular $\sK^M_{r,X}=0$ for $r< 0$, $\sK^M_{0,X}=\Z$ and $\sK^M_{1,X}=\sO_X^\times$.
There is a canonical resolution $\sK^M_{r,X}\to C^\bullet_{r,X}$ by flasque sheaves called the {\em Gersten resolution}
(see e.g. \cite[Thm 6.1]{Rost})
\eq{Gersten}{0\to \sK^M_{r,X}\to \underbrace{i_{\eta*}K^M_{r}(k(\eta))\to 
     \bigoplus_{x\in X^{(1)}}i_{x*}K^M_{r-1}(k(x))\to 
                 \bigoplus_{x\in X^{(2)}} i_{x*}K^M_{r-2}(k(x))\to \ldots}_{:= C^\bullet_{r,X}},}
where $i_{x}: x\to X$ denotes the inclusion.

By \cite[Prop 10, (8) and Thm 13]{Kerz10}
the stalk of $\sK^M_{r, X}$ at $x\in X$ is the subgroup of 
$K^M_{r}(k(\eta))$ generated by symbols of the form $\{a_1,\ldots, a_r\}$, $a_i\in \sO_{X,x}^\times$, i.e.
\eq{description-MilnorK}{\sK^M_{r,X,x}=\{\sO_{X,x}^\times,\ldots, \sO_{X,x}^\times\}\subset K^M_r(k(\eta)).}
If $k$ is an infinite field, then by \cite[Thm 1.3 and Def 2.1]{Kerz09} 
\eq{MilnorK-relations}{\sK^M_{r,X}= (\sO^\times)^{\otimes_\Z r}/\sR,}
where $\sR\subset (\sO^\times)^{\otimes_\Z r}$ is the subsheaf of abelian groups which 
is generated by local sections of the form
$b_1\otimes\ldots \otimes b_{i-1}\otimes a\otimes (1-a)\otimes b_{i+2}\otimes\ldots \otimes b_r$,
where $a\in \sO_X^\times$ with $1-a\in \sO_X^\times$ and $b_i\in \sO_X^\times$.

In case $X$ is not connected and $X=\sqcup_j X_j$ is its decomposition into connected components 
with corresponding immersions $i_j: X_j\inj X$, we set
\[\sK^M_{r,X}:=\bigoplus_i i_{j*}\sK^M_{r,X_i}.\]
\end{no}

\begin{no}\label{K-Nis}
By \cite[Prop 10, (11)]{Kerz10} we have an isomorphism of Zariski sheaves
\[\sK^M_{r, X}\cong \sH^r(\Z(r)_X).\]
In particular $Y\mapsto H^0(Y,\sK^M_{r, Y})$ defines a homotopy invariant presheaf with transfers
on the category of smooth $k$-schemes in the sense of \cite[3.]{VoDM}. 
Hence  by \cite[Thm 3.1.12]{VoDM} it restricts to a sheaf on the Nisnevich site $X_{\Nis}$,
which we continue to denote by $\sK^M_{r,X}$ or if we want to stress that we are on $X_\Nis$
by $\sK^M_{r,X,\Nis}$.
\end{no}

\subsection{Milnor $K$-sheaf of a complement of an SNCD}
\begin{no}\label{cech-symbol}
Let $Y$ be a scheme and $\sF$ a sheaf of abelian groups on $Y$.
Let $Z\subset Y$  be a closed subscheme and denote by $j: V=Y\setminus Z\inj Y$ the inclusion
of the complement. We denote by $\ul{\Gamma}_Z(\sF)$ the sheaf on $Y$ of sections of $\sF$
with supports in $Z$ and by $\sH^i_Z(\sF)= R^i\ul{\Gamma}_Z(\sF)$ the $i$-th cohomology
sheaf with support in $Z$.  
For a scheme point $y\in Y$, we also define
\[H^i_y(\sF):=\sH^i_{\ol{y}}(\sF)_y=\varinjlim_{y\in U} H^i_{\ol{y}\cap U}(U, \sF),\]
where $\ol{y}$ denotes the closure of $y$ in $Y$ and the limit is over all open neighborhoods of $y$.
We have isomorphisms
\eq{cech-symbol1}{\frac{j_*(\sF_{|V})}{\sF}\cong \sH^1_Z(\sF)\quad \text{and}\quad
                            R^{i-1}j_*(\sF_{|V})\cong \sH^i_Z(\sF),\quad \text{for }i\ge 2.}
Assume that the ideal sheaf $\sI$ of $Z\subset X$ is generated
by a regular sequence of global sections $t_1,\ldots, t_c\in \Gamma(Y,\sO_Y)$. 
Then we can use the Zariski cover $\fV= \{V_i= Y\setminus V(t_i), i=1,\ldots c\}$ of $V$ 
to build the Cech complex $\sC^\bullet(\fV, \sF)$, which is a complex of sheaves on $V$
resolving $\sF_{|V}$. We obtain a natural map $H^i(j_*\sC^\bullet(\fV, \sF))\to R^i j_*\sF_{|V}$.
For an element $a\in \sF(V_1\cap\ldots\cap V_c)$ we denote by 
\eq{cech-symbol2}{\genfrac{[}{]}{0pt}{}{a}{t_1,\ldots, t_c}\in \Gamma(Y,\sH^c_Z(\sF))}
the image of $a$ under the composition
\mlnl{\sF(V_1\cap\ldots\cap V_c)\to \Gamma(Y, H^{c-1}(j_*\sC^\bullet(\fV, \sF)))\\ 
        \to   \Gamma(Y, R^{c-1}j_*(\sF_{|V}))\xr{\eqref{cech-symbol1}} \Gamma(Y, \sH^c_Z(\sF)).}
\end{no}

\begin{lem}\label{lem:MilnorKsuppSNCD}
Let $E\subset X$ be a simple normal crossing divisor and denote by $j: V\inj X$ the inclusion
of the complement. 

Then 
$\sH^i_E(\sK^M_{r,X})=0$ for all $i\neq 1$. 
Furthermore,  for $r\ge 1$ and $x\in E$
\eq{lem:MilnorKsuppSNCD1}{(j_*\sK^M_{r,V})_x= 
        \{(\sO_{X,x}[\tfrac{1}{f}])^\times,\ldots, (\sO_{X,x}[\tfrac{1}{f}])^\times\}\subset K^M_{r}(k(\eta)),}
where $f\in \sO_{X,x}$ is a local equation for $E$.
\end{lem}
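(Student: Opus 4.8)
The plan is to analyze the local structure near a point $x \in E$ and reduce to a computation on the stalk $\sO_{X,x}$, using that $E$ is an SNCD. The statement has two parts: the vanishing of $\sH^i_E(\sK^M_{r,X})$ for $i \neq 1$, and the explicit description \eqref{lem:MilnorKsuppSNCD1} of the stalk $(j_*\sK^M_{r,V})_x$. Since both are local, I would work in the local ring $A = \sO_{X,x}$ and let $f_1, \ldots, f_c$ be part of a regular system of parameters cutting out the components of $E$ through $x$, so that $E$ is locally $V(f_1 \cdots f_c)$ with $f = f_1 \cdots f_c$.

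First I would establish the vanishing. The key input is the isomorphism $R^{i-1}j_*(\sK^M_{r,V}) \cong \sH^i_E(\sK^M_{r,X})$ for $i \geq 2$ from \eqref{cech-symbol1}, together with $j_*(\sK^M_{r,V})/\sK^M_{r,X} \cong \sH^1_E$. So it suffices to show $R^{i-1}j_* \sK^M_{r,V} = 0$ for $i \geq 2$, i.e. the higher direct images of $\sK^M_{r,V}$ under the open immersion vanish. For this I would use the description \eqref{description-MilnorK} of the Milnor $K$-sheaf as generated by symbols of units, combined with the Gersten resolution \eqref{Gersten}, which is flasque and hence computes the higher direct images. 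The crucial point is that the Gersten complex of $\sK^M_{r,V}$ is obtained by restricting the Gersten complex of $\sK^M_{r,X}$ to $V$, i.e. by deleting the terms supported on points of $E$. A short exact sequence of complexes relating the Gersten complexes on $X$ and on $V$ then lets me compute $Rj_* \sK^M_{r,V}$ via the local cohomology of the flasque resolution, and the flasqueness collapses everything into a single nonzero cohomology sheaf in degree one. The SNCD hypothesis enters in guaranteeing that the relevant local rings along $E$ are regular, so that the Gersten resolution is available for all the residue fields appearing.

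For the explicit stalk description \eqref{lem:MilnorKsuppSNCD1}, I would argue that $(j_*\sK^M_{r,V})_x = \varinjlim_{x \in U} \sK^M_{r,V}(U \cap V)$, and that an element here is a symbol in $K^M_r(k(\eta))$ lying in $\sK^M_{r,V}$ near $x$, i.e. unramified at all codimension-one points of $V$ close to $x$. By Kerz's description \eqref{description-MilnorK} applied on the regular local ring $\sO_{X,x}[\tfrac{1}{f}] = \sO_{V,x}$ (whose points are exactly the codimension-one points of $X$ near $x$ not lying on $E$), such a section is precisely a symbol in the units $(\sO_{X,x}[\tfrac{1}{f}])^\times$. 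The inclusion $\supseteq$ is clear since symbols of such units extend to sections over a punctured neighborhood; the inclusion $\subseteq$ follows because being a section of $j_*\sK^M_{r,V}$ at $x$ means being a section of the Milnor $K$-sheaf on the regular scheme $\Spec \sO_{V,x}$, whose stalk is exactly this symbol group by \eqref{description-MilnorK}.

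The main obstacle I anticipate is the vanishing $\sH^i_E = 0$ for $i \geq 2$, since $E$ has codimension one but is reducible with self-intersections, so one cannot simply invoke a purity or single-valuation argument pointwise. The honest content is to show that the higher direct images $R^{\geq 1} j_* \sK^M_{r,V}$ vanish even at the multiple points where several branches of $E$ meet; here I expect to exploit the SNCD structure to reduce, via the product structure of the local Gersten complex or an induction on the number $c$ of branches, to the smooth-divisor case where purity for $\sK^M$ is classical. The explicit stalk formula is comparatively routine once the correct interpretation of $j_*\sK^M_{r,V}$ as the global sections over $\Spec\sO_{V,x}$ is in place.
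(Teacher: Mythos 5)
Your proposal has a genuine gap, and in both halves it is essentially the same one: the step that carries the actual content is obtained by assuming the statement to be proved. Most seriously, for the stalk description you invoke \eqref{description-MilnorK} ``applied on the regular local ring $\sO_{X,x}[\tfrac{1}{f}]$'' --- but this ring is \emph{not} local (there is no local ring $\sO_{V,x}$, since $x\notin V$; for $x$ of codimension $\ge 2$ the ring $\sO_{X,x}[\tfrac{1}{f}]$ has infinitely many maximal ideals). Kerz's theorem behind \eqref{description-MilnorK} is a Gersten-type statement for \emph{local} rings of smooth schemes, and the assertion that the classes in $K^M_r(k(\eta))$ unramified at all height-one primes of $\sO_{X,x}[\tfrac{1}{f}]$ are exactly the symbols in its units is precisely \eqref{lem:MilnorKsuppSNCD1}, i.e.\ the lemma itself; this is exactly where the SNCD hypothesis must do work. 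The paper instead argues by induction on the number $n$ of components of $E$: for $n=1$ it uses the exact sequence $0\to \sK^M_{r,X}\to j_*\sK^M_{r,V}\xr{\partial_E} i_*\sK^M_{r-1,E}\to 0$ (a consequence of the vanishing part), applies \eqref{description-MilnorK} to the honest local rings $\sO_{X,x}$ and $\sO_{E,x}$, lifts the generators of $\sK^M_{r-1,E,x}$ to symbols $\{u_1,\ldots,u_{r-1},f\}$ with $u_i\in\sO_{X,x}^\times$, and subtracts them to reduce to an element killed by $\partial_E$, hence lying in $\sK^M_{r,X,x}$; for $n>1$ it factors $j$ through $X\setminus E'$ and repeats with the analogous sequence there.

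For the vanishing, your reduction via \eqref{cech-symbol1} to $R^ij_*\sK^M_{r,V}=0$ for $i\ge 1$, together with $Rj_*\sK^M_{r,V}=j_*C^\bullet_{r,V}$ (flasqueness of the Gersten resolution), is correct but tautological: the short exact sequence of complexes $0\to\ul{\Gamma}_E(C^\bullet_{r,X})\to C^\bullet_{r,X}\to j_*C^\bullet_{r,V}\to 0$ merely translates the desired vanishing back into the exactness of $\ul{\Gamma}_E(C^\bullet_{r,X})$ outside degree $1$, which is where you started, so ``flasqueness collapses everything into degree one'' proves nothing. You do correctly anticipate that the real content sits at the crossing points and that an induction on the number of branches should reduce to the smooth-divisor case; this is indeed the paper's mechanism: for a \emph{smooth} closed $Z\subset X$ of pure codimension $c$ one has $\ul{\Gamma}_Z(C^\bullet_{r,X})=C^\bullet_{r-c,Z}[-c]$, whence purity, and then the long exact sequence relating $\sH^i_{E_n}$, $\sH^i_E$ and $\sH^i_{E\setminus E_n}$ plus induction on $n$ gives the vanishing. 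But since you neither carry out this induction nor supply a valid substitute for the misapplied Gersten input, the proposal as written establishes neither assertion.
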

\begin{proof}
First of all, notice that for a smooth closed subscheme $Z\subset X$ of pure codimension $c$, we have 
$\ul{\Gamma}_Z(C^\bullet_{r,X})= C^\bullet_{r-c, Z}[-c]$.
Hence $\sH^i_Z(\sK^M_{r,X})=0$ for all $i\neq c$ and $\sH^c_Z(\sK^M_{r,X})=\sK^M_{r-c, Z}$.

Now for the lemma write $E=\cup_{i=1}^n E_i$, where the $E_i$ are the irreducible components of $E$. 
We do induction on $n$. 
If $n=1$, i.e. $E\subset X$ is a smooth integral subscheme of codimension 1, 
the first statement follows directly from the remark above. For the second statement observe
that we obtain the following exact sequence from the long exact localization sequence 
\eq{lem:MilnorKsuppSNCD2}{0\to \sK^M_{r,X}\to j_*\sK^M_{r,V}\xr{\partial_E} i_*\sK^M_{r-1,E}\to 0,}
where $i:E\inj X$ denotes the closed immersion and $\partial_E$ is induced by the symbol 
$\partial_e: K^M_r(k(\eta))\to K^M_{r-1}(k(e))$, with $e\in E$ the generic point.
 Clearly the right hand side of \eqref{lem:MilnorKsuppSNCD1} is contained in the left hand 
side. Therefore it suffices to show
that the left hand side is contained in 
\[\{\sO_{X,x}^\times,\ldots, \sO_{X,x}^\times\}+ \{\sO_{X,x}^\times,\ldots, \sO_{X,x}^\times,f\}
\subset K^M_r(k(\eta)).\]   
 This follows from the short exact sequence above and 
the description \eqref{description-MilnorK}  for $\sK^M_{r, X}$ and $\sK^M_{r-1, E}$.

In general, set $E'= \cup_{i<n}E_i$. Thus $E=E'\cup E_n$ and 
the vanishing assertion follows by induction from the long exact sequence
$\ldots\to \sH^i_{E_n}(\sK^M_{r, X})\to \sH^i_{E}(\sK^M_{r, X})\to 
\sH^i_{E\setminus E_n}(\sK^M_{r, X})\to \ldots$.
Denote by $j_n: V\inj X\setminus E'$ and $i_n: E_n\setminus(E_n\cap E')\inj X\setminus E'$ the inclusions.
The second statement follows by induction from the exact sequence
\[0\to \sK^M_{r, X\setminus E'}\to j_{n*}\sK^M_{r, V}\to i_{n*}\sK^M_{r-1, E_n\setminus(E_n\cap E')}\to 0\]
and a similar argument as in the case $n=1$.
\end{proof}

\begin{cor}\label{cor:MilnorKcompSNCD}
Let $E\subset X$ and $j:V\inj X$ be as in Lemma \ref{lem:MilnorKsuppSNCD}.
\begin{enumerate}
\item\label{jlowerstar} We have $R^i j_*\sK^M_{r,V}=0$, for all $i\ge 1$, and  
\[j_*\sK^M_{r,V}=\{j_*\sO_V^\times,\ldots, j_*\sO_V^\times\}\subset K^M_r(k(\eta)).\]
\item\label{cohsuppnotc} For $T\subset X$ a closed subscheme of pure codimension $c$, we have
\[\sH^i_T(j_*\sK^M_{r,V})=0,\quad \text{for } i< c.\]
\end{enumerate}
\end{cor}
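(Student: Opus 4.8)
The plan is to deduce both assertions from Lemma \ref{lem:MilnorKsuppSNCD}. For part \eqref{jlowerstar} I would first feed the vanishing $\sH^i_E(\sK^M_{r,X})=0$ ($i\neq 1$) into the isomorphism \eqref{cech-symbol1}: taking $\sF=\sK^M_{r,X}$, $Z=E$ and $\sF_{|V}=\sK^M_{r,V}$ gives $R^{i-1}j_*\sK^M_{r,V}\cong\sH^i_E(\sK^M_{r,X})$ for $i\ge 2$, whence $R^ij_*\sK^M_{r,V}=0$ for all $i\ge 1$. For the symbol description I would argue stalkwise: by normality of $X$, for $x\in E$ with local equation $f$ one has $(j_*\sO_V)_x=\sO_{X,x}[\tfrac1f]$, and passing to units (which commutes with the filtered colimit defining the stalk) gives $(j_*\sO_V^\times)_x=(\sO_{X,x}[\tfrac1f])^\times$. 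Comparing with \eqref{lem:MilnorKsuppSNCD1}, the stalk at every $x\in E$ of the subsheaf $\{j_*\sO_V^\times,\ldots,j_*\sO_V^\times\}\subset i_{\eta*}K^M_r(k(\eta))$ then agrees with $(j_*\sK^M_{r,V})_x$; as both sheaves restrict to $\sK^M_{r,V}$ on $V$, they coincide at every point of $X$, hence as subsheaves.

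For part \eqref{cohsuppnotc} the key step is to resolve $j_*\sK^M_{r,V}$ by the pushforward of the Gersten resolution \eqref{Gersten} on $V$. Writing $\sK^M_{r,V}\to C^\bullet_{r,V}$ with $C^p_{r,V}=\bigoplus_{x\in V^{(p)}}i_{x*}K^M_{r-p}(k(x))$, the terms are flasque, hence $j_*$-acyclic, so $Rj_*\sK^M_{r,V}$ is computed by the termwise pushforward $j_*C^\bullet_{r,V}$. Part \eqref{jlowerstar} says that its cohomology sheaves $R^ij_*\sK^M_{r,V}$ vanish for $i\ge 1$, so
\[0\to j_*\sK^M_{r,V}\to j_*C^0_{r,V}\to j_*C^1_{r,V}\to\cdots\]
is a flasque resolution, with $j_*C^p_{r,V}=\bigoplus_{x\in V^{(p)}}i_{x*}K^M_{r-p}(k(x))$ supported on the codimension $p$ points of $X$ lying in $V$.

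It then remains to carry out the support bookkeeping. Since $j_*C^\bullet_{r,V}$ is flasque, hence $\ul{\Gamma}_T$-acyclic, I would compute $\sH^i_T(j_*\sK^M_{r,V})=H^i(\ul{\Gamma}_T(j_*C^\bullet_{r,V}))$. A direct check on the skyscraper-type terms gives $\ul{\Gamma}_T(i_{x*}M)=i_{x*}M$ if $x\in T$ and $0$ otherwise, so $\ul{\Gamma}_T(j_*C^p_{r,V})=\bigoplus_{x\in V^{(p)}\cap T}i_{x*}K^M_{r-p}(k(x))$; as $T$ has pure codimension $c$, every point of $T$ has codimension $\ge c$, so this term is $0$ for $p<c$. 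Hence $\ul{\Gamma}_T(j_*C^\bullet_{r,V})$ is concentrated in degrees $\ge c$ and $\sH^i_T(j_*\sK^M_{r,V})=0$ for $i<c$. The only genuinely delicate point is the passage from the Gersten resolution on $V$ to a resolution of $j_*\sK^M_{r,V}$ on $X$: this is exactly where the higher-direct-image vanishing of part \eqref{jlowerstar} is used, via the $j_*$-acyclicity of flasque sheaves. Everything else is the routine stalk and support bookkeeping fed by Lemma \ref{lem:MilnorKsuppSNCD}.
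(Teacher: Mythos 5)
Your proposal is correct and follows essentially the same route as the paper: part \eqref{jlowerstar} via the identification $R^{i-1}j_*\sK^M_{r,V}\cong\sH^i_E(\sK^M_{r,X})$ together with the vanishing and the stalk description \eqref{lem:MilnorKsuppSNCD1} from Lemma \ref{lem:MilnorKsuppSNCD}, and part \eqref{cohsuppnotc} by observing that $j_*C^\bullet_{r,V}$ is then a flasque resolution of $j_*\sK^M_{r,V}$ whose terms in degree $p<c$ have no sections supported on $T$. You merely spell out the stalkwise and support bookkeeping that the paper compresses into ``follows directly.''
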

\begin{proof}
By the long exact localization sequence $R^i j_*\sK^M_{r,V}\cong \sH^{i+1}_E\sK^M_{r,X}$ for $i\ge 1$.
Hence \eqref{jlowerstar} follows directly from Lemma \ref{lem:MilnorKsuppSNCD}.
It follows that $j_*C^\bullet_{r, V}$ is a flasque resolution of $j_*\sK^M_{r,V}$, 
which directly implies  \eqref{cohsuppnotc}.
\end{proof}

\begin{cor}\label{cor:MilnorK-coh-in-c}
Let $z\in X$ be a point of codimension $c\ge 1$ and $t_1,\ldots, t_c\in \sO_{X,z}$ a regular system of parameters.
We set $t:=t_1\cdots t_c$ and  $t_{\hat{j}}:=t_1\cdots \widehat{t_j}\cdots t_c$.
(By convention if $c=1$ we set $t_{\hat{1}}:=1$.)
Then with the notation from \ref{cech-symbol}
\[\frac{\{(\sO_{X,z}[\frac{1}{t}])^\times,\ldots, (\sO_{X,z}[\frac{1}{t}])^\times\}}{
  \sum_{j=1}^c 
        \{(\sO_{X,z}[\frac{1}{t_{\hat{j}}}])^\times,\ldots, (\sO_{X,z}[\frac{1}{t_{\hat{j}}}])^\times\}}
         \cong H^c_{z}(\sK^M_{r,X})\cong K^M_{r-c}(k(z)),\]
where on the left the quotient is between two subgroups of $K^M_r(k(\eta))$.
Moreover with the notation from \eqref{cech-symbol2}, the isomorphism 
$K^M_{r-c}(k(z))\xr{\simeq} H^c_{z}(\sK^M_{r,X})$ 
is given by
\eq{cor:MilnorK-coh-in-c1}{\{b_1,\ldots, b_{r-c}\}\mapsto 
          \genfrac{[}{]}{0pt}{}{\{\tilde{b}_1,\ldots, \tilde{b}_{r-c}, t_1,\ldots, t_c\}}{t_1,\ldots, t_c},}
where $\tilde{b}_i\in (\sO_{X,z})^\times$ is any lift of $b_i\in k(z)^\times$.
\end{cor}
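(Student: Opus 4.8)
The plan is to compute the local cohomology $H^c_z(\sK^M_{r,X})$ in two different ways and to match the two resulting descriptions with the explicit symbol \eqref{cor:MilnorK-coh-in-c1}. Throughout I work on $Y=\Spec \sO_{X,z}$, whose closed point is $z$ and whose maximal ideal is generated by the regular sequence $t_1,\ldots,t_c$. Since $z$ is the closed point of the local scheme $Y$, the only open subset of $Y$ containing $z$ is $Y$ itself, so the stalk at $z$ of any sheaf on $Y$ agrees with its group of sections over $Y$.

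First I would establish the second isomorphism together with the vanishing in the other degrees, directly from the Gersten resolution \eqref{Gersten}. Exactly as in the proof of Lemma \ref{lem:MilnorKsuppSNCD}, the only summand of $C^\bullet_{r,Y}$ whose support is contained in the closed point is the one indexed by $x=z$, which sits in cohomological degree $c$; hence $\ul{\Gamma}_{\{z\}}(C^\bullet_{r,Y})=i_{z*}K^M_{r-c}(k(z))[-c]$, and since $C^\bullet_{r,Y}$ is a flasque resolution this yields $\sH^i_{\{z\}}(\sK^M_{r,X})=0$ for $i\neq c$ together with a canonical isomorphism $H^c_z(\sK^M_{r,X})\cong K^M_{r-c}(k(z))$. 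Concretely this isomorphism is induced by the iterated residue $\partial_{t_c}\circ\cdots\circ\partial_{t_1}$ attached to the parameters $t_1,\ldots,t_c$.

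Next, for the first isomorphism, I would feed the cover $\fV=\{V_i=Y\setminus V(t_i)\}_{i=1}^c$ of $V=Y\setminus\{z\}$ into the machinery of \ref{cech-symbol}. Writing $V_S=\bigcap_{i\in S}V_i=\Spec \sO_{X,z}[1/\prod_{i\in S}t_i]$, each $V_S$ is the complement in $Y$ of the SNCD $\bigcup_{i\in S}V(t_i)$, so Corollary \ref{cor:MilnorKcompSNCD}\eqref{jlowerstar} gives $R^q(j_{V_S\to Y})_*\sK^M_{r,V_S}=0$ for $q\ge 1$. Running the hypercohomology spectral sequence for $Rj_*$ on the \v{C}ech resolution $\sC^\bullet(\fV,\sK^M_r)$ of $\sK^M_r|_V$ therefore makes the natural map $H^i(j_*\sC^\bullet(\fV,\sK^M_r))\to R^ij_*\sK^M_r|_V$ of \ref{cech-symbol} an isomorphism; combined with \eqref{cech-symbol1} this identifies $H^{c-1}(j_*\sC^\bullet(\fV,\sK^M_r))$ with $\sH^c_{\{z\}}(\sK^M_{r,X})$ (for $c=1$ one uses instead the first isomorphism of \eqref{cech-symbol1} directly). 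The complex $j_*\sC^\bullet(\fV,\sK^M_r)$ is concentrated in degrees $0,\ldots,c-1$, so its top cohomology sheaf is the cokernel of the last differential; taking the stalk at $z$ and using that the stalk at $z$ of $(j_{V_S\to Y})_*\sK^M_r|_{V_S}$ is $\sK^M_r(V_S)$, I obtain
\[ H^c_z(\sK^M_{r,X})\cong \frac{\sK^M_r(\Spec \sO_{X,z}[1/t])}{\sum_{j=1}^c \sK^M_r(\Spec \sO_{X,z}[1/t_{\hat j}])}. \]
Finally, Corollary \ref{cor:MilnorKcompSNCD}\eqref{jlowerstar} together with the description \eqref{description-MilnorK} identifies $\sK^M_r(\Spec\sO_{X,z}[1/s])$ with the symbol subgroup $\{(\sO_{X,z}[1/s])^\times,\ldots,(\sO_{X,z}[1/s])^\times\}$ of $K^M_r(k(\eta))$ for $s=t$ and $s=t_{\hat j}$, turning the right-hand side into exactly the quotient in the statement.

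It then remains to verify formula \eqref{cor:MilnorK-coh-in-c1}. By construction the class of the symbol \eqref{cech-symbol2} attached to $a=\{\tilde b_1,\ldots,\tilde b_{r-c},t_1,\ldots,t_c\}\in\sK^M_r(\Spec\sO_{X,z}[1/t])$ is precisely the image of $a$ in the cokernel above, i.e. in $H^c_z(\sK^M_{r,X})$; so I would check that under the residue isomorphism of the first step this class maps to $\{b_1,\ldots,b_{r-c}\}$, which is the identity $\partial_{t_c}\cdots\partial_{t_1}\{\tilde b_1,\ldots,\tilde b_{r-c},t_1,\ldots,t_c\}=\{b_1,\ldots,b_{r-c}\}$ obtained by iterating the elementary relation $\partial_{t_i}\{u_1,\ldots,u_s,t_i\}=\{\bar u_1,\ldots,\bar u_s\}$ for units $u_l$. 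The main obstacle I expect is not any single step but the careful bookkeeping in this last part: tracking signs and the order of the parameters through the identifications of the previous two paragraphs, and making sure the global-sections-versus-symbol-group identification (which genuinely uses that $\sO_{X,z}$ is local, via Corollary \ref{cor:MilnorKcompSNCD}) is compatible with the \v{C}ech differential, so that the denominator comes out as the stated sum $\sum_j\{(\sO_{X,z}[1/t_{\hat j}])^\times,\ldots,(\sO_{X,z}[1/t_{\hat j}])^\times\}$.
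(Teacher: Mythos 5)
Your treatment of the two isomorphisms is correct and is essentially the paper's own argument: the same \v{C}ech cover $\fV=\{V_i=Y\setminus V(t_i)\}$ of $V=Y\setminus\{z\}$, the same acyclicity input from Corollary \ref{cor:MilnorKcompSNCD}, \eqref{jlowerstar} (each $V_I\inj V$ and $V_I\inj Y$ being the complement of an SNCD) to conclude $Rj_*\sK^M_{r,V}=j_*\sC^\bullet(\fV,\sK^M_{r,V})$, and the flasqueness of $j_*C^\bullet_{r,V}$ to identify $H^c_z(\sK^M_{r,X})\cong K^M_{r-c}(k(z))$. Reading the first isomorphism off as the cokernel of the top \v{C}ech differential, with the terms identified with symbol subgroups of $K^M_r(k(\eta))$ via Corollary \ref{cor:MilnorKcompSNCD}, \eqref{jlowerstar} and \eqref{description-MilnorK}, is fine; note that the alternating signs in the differential do not affect its image, so the compatibility you flag as the ``main obstacle'' --- that the denominator comes out as the stated sum --- is in fact the easy part.

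The genuine gap is the verification of \eqref{cor:MilnorK-coh-in-c1}, which you assert rather than prove, and it is precisely where the paper spends the second half of its proof. The relation $\partial\{u_1,\ldots,u_s,t\}=\{\bar u_1,\ldots,\bar u_s\}$ computes iterated residues of the element $a=\{\tilde b_1,\ldots,\tilde b_{r-c},t_1,\ldots,t_c\}$ of $K^M_r(k(\eta))$, but the class $\genfrac{[}{]}{0pt}{}{a}{t_1,\ldots,t_c}$ is a cocycle in the \v{C}ech model of $R^{c-1}j_*\sK^M_{r,V}$, whereas your residue isomorphism is defined on the Gersten model $C^\bullet(V)$: you cannot apply residues to the \v{C}ech class until you have produced a Gersten cocycle representing the same cohomology class, and this comparison is not automatic. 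The paper carries it out by forming the total complex $T$ of the double complex $G^\bullet(C^\bullet)$, using the quasi-isomorphisms $G^\bullet(\sK^M_{r,V})\xr{\simeq}T\xl{\simeq}C^\bullet(V)$, and exhibiting explicit cochains $\beta_0,\ldots,\beta_{c-2}$ with $\partial^{T,c-2}(\beta_i)=(-1)^{i+1}(\alpha_{i+1}-\alpha_i)$, where $\alpha_i$ carries the partially resided symbol $\{\tilde b_1,\ldots,\tilde b_{r-c},t_1,\ldots,t_{i+1}\}$ at the point $z_{c-1-i}$; this homotopy chain implements exactly the one-variable-at-a-time stripping you envisage, ending with $\{\tilde b_1,\ldots,\tilde b_{r-c},t_1\}$ at $z_{c-1}$, whose residue at $z$ is $\{b_1,\ldots,b_{r-c}\}$. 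That this chase is not mere bookkeeping shows up already in your displayed identity: the composite $\partial_{t_c}\circ\cdots\circ\partial_{t_1}$, with $\partial_{t_1}$ applied first, sends $a$ to $(-1)^{c(c-1)/2}\{b_1,\ldots,b_{r-c}\}$, because $t_1$ must be transposed past $t_2,\ldots,t_c$ before your elementary relation applies; the sign-free formula of the corollary corresponds to stripping $t_c$ first, which is the order the paper's chain $\alpha_0\equiv\alpha_1\equiv\cdots\equiv\alpha_{c-1}$ actually realizes. So your strategy coincides with the paper's, but without the double-complex argument the formula \eqref{cor:MilnorK-coh-in-c1} remains unproven.
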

\begin{proof}
Since the question is local around $z$ we can assume that the sequence $t_1,\ldots, t_c$ is 
a regular sequence of global sections of $\sO_X$ and that $Z=\ol{\{z\}}$ is globally defined by their vanishing.
We denote by $j: V:= X\setminus Z \inj X$ the open embedding.
For $n\ge 0$ denote by $S^{n}\subset \N^{n+1}$ the set of tuples $(i_0,i_1,\ldots, i_n)$ 
with $1\le i_0< \ldots <i_n\le c$.
For $i\in [1,c]$ set $V_i:= X\setminus V(t_i)$ and for 
$I=(i_0,\ldots, i_n)\in S^n$ set $V_I:= V_{i_0}\cap\ldots\cap V_{i_n}$.
Denote by $j_I: V_I\inj V$ the open embeddings.  
By Corollary \ref{cor:MilnorKcompSNCD}, \eqref{jlowerstar}, we have 
\[Rj_{*} (j_{I*}j_I^{-1}\sK^M_{r,V}) = Rj_{*}Rj_{I*}\sK^M_{r,V_I}=
              j_{*}j_{I*}\sK^M_{r,V_I},\]
where for the first and second equality,  we use that the inclusions $V_I\inj V$ and $V_I\inj X$ 
are complements of an SNCD.
It follows that the Cech complex $\sC^\bullet(\fV, \sK^M_{r,V})$ is acyclic for $j_{*}$,
where $\fV=\{V_1,\ldots, V_c\}$.
Therefore $R j_{*} (\sK^M_{r,V})= j_{*}\sC^\bullet(\fV, \sK^M_{r,V})$.
Now the first isomorphism from the statement of the corollary follows from \eqref{cech-symbol1} and
Corollary \ref{cor:MilnorKcompSNCD}, \eqref{jlowerstar}.
The second isomorphism in the statement is an immediate consequence of the fact that $j_*C^\bullet_{r, V}$ is a 
flasque resolution of $j_*\sK^M_{r,V}$ (see Corollary \ref{cor:MilnorKcompSNCD}, \eqref{jlowerstar}).

It remains to prove the explicit formula \eqref{cor:MilnorK-coh-in-c1}.
We can assume $X=\Spec A$ with  $A:=\sO_{X,z}$. Then for an abelian sheaf $\sF$ on $V$
the stalk of $j_{*}\sC^\bullet(\fV,\sF)$ at $z$ is the following complex of abelian groups (starting in degree 0)
\[G(\sF)^\bullet: \bigoplus_{I\in S^0} \sF(V_I)\xr{\check{\partial}^0} 
            \bigoplus_{I\in S^1}\sF(V_I)\xr{\check{\partial}^1}\ldots
         \xr{\check{\partial}^{c-2}}   \bigoplus_{I\in S^{c-1}}\sF(V_I),\]
with 
\[(\check{\partial}^n(\alpha_I)_{I\in S^n})_J= \sum_{j=0}^{n+1} (-1)^j (\alpha_{J(j)})_{|V_J},\]
where $J(j)$ equals the tuple $J$ with the $j$-th entry omitted. 
Let $C^\bullet_{r,V}$ be the Gersten complex from \eqref{Gersten} and 
set $C^\bullet:=j_*C^\bullet_{r,V}$. Then
the sections of $C^\bullet$ over $V_I$ form the following complex
\mlnl{C^\bullet(V_I):=\Gamma(V_I, j_*C^\bullet_{r,V}):\\
          \bigoplus_{x\in V_I^{(0)}} K^M_r(k(x))\xr{\partial^{K,0}} 
                \bigoplus_{x\in V_I^{(1)}}K^M_{r-1}(k(x))\xr{\partial^{K,1}}\ldots \xr{\partial^{K,c-2}}
              \bigoplus_{x\in V_I^{(c-1)}} K^M_{r-c+1}(k(x)).}
Let $T$ be the total complex associated to the double complex $G^\bullet(C^\bullet)$; its differentials are given by
\[\partial^{T,n}=(\check{\partial}^i + (-1)^i\partial^{K, n-i})_{i\in [0,n]}:  T^n\to T^{n+1}.\]
Then the natural maps $G^\bullet(\sK^M_{r,V})\to G^\bullet(C^0)$
and  $C^\bullet(V)\to G^0(C^\bullet)$ induce quasi-isomorphisms
\[G^\bullet(\sK^M_{r,V})\xr{\simeq} T \xl{\simeq} C^\bullet(V).\]
For $i\in[0, c]$ the vanishing loci $V(t_{c-i+1},\ldots, t_c)\subset X=\Spec A$ 
are integral closed subschemes which are regular; we denote by $z_i$ their unique generic points, i.e.
\[\ol{\{z_i\}}=V(t_{c-i+1},\ldots, t_c).\]
In particular, $z_i\in X^{(i)}$,  $z_c=z$ and $z_0$ is the generic point of $X$. 
Take  $b_1,\ldots, b_{r-c}\in k(z)^\times$ and let $\tilde{b}_1,\ldots, \tilde{b}_{r-c}\in A^\times$ be lifts.
(By abuse of notation we will also write $\tilde{b}_i$ (resp. $t_i$) for the image of $\tilde{b}_i$ (resp. $t_i$)
 under any ring homomorphism $A\to R$.) For $i\in [0, c-1]$ set
\[a_{c-1-i}:=\{\tilde{b}_1,\ldots, \tilde{b}_{r-c}, t_1,\ldots, t_{i+1}\}\in K^M_{r-(c-1-i)}(k(z_{c-1-i})),\quad
              i\in[0, c-1],\]  
and define
\[\alpha_{i}:=\big((\alpha_{i, I,x})_{x\in V_I^{(c-1-i)}}\big)_{I\in S^i} 
            \in G^i(C^{c-1-i})=\bigoplus_{I\in S^i}C^{c-1-i}(V_I),\]
by
\[\alpha_{i,I,x}=\begin{cases} a_{c-1-i}, &\text{if } I=(1,\ldots i+1)\text{ and } x= z_{c-1-i,}\\
                                             0, & \text{else.}\end{cases}\]
By definition 
\[\{\tilde{b}_1,\ldots,\tilde{b}_{r-c}, t_1,\ldots, t_c\}\mapsto\alpha_{c-1} \quad\text{under}\quad 
 G^{c-1}(\sK^M_{r,V})\to T^{c-1}\]
and 
\[\{\tilde{b}_1,\ldots, \tilde{b}_{r-c}, t_1\}\mapsto \alpha_0 \quad\text{under}\quad 
                K^M_{r-c+1}(k(z_{c-1}))\to C^{c-1}(V)\to T^{c-1}.\]
Further, under the composition 
\mlnl{K^M_{r-c+1}(k(z_{c-1}))\to C^{c-1}(V)\surj (R^{c-1}j_*\sK^{M}_{r,V})_z\\ \xr{\partial}
     H^c_{z}(\sK^M_{r,X})= \Gamma_{z}C^c_{r,X}=K^M_{r-c}(k(x))}
the element $\{\tilde{b}_1,\ldots, \tilde{b}_{r-c}, t_1\}$ is sent to  $\{b_1,\ldots, b_{r-c}\}$.
Altogether it remains to show that for $c\ge 2$ we have
\eq{cor:MilnorK-coh-in-c2}{\alpha_0\equiv \alpha_{c-1}\text{ mod }\partial^{T,c-2} T^{c-2}.}
To this end we define for $i\in [0, c-2]$,
\[\beta_i=((\beta_{i,I,x})_{x\in V^{(c-2-i)}})_{I\in S^i}\in G^i(C^{c-2-i})\]
by
\[\beta_{i,I,x}=\begin{cases} a_{c-2-i}, &\text{if } I=(1,\ldots, i+1) \text{ and } x=z_{c-2-i},\\
                                               0, &\text{else}.
         \end{cases}\]
We have 
\[\check{\partial}^i(\beta_i)= (-1)^{i+1}\alpha_{i+1}.\]
One checks this easily  using that for $J\in  S^{i+1}$ and $j\in[1,i+2]$,
we have 
\[J(j)=(1,\ldots, i+1)\text{ and } z_{c-2-i}\in V_J^{c-2-i}\Longleftrightarrow J=(1,\ldots, i+2) \text{ and } j=i+2.\]
On the other hand 
\[\partial^{K, c-2-i}(\beta_i)=\alpha_i.\]
This directly follows from
\[x\in \ol{\{z_{c-2-i}\}}^{(1)}\cap V_{(1,\ldots, i+1)} \text{ and } \partial_x(a_{c-2-i})\neq 0 \Longleftrightarrow
        x=z_{c-1-i}.\]
Thus
\[\partial^{T, c-2}(\beta_i)=\check{\partial}^i(\beta_i)+ (-1)^i \partial^{K,c-2-i}(\beta_i)
              = (-1)^{i+1}(\alpha_{i+1}-\alpha_i).\]
Altogether
\[\alpha_{0}\equiv \alpha_1\equiv \ldots\equiv \alpha_{c-1} \quad \text{mod } \partial^{T,c-2}T^{c-2}.\]
This shows \eqref{cor:MilnorK-coh-in-c2} and hence finishes the proof.
\end{proof}

\subsection{The relative Milnor {$K$}-sheaf}

\begin{defn}\label{defn:MilnorModulus}
Let $D$ be an effective divisor on $X$. Denote by $j: U:=X\setminus D\inj X$ the inclusion of the complement.
\begin{enumerate}
\item We define the Zariski sheaf $\sK^M_{r,X|D}$ for $r\in \Z$ to be the image of the map 
\[\Ker(\sO_X^\times\to \sO_D^\times)\otimes_{\Z} j_*\sK^M_{r-1,U}\to j_*\sK^M_{r,U},\quad 
       a\otimes \{b_1,\ldots, b_{r-1}\}\mapsto \{a,b_1,\ldots, b_{r-1}\}.\]
In particular $\sK^M_{r, X|D}=0$ for $r\le 0$ and $\sK^M_{1, X|D}=\Ker(\sO_X^\times\to \sO_D^\times)$.
\item\label{defn:MilnorModulus(2)}  We have a presheaf on the small Nisnevich site of $X$
   \[X_{\rm Nis}\to (\text{abelian groups}), \quad (v:V\to X)\mapsto H^0(V, \sK^{M}_{r,V| v^*D})=:\sK^M_{r,X|D}(V).\]
  We denote  by $\sK^M_{r, X|D,{\rm Nis}}$ the Nisnevich sheaf on $X_{\rm Nis}$ associated to this functor.
    If $u:X'\to X$ is \'etale and $x'\in X'$ is a point we set 
 \eq{defn:MilnorModulus3}{\sK^{M,h}_{r,X|D,x'}:=\varinjlim_{(v,y)} H^0(V,\sK^M_{r, V|(u\circ v)^*D}),}
      where the limit is over the filtered category  of pairs $(v,y)$, where $v:V\to X'$ is \'etale and $y\in V$ is a point
such that $v$ induces an isomorphism $k(x')\xr{\simeq} k(y)$.  
\end{enumerate}
    \end{defn}

\begin{rmk}\label{rmk:MilnorModulus}
If $v: V\to X$ is an \'etale map that factors through the open immersion $ j:U\inj X$, then  
by \ref{K-Nis}
\[H^0(V, \sK^M_{r, X|D, {\rm Nis}})=\sK^M_{r,U}(V)=H^0(V,\sK^M_{r,V}).\] 
Assume $D_{\rm red}$ is a SNCD. For $x\in D$, set $A:=\sO_{X,x}$   
and denote by $A^h$ its henselization. 
Then $\sK^M_{r, X|D, x}$ (resp. $\sK^{M,h}_{r, X|D, x}$)
is by Lemma \ref{lem:MilnorKsuppSNCD}
the subgroup of $K^M_r(k(\eta))$ (resp. $K^M_r({\rm Frac}(A^h))$)
generated by symbols of the form $\{1+fa, b_1,\ldots, b_{r-1}\}$, 
where $f\in A$ is a local equation for $D$,
 $a\in A$ (resp. $A^h$) and $b_i\in A[\frac{1}{f}]^\times$ 
(resp. $A^h[\frac{1}{f}]^\times$).
\end{rmk}

The stalk of the sheaf $\sK^M_{r,X|D}$  at a generic point of the effective divisor $D$  looks as follows.
\begin{no}{}\label{U-for-DVR}  
       Let $A$ be a discrete valuation ring with its maximal ideal $\fm$ and $K$ the field of fractions.
       We set $U^{(0)}_K=A^\times$ and $U^{(n)}_K=1+\fm^n$, for $m\ge 1$. 
     We denote by $U^{0}K^M_r(K)$ the image of the natural map 
       $(A^\times)^{\otimes r}\to K^M_r(K)$ and by $U^{n}K^M_r(K)$, $n\ge 1$,
 the image of the multiplication map $U^{(n)}_K\otimes_\Z K^M_{r-1}(K)\to K^M_r(K)$.
\end{no}
  
The following two Lemmas are well known.
        
\begin{lem}\label{lem:DVR-cDVR}
Let $(A, K,\fm)$ be as above and denote by $\hat{K}$ the fraction field of the completion of $A$ along $\fm$.
 Then for all $n\ge 1$ the natural map
\[K^M_r(K)/U^{n}K^M_r(K)\to K^M_r(\hat{K})/U^{n}K^M_r(\hat{K})\]
is an isomorphism.
\end{lem}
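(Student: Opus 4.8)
The plan is to reduce everything to the elementary congruence $A/\fm^n\cong \hat A/\hat\fm^n$, where $\hat A$ is the completion of $A$ and $\hat\fm=\fm\hat A$. First I would fix a uniformizer $\pi$ of $A$, which is also a uniformizer of $\hat A$. Passing to unit groups gives $A^\times/U^{(n)}_K=(A/\fm^n)^\times\cong(\hat A/\hat\fm^n)^\times=\hat A^\times/U^{(n)}_{\hat K}$, and since $K^\times=\pi^{\Z}\times A^\times$, $\hat K^\times=\pi^{\Z}\times \hat A^\times$ with $U^{(n)}_K\subset A^\times\subset \hat A^\times$, I obtain a canonical isomorphism, induced by $K^\times\inj \hat K^\times$,
\[\lambda: K^\times/U^{(n)}_K\xr{\simeq}\hat K^\times/U^{(n)}_{\hat K}.\]
This is the only input that refers to the completion; the rest of the argument is formal.

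Next I would give both sides of the asserted map a uniform presentation. The key observation is that modifying a single entry of a symbol by an element of $U^{(n)}_K$ changes the symbol only by an element of $U^nK^M_r(K)$: by multilinearity and graded-commutativity, $\{a_1,\dots,u a_i,\dots,a_r\}-\{a_1,\dots,a_i,\dots,a_r\}$ equals $\{a_1,\dots,u,\dots,a_r\}$ with $u$ in the $i$-th slot, which is up to sign a symbol $\{u,\dots\}$ with $u\in U^{(n)}_K$, hence lies in $U^nK^M_r(K)$ by the very definition of the filtration. Therefore the symbol map factors through $(K^\times/U^{(n)}_K)^{\otimes r}$, and writing $K^M_r(K)=(K^\times)^{\otimes r}/S$ with $S$ the subgroup generated by the Steinberg relators, one gets a natural isomorphism
\[K^M_r(K)/U^nK^M_r(K)\cong (K^\times/U^{(n)}_K)^{\otimes r}/R(K),\]
where $R(K)$ is the image of $S$, i.e. the subgroup generated by the classes of $b_1\otimes\cdots\otimes a\otimes(1-a)\otimes\cdots\otimes b_{r-2}$ with $a\in K\setminus\{0,1\}$ and $b_i\in K^\times$. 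The identical presentation holds for $\hat K$.

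Tensoring $\lambda$ gives an isomorphism $\Phi:(K^\times/U^{(n)}_K)^{\otimes r}\xr{\simeq}(\hat K^\times/U^{(n)}_{\hat K})^{\otimes r}$ compatible with the natural map on Milnor $K$-theory, so it suffices to show $\Phi(R(K))=R(\hat K)$. The inclusion $\Phi(R(K))\subseteq R(\hat K)$ is clear, since a Steinberg relator over $K$ is one over $\hat K$. For the reverse inclusion, which I expect to be the main obstacle, I would argue by approximation: given a generator of $R(\hat K)$ coming from $a\in\hat K\setminus\{0,1\}$, set $m=v(a)$, $\ell=v(1-a)$ and use density of $K$ in $\hat K$ to choose $a_0\in K$ with $v(a-a_0)\ge \max(m,\ell)+n$. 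Then $v(a_0)=m$ and $v(1-a_0)=\ell$ are finite (so $a_0\neq 0,1$), while simultaneously $a/a_0\in U^{(n)}_{\hat K}$ and $(1-a_0)/(1-a)\in U^{(n)}_{\hat K}$; thus the classes of $a$ and $a_0$, and of $1-a$ and $1-a_0$, coincide in $\hat K^\times/U^{(n)}_{\hat K}$. Replacing the remaining entries $b_i$ by elements of $K^\times$ congruent mod $U^{(n)}_{\hat K}$ (possible since $\lambda$ is surjective), the given relator becomes $\Phi$ of a Steinberg relator over $K$, hence lies in $\Phi(R(K))$.

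Combining these steps, $\Phi$ descends to an isomorphism $K^M_r(K)/U^nK^M_r(K)\xr{\simeq}K^M_r(\hat K)/U^nK^M_r(\hat K)$ which, by construction on symbols, coincides with the map induced by $K^M_r(K)\to K^M_r(\hat K)$, proving the lemma. The one delicate point is the simultaneous control of $a$ and $1-a$ in the approximation step; everything else reduces to the definition of the filtration and to $A/\fm^n\cong\hat A/\hat\fm^n$.
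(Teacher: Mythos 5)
Your proof is correct, and while it shares its two main pillars with the paper's argument --- the identification $K^\times/U^{(n)}_K\cong \hat{K}^\times/U^{(n)}_{\hat{K}}$ coming from $A/\fm^n\cong \hat{A}/\hat{\fm}^n$, and the observation that changing one entry of a symbol by an element of $U^{(n)}_K$ moves it only inside $U^nK^M_r(K)$ --- it handles the one delicate point, the Steinberg relations, by a genuinely different mechanism. The paper constructs the inverse map directly on symbols, choosing arbitrary $b_i\in K^\times$ congruent to the given entries modulo $U^{(n)}_{\hat{K}}$, and kills the relator $a\otimes(1-a)$ by a case split: if $a\notin U^{(1)}_{\hat{K}}$, then $b\equiv a$ mod $U^{(n)}_{\hat{K}}$ automatically forces $1-b\equiv 1-a$ mod $U^{(n)}_{\hat{K}}$ (the hypothesis on $a$ is exactly what bounds $v\bigl(a(b/a-1)/(1-a)\bigr)$ from below by $n$), so the relator maps to $\{b,1-b\}=0$; if instead $a\in U^{(1)}_{\hat{K}}$, the paper approximates $1-a$ (which is then not a $1$-unit) by $b\in K^\times$ and uses antisymmetry to land on $\{1-b,b\}=0$. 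You avoid this dichotomy entirely by approximating once at the boosted precision $v(a-a_0)\ge \max(v(a),v(1-a))+n$, which forces $a/a_0\in U^{(n)}_{\hat{K}}$ and $(1-a_0)/(1-a)\in U^{(n)}_{\hat{K}}$ simultaneously for every $a\in\hat{K}\setminus\{0,1\}$; your valuation checks (including the case $v(a)=v(1-a)<0$) all go through. You also make explicit the presentation $K^M_r(K)/U^nK^M_r(K)\cong (K^\times/U^{(n)}_K)^{\otimes r}/R(K)$, which is legitimate because, modulo the Steinberg ideal, a unit in any slot can be moved to the first slot up to sign, so the ``first-slot'' definition of $U^nK^M_r$ absorbs all slots; the paper leaves this bookkeeping implicit in its ``clearly there is a well-defined map.'' The trade-off: the paper's route needs no extra precision but relies on the asymmetric swap trick between $a$ and $1-a$, whereas your uniform approximation is cleaner, treats both entries symmetrically, and isolates exactly where density of $K$ in $\hat{K}$ enters.
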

\begin{proof}
We define an inverse map. Clearly there is a well defined map 
$(\hat{K}^\times)^{\otimes_\Z r}\to K^M_r(K)/U^{n}K^M_{r}(K)$ which sends an element
$a_1\otimes\ldots\otimes a_r$ to the class of $\{b_1,\ldots, b_r\}$, where we take  any $b_i\in K^\times$
with $b_i\equiv a_i$ mod $1+\fm^n$. This map also kills the Steinberg relations.
Indeed if we take $a\in \hat{K}^\times\setminus U^{(1)}_{\hat{K}}$ and $b\in K^\times$ with
$b\equiv a$ mod $U^{(n)}_{\hat{K}}$ then $1-b\equiv 1-a$ mod $U^{(n)}_{\hat{K}}$.
Hence $a\otimes (1-a)$ is sent to the class of $\{b, 1-b\}=0$. If we take $a\in U^{(1)}_{\hat{K}}$ and
$b\in K^\times$ with $b\equiv 1-a$ mod $U^{(n)}_{\hat{K}}$, then $1-b\equiv a$ mod $U^{(n)}_{\hat{K}}$
and $a\otimes (1-a)$ is sent to the class of $\{1-b,b\}=0$. 
It follows that this map factors to give a well-defined map inverse to the natural map from the statement.
\end{proof}

\begin{lem}\label{lem:symbol-formula}
Let $A$ be an integral local ring with its maximal ideal $\fm$ and the fraction field $K={\rm Frac}(A)$.
For elements $a,b,c\in A$ and $s,t\in \fm$, the following equality holds in $K^M_2(K)$:
\begin{enumerate}
\item\label{lem:symbol-formula1} $\{1+as,1+bt\}=-\{1+\frac{ab}{1+as}st,-as(1+bt)\}$.
\item $\{1+\frac{s-1}{1+ct} ct, 1-\frac{1+ct}{1+cst} s\}= \{1+c st, s\}$.
\end{enumerate}
\end{lem}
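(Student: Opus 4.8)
The plan is to derive both identities purely formally in $K^M_2(K)$ from the bimultiplicativity of the symbol together with the Steinberg relation $\{x,1-x\}=0$ and its standard consequences $\{x,-x\}=0$, $\{x,y\}=-\{y,x\}$, $\{x,x\}=\{x,-1\}$, and the fact that $\{x,-1\}$ is $2$-torsion. First I would note that all entries are honest units: since $s,t\in\fm$, the elements $1+as$, $1+bt$, $1+cst$, $1+ct$, $1-s$ lie in $1+\fm\subset A^\times$, while $s$ and all the displayed fractions are nonzero in $K=\Frac(A)$, so everything indeed takes place in $K^M_2(K)$.

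For part \eqref{lem:symbol-formula1} I would set $u:=1+as$, $v:=1+bt$ and $w:=as(1+bt)$. A direct computation rewrites the first right-hand entry as $1+\tfrac{ab}{1+as}st=\tfrac{u+abst}{u}=\tfrac{1+w}{u}$, using $as+abst=as(1+bt)=w$, and the second entry is $-as(1+bt)=-w$. Expanding by bimultiplicativity gives $-\{\tfrac{1+w}{u},-w\}=-\{1+w,-w\}+\{u,-w\}$. The first term vanishes by Steinberg applied to $x=1+w$, since $-w=1-(1+w)$. For the second, $-w=(1-u)v$, so $\{u,-w\}=\{u,1-u\}+\{u,v\}=\{u,v\}$, again by Steinberg. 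This is exactly $\{1+as,1+bt\}$.

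For the second identity I would begin by simplifying both entries: one checks $1+\tfrac{s-1}{1+ct}ct=\tfrac{1+cst}{1+ct}$ and $1-\tfrac{1+ct}{1+cst}s=\tfrac{1-s}{1+cst}$, the latter relying on the cancellation $cst-cts=0$. Writing $q:=1+cst$ and $p:=1+ct$, the relation $q-1=cst=s(ct)=s(p-1)$ yields the key additive decomposition $q=(1-s)+sp$, whence $\tfrac{1-s}{q}+\tfrac{sp}{q}=1$ and Steinberg gives $\{\tfrac{1-s}{q},\tfrac{sp}{q}\}=0$. Expanding both this relation and the target symbol $\{\tfrac{q}{p},\tfrac{1-s}{q}\}$ by bimultiplicativity, and using $\{1-s,s\}=0$, one finds after rearranging that the target equals $\{q,s\}-2\{q,q\}$.

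The main point to watch—and the only place where a naive calculation seems to fail—is precisely this leftover term in part (2): a direct bookkeeping produces the residual $-2\{q,q\}=-2\{q,-1\}$, which at first glance looks like an obstruction to the identity. It vanishes because $\{q,-1\}$ is $2$-torsion, as $2\{q,-1\}=\{q,(-1)(-1)\}=\{q,1\}=0$; hence the target reduces to $\{q,s\}=\{1+cst,s\}$, as claimed. Beyond this subtlety both arguments are routine symbol manipulations, and the real work is simply organizing the expansions so that every auxiliary symbol is either of Steinberg type or cancels in pairs.
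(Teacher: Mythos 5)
Your proof is correct, but for part (2) it takes a genuinely different route from the paper. The paper's entire proof is one line: (1) is declared straightforward (your verification --- rewriting the right-hand side as $-\{\tfrac{1+w}{u},-w\}$ with $u=1+as$, $w=as(1+bt)$, and eliminating everything with two applications of Steinberg --- is exactly the intended computation), and (2) is then deduced from (1) by the substitution $a=-\tfrac{1+ct}{1+cst}$, $b=\tfrac{c(s-1)}{1+ct}$: with these choices $1+as=\tfrac{1-s}{1+cst}$, $1+bt=\tfrac{1+cst}{1+ct}$, $\tfrac{ab}{1+as}=c$ and $-as(1+bt)=s$, so (2) falls out of (1) after one use of antisymmetry (and note $a,b$ do lie in $A$, since $1+ct$ and $1+cst$ are units of the local ring). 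You instead prove (2) from scratch via the decomposition $1+cst=(1-s)+s(1+ct)$, which yields the Steinberg relation $\bigl\{\tfrac{1-s}{q},\tfrac{sp}{q}\bigr\}=0$ with $q=1+cst$, $p=1+ct$; expanding and comparing with the target does, in your ordering of the symbol, leave the residual $-2\{q,q\}=-2\{q,-1\}=0$, which you kill correctly since $2\{q,-1\}=\{q,1\}=0$. Both arguments are sound, and both share the same implicit nondegeneracy assumption (all displayed entries nonzero in $K$), which you rightly flag. The paper's substitution is shorter once (1) is in hand and exhibits (2) as a literal specialization of (1); your direct argument is self-contained, requires no guessing of the substitution, and exposes the additive identity driving (2) --- at the modest cost of the $2$-torsion observation, which can in fact be avoided: expanding only the \emph{first} slot of the relation $\bigl\{\tfrac{sp}{q},\tfrac{1-s}{q}\bigr\}=0$ gives $\bigl\{\tfrac{q}{p},\tfrac{1-s}{q}\bigr\}=\bigl\{s,\tfrac{1-s}{q}\bigr\}=\{s,1-s\}-\{s,q\}=\{q,s\}$ directly, with no residual term at all.
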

\begin{proof}
(1) is straightforward and (2) follows from (1) by setting
\[a=-\frac{1+ct}{1+cst}, \quad b=\frac{c(s-1)}{1+ct}.\]
\end{proof}

\begin{prop}\label{prop:rel-K-in-K}
Let $D$ be an effective divisor on $X$ whose support has simple normal crossings.
Let $x\in D$ be a point and $D_1,\ldots, D_n$ all the irreducible components of $D$
passing through $x$. Let  $t_i\in \sO_{X,_x}$ be a local equation for $D_i$ around $x$
and  assume that around $x$ the divisor $D$ is given by the vanishing of $t_1^{m_1}\cdots t_n^{m_n}$, 
with $m_i\ge 1$.
\begin{enumerate}
\item Assume either there exists an $i_0\in [1,n]$ with $m_{i_0}\ge 2$ or $n\ge r$.
        Then $\sK^M_{r,X|D, x}$ is equal to the subgroup of $K^M_r(k(\eta))$ which is generated by
elements of the form
\eq{prop:rel-K-in-K1}{\{1+ a\cdot \prod_{i\in I_s}t^{m_i-1}_i\cdot
      \prod_{i\in [1,n]\setminus I_s}t^{m_i}_i, 
 1+u_1t_{i_1}, \ldots, 1+u_s t_{i_s},  u_{s+1},\ldots, u_{r}\},}
where $s\in [0,{\rm min}(r-1,n)]$, $I_s=\{i_1,\ldots, i_s\}\subset [1,n]$, $a\in \sO_{X,x}$ and 
 $ u_i\in \sO_{X,x}^\times$.
\item Assume $m_1=\ldots= m_n=1$ and $n< r$. 
         Then $\sK^M_{r,X|D, x}$ is equal to the subgroup of $K^M_r(k(\eta))$ which is generated by
elements of the form \eqref{prop:rel-K-in-K1} for $s\le n-1$ together with elements of the form
  \eq{prop:rel-K-in-K2}{\{1+u_1t_1,\ldots, 1+u_n t_n, u_{n+1},\ldots, u_r\}, 
            \quad u_i\in\sO_{X,x}^\times.}   
\end{enumerate}
\end{prop}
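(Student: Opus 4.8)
The plan is to prove $\sK^M_{r,X|D,x}=G$, where $G\subset K^M_r(k(\eta))$ is the subgroup generated by the symbols \eqref{prop:rel-K-in-K1} (restricted to $s\le n-1$ and supplemented by those of type \eqref{prop:rel-K-in-K2} in case (2)), by checking the two inclusions separately; throughout one computes with Milnor symbols in $K^M_r(k(\eta))$ using bilinearity, the Steinberg relations, and the identities of Lemma \ref{lem:symbol-formula}. Write $A=\sO_{X,x}$ with maximal ideal $\fm$ and fix the local equation $f=t_1^{m_1}\cdots t_n^{m_n}$ of $D$ (up to a unit). By Remark \ref{rmk:MilnorModulus}, $\sK^M_{r,X|D,x}$ is generated by the symbols $\{1+fa,c_1,\dots,c_{r-1}\}$ with $a\in A$ and $c_i\in A[\tfrac1f]^\times$. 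Since $A$ is factorial and $t_1,\dots,t_n$ are exactly the primes inverted in $A[\tfrac1f]$, each $c_i$ is a unit times a monomial in the $t_i$, so expanding by bilinearity in the last $r-1$ entries reduces the generating set to symbols $\omega=\{1+fa,\,w_1,\dots,w_{r-1}\}$ with $w_i\in\{t_1,\dots,t_n\}\cup A^\times$.

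For the inclusion $\sK^M_{r,X|D,x}\subseteq G$ the job is to eliminate the bare entries $w_i=t_{i_j}$. Using $\{t,t\}=\{t,-1\}$ one may assume these are pairwise distinct components $t_{i_1},\dots,t_{i_k}$ with $k\le\min(r-1,n)$. The basic move is Lemma \ref{lem:symbol-formula}(2): writing the first slot as $1+(\text{coeff})\,M$ with $M$ a monomial in the $t_i$ divisible by $t_{i_j}$, and choosing $c,\tau$ with $c\,t_{i_j}\,\tau=(\text{coeff})\,M$, it rewrites $\{1+(\text{coeff})M,\,t_{i_j}\}$ as $\{1+(\text{unit})(\text{coeff})\,M/t_{i_j},\,1+u_jt_{i_j}\}$, trading the bare $t_{i_j}$ for a factor $1+u_jt_{i_j}$ and dropping the $t_{i_j}$-exponent from $m_{i_j}$ to $m_{i_j}-1$ --- precisely the exponent pattern of \eqref{prop:rel-K-in-K1}. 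This is legitimate exactly when $M/t_{i_j}\in\fm$, i.e. $\deg M\ge2$. As each of the $k$ conversions lowers $\deg M$ from $\sum_i m_i$ by one, the binding requirement is $\sum_i m_i\ge k+1$, which holds throughout case (1): if some $m_{i_0}\ge2$ then $\sum m_i\ge n+1\ge k+1$, and if $n\ge r$ then $k\le r-1\le n-1<n\le\sum m_i$. Thus in case (1) every $\omega$ is brought into the form \eqref{prop:rel-K-in-K1} with $s=k$.

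In case (2) one has $\sum m_i=n$, so the conversions succeed whenever $k\le n-1$, again yielding \eqref{prop:rel-K-in-K1} with $s\le n-1$. The lone exception, which I expect to be the main subtlety of the forward inclusion, is $k=n$: after converting $t_1,\dots,t_{n-1}$ the monomial has shrunk to $M=t_n$, the first slot reads $1+b\,t_n$, and the final conversion has $M/t_n=1\notin\fm$. I would settle this by cases on $b$: if $b\in\fm$, one more application of Lemma \ref{lem:symbol-formula}(2), now taking $\tau$ to be a factor of $b$ itself, converts the last $t_n$ and simultaneously turns the first slot into a unit; if $b\in A^\times$, one instead uses $\{1+bt_n,t_n\}=-\{1+bt_n,-b\}$ with $-b$ a unit. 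In either case $\omega$ ends with $n$ factors $1+u_it_i$ and otherwise unit entries, i.e. lands in \eqref{prop:rel-K-in-K2}.

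The reverse inclusion $G\subseteq\sK^M_{r,X|D,x}$ is direct for \eqref{prop:rel-K-in-K2}: combining the factors $1+u_it_i$ in pairs via Lemma \ref{lem:symbol-formula}(1) produces side entries $(\text{unit})\cdot t_i\in A[\tfrac1f]^\times$ and a first slot $1+(\text{unit})\,t_1\cdots t_n=1+f\cdot(\dots)$, which is a generator of $\sK^M_{r,X|D,x}$. For \eqref{prop:rel-K-in-K1} I would induct on the number $s$ of deficient exponents, the case $s=0$ being a generator outright. In the step one raises the $t_{i_s}$-exponent back to $m_{i_s}$ by applying Lemma \ref{lem:symbol-formula}(1) to $1+aM'$ and $1+u_st_{i_s}$, producing a side entry $-aM'(1+u_st_{i_s})$ which splits by bilinearity into a unit, a monomial in the $t_i$, and the scalar $-a$. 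The unit piece is a lower-$s$ instance of \eqref{prop:rel-K-in-K1}; the bare-component and scalar pieces are the delicate ones, and here the decisive structural fact is that the new coefficient $a'=au_s/(1+aM')$ is divisible by $a$, so the first slot has the form $1+aP$ and the Steinberg identity $\{1+aP,-a\}=-\{1+aP,P\}$ rewrites $-a$ through the first slot and the components $t_i$. A finite recursion lowering $s$ and the auxiliary monomial degree then keeps every term inside $\sK^M_{r,X|D,x}$. That such care is unavoidable is already visible in the fact that a symbol like $\{1+t_1,t_2\}$ (for $D=\{t_1\}$) does \emph{not} lie in the relative $K$-sheaf, so these error terms must be genuinely controlled rather than discarded --- managing them is the real obstacle of the argument.
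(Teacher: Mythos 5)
Your treatment of the inclusion $\sK^M_{r,X|D,x}\subseteq G$ is essentially the paper's argument: reduce via Remark \ref{rmk:MilnorModulus} and factoriality of $A=\sO_{X,x}$ to symbols $\{1+fa,t_{i_1},\ldots,t_{i_s},u_{s+1},\ldots\}$ with distinct bare components, eliminate the bare $t_{i_j}$ one at a time by Lemma \ref{lem:symbol-formula}(2) with exactly the same degree count (your condition $\sum_i m_i\ge k+1$ is the paper's ``some $m_{i_0}\ge 2$ or $n>s$''), and isolate the same residual case $m_1=\cdots=m_n=1$, $s=n$. Your unit/non-unit case split on $b$ there is equivalent to the paper's device of factoring $1+tb=\bigl(1+t\tfrac{1}{1+t(b-1)}\bigr)\bigl(1+t(b-1)\bigr)$ to reduce to a unit coefficient and then using $0=\{1+ta,-ta\}$; both land correctly in \eqref{prop:rel-K-in-K2}.

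Where you diverge is the containment $G\subseteq\sK^M_{r,X|D,x}$, which the paper dispatches as immediate and you declare ``the real obstacle.'' The difficulty you fight is an artifact of the orientation in which you apply Lemma \ref{lem:symbol-formula}(1): pairing $(1+aM',\,1+u_st_{i_s})$ with $s:=M'$ puts the arbitrary coefficient $a$ into the side entry $-aM'(1+u_st_{i_s})$, whence your Steinberg correction $\{1+aP,-a\}=-\{1+aP,P\}$ and the recursion over error terms. Swap the roles instead: apply the lemma to $\{1+u_jt_{i_j},\,1+aM'\}$, i.e.\ with $s:=t_{i_j}$ and $t:=M'$ (note $M'\in\fm$ for every generator of type \eqref{prop:rel-K-in-K1} in both cases, since $s\le n-1$ when all $m_i=1$). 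The side entry is then $-u_jt_{i_j}(1+aM')$, a product of a unit, the prime $t_{i_j}$, and the unit $1+aM'\in 1+\fm$ --- already decomposable in $A[\tfrac1f]^\times$ --- while the first slot becomes $1+a'M't_{i_j}$ with $a'=a\cdot(\text{unit})$; after $s$ steps it equals $1+a''f$ up to a unit factor, and the symbol is visibly a sum of generators of $\sK^M_{r,X|D,x}$, with no error terms at any stage. (Your handling of \eqref{prop:rel-K-in-K2} is clean for the same reason: there all coefficients are units.) Your route can be completed, but as written it is not self-contained: the terms carrying a bare entry $t_j$ or the scalar $-a$ alongside a first slot of non-full modulus are not of the form \eqref{prop:rel-K-in-K1}, so your induction on $s$ does not apply to them; you would need a strengthened hypothesis (allowing bare components, with the number of unconsumed factors $1+u_jt_{i_j}$ as termination measure) and the observation that repeated $(-a)$-entries collapse via $\{-a,-a\}=\{-a,-1\}$. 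Your closing sanity check that $\{1+t_1,t_2\}$ fails the modulus condition is correct, but it only shows that bare entries must be traded away --- which the swapped orientation does for free --- not that genuine error terms are forced.
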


\begin{proof}
Set $A=\sO_{X,x}$ and denote by $\fm$ its maximal ideal. 
The statement holds for $r= 1$ by definition. For $r\ge 2$
denote by $L_r$ the subgroup of $K^M_r(k(\eta))$ which in case (1) is generated by the elements
\eqref{prop:rel-K-in-K1}  and in case (2) is generated  
by the elements \eqref{prop:rel-K-in-K1} for $s\le n-1$ and the elements \eqref{prop:rel-K-in-K2}.
In both cases the inclusion $L_r\subset \sK^M_{r,X|D,x}$ follows directly from 
Lemma \ref{lem:symbol-formula}, (1) and Remark \ref{rmk:MilnorModulus}. 
For the other inclusion it suffices to show (in both cases)
\[\{1+at_1^{m_1}\cdots t_n^{m_n}, t_{i_1},\ldots, t_{i_s} \}\in L_{s+1}\]
for $a\in A$, $\{i_1,\ldots, i_s\}\subset[1,n]$.
If one of the $m_i$'s is $\ge 2$ or $n>s$ this follows directly from Lemma \ref{lem:symbol-formula}, (2).
If $m_1=\ldots=m_n=1$ and $s=n$, then we can use Lemma \ref{lem:symbol-formula}, (2)
to reduce to the case $n=1$. 
Setting $t:=t_1$ it therefore remains to show 
\eq{prop:rel-K-in-K3}{\{1+a t, t\}\in L_2,\quad a\in A.}
To this end notice that $1+t A$ is multiplicatively generated by elements in
$1+t A^\times$. Indeed if $b\in\fm$ we can write
 \[1+t b= (1+t\frac{1}{1+t(b-1)}) (1+t(b-1)).\]
Therefore we can assume in \eqref{prop:rel-K-in-K3} that $a\in A^\times$.
Then the statement follows from
$0=\{1+ta, -ta\}= \{1+ta, t\} + \{1+ta, -a\}.$
This finishes the proof.
\end{proof}

\begin{cor}\label{cor:rel-K-in-K}
Let $D_1$ and $D_2$ be effective divisors on $X$ whose supports are simple normal crossing divisors.
Assume $D_1\le D_2$. Then we have the inclusion of sheaves
\[\sK^M_{r, X|D_2}\subset \sK^M_{r, X|D_1}\subset \sK^M_{r,X}\quad \text{on }X_{\Zar}\]
and 
\[\sK^M_{r, X|D_2,\Nis}\subset \sK^M_{r, X|D_1,\Nis}\subset \sK^M_{r,X}\quad \text{on }X_{\Nis}\]
\end{cor}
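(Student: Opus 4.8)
The plan is to argue at stalks, reducing everything to the explicit generators furnished by Proposition \ref{prop:rel-K-in-K} and manipulating them with the symbol identities of Lemma \ref{lem:symbol-formula}.

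\emph{The inclusion $\sK^M_{r,X|D}\subset\sK^M_{r,X}$.} This is the easy part, and I would treat it first. At a point $x\notin|D|$ the two sheaves agree, so fix $x\in|D|$ and set $A=\sO_{X,x}$ with maximal ideal $\fm$. By Proposition \ref{prop:rel-K-in-K} the stalk $\sK^M_{r,X|D,x}$ is generated by the symbols \eqref{prop:rel-K-in-K1}, together with \eqref{prop:rel-K-in-K2} in the reduced case. Under the hypotheses of that proposition the distinguished first entry lies in $1+\fm$, while every other entry is either of the form $1+u_jt_{i_j}\in 1+\fm$ or a unit $u_i$; hence all entries lie in $A^\times$, and by the description \eqref{description-MilnorK} each generator lies in $\sK^M_{r,X,x}$. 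This gives $\sK^M_{r,X|D}\subset\sK^M_{r,X}$, and in particular $\sK^M_{r,X|D_2}\subset\sK^M_{r,X}$.

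\emph{Setup for $\sK^M_{r,X|D_2}\subset\sK^M_{r,X|D_1}$.} Since $D_1\le D_2$ forces $|D_1|\subset|D_2|$, off $|D_2|$ both sheaves equal $\sK^M_{r,X}$, so I may fix $x\in|D_2|$. Let $t_1,\dots,t_n$ be local equations at $x$ for the components of $D_2$ through $x$, and write the local equations of $D_2$ and $D_1$ as $\prod_i t_i^{m_i^{(2)}}$ and $f_1=\prod_{i\in J}t_i^{m_i^{(1)}}$, where $0\le m_i^{(1)}\le m_i^{(2)}$ and $J=\{i:m_i^{(1)}\ge 1\}$. By Remark \ref{rmk:MilnorModulus}, $\sK^M_{r,X|D_1,x}$ is generated by the symbols $\{1+f_1a,b_1,\dots,b_{r-1}\}$ with $a\in A$ and $b_l\in(A[\tfrac1{f_1}])^\times$, so it suffices to rewrite each generator \eqref{prop:rel-K-in-K1}/\eqref{prop:rel-K-in-K2} of $\sK^M_{r,X|D_2,x}$ in this shape. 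A preliminary reduction lets me assume the coefficient of the distinguished entry is a unit: exactly as in the proof of Proposition \ref{prop:rel-K-in-K}, $1+MA$ is multiplicatively generated by $1+MA^\times$ for any $M\in\fm$, so by multilinearity in the first slot I may take $a\in A^\times$ and write the distinguished monomial as $M_2=\prod_{i\in I_s}t_i^{m_i^{(2)}-1}\prod_{i\notin I_s}t_i^{m_i^{(2)}}$.

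\emph{The main step and the expected obstacle.} The distinguished entry $1+aM_2$ is $\equiv 1\bmod f_1$ precisely when $f_1\mid M_2$, which fails exactly at the ``deficit'' indices $i\in I_s\cap J$ with $m_i^{(2)}=m_i^{(1)}$; for each such $i$ the symbol carries a companion $1+u_jt_i$ providing the missing factor $t_i$. Applying Lemma \ref{lem:symbol-formula}(1) to the distinguished entry and $1+u_jt_i$ transfers a factor $t_i$ into the distinguished monomial, and doing this for all deficit indices replaces it by some $M'$ divisible by $f_1$ with a new unit coefficient $c$; the distinguished entry is then $\equiv 1\bmod f_1$. The heart of the matter, and the step I expect to be the main obstacle, is that the entries of a $D_2$-symbol are units on $U_2=X\setminus|D_2|$ but in general not on the larger set $U_1=X\setminus|D_1|$: the companions altered by Lemma \ref{lem:symbol-formula}(1) pick up the factor $M_2$, which contains the $t_{i'}$ of the components $i'\notin J$ of $D_2$ missing from $D_1$, and these are not units in $A[\tfrac1{f_1}]$. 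The way around this is to absorb \emph{only} the companions indexed by $J$; then the part of each altered companion prime to $f_1$ is exactly the part $M_2^{\circ}$ of $M_2$ supported on the components outside $J$, which coincides with the part of $M'$ prime to $f_1$. Consequently $cM'/M_2^{\circ}\in(A[\tfrac1{f_1}])^\times$, and the Steinberg relation $\{1+cM',-cM'\}=0$ lets me trade the offending factor $M_2^{\circ}$ against entries lying in $(A[\tfrac1{f_1}])^\times$. After this cleanup every entry but the distinguished one lies in $(A[\tfrac1{f_1}])^\times$ (the untouched companions and the $u_i$ are already units of $A$), so the symbol lies in $\sK^M_{r,X|D_1,x}$. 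The generators \eqref{prop:rel-K-in-K2} are handled identically, singling out any entry $1+u_{i_0}t_{i_0}$ with $i_0\in J$ as the distinguished one; if $J=\emptyset$ then $D_1=0$ near $x$ and the claim reduces to the first part.

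\emph{The Nisnevich statement.} Finally I would transfer the argument to $X_\Nis$. The henselian stalks $\sK^{M,h}_{r,X|D,x}$ of \eqref{defn:MilnorModulus3} carry the same generator descriptions as in Proposition \ref{prop:rel-K-in-K} and Remark \ref{rmk:MilnorModulus}, now over the henselization $A^h$, because those proofs use only Lemma \ref{lem:symbol-formula}, which holds over an arbitrary local ring. Hence the absorption-and-Steinberg computation applies verbatim over $A^h$ and yields $\sK^M_{r,X|D_2,\Nis}\subset\sK^M_{r,X|D_1,\Nis}$, while the inclusion into $\sK^M_{r,X}$ on $X_\Nis$ follows from the first part together with the identification $\sK^M_{r,X,\Nis}=\sK^M_{r,X}$ recalled in \ref{K-Nis}.
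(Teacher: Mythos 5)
Your proof is correct --- the reduction to unit coefficients, the absorption via Lemma \ref{lem:symbol-formula}, (1), the bookkeeping showing that the prime-to-$f_1$ part of every altered companion is the fixed monomial $M_2^{\circ}$, the Steinberg trade $\{1+cM',M_2^\circ\}=-\{1+cM',-cM'/M_2^\circ\}$ with $M'/M_2^\circ$ supported in $J$, and the transfer to henselian stalks all check out --- but it is much heavier than the paper's proof, which is the single line ``this follows directly from Proposition \ref{prop:rel-K-in-K}''. The divergence lies in which description of $\sK^M_{r,X|D_1,x}$ you match against. You compare the $D_2$-generators with the coarse description from Remark \ref{rmk:MilnorModulus} (modulus $f_1$ in one slot, all other entries in $A[\tfrac1{f_1}]^\times$), and your ``deficit'' obstacle at indices $i\in I_s\cap J$ with $m^{(1)}_i=m^{(2)}_i$ is an artifact of that choice. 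If you instead apply Proposition \ref{prop:rel-K-in-K} to $D_1$ as well and set $I'_{s'}:=I_s\cap J$, the $D_1$-monomial $M_1=\prod_{i\in I'_{s'}}t_i^{m^{(1)}_i-1}\prod_{i\in J\setminus I'_{s'}}t_i^{m^{(1)}_i}$ divides $M_2$ with no exception, because the drop of the exponent by one at companion indices is built into \emph{both} generator shapes; hence the distinguished entry $1+aM_2=1+(aM_2/M_1)M_1$ is already admissible for $D_1$ with the companions $1+u_jt_{i_j}$, $i_j\in I'_{s'}$, serving as the required ones, while every remaining entry --- the companions with $i_j\in I_s\setminus J$, the units $u_i$, and (in the one subcase $J\subset I_s$ with all $m^{(1)}_i=1$, where one uses shape \eqref{prop:rel-K-in-K2} for $D_1$) the old distinguished entry itself --- lies in $A^\times$ and fills the unit slots; note $s'\le s\le r-1$, so the parameter constraint is automatic, and the shape-\eqref{prop:rel-K-in-K2} generators for $D_2$ are matched the same way. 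Thus each $D_2$-generator is verbatim, up to permutation and sign, a $D_1$-generator, and no symbol identity --- neither Lemma \ref{lem:symbol-formula} nor a Steinberg relation --- is needed; the Nisnevich statement then follows exactly as in your last paragraph, since the generator descriptions hold over $A^h$. What your route buys is that it only needs the crude description of the target sheaf from Remark \ref{rmk:MilnorModulus}, at the price of redoing by hand the manipulations that Proposition \ref{prop:rel-K-in-K} has already packaged; the paper's route buys a computation-free proof.
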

\begin{proof}
This follows directly from Proposition \ref{prop:rel-K-in-K}.
\end{proof}

\subsection{The structure of relative Milnor {$K$}-sheaves}\label{sec-structure-relK}
In this subsection we assume that $D$ is an effective divisor on $X$ whose support has simple normal crossings.
We denote by $i: D_{\rm red}\inj X$ the corresponding closed immersion, by
 $j: U=X\setminus D\inj X$ the inclusion of the complement and by $\{D_\lambda\}_{\lambda\in \Lambda}$ the irreducible components of $D$. We write $\Omega^q_{X}=\Omega^q_{X/\Z}$ etc.

\begin{no}{}\label{gr-rel-K}
We write $\N=\{0,1,2,\ldots\}$ and endow $\N^\Lambda$ with a semi-order by
\[(m_\lambda)_{\lambda\in \Lambda}\le (m_\lambda)_{\lambda\in \Lambda} \, \Leftrightarrow\,
  m_\lambda\le n_\lambda,\quad \text{for all }\lambda\in \Lambda.\]
For $\fm=(m_{\lambda})_{\lambda\in \Lambda}\in \N^\Lambda$, we set
\[D_\fm:=\sum_{\lambda\in\Lambda}m_{\lambda} D_\lambda.\]
For $\nu \in \Lambda$, set 
\[\delta_\nu=(0,\ldots,\overset{\overset{\nu}{\lor}}{1},\ldots,0)\in \N^\Lambda\] 
and we define the following sheaves for $r\ge 1$
\[\gr^{\fm,\nu}\sK^M_{r,X}:= \sK^M_{r,X|D_\fm}/\sK^M_{r,X|D_{\fm+\delta_\nu}} \quad \text{on } X_{\Zar}\]
and
\[\gr^{\fm,\nu}\sK^M_{r,X, {\rm Nis}}:= 
          \sK^M_{r,X|D_\fm, {\rm Nis}}/\sK^M_{r,X|D_{\fm+\delta_\nu}, {\rm Nis}}\quad \text{on } X_\Nis.\]
Notice that this makes sense by Corollary \ref{cor:rel-K-in-K} and that these sheaves have support in $D_\nu$.
We remark that $\gr^{\fm,\nu}\sK^M_{r,X, {\rm Nis}}$
is also the Nisnevich sheaf associated to the presheaf
\eq{gr-rel-K1}{X_{\rm Nis}\ni (v:V\to X)\mapsto 
                H^0(V_{\rm Zar}, \sK^M_{r, V|v^*D_{\fm}}/ \sK^M_{r, V|v^*D_{\fm+\delta_\nu}})=:
                      \gr^{\fm,\nu}\sK^M_{r,X}(V).}
For an \'etale map $v:V\to X$ we  can write 
$v^*D_\lambda= D_{\lambda, 1}\sqcup \ldots\sqcup D_{\lambda, j_\lambda}$, with $D_{\lambda,i}\subset V$
irreducible smooth divisors. For a subset $S\subset \Lambda$ set
\[v^*S:= \{(\lambda, i)\,|\, \lambda\in S,\, i\in[1,j_\lambda]\}\]
and for $i\in [1, j_\nu]$
\[  \fm_{(\nu,i)}:=(m_{(\lambda,j)})_{(\lambda,j)\in v^*(\Lambda\setminus\{\nu\}) } +m_\nu \delta_{(\nu,i)},\]
  with 
\[ m_{(\lambda,j)}:=m_\lambda\quad \text{and}\quad 
    \delta_{(\nu,i)}=(0,\ldots,\overset{\overset{(\nu,i)}{\lor}}{1},\ldots,0)\in \N^{v^*\Lambda} .\]
Then $\{D_{\lambda'}\}_{\lambda'\in v^*\Lambda}$ are the irreducible components 
of $v^*D_{\rm red}$ and 
\eq{gr-rel-K2}{\frac{\sK^M_{r, V|v^*D_{\fm}}}{\sK^M_{r, V|v^*D_{\fm+\delta_\nu}}}
       =\bigoplus_{i=1}^{j_\nu}
         \frac{\sK^M_{r, V|D_{\fm_{(\nu,i)}}}}{ \sK^M_{r, V|D_{\fm_{(\nu,i)}+\delta_{(\nu,i)}}}}
       \stackrel{\text{by defn}}{=}\bigoplus_{i=1}^{j_\nu} \gr^{\fm_{(\nu,i)}, (\nu,i)}\sK^M_{r, V}.}
\end{no}

\begin{prop}\label{prop:gr0-map}
We keep the notations from above.
Let  $\fm=(m_\lambda)_{\lambda\in \Lambda}$ be an element in $\N^\Lambda$
and take $\nu\in \Lambda$, $r\ge 1$. Denote by $i_\nu: D_\nu\inj X$ the closed immersion.
Assume $m_\nu=0$ and set 
      \[D_{\nu,\fm}:=\sum_{\lambda\in \Lambda\setminus\{\nu\}} m_\lambda (D_\nu\cap D_\lambda).\]
 Then there is a natural surjection
          \eq{prop:gr0-map1}{\gr^{\fm,\nu}\sK^M_{r,X}\surj i_{\nu*}\sK^M_{r, D_\nu|D_{\nu,\fm} }. }
 If the $t_\lambda$'s are local equations for the $D_\lambda$'s around a point $x\in X$, then the composition 
 of this map with $\sK^M_{r, X|D_\fm}\to \gr^{\fm,\nu}\sK^M_{r,X}$ is given by
  \eq{prop:gr0-map1.5}{
           \{1+t^\fm a, b_1,\ldots, b_{r-1}\}\mapsto \{1+t^\fm\bar{a}, \bar{b}_1,\ldots, \bar{b}_{r-1}\},}
           where $a\in \sO_X$, $b_i\in \sO_{X\setminus |D_{\fm}|}^\times$ with
           $\bar{a}\in \sO_{D_\nu}$, $\bar{b}_i\in \sO_{D_\nu\setminus |D_{\nu,\fm}|}^\times$
               as their images and $t^\fm=\prod_{\lambda\in\Lambda} t_\lambda^{m_\lambda}$.
This map induces an isomorphism between sheaves on $X_{\rm Nis}$
 \eq{prop:gr0-map2}{\gr^{\fm,\nu}\sK^M_{r,X,{\rm Nis}}\xr{\simeq} 
                                                      i_{\nu*}\sK^M_{r, D_\nu|D_{\nu,\fm}, {\rm Nis}}. }
Furthermore, if $D_{\rm red}$ is smooth, then \eqref{prop:gr0-map1} is already an isomorphism.
\end{prop}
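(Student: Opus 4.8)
The plan is to construct \eqref{prop:gr0-map1} as reduction along $D_\nu$ and to deduce the isomorphism statements by producing a lifting that is a two-sided inverse. Since both sheaves in \eqref{prop:gr0-map1} are supported on $D_\nu$, I would argue stalkwise at a point $x\in D_\nu$ and fix local equations $t_\lambda$ for the components $D_\lambda$ through $x$; the hypothesis $m_\nu=0$ means $t^\fm=\prod_\lambda t_\lambda^{m_\lambda}$ does not involve $t_\nu$, so $\bar t^\fm$ is a local equation for $D_{\nu,\fm}$ on $D_\nu$. By Proposition \ref{prop:rel-K-in-K} the stalk $\sK^M_{r,X|D_\fm,x}$ is generated by symbols all of whose entries lie in $\sO_{X,x}^\times$; hence the local homomorphism $\sO_{X,x}\to\sO_{D_\nu,x}$ (reduction modulo $t_\nu$) induces, by functoriality of the symbolic presentation recorded in \ref{MilnorK}, a map $\sK^M_{r,X,x}\to\sK^M_{r,D_\nu,x}$, and I would take \eqref{prop:gr0-map1}, henceforth denoted $\phi$, to be its restriction to the relative subgroup. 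Inspecting the generators \eqref{prop:rel-K-in-K1} shows that the image lands in $\sK^M_{r,D_\nu|D_{\nu,\fm},x}$ and that the formula \eqref{prop:gr0-map1.5} holds (a section $b_i\in\sO_{X\setminus|D_\fm|}^\times$ still restricts to a unit on $D_\nu$ because $D_\nu\not\subset|D_\fm|$); surjectivity is immediate by lifting generators along $\sO_{X,x}\surj\sO_{D_\nu,x}$.

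First I would check that $\phi$ factors through $\gr^{\fm,\nu}\sK^M_{r,X}$, i.e. that it kills $\sK^M_{r,X|D_{\fm+\delta_\nu}}$. Applying Proposition \ref{prop:rel-K-in-K} to the divisor $D_{\fm+\delta_\nu}$, whose local equation is $t_\nu\cdot t^\fm$, every generator either carries an entry of the form $1+u\,t_\nu$ or has leading entry $1+a\,t_\nu(\cdots)$; in either case reduction modulo $t_\nu$ produces a symbol with an entry equal to $1$, hence $0$. This yields the surjection \eqref{prop:gr0-map1} together with the explicit description \eqref{prop:gr0-map1.5}.

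For the isomorphism \eqref{prop:gr0-map2} on $X_\Nis$ the remaining point is injectivity on the Nisnevich (henselian) stalks, which I would establish by constructing a lifting $s$. On the generators of $\sK^M_{r,D_\nu|D_{\nu,\fm}}$ furnished by Proposition \ref{prop:rel-K-in-K} for the pair $(D_\nu,D_{\nu,\fm})$, set $s(\{1+\bar t^\fm\bar a,\bar b_1,\ldots\})$ to be the class in $\gr^{\fm,\nu}\sK^M_{r,X}$ of $\{1+t^\fm a,b_1,\ldots\}$, for arbitrary lifts $a,b_i$ of $\bar a,\bar b_i$. The crucial observation is that once $s$ is shown to be well defined it is automatically a two-sided inverse of $\phi$: that $\phi\circ s=\id$ is clear, and $s\circ\phi=\id$ follows because for any representative $\{1+t^\fm a,b_1,\ldots\}$ the entries $a,b_i$ are themselves lifts of their own restrictions. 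Well-definedness of $s$ splits into independence of the chosen lifts and compatibility with the relations: independence of the lift of the leading entry follows from the factorization $1+t^\fm a=(1+t^\fm a')\bigl(1+t^{\fm+\delta_\nu}(\cdots)\bigr)$ valid when $a\equiv a'$ modulo $t_\nu$, the second factor being a relative unit for $D_{\fm+\delta_\nu}$; independence of the remaining lifts and compatibility with the Steinberg and defining relations are reduced, via the two identities of Lemma \ref{lem:symbol-formula}, to the statement that symbols $\{1+t^\fm a,\ldots,1+w\,t_\nu,\ldots\}$ lie in $\sK^M_{r,X|D_{\fm+\delta_\nu}}$. Passing to the henselian stalk is what makes Proposition \ref{prop:rel-K-in-K} and the branch decomposition \eqref{gr-rel-K2} available for $(D_\nu,D_{\nu,\fm})$, so that $s$ is a genuine map of Nisnevich sheaves.

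Finally, when $D_{\rm red}$ is smooth its components are pairwise disjoint, so near a point of $D_\nu$ no other $D_\lambda$ occurs; then $D_{\nu,\fm}=0$ and $\sK^M_{r,X|D_\fm,x}=\sK^M_{r,X,x}$, and $\phi$ reduces to the restriction isomorphism $\sK^M_{r,X}/\sK^M_{r,X|D_\nu}\xr{\simeq}\sK^M_{r,D_\nu}$ for a single smooth divisor, which can be read off from Lemma \ref{lem:MilnorKsuppSNCD} and holds already on $X_\Zar$. The crossing points having disappeared, the section $s$ of the previous paragraph is now well defined Zariski-locally, so \eqref{prop:gr0-map1} is a Zariski isomorphism. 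I expect the main obstacle to be exactly the well-definedness of $s$ in the Nisnevich case, i.e. verifying that all relations among the generators of the target lift to relations modulo $\sK^M_{r,X|D_{\fm+\delta_\nu}}$, since this is the only place where the precise form of Lemma \ref{lem:symbol-formula} and the henselian hypothesis genuinely enter.
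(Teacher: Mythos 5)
The first half of your plan is essentially the paper's: reduction modulo $t_\nu$ on unit-entry symbols, vanishing on $\sK^M_{r,X|D_{\fm+\delta_\nu}}$ via the generators of Proposition \ref{prop:rel-K-in-K}, surjectivity by lifting, the formula \eqref{prop:gr0-map1.5}, and the smooth case with its lifted inverse all go through. (Two small glosses: the paper defines the map as $\alpha\mapsto \partial_{D_\nu}(\alpha\cdot\{t_\nu\})$, which is well defined on all of $K^M_r(k(\eta))$ and so sidesteps your appeal to ``functoriality of the symbolic presentation'' --- note \eqref{MilnorK-relations} is stated only for infinite $k$, although functoriality of improved Milnor $K$-theory of local rings does cover this; and in checking $\phi\circ s=\id$ you must choose lifts of $\bar b_i\in \sO_{D_\nu}[1/\bar t^{\fm}]^\times$ that are units in $\sO_{X,x}[1/t^{\fm}]$, which requires the factorization of such units as $\bar u\prod \bar t_\lambda^{k_\lambda}$.) The genuine gap is in the heart of the proposition, the injectivity of \eqref{prop:gr0-map2}. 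You define the section $s$ on the generators furnished by Proposition \ref{prop:rel-K-in-K} and propose to verify ``compatibility with the Steinberg and defining relations'' via Lemma \ref{lem:symbol-formula}. But $\sK^M_{r,D_\nu|D_{\nu,\fm}}$ is not given by generators and relations: it is by definition a subgroup of $K^M_r(k(D_\nu))$, and Proposition \ref{prop:rel-K-in-K} provides generators only, not a presentation. A ``relation'' among your generators is any identity that happens to hold in the Milnor $K$-theory of the function field, possibly passing through symbols whose entries are neither units at $x$ nor of modulus type, so there is no enumerable list against which Lemma \ref{lem:symbol-formula} could be checked. The smooth case is genuinely different in exactly this respect: there the target is the full sheaf $\sK^M_{r,D_\nu}$, for which Kerz's presentation is available; no presentation theorem is known for the relative sheaf, which is why your generator-by-generator verification cannot be completed as stated.

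Relatedly, your diagnosis of where the Nisnevich topology enters is incorrect: Proposition \ref{prop:rel-K-in-K} and the branch decomposition are already available Zariski-locally for the pair $(D_\nu, D_{\nu,\fm})$ (the former is a statement about Zariski stalks $\sO_{X,x}$), so if your relation-check could be carried out over $\sO_{X,x}$ it would prove the Zariski version of \eqref{prop:gr0-map2} at crossing points --- which the paper conspicuously does not claim, and indeed the paper remarks that this injectivity is the \emph{only} place the Nisnevich topology is needed. What the paper actually does is geometric: after shrinking $V$ around $x$ and choosing an \'etale map $V\to \A^n_k$ adapted to the SNCD together with a splitting $\A^n_k\to\A^{n-1}_k$, it forms $V_1=V\times_{\A^{n-1}_k}D_{\nu,V}$, removes the extra branch $E$ of $v_1^*D_{\nu,V}$, and obtains an \'etale neighborhood $v:V'\to V$ of $x$ carrying a retraction $\pi:V'\to D_{\nu,V}$ splitting $D_{\nu,V}\inj V'$. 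The section is then $\pi^*$ followed by the quotient map, hence induced by an honest ring homomorphism and well defined by functoriality, with no relation-checking at all; one checks it inverts $(*)$ using \eqref{prop:gr0-map1.5}. Such a retraction exists \'etale-locally but not Zariski-locally, and this --- not the availability of Proposition \ref{prop:rel-K-in-K} at henselian stalks --- is the reason the isomorphism is a Nisnevich statement. Completing your plan would require either rediscovering this retraction trick or proving a presentation theorem for relative Milnor $K$-sheaves, a substantial missing ingredient.
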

\begin{proof}
Assume $t_\nu\in \Gamma(X, \sO_X)$ is an equation for $D_\nu$. Then we have the following map at our disposal
\[s_{t_\nu}:\sK^M_{r,X}\to \sK^M_{r, D_\nu}, \quad
      \alpha\mapsto s_{t_\nu}(\alpha):=\partial_{D_\nu}(\alpha\cdot\{t_\nu\}),\]
where $\partial_{D_\nu}: K^M_{r+1}(k(X))\to K^M_{r}(k(D_\nu))$ denotes the tame symbol defined by the 
valuation corresponding to $D_\nu$. One directly checks 
\[s_{t_{\nu}}(\{a_1,\ldots, a_r\})= \{\bar{a}_1,\ldots, \bar{a}_r\}, \]
where $a_i\in \sO_{X}^\times$ and $\bar{a}_i\in\sO_{D_\nu}^\times$ is its image.
This also shows that $s_{t_\nu}$ does not depend on the choice of the equation $t_\nu$.
Therefore we can write $s_{D_\nu}$ instead of $s_{t_\nu}$. In particular, in case $D_\nu$ is not
given by a global equation we can locally  define maps as above and glue them to obtain a morphism
\[s_{D_\nu}:\sK^M_{r,X}\to \sK^M_{r,D_\nu}.\]
Restricting along the open immersion $j:X\setminus|D_\fm|\inj X$ we obtain an induced map
\[\sK^M_{r, X|D_\fm}\inj j_*\sK^M_{r, X\setminus|D_\fm|}\xr{s_{D_\nu}} 
          j_*\sK^M_{D_\nu\setminus|D_{\nu,\fm}|}.\]
It is immediate to check that the image of this map is $\sK^M_{r,D_\nu|D_{\nu,\fm}}$ and that it factors to give the
map \eqref{prop:gr0-map1} from the statement. (Use Proposition \ref{prop:rel-K-in-K} 
to check that $\sK^M_{r,X|D_{\fm+\delta_\nu}}$ is mapped to zero.)

If $D_{\rm red}$ is smooth, then \eqref{prop:gr0-map1} is an isomorphism.
Indeed, it suffices to consider the case in which $D$ is connected. Then
           \eqref{prop:gr0-map1} is a map $\sK^M_{r, X}/\sK^M_{r, X|D}\to \sK^M_{r, D}$
and it is easy to see that the assignment $\{\bar{a}_1,\ldots, \bar{a}_r\}\mapsto \{a_1,\ldots, a_r\}$ 
mod $\sK^M_{r, X|D}$,  in which the $a_i\in \sO_{X}^\times$ are lifts of the $\bar{a}_i\in \sO_{D}^\times$, 
induces a well-defined map $\sK^M_{r, D}\to \sK^M_{r, X}/\sK^M_{r, X|D}$, 
which is inverse to \eqref{prop:gr0-map1}.

Let $v: V\to X$ be \'etale. With the notation from \eqref{gr-rel-K2} we have 
\[\sK^M_{r, v^*D_\nu|v^*D_{\nu,\fm}}=
 \bigoplus_{i=1}^{j_\nu}\sK^M_{r, D_{(\nu,i)}|D_{(\nu,i),\fm_{(\nu,i)}}},\]
here $D_{(\nu,i)}$, $i\in[1,j_\nu]$, are the irreducible components of $|v^*D_\nu|$
and 
\[D_{(\nu,i),\fm_{(\nu,i)}}=
 \sum_{(\lambda,j)\in v^*(\Lambda\setminus\{\nu\})} m_\lambda (D_{(\lambda,j)}\cap D_{(\nu,i)}) .\]
It follows that the map \eqref{prop:gr0-map1} induces a map from the presheaf
\eqref{gr-rel-K1} to the presheaf 
\[X_{\rm Nis}\ni(v:V\to X)\mapsto H^0(V, i_{\nu*}\sK^M_{D_\nu|D_{\nu,\fm}})
                        =\sK^M_{D_\nu|D_{\nu,\fm}}(v^{-1}D_{\nu}),\]
where we use the notation from Definition \ref{defn:MilnorModulus}, \eqref{defn:MilnorModulus(2)}.
We obtain the map \eqref{prop:gr0-map2} by Nisnevich sheafification.
The surjectivity of \eqref{prop:gr0-map2} follows from the surjectivity of \eqref{prop:gr0-map1}.
To prove the injectivity, it suffices to show the following (for all $(X,D)$):
Let $x\in D_\nu$ be a point, let $V\subset X$ be an open neighborhood of $x$ and
$\alpha\in H^0(V, \gr^{\fm ,\nu}\sK^M_{r,X})$ be an element which under \eqref{prop:gr0-map1} 
is mapped to zero in $H^0(V\cap D_\nu, \sK^M_{r, D_\nu|D_{\nu,\fm}})$. Then there exists 
an \'etale morphism $v :V'\to V$ and a point $x'\in V'$ such that $v$ induces an isomorphism $k(x)\xr{\simeq} k(x')$
with the property that $v^*\alpha=0$ in $ \gr^{\fm,\nu}\sK^M_{r, X|D}(V')$.

To this end, we can assume, after shrinking $V$ around $x$, that we have a cartesian diagram
\[\xymatrix{ D_{\nu,V}:=D_\nu\cap V\ar@{^(->}[r]\ar[d] & V\ar[d] \\ \A^{n-1}_k\ar@{^(->}[r] & \A^n_k, }\]
in which the vertical arrows are \'etale, the bottom horizontal arrow is induced by 
$k[t_1,\ldots, t_n]\surj k[t_1,\ldots,t_n]/(t_n)$ and the pullback of the coordinate $t_\lambda$ to $\sO_V$ is a local
equation for $D_\lambda$. We choose a splitting $\A^n_k\to \A^{n-1}_k$ of the bottom map; 
in this way $V$ becomes an $\A^{n-1}_k$-scheme and we set
\[V_1:= V\times_{\A^{n-1}_k} D_{\nu, V}.\]
We have a diagonal embedding $D_{\nu, V}\inj V_1$. The projection $v_1: V_1\to V$ is \'etale and hence
we can write $v_1^*(D_{\nu,V})= D_{\nu, V}\sqcup E$, for some smooth divisor $E\subset V_1$. 
We set $V':=V_1\setminus E$ and denote by $v: V'\to V$ the map induced by $v_1$.
Then $v: V'\to V$ is \'etale, $v$ induces an isomorphism $v^{-1}(D_{\nu,V})\xr{\simeq} D_{\nu, V}$ and 
there is a natural map induced by the projection $\pi:V'\to D_{\nu, V}$ which splits the inclusion
$D_{\nu, V}\inj V'$. We obtain a commutative diagram
\eq{prop:gr0-map2.5}{\xymatrix{ \gr^{\fm,\nu}\sK^M_{r,X}(V')\ar[dr]^{(*)}  &  \\
                   \gr^{\fm,\nu}\sK^M_{r,X}(V)\ar[u]^{v^*}\ar[r] & \sK^M_{r, D_\nu|D_{\fm,\nu}}(D_{\nu,V}) .   
}}
It suffices to show that $(*)$ in \eqref{prop:gr0-map2.5} is injective.
Denote by $D_{\fm,V}$, $D_{\fm+\delta_\nu,V}$ and $D_{\fm,\nu,V}$
 the pullback along the open immersion $V\inj X$ of $D_{\fm}$, $D_{\fm+\delta_\nu,V}$ and $D_{\fm,\nu}$,
 respectively. We consider the composition
\[\sK^M_{r,D_{\nu, V}}\xr{\pi^*} \sK^M_{r, V'}\to \sK^M_{r, V'}/\sK^M_{r, V'|v^*D_{\fm+\delta_\nu,V}}.\]
The restriction of this map to $\sK^M_{r, D_{\nu,V}|D_{\fm,\nu, V}}$ induces a map
\[\sK^M_{r, D_{\nu,V}|D_{\fm,\nu, V}}\to \sK^M_{r, V'|v^*D_{\fm,V}}/\sK^M_{r, V'|v^*D_{\fm+\delta_\nu,V}}.\]
Taking global sections we obtain a map 
\eq{prop:gr0-map3}{\sK^M_{r, D_\nu|D_{\fm,\nu}}(D_{\nu,V})\to  \gr^{\fm,\nu}\sK^M_{r,X}(V').}
Using the explicit description \eqref{prop:gr0-map1.5} of the map $(*)$ in \eqref{prop:gr0-map2.5} it is 
straightforward to check that \eqref{prop:gr0-map3} and $(*)$ are inverse to each other. 
This finishes the proof of the proposition.
 \end{proof}

\begin{rmk}
The proof of the injectivity of \eqref{prop:gr0-map2} is the only place where we need the Nisnevich topology.
\end{rmk}

\begin{no}\label{Nis-to Zar}
We denote by  $X_{\Zar}^{\tilde{}}$ (resp. $X_{\Nis}^{\tilde{}}$) the topos of sheaves of sets
on the site $X_{\Zar}$ (resp. $X_{\Nis}$) and by
$\e=(\e^{-1}, \e_*): X_{\Nis}^{\tilde{}}\to X_{\Zar}^{\tilde{}}$ the natural morphism of topoi.
Then $\epsilon_*$ is left exact when restricted to the category of abelian sheaves and 
right derives to a functor
\[R\e_*: \sD^+(X_{\Nis})\to \sD^+(X_\Zar),\]
between the derived categories of bounded below complex of abelian sheaves on $X_\Nis$ and $X_\Zar$, respectively.
Since the cohomological dimension of $X_\Nis$ is $\le\dim X$ (see e.g. \cite[1.32]{Nis}) this functor restricts 
to a functor between the derived category of complexes with bounded cohomology
\[R\e_* : \sD^b(X_\Nis)\to \sD^b(X_\Nis).\]
\end{no}

\begin{cor}\label{cor:gr0-map}
In the situation of Proposition \eqref{prop:gr0-map} we have a distinguished triangle in $\sD^b(X_{\rm Zar})$
\[R\e_*(\sK^M_{r, X|D_{\fm+\delta_\nu}, {\rm Nis}})\to R\e_*(\sK^M_{r, X|D_\fm, {\rm Nis}})\to
     R(\e\circ i_{\nu})_*(\sK^M_{r, D_\nu|D_{\nu,\fm},\Nis})\xr{[1]}.\]
\end{cor}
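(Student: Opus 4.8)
The plan is to obtain the triangle by applying the derived pushforward $R\e_*$ to a short exact sequence of Nisnevich sheaves, the existence of which is essentially the content of Proposition \ref{prop:gr0-map}.

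First I would record the relevant short exact sequence on $X_\Nis$. By definition $\gr^{\fm,\nu}\sK^M_{r,X,\Nis}$ is the cokernel of the inclusion $\sK^M_{r,X|D_{\fm+\delta_\nu},\Nis}\inj\sK^M_{r,X|D_\fm,\Nis}$, which is legitimate by Corollary \ref{cor:rel-K-in-K} since $D_\fm\le D_{\fm+\delta_\nu}$. As we are in the situation of Proposition \ref{prop:gr0-map}, where $m_\nu=0$, the isomorphism \eqref{prop:gr0-map2} identifies this cokernel with $i_{\nu*}\sK^M_{r,D_\nu|D_{\nu,\fm},\Nis}$. Hence we have a short exact sequence of abelian Nisnevich sheaves on $X$
\[0\to \sK^M_{r,X|D_{\fm+\delta_\nu},\Nis}\to \sK^M_{r,X|D_\fm,\Nis}\to i_{\nu*}\sK^M_{r,D_\nu|D_{\nu,\fm},\Nis}\to 0,\]
equivalently a distinguished triangle in $\sD^b(X_\Nis)$.

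Next I would apply the triangulated functor $R\e_*$, which maps $\sD^b(X_\Nis)$ to $\sD^b(X_\Zar)$ because the cohomological dimension of $X_\Nis$ is at most $\dim X$. This produces a distinguished triangle in $\sD^b(X_\Zar)$
\[R\e_*(\sK^M_{r,X|D_{\fm+\delta_\nu},\Nis})\to R\e_*(\sK^M_{r,X|D_\fm,\Nis})\to R\e_*(i_{\nu*}\sK^M_{r,D_\nu|D_{\nu,\fm},\Nis})\xr{[1]},\]
which already has the shape asserted in the corollary save for the identification of the third vertex.

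It then remains to check that $R\e_*(i_{\nu*}\sK^M_{r,D_\nu|D_{\nu,\fm},\Nis})\cong R(\e\circ i_\nu)_*(\sK^M_{r,D_\nu|D_{\nu,\fm},\Nis})$. Here one uses that the closed immersion $i_\nu$ induces on the small Nisnevich sites an exact pushforward $i_{\nu*}$ admitting the exact left adjoint $i_\nu^{-1}$; in particular $i_{\nu*}$ sends injectives to injectives, which are $\e_*$-acyclic, so the Grothendieck composition formula gives $R\e_*\circ i_{\nu*}=R\e_*\circ Ri_{\nu*}=R(\e\circ i_\nu)_*$. Substituting this into the triangle above finishes the argument. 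I expect this last point — and in particular the verification that the two ways of composing, namely $(D_\nu)_\Nis\to(D_\nu)_\Zar\to X_\Zar$ versus $(D_\nu)_\Nis\to X_\Nis\to X_\Zar$, yield the same morphism of topoi $\e\circ i_\nu$ — to be the only place requiring any care, although it is an entirely standard compatibility.
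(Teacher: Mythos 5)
Your proposal is correct and follows essentially the same route as the paper: the paper's proof likewise deduces the triangle from the short exact sequence of Nisnevich sheaves packaged in Proposition \ref{prop:gr0-map} (the quotient $\gr^{\fm,\nu}\sK^M_{r,X,\Nis}$ identified with $i_{\nu*}\sK^M_{r,D_\nu|D_{\nu,\fm},\Nis}$ via \eqref{prop:gr0-map2}) together with the one-line observation $R\e_* i_{\nu*}=R\e_* Ri_{\nu*}=R(\e\circ i_\nu)_*$. The only difference is that you spell out the standard justifications for this identity --- exactness of $i_{\nu*}$ for a closed immersion in the Nisnevich topology and preservation of injectives via the exact left adjoint $i_\nu^{-1}$ --- which the paper leaves implicit.
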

\begin{proof}
This follows directly from Proposition \ref{prop:gr0-map} together with the observation
$R\e_* i_{\nu*}=R\e_*Ri_{\nu*}=R(\e\circ i_\nu)_*$.
\end{proof}

\begin{no}\label{restricted-log-differentials}
We keep the notations from \ref{gr-rel-K}.
For $\nu\in \Lambda$ and $q\ge 0$ we define the following sheaf on $X_{\Zar}$ (with support in $D_\nu$) 
\eq{restricted-log-differentials1}{\omega_{X|D,\fm,\nu}:=\omega^q_{\fm, \nu}:= 
                        (\Omega^q_X(\log D)(-D_\fm))_{|D_\nu},}
where we use the short hand notation 
\[\Omega^q_X(\log D)(-D_\fm):=\sO_X(-D_\fm) \otimes_{\sO_{X}}\Omega^q_X(\log D).\]
It is immediate to check that the differential $d^q:\Omega^q_U\to \Omega^{q+1}_U$
restricts to a differential $d^q:\Omega^q_{X}(\log D)(-D_\fm)\to \Omega^{q+1}_{X}(\log D)(-D_\fm)$, which induces a differential
\[d^q: \omega^q_{\fm,\nu}\to \omega^{q+1}_{\fm,\nu}.\]
If $t_\lambda\in \sO_{X}$ are local parameters of the $D_\lambda$, 
then  this differential is explicitly given by
\eq{restricted-log-differentials2}{d^q(t^\fm\otimes \omega)= 
t^\fm\otimes \bigg(d\omega+\sum_{\lambda\in \Lambda} m_\lambda \cdot \dlog(t_\lambda)\wedge\omega\bigg),}
where we write $t^\fm:=\prod_{\lambda\in \Lambda} t_\lambda^{m_\lambda}$.
We set
\eq{restricted-log-differentials3}{
 Z^q_{\fm,\nu}:=\Ker(\omega^q_{\fm,\nu}\xr{d^q}\omega^{q+1}_{\fm,\nu}), \quad
    B^q_{\fm,\nu}:=\im(\omega^{q-1}_{\fm,\nu}\xr{d^{q-1}}\omega^q_{\fm,\nu}). }
\end{no}

\begin{prop}\label{prop-rest-log-diff-gen-rel}
We keep the notation from above.
Set $\sM:=j_*(\sO_U^\times)\cap\sO_X$ and denote by $\sM^{\rm gp}$ the sheaf of groups on $X$ 
associated to the monoid $\sM$. Then 
there is a surjective morphism of $\sO_{D_\nu}$-modules
\[\sO_X(-D_\fm)_{|D_\nu}\otimes_{\Z}\bigwedge^q\sM^{\rm gp}\surj \omega^q_{\fm,\nu},\quad
     a\otimes x_1\wedge\ldots\wedge x_q\mapsto  a\otimes \dlog(x_1)\wedge \ldots\wedge \dlog(x_q).\]
With the notation from \eqref{restricted-log-differentials2} the kernel is the $\sO_{D_\nu}$-module 
which is locally generated by elements of the form
\[t^\fm\bar{a}\otimes a\wedge x_2\wedge\ldots\wedge x_{q}- 
                   \sum_i t^\fm\bar{u}_i\otimes u_i\wedge x_2\wedge\ldots\wedge x_{q},\]
for all $a, x_i\in \sM$ and $u_i\in \sO^\times_X$ satisfying $a=\sum_i u_i$ in $\sO_X$
and where $\bar{a},\bar{u}_i$ denote the images in $\sO_{D_\nu}$.
\end{prop}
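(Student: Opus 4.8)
The plan is to deduce the proposition from a short exact sequence of Zariski sheaves on $X$, before twisting by $\sO_X(-D_\fm)$ and restricting to $D_\nu$:
\[0\to \tilde{R}\to \sO_X\otimes_\Z{\textstyle\bigwedge^q}\sM^{\rm gp}\xr{\Phi}\Omega^q_X(\log D)\to 0,\]
where $\Phi(c\otimes x_1\wedge\ldots\wedge x_q)=c\cdot\dlog x_1\wedge\ldots\wedge\dlog x_q$ and $\tilde{R}$ is the subsheaf locally generated by the elements $a\otimes a\wedge x_2\wedge\ldots\wedge x_q-\sum_i u_i\otimes u_i\wedge x_2\wedge\ldots\wedge x_q$ with $a,x_i\in\sM$, $u_i\in\sO_X^\times$ and $a=\sum_i u_i$. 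That these generators lie in $\ker\Phi$ is the easy inclusion: their image under $\Phi$ is $(da-\sum_i du_i)\wedge\dlog x_2\wedge\ldots\wedge\dlog x_q$, which vanishes since $a=\sum_i u_i$. Granting the displayed sequence, the proposition follows formally: tensoring over $\sO_X$ with the line bundle $\sO_X(-D_\fm)$ is exact and restricting along $i_\nu\colon D_\nu\inj X$ (that is, applying $-\otimes_{\sO_X}\sO_{D_\nu}$) is right exact, so one obtains a presentation of $\omega^q_{\fm,\nu}=(\Omega^q_X(\log D)(-D_\fm))_{|D_\nu}$ whose generators are the $t^\fm\bar a\otimes a\wedge\ldots$ and whose relations are exactly those in the statement.

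For surjectivity I would work locally at a point $x\in D_\nu$: choosing a regular system of parameters $t_1,\ldots,t_d$ of $\sO_{X,x}$ with $t_1,\ldots,t_n$ local equations of the components of $D$ through $x$, the module $\Omega^1_X(\log D)_x$ is free on $\dlog t_1,\ldots,\dlog t_n,dt_{n+1},\ldots,dt_d$. Now $t_\lambda\in\sM$, and for $j>n$ one has $1+t_j\in\sO_{X,x}^\times\subset\sM$ with $dt_j=(1+t_j)\dlog(1+t_j)$; since moreover $da=a\cdot\dlog a$ for every unit $a$ and every element of $\sO_{X,x}$ is a sum of units (reducing to the case of infinite residue field by a standard argument), the image of $\Phi$ contains every $da$ and every $\dlog t_\lambda$, hence all of $\Omega^q_X(\log D)$. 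The passage from $q=1$ to general $q$ is then formal: writing $N^{(q)}=(\sO_X\otimes_\Z\bigwedge^q\sM^{\rm gp})/\tilde R^{(q)}$, the identification $\bigwedge^q_{\sO_X}(\sO_X\otimes_\Z\sM^{\rm gp})=\sO_X\otimes_\Z\bigwedge^q_\Z\sM^{\rm gp}$ turns $\tilde R^{(q)}$ into $\tilde R^{(1)}\wedge\bigwedge^{q-1}(\sO_X\otimes_\Z\sM^{\rm gp})$, so the right exactness of exterior powers gives $N^{(q)}=\bigwedge^q_{\sO_X}N^{(1)}$; once $N^{(1)}\cong\Omega^1_X(\log D)$ is established, taking $\bigwedge^q$ yields $N^{(q)}\cong\Omega^q_X(\log D)$.

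The heart of the matter is thus the case $q=1$: that $\Phi\colon N^{(1)}=(\sO_X\otimes_\Z\sM^{\rm gp})/\tilde R^{(1)}\to\Omega^1_X(\log D)$ is an isomorphism. I would produce an inverse through the universal property of log differentials, noting that $\sM$ is exactly the canonical log structure of $(X,D)$, so that $\Omega^1_X(\log D)$ represents log derivations $(\partial,\ell)$ consisting of a derivation $\partial\colon\sO_X\to E$ and a homomorphism $\ell\colon\sM^{\rm gp}\to E$ with $\partial(m)=m\cdot\ell(m)$ for $m\in\sM$. On $N^{(1)}$ set $\ell(m)=[1\otimes m]$ and $\delta(a)=\sum_j[v_j\otimes v_j]$ for any expression $a=\sum_j v_j$ with $v_j\in\sO_X^\times$. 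Granting that $\delta$ is well defined, additivity is automatic (concatenate the decompositions of $a$ and of $b$), the Leibniz rule follows from $\otimes_\Z$-bilinearity together with the identity $[ab\otimes ab]=[ab\otimes a]+[ab\otimes b]$ valid for units, and the compatibility $\delta(m)=m\cdot\ell(m)=[m\otimes m]$ for $m\in\sM$ is precisely a defining relation of $\tilde R^{(1)}$. The pair $(\delta,\ell)$ then induces $\Psi\colon\Omega^1_X(\log D)\to N^{(1)}$ with $\Psi\circ\Phi=\id$ and $\Phi\circ\Psi=\id$.

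The main obstacle is the \emph{well-definedness of $\delta$}, i.e.\ that $\sum_j[v_j\otimes v_j]$ in $N^{(1)}$ depends only on $a=\sum_j v_j$ and not on the chosen decomposition into units. The plan here is to connect any two unit decompositions of $a$ by a chain of elementary moves, each preserving the class $\sum_j[v_j\otimes v_j]$: namely splitting a unit $v=v'+v''$ into two units, and replacing a pair $(v_i,v_{i+1})$ of units whose sum $s=v_i+v_{i+1}$ is again a unit by another pair of units with the same sum. Each such move leaves the class unchanged by applying the relation of $\tilde R^{(1)}$ to the partial sum $s$ (a unit, hence in $\sM$), which gives $[v'\otimes v']+[v''\otimes v'']=[s\otimes s]$ on both sides. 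That any two unit decompositions of a fixed element are connected by these moves is a combinatorial statement about local rings with infinite residue field; carrying it out, together with the reduction to the infinite residue field case, is the technical core of the argument. Once this is done the case $q=1$ is complete, and with it the whole proposition.
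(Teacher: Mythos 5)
Your skeleton is sound and, in substance, matches the paper's: the paper disposes of this proposition in one sentence by citing the presentation of logarithmic differentials in Kato's (1.7), and your steps --- exactness of twisting by the line bundle $\sO_X(-D_\fm)$ and right exactness of restriction to $D_\nu$, surjectivity via sums of units, the exterior-power reduction $N^{(q)}=\bigwedge^q_{\sO_X}N^{(1)}$, and the construction of the inverse through the universal property of log differentials --- are all correct and amount to re-deriving that citation by hand. The genuine gap is exactly the step you defer: the well-definedness of $\delta$. Moreover, the route you sketch for closing it is doubtful as stated. Your elementary moves only invoke instances of the relation in which the \emph{partial} sum $s$ is a unit, which is strictly weaker than what $\tilde R^{(1)}$ provides (relations for every $a\in\sM$ and every finite unit decomposition, of any length); over a finite residue field the generic-choice arguments needed for your connectivity claim are unavailable; and the ``standard reduction to the infinite residue field case'' is not standard here: descending along the faithfully flat extension $\sO_{X,x}\to\sO_{X,x}(T)$ would require injectivity of $N^{(1)}(\sO_{X,x})\to N^{(1)}(\sO_{X,x}(T))$, which presupposes precisely the control of $N^{(1)}$ you are trying to establish. (Two smaller points: $\Omega^1_X(\log D)_x$ is \emph{not} free on $\dlog t_1,\ldots,\dlog t_n, dt_{n+1},\ldots,dt_d$, since the paper works with absolute differentials $\Omega^q_{X/\Z}$ and there are differentials of constants; but your surjectivity argument only uses generation by the $da$, $a\in\sO_{X,x}$, together with the $\dlog t_\lambda$, which is correct. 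Also no residue-field hypothesis is needed for sums of units: in any local ring, $a=(a-1)+1$ writes a non-unit as a sum of two units.)

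The gap can in fact be closed in two lines, with no combinatorics and no hypothesis on the residue field, by using the full strength of the relations: work stalkwise at $x$ and let $a=\sum_i u_i=\sum_j w_j$ be two unit decompositions. If $a$ is a unit, then $a\in\sM_x$ and both $a\otimes a-\sum_i u_i\otimes u_i$ and $a\otimes a-\sum_j w_j\otimes w_j$ are \emph{themselves} defining relations, so $\sum_i[u_i\otimes u_i]=[a\otimes a]=\sum_j[w_j\otimes w_j]$. If $a$ is not a unit, then $a+1$ is a unit, and $(u_1,\ldots,u_m,1)$ and $(w_1,\ldots,w_n,1)$ are unit decompositions of $a+1\in\sO_{X,x}^\times\subset\sM_x$; the same argument, together with $[1\otimes 1]=0$ (the second factor being the neutral element of $\sM^{\rm gp}$), gives $\sum_i[u_i\otimes u_i]=[(a+1)\otimes(a+1)]=\sum_j[w_j\otimes w_j]$. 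With $\delta$ well defined, your verifications of additivity, Leibniz, and the compatibility $\delta(m)=m\cdot\ell(m)$ for $m\in\sM$ go through verbatim, and the universal property you then invoke is exactly Kato's (1.7) --- the sole ingredient of the paper's own proof --- so after this repair your argument is a correct, self-contained expansion of the paper's citation.
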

\begin{proof}
This follows directly from the definition of $\omega^q_{\fm,\nu}$ and the description
of logarithmic differentials given in \cite[(1.7), p. 196]{Kato-log}.
\end{proof}

\begin{prop}\label{prop:higher-gr-map}
We keep the notations from above. 
Let  $\fm=(m_\lambda)_{\lambda\in \Lambda}$ be an element in $\N^\Lambda$
and take $\nu\in \Lambda$, $r\ge 1$.
Assume $m_\nu\ge 1$. 
Then there is a natural surjection
            \eq{prop:higher-gr-map0}{\omega^{r-1}_{\fm, \nu}/ B^{r-1}_{\fm,\nu}\surj \gr^{\fm,\nu}\sK^M_{r,X}}
        given by 
         \[{\text{class of }}( \bar{a}\otimes \dlog x_1\wedge\ldots\wedge \dlog x_{r-1})\mapsto 
                       {\text{class of }}\{1+a, x_1,\ldots, x_{r-1}\},\]
         where $x_i\in\sM^{\rm gp}$, $\bar{a}\in \sO_X(-D_\fm)_{|D_\nu}$  and $a\in\sO_X(-D_{\fm})$ 
          is a lift of $\bar{a}$.
\end{prop}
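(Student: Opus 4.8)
The plan is to build the map directly on the presentation of $\omega^{r-1}_{\fm,\nu}$ given by Proposition \ref{prop-rest-log-diff-gen-rel} and then check it respects every relation, reducing the whole verification to a single Steinberg-type congruence. First I would work at a point $x\in D_\nu$, set $A=\sO_{X,x}$, fix local equations $t_\lambda$ of the $D_\lambda$ and write $t^\fm=\prod_\lambda t_\lambda^{m_\lambda}$, so $\sO_X(-D_\fm)_x=t^\fm A$. For $\bar a\in\sO_X(-D_\fm)_{|D_\nu}$ with lift $a$ and $x_1,\dots,x_{r-1}\in\sM^{\rm gp}$ I send $\bar a\otimes x_1\wedge\dots\wedge x_{r-1}$ to the class of $\{1+a,x_1,\dots,x_{r-1}\}$, obtaining a map $\rho$. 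Because $m_\nu\ge1$, the element $a$ vanishes along $D_\nu$, so $1+a$ is a unit near $D_\nu$ lying in $\Ker(\sO_X^\times\to\sO_{D_\fm}^\times)$, while each $x_i$ is a unit on $U$; hence by Definition \ref{defn:MilnorModulus} the symbol represents a section of $\sK^M_{r,X|D_\fm}$. Independence of the lift is immediate: two lifts differ by an element of $\sO_X(-D_{\fm+\delta_\nu})$, and $\{(1+a)/(1+a'),x_1,\dots\}=\{1+\tfrac{a-a'}{1+a'},x_1,\dots\}$ lies in $\sK^M_{r,X|D_{\fm+\delta_\nu}}$, hence dies in $\gr^{\fm,\nu}$. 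Additivity of $\rho$ in $\bar a$ follows the same way, the cross term $aa'\in\sO_X(-2D_\fm)\subseteq\sO_X(-D_{\fm+\delta_\nu})$ being absorbed (again using $m_\nu\ge1$); by the identical argument the symbol $\{1+\,\cdot\,,y\}$ is additive in its first slot modulo $\sK^M_{r,X|D_{\fm+\delta_\nu}}$.

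The technical heart is the single congruence
\[(\star)\qquad \{1+c,c\}\equiv 0 \pmod{\sK^M_{2,X|D_{\fm+\delta_\nu}}},\qquad c\in\sO_X(-D_\fm),\]
equivalently $\{1+c,-1\}\equiv0$. I would deduce $(\star)$ from Lemma \ref{lem:symbol-formula}~(1): taking there $as=bt=c$ gives $\{1+c,1+c\}=-\{1+\tfrac{c^2}{1+c},-c(1+c)\}$, and since $\tfrac{c^2}{1+c}\in\sO_X(-D_{\fm+\delta_\nu})$ and $-c(1+c)\in\sM^{\rm gp}$ (after reducing to $c=t^\fm\cdot(\text{unit})$) the right-hand side lies in $\sK^M_{2,X|D_{\fm+\delta_\nu}}$; combined with $\{1+c,1+c\}=\{1+c,-1\}=-\{1+c,c\}$ this gives $(\star)$. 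Every remaining relation then collapses onto $(\star)$ and the first-slot additivity above. Indeed the strict alternating relation $\dlog y\wedge\dlog y=0$ corresponds to $\{1+a,y,y,\dots\}=\{1+a,-1,y,\dots\}\equiv0$ by $(\star)$; the relations of Proposition \ref{prop-rest-log-diff-gen-rel} coming from $a=\sum_i u_i$ reduce, after rewriting $\{1+t^\fm a,a\}\equiv-\{1+t^\fm a,t^\fm\}=-\sum_\lambda m_\lambda\{1+t^\fm a,t_\lambda\}$ via $(\star)$ and doing the same for each $u_i$, to the first-slot additivity $\sum_i\{1+t^\fm u_i,t_\lambda\}\equiv\{1+t^\fm a,t_\lambda\}$; and $\rho(B^{r-1}_{\fm,\nu})=0$ reduces, after expanding $d$ by \eqref{restricted-log-differentials2} and writing the coefficient as a sum of units, to the cancellation of $-\{1+c,t^\fm,\vec x\}$ (from the $db$–term) against $+\{1+c,t^\fm,\vec x\}$ (from the $\sum_\lambda m_\lambda\dlog t_\lambda$–term). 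Thus $\rho$ factors through $\omega^{r-1}_{\fm,\nu}/B^{r-1}_{\fm,\nu}$ and yields the asserted morphism.

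Surjectivity is the easy part: by Remark \ref{rmk:MilnorModulus} the stalk $\sK^M_{r,X|D_\fm,x}$ is generated by symbols $\{1+t^\fm a,b_1,\dots,b_{r-1}\}$ with $b_i\in A[\tfrac1{t^\fm}]^\times\subseteq\sM^{\rm gp}_x$, and these are exactly the images of $\rho$, so $\rho$ is onto already before passing to $\gr^{\fm,\nu}$. The main obstacle I anticipate is organizational rather than conceptual: verifying $(\star)$ and the exactness relation cleanly requires reducing an arbitrary coefficient in $\sO_X(-D_\fm)$ to the shape $t^\fm\cdot(\text{unit})$, i.e. writing a local function as a sum of units. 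This is harmless when the residue field at $x$ is infinite, and for finite residue fields one removes it by the usual device of base–changing to $k(u)$ with $u$ transcendental, which is faithfully flat and compatible with all the sheaves involved. The careful bookkeeping of which quadratic error terms land in $\sO_X(-D_{\fm+\delta_\nu})$—always guaranteed by the hypothesis $m_\nu\ge1$—is precisely what makes all the relations collapse onto the one congruence $(\star)$.
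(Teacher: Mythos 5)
Your proposal is correct and is essentially the paper's own proof: you build the same map on the presentation of Proposition \ref{prop-rest-log-diff-gen-rel}, use the same quadratic-error congruence $(1+a)(1+b)\equiv 1+a+b \bmod 1+\sO_X(-D_{\fm+\delta_\nu})$ for lift-independence and additivity in $\bar a$, and your single congruence $(\star)$ is exactly the paper's combination of the vanishing of $\varphi$ when some $x_i=-1$ with the Steinberg identity $\{1+c,-c\}=0$ (this is the computation \eqref{prop:higher-gr-map1}), after which the kernel relations and the vanishing on $B^{r-1}_{\fm,\nu}$ collapse by the same cancellation of $\mp\{1+c,t^\fm,\ul{x}\}$, and surjectivity follows as in the paper from Remark \ref{rmk:MilnorModulus}. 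The only divergence is packaging: your sum-of-units/transcendental base-change detour for finite residue fields is avoidable, since the paper verifies the $B$-vanishing locally via the dichotomy $a\in\sO_X^\times$ or $1+a\in\sO_X^\times$, and the multiplicative factorization $1+t^\fm b=\bigl(1+t^\fm\tfrac{1}{1+t^\fm(b-1)}\bigr)\bigl(1+t^\fm(b-1)\bigr)$ from the proof of Proposition \ref{prop:rel-K-in-K} reduces to unit coefficients over any local ring.
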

\begin{proof}
For $\bar{a}\in \sO_X(-D_\fm)_{|D_\nu}$ and $\ul{x}=(x_1,\ldots, x_{r-1})\in \prod_{i=1}^{r-1}\sM^{\rm gp}$
 define
\[\varphi(\bar{a},\ul{x}):= \text{ class of } \{1+a,\ul{x}\} \quad\text{in }\gr^{\fm,\nu}\sK^M_{r,X},\]
where $a\in \sO_{X}(-D_\fm)$ is some lift of $\bar{a}$.
Since $(1+a)(1+b)\equiv (1+a+b)$ mod  $1+\sO_{X}(-D_{\fm+\delta_\nu})$, for all  $a,b\in \sO_{X}(-D_{\fm})$,
this element is well-defined and induces a multilinear map 
$\varphi:\sO_{X}(-D_{\fm})_{|D_\nu}\oplus \bigoplus_{i=1}^{r-1}\sM^{\rm gp}\to \gr^{\fm,\nu}\sK^M_{r,X}$.
This also implies that if one of the $x_i$'s equals $-1$, then $\varphi(\bar{a},\ul{x})=0$.
Since $\{x,x\}=\{x,-1\}$ in $\sK^M_{2,X}$, the map $\varphi$ induces a surjective homomorphism
\[\sO_X(-D_\fm)_{|D_\nu}\otimes_{\Z}\bigwedge^{r-1}\sM^{\rm gp}\surj \gr^{\fm,\nu}\sK^M_{r,X}.\]
For $a\in\sM$ and $\ul{y}=(y_2,\ldots, y_{r})\in \prod_{i=1}^{r-2}\sM$ we have  
\begin{align}\label{prop:higher-gr-map1}
\varphi(t^\fm\bar{a},a,\ul{y}) &=-\varphi(t^\fm\bar{a}, -t^\fm,\ul{y})
        =-\varphi(t^\fm\bar{a}, t^\fm,\ul{y})- \varphi(t^\fm\bar{a}, -1,\ul{y})\\
       &=-\varphi(t^\fm\bar{a}, t^\fm,\ul{y}).\notag
\end{align}
For $a=\sum_i u_i$, with $u_i\in \sO_X^\times$, we get 
\[
\varphi(t^\fm\bar{a},a,\ul{y}) = -\varphi(t^\fm\bar{a}, t^\fm,\ul{y}) =
                                               \sum_i -\varphi(t^\fm\bar{u}_i, t^\fm,\ul{y})\notag\\
                                          = \sum_i\varphi(t^\fm \bar{u}_i, u_i,\ul{y}).\]
Hence by Proposition \ref{prop-rest-log-diff-gen-rel}, $\varphi$ factors through $\omega^{r-1}_{\fm,\nu}$.
It remains to show that $\varphi$ vanishes on $B^{r-1}_{\fm ,\nu}$.
It suffices to check this locally. Therefore it suffices to show that the boundary of
a form (with the obvious abuse of notation) $t^\fm a \dlog \ul{y}$, with either $a\in \sO^\times_X$ or 
$1+a\in \sO^\times_X$, is mapped to zero under $\varphi$.
Using the formula \eqref{restricted-log-differentials2} for the differential, we see that it suffices to show 
\[0=\begin{cases} \varphi(t^\fm \bar{a}, a,\ul{y})+
             \sum_{\lambda}\varphi(m_\lambda t^\fm \bar{a}, t_\lambda,\ul{y}),& \text{if }a\in \sO^\times_X,\\
             \varphi(t^\fm (\ol{1+a}), (1+a),\ul{y})+
            \sum_{\lambda}  \varphi(m_\lambda t^\fm \bar{a}, t_\lambda,\ul{y}),& \text{if }1+a\in \sO^\times_X.
                  \end{cases}\] 
In case $a\in \sO^\times_X$, this vanishing follows directly from  \eqref{prop:higher-gr-map1}.
In case $1+a\in\sO^\times_X$, we observe that $\varphi(t^\fm, t^\fm,\ul{y})=0$ and hence
\begin{align*}
\varphi(t^\fm (\ol{1+a}), (1+a),\ul{y})& =-\varphi(t^\fm (\ol{1+a}), t^\fm,\ul{y})
= -\varphi(t^\fm, t^\fm,\ul{y})-\varphi(t^\fm \bar{a}, t^\fm,\ul{y})\\
& =-\varphi(t^\fm \bar{a}, t^\fm,\ul{y}),
\end{align*}
which yields the promised vanishing in this case.
\end{proof}

\begin{prop}\label{prop:higher-gr-map-char0}
Assume $m_\nu\ge 1$ and that $k$ has either characteristic $0$ or prime to $m_\nu$.
Then  the map \eqref{prop:higher-gr-map0}
is an isomorphism.
\end{prop}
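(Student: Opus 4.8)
Write $\phi$ for the map \eqref{prop:higher-gr-map0}. Since $\phi$ is already known to be surjective by Proposition \ref{prop:higher-gr-map}, my plan is to produce a one-sided inverse $\psi\colon \gr^{\fm,\nu}\sK^M_{r,X}\to \omega^{r-1}_{\fm,\nu}/B^{r-1}_{\fm,\nu}$ with $\psi\circ\phi=\id$: a surjection admitting such a left inverse is automatically bijective, which gives the claim. The construction of $\psi$ should go through the logarithmic derivative, and the whole point is that $\dlog$ needs no relations to be checked by hand, so that the only genuine content — and the only use of the hypothesis $\Char k\nmid m_\nu$ — is deferred to a single residue computation.

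Concretely, recall that $\{y_1,\dots,y_r\}\mapsto \dlog y_1\wedge\cdots\wedge\dlog y_r$ is a well-defined morphism $\sK^M_{r,U}\to\Omega^r_U$, the Steinberg relation being automatic from $\dlog y\wedge\dlog(1-y)=0$. First I would check that on the subsheaf $\sK^M_{r,X|D_\fm}$ this map takes values in $\Omega^r_X(\log D)(-D_\fm)$: for a generating symbol $\{1+a,x_1,\dots,x_{r-1}\}$ with $a\in\sO_X(-D_\fm)$ and $x_i\in\sM^{\rm gp}$ one has $\dlog(1+a)=da/(1+a)$, and $da\in\Omega^1_X(\log D)(-D_\fm)$ by the formula \eqref{restricted-log-differentials2}, while each $\dlog x_i\in\Omega^1_X(\log D)$. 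Composing with the residue $\Res_\nu$ along $D_\nu$ — the $\sO_X(-D_\fm)$-linear operator extracting the coefficient of $\dlog t_\nu$ and restricting to $D_\nu$, with $t^\fm$ read as a local generator of $\sO_X(-D_\fm)$, so that the values land in $\omega^{r-1}_{\fm,\nu}$ — I set $\tilde\psi:=\tfrac{1}{m_\nu}\,\Res_\nu\circ\dlog$. This is where $\Char k\nmid m_\nu$ enters, and it is the only place. (The locally defined maps glue over $X$ because the residue is independent of the chosen local equation $t_\nu$, exactly as in the proof of Proposition \ref{prop:gr0-map}.)

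Next I would verify that $\tilde\psi$ annihilates $\sK^M_{r,X|D_{\fm+\delta_\nu}}$: the $\dlog$ of its symbols lies in $\Omega^r_X(\log D)(-D_{\fm+\delta_\nu})$, carrying an extra factor $t_\nu$ that vanishes on $D_\nu$ after applying $\Res_\nu$. Hence $\tilde\psi$ descends to $\bar\psi\colon \gr^{\fm,\nu}\sK^M_{r,X}\to \omega^{r-1}_{\fm,\nu}$, and $\psi$ is defined as its composite with the projection modulo $B^{r-1}_{\fm,\nu}$.

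The remaining, and main, step is the identity $\psi\circ\phi=\id$, checked on the generators $\bar a\otimes\dlog x_1\wedge\cdots\wedge\dlog x_{r-1}$ of $\omega^{r-1}_{\fm,\nu}/B^{r-1}_{\fm,\nu}$. When all $x_i$ are units the computation is immediate: in $\dlog(1+a)$ only the term $\bar a\,m_\nu\dlog t_\nu$ contributes to $\Res_\nu$, and the prefactor $\tfrac1{m_\nu}$ cancels the $m_\nu$, returning the same form. The delicate case is $x_1=t_\nu$ (and its permutations), where $\Res_\nu$ produces $t^\fm\otimes(d\bar a+\bar a\sum_{\lambda\ne\nu}m_\lambda\dlog t_\lambda)\wedge\dlog x_2\wedge\cdots$, which a priori differs from the target form $\bar a\otimes\dlog t_\nu\wedge\cdots$. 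I expect this to be the main obstacle, and I would resolve it using the boundary formula \eqref{restricted-log-differentials2}: applying the differential to $t^\fm\otimes\bar a\,\dlog x_2\wedge\cdots$ shows that these two expressions coincide modulo $B^{r-1}_{\fm,\nu}$, once again up to the factor $m_\nu$ contributed by the $\lambda=\nu$ term, so that the $\tfrac1{m_\nu}$ is indispensable a second time. Once $\psi\circ\phi=\id$ is established, the surjectivity from Proposition \ref{prop:higher-gr-map} forces $\phi$ to be an isomorphism.
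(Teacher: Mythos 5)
Your proposal is correct, but it proves injectivity by a genuinely different mechanism than the paper. The paper post-composes the surjection \eqref{prop:higher-gr-map0} with the $\dlog$-induced map \eqref{prop:higher-gr-map-char02} into $\omega^{r}_{\fm,\nu}$ (one degree \emph{up}), checks that the composite \eqref{prop:higher-gr-map-char03} is precisely the differential \eqref{restricted-log-differentials2}, and then concludes injectivity from the equality $B^{r-1}_{\fm,\nu}=Z^{r-1}_{\fm,\nu}$ of \cite[Lem 6.2]{BS14} (which is \cite[Lem 3.4]{Kato-Saito-Sato}) --- that acyclicity lemma being the sole place where the hypothesis on ${\rm char}(k)$ enters. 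You instead stay in degree $r-1$ and manufacture an explicit left inverse $\tfrac{1}{m_\nu}\Res_\nu\circ\dlog$. The two arguments are close cousins: the operator ``extract the coefficient of $\dlog t_\nu$'' is exactly the contraction that furnishes the homotopy $dh+hd=m_\nu\cdot\mathrm{id}$ on $\omega^{\bullet}_{\fm,\nu}$ by which the cited acyclicity lemma is proved, so in effect you have inlined that lemma's proof into the construction of a retraction. What your route buys is self-containedness (no appeal to \cite[Lem 6.2]{BS14}) and complete transparency about where invertibility of $m_\nu$ is used; what it costs is the obligation to verify carefully that the twisted residue $\Res_\nu$ is well defined as a map of sheaves into $\omega^{r-1}_{\fm,\nu}$, a point you assert rather than prove. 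It does go through: independence of the chosen equation is immediate, since $\dlog(ut_\nu)-\dlog t_\nu=\dlog u$ contributes no $\dlog t_\nu$-term, and changing the remaining members of the regular system of parameters perturbs the $\dlog t_\nu$-coefficient only by forms divisible by $t_\nu$, which die in $\omega^{r-1}_{\fm,\nu}$ after restriction to $D_\nu$ (here $m_\nu\geq 1$ is used). Two small points of hygiene: your displayed residue in the case $x_1=t_\nu$ is off by a sign --- with a consistent convention the same sign appears in $d\bigl(t^\fm\otimes\bar a\,\dlog x_2\wedge\cdots\wedge\dlog x_{r-1}\bigr)$, so the identity $\psi\circ\phi=\mathrm{id}$ holds as claimed --- and besides units and $t_\nu$ you should also note the (trivial) case $x_i=t_\lambda$ with $\lambda\neq\nu$, which behaves like the unit case since $\dlog t_\lambda$ carries no $\dlog t_\nu$-component. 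With these remarks your argument is complete.
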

\begin{proof}
For $r= 1$ the statement holds by definition. For $r\ge 2$ we have 
\eq{prop:higher-gr-map-char01}{B^{r-1}_{\fm,\nu}=Z^{r-1}_{\fm,\nu}}
by \cite[Lem 6.2]{BS14} (here we use that either ${\rm char}(k)=0$ or $({\rm char}(k), m_\nu)=1$).
We have a well-defined map
\[K^M_r(k(\eta))\to \Omega^r_{k(\eta)},\quad 
              \{a_1,\ldots, a_r\}\mapsto \dlog(a_1)\wedge\ldots\wedge \dlog(a_r).\]
This clearly induces a map $\sK^M_{r,X|D_{\fm}}\to \Omega^r_{X}(\log D_{\rm red})(-D_\fm)$.
We obtain a well-defined map
\eq{prop:higher-gr-map-char02}{\gr^{\fm,\nu}\sK^M_{r,X}\lra 
\frac{\Omega^r_{X}(\log D_{\rm red})(-D_\fm)}{\Omega^r_{X}(\log D_{\rm red})(-D_{\fm+\delta_\nu})}
  =\omega^r_{\fm,\nu}.}
The composition 
\eq{prop:higher-gr-map-char03}{\omega^{r-1}_{\fm,\nu}\xr{\eqref{prop:higher-gr-map0}} 
     \gr^{\fm,\nu}\sK^M_{r,X}  \xr{\eqref{prop:higher-gr-map-char02}} \omega^{r}_{\fm,\nu}}
is equal to the differential \eqref{restricted-log-differentials2}.
Indeed, under this composition a local section $t^\fm \bar{a}\dlog\ul{x}\in\omega^{r-1}_{\fm,\nu}$ is sent to 
\begin{align*}
\dlog(1+t^\fm \bar{a})\dlog \ul{x} &= 
       \frac{t^\fm}{1+t^\fm \bar{a}} (da+\sum_\lambda m_\lambda a \dlog t_\lambda)\wedge\dlog(\ul{x})\\
 & =t^\fm (1-t^\fm \bar{a}) (da+\sum_\lambda m_\lambda a \dlog t_\lambda)\wedge\dlog(\ul{x})\\
& =t^\fm  (da+\sum_\lambda m_\lambda a \dlog t_\lambda)\wedge\dlog(\ul{x}).
\end{align*}
Hence the statement follows from \eqref{prop:higher-gr-map-char01}.
\end{proof}

\begin{thm}\label{thm:Cartier-mod}
Assume $k$ has characteristic $p>0$. Let the notation be as above and let $\fm'\in \N^\Lambda$ be the
smallest tuple with $p\cdot\fm'\ge \fm$. 
Assume $p|m_\nu$. Then the inverse Cartier  operator induces an isomorphism 
\[C^{-1}_{\fm,\nu}:\omega^q_{\fm',\nu}\xr{\simeq} \sH^q(\omega^\bullet_{\fm,\nu}),\] 
\[ a\otimes \dlog x_1\wedge \ldots\wedge \dlog x_q\mapsto a^p\otimes \dlog x_1\wedge \ldots\wedge \dlog x_q,\]
where $a\in \sO_X(-D_{\fm'})_{|D_\nu}$ and $x_i\in \sM$.
\end{thm}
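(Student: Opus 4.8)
The plan is to realize $\omega^\bullet_{\fm,\nu}$ as the restriction to $D_\nu$ of a twisted logarithmic de Rham complex on which $C^{-1}_{\fm,\nu}$ is induced by the ordinary inverse Cartier operator, and then to deduce the statement from the classical Cartier isomorphism. Since the formation of $\omega^\bullet_{\fm,\nu}$ and of $\sH^q$ commutes with \'etale localization, I would first reduce to the stalk at a point $x\in D_\nu$; after an \'etale coordinate change we may take $X=\Spec\sO_{X,x}$ with $D_{\rm red}=\{t_1\cdots t_s=0\}$ for a regular system of parameters $t_1,\dots,t_d$ and $D_\nu=\{t_1=0\}$, so I write $\nu=1$. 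Writing $\sO_X(-D_\fm)=t^\fm\sO_X$ inside $k(X)$, the crucial observation is that the twisted differential \eqref{restricted-log-differentials2} is nothing but the restriction of the absolute de Rham differential to the subcomplex $t^\fm\Omega^\bullet_X(\log D)\subset\Omega^\bullet_{k(X)}$, since $d(t^\fm\omega)=t^\fm(d\omega+\dlog(t^\fm)\wedge\omega)$ and $\dlog(t^\fm)=\sum_\lambda m_\lambda\dlog t_\lambda$. Thus $\omega^\bullet_{\fm,\nu}=(t^\fm\Omega^\bullet_X(\log D))\otimes_{\sO_X}\sO_{D_\nu}$, and $C^{-1}_{\fm,\nu}$ is the restriction of the usual $\sigma$-semilinear inverse Cartier operator.

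Next I would verify that $C^{-1}_{\fm,\nu}$ is well defined. As $p\fm'\ge\fm$, for $a\in\sO_X(-D_{\fm'})$ one has $a^p\in\sO_X(-pD_{\fm'})\subseteq\sO_X(-D_\fm)$, and if moreover $a\in\sO_X(-D_{\fm'+\delta_\nu})$ then $a^p\in\sO_X(-D_{\fm+\delta_\nu})$; hence $a\otimes\dlog\ul{x}\mapsto a^p\otimes\dlog\ul{x}$ carries $\omega^q_{\fm',\nu}$ into $\omega^q_{\fm,\nu}$ compatibly with the restrictions to $D_\nu$. The image is a cycle because $d(a^p\,\dlog\ul{x})=d(a^p)\wedge\dlog\ul{x}=0$ in characteristic $p$. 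Finally, the defining relations of $\omega^q_{\fm',\nu}$ in Proposition \ref{prop-rest-log-diff-gen-rel} are equalities of honest forms expressing $da=\sum_i du_i$ for $a=\sum_i u_i$; applying the inverse Cartier operator, which is additive and sends $da$ to the class of $a^{p-1}\,da$, these relations become equalities in $\sH^q$ because $a^{p-1}\,da-\sum_i u_i^{p-1}\,du_i$ is exact (here the Frobenius additivity $(\sum_i u_i)^p=\sum_i u_i^p$ is used). Therefore $C^{-1}_{\fm,\nu}$ descends to a map into $\sH^q(\omega^\bullet_{\fm,\nu})$.

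To prove bijectivity I would analyze the twisted complex direction by direction, as in the $p$-basis proof of the Cartier isomorphism. In the coordinate $t_\lambda$ the twist introduces the operator $\Theta_\lambda=t_\lambda\partial_{t_\lambda}+m_\lambda$, which acts on a monomial $t_\lambda^{\,n}$ by the scalar $n+m_\lambda$; hence its kernel and cokernel are spanned by the monomials with $n\equiv -m_\lambda\bmod p$, whose minimal nonnegative exponent is $pm'_\lambda-m_\lambda$ with $m'_\lambda=\lceil m_\lambda/p\rceil$. This is exactly the exponent produced by $C^{-1}$ applied to the $\fm'$-twist, namely $t_\lambda^{m'_\lambda}\mapsto t_\lambda^{pm'_\lambda}$. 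For the direction $\nu$ we restrict to $t_\nu=0$, where $\Theta_\nu$ becomes multiplication by $m_\nu$; since $p\mid m_\nu$ this vanishes, so the $\nu$-direction splits $\omega^\bullet_{\fm,\nu}$ into two copies, with and without $\dlog t_\nu$, and matches the source level $m'_\nu=m_\nu/p$ via $t_\nu^{m_\nu}=(t_\nu^{m'_\nu})^p$. Combining the eigenspace computation in each direction with the classical Cartier isomorphism for the coefficient ring (the Frobenius-semilinear bijectivity onto $\sH^q$) assembles to the asserted isomorphism $C^{-1}_{\fm,\nu}$.

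The hard part will be the bookkeeping that makes this assembly legitimate and the handling of absolute differentials. Because we work with $\Omega^\bullet_{X/\Z}$, the complex contains the factor $\Omega^\bullet_{k/\Z}$, so for imperfect $k$ one must invoke the Cartier isomorphism for the \emph{absolute} de Rham complex, valid since $\sO_{D_\nu}$ admits a $p$-basis \'etale-locally, rather than the smooth-over-a-perfect-field version; in particular the surjectivity of $C^{-1}_{\fm,\nu}$ is a genuinely $\sigma$-semilinear statement and must not be read off the $k$-linear span of monomials alone. One must also check that, after restriction to $D_\nu$, the possibly strict inequalities $pm'_\lambda>m_\lambda$ for $\lambda\neq\nu$ still assemble to exactly the $\fm'$-twisted source, and that the splitting in the $\nu$-direction is compatible with the multiplicativity used in the Cartier isomorphism. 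The hypothesis $p\mid m_\nu$ is indispensable: it is what makes $\Theta_\nu$ vanish upon restriction and thereby produces the nontrivial cohomology matched by $C^{-1}_{\fm,\nu}$; when $p\nmid m_\nu$ the corresponding factor is instead acyclic, which is the content of \cite[Lem 6.2]{BS14} underlying Proposition \ref{prop:higher-gr-map-char0}.
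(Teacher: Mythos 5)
Your argument is correct in substance, but it takes a genuinely different route from the paper's. The paper deduces Theorem \ref{thm:Cartier-mod} by a short global d\'evissage from two quoted inputs: the acyclicity of $\omega^\bullet_{\fn,\mu}$ whenever $(n_\mu,p)=1$ (\cite[Lem 6.2]{BS14}), which makes the inclusion $\Omega^\bullet_X(\log D)(-D_{p\fm'})\inj \Omega^\bullet_X(\log D)(-D_{\fm})$ and its $\delta_\nu$-shifted analogue quasi-isomorphisms --- this is where $p\mid m_\nu$ enters there, guaranteeing that $\fm'+\delta_\nu$ is minimal with $p\cdot(\fm'+\delta_\nu)\ge \fm+\delta_\nu$ --- and the classical Cartier isomorphism \cite[Thm (7.2)]{KatzNil} twisted by $\sO_X(-D_{\fm'})$ via $F^*\sO_X(-D_{\fm'})=\sO_X(-D_{p\fm'})$; the resulting distinguished triangle with third term $\omega^\bullet_{\fm,\nu}$, compared with the short exact sequence $0\to \Omega^q_X(\log D)(-D_{\fm'+\delta_\nu})\to \Omega^q_X(\log D)(-D_{\fm'})\to \omega^q_{\fm',\nu}\to 0$, then forces the isomorphism by a diagram chase. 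You instead reprove both inputs simultaneously by a stalkwise weight decomposition with respect to a $p$-basis: the eigenvalue analysis of $\Theta_\lambda=t_\lambda\partial_{t_\lambda}+m_\lambda$ kills all prime-to-$p$ weights and leaves exactly the Frobenius image (your exponent count $p m'_\lambda-m_\lambda$ is right, and matches the paper's minimality of $\fm'$), with $p\mid m_\nu$ appearing as the vanishing of $\Theta_\nu$ after restriction to $t_\nu=0$. That is essentially the proof of \cite[Lem 3.4]{Kato-Saito-Sato} together with the $p$-basis proof of the Cartier isomorphism, so the ``bookkeeping'' you defer --- the Frobenius-semilinear K\"unneth assembly and the absolute-differential/imperfect-field care --- is precisely the content the paper outsources to its two citations; it is standard and nothing in it fails. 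Your approach buys self-containedness, a manifest formula $a\otimes\dlog \ul{x}\mapsto a^p\otimes\dlog\ul{x}$, and a concrete explanation of why $p\mid m_\nu$ separates the acyclic case from the case with cohomology; the paper's approach buys brevity and globality (no localization, everything happens in $\sD^b(X_\Zar)$). One detail of yours worth writing out in full: in the well-definedness step, the Witt-polynomial primitive $w$ of $a^{p-1}da-\sum_i u_i^{p-1}du_i$ must be inserted into the \emph{twisted} complex, i.e.\ one uses that $t^{p\fm'}w\,\dlog\ul{x}\in\omega^{q-1}_{\fm,\nu}$ (legitimate since $p\fm'\ge\fm$) and that the correction terms $\sum_\lambda p m'_\lambda\,\dlog t_\lambda\wedge(-)$ in \eqref{restricted-log-differentials2} vanish in characteristic $p$, so that the image of each relation is a boundary of the $\fm$-twisted complex and the semilinearity $C^{-1}(f\xi)=f^pC^{-1}(\xi)$ propagates this over the $\sO_{D_\nu}$-module of relations from Proposition \ref{prop-rest-log-diff-gen-rel}.
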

\begin{proof}
This is proven in \cite[Thm 3.2]{Kato-Saito-Sato} (in a slightly different situation).
For the reader's convenience we give the proof.
By \cite[Lem 6.2]{BS14} (which is \cite[Lem 3.4]{Kato-Saito-Sato})
$\omega^\bullet_{\fn,\mu}$ is acyclic if $(n_\mu,p)=1$,
for all $\fn=(n_\lambda)\in\N^\Lambda$ and $\mu\in \Lambda$.
By the special choice of $\fm'$ we see that the natural inclusion
\eq{thm:Cartier-mod1}{\Omega^\bullet_X(\log D)(-D_{p\fm'})\inj \Omega^\bullet_X(\log D)(-D_{\fm})}
is a quasi-isomorphism. Indeed we can refine this inclusion to a filtration whose
graded pieces are of the form $\omega^\bullet_{\fn,\mu}$ as above.
We have $p\cdot(\fm'+\delta_\nu)\ge \fm+\delta_\nu$ and since $p|m_\nu$ the tuple $\fm'+\delta_\nu$ is 
minimal with this property. Thus if  we replace in \eqref{thm:Cartier-mod1} $\fm'$ by $\fm'+\delta_\nu$ and 
$\fm$ by $\fm+\delta_\nu$, we again get a quasi-isomorphism.
This yields a distinguished triangle in $D^b(X_\Zar)$
\[\Omega^\bullet_X(\log D)(-D_{p(\fm'+\delta_\nu)})\to \Omega^\bullet_X(\log D)(-D_{p\fm'})\to
\omega^\bullet_{\fm,\nu}\xr{[1]}.\]
Let $F:X\to X$ be the absolute Frobenius. The classical Cartier isomorphism
(see e.g. \cite[Thm (7.2)]{KatzNil}) gives an isomorphism of $\sO_X$-modules
\[C^{-1}:\Omega^q_X(\log D_{\rm red})\xr{\simeq} \sH^q(F_*\Omega^\bullet_X(\log D)).\]
Twisting this with $\sO_X(-D_{\fm'})$ yields an isomorphism
\[\Omega^q_X(\log D)(-D_{\fm'})\xr{\simeq} 
\sH^q(F_*(\Omega^\bullet_X(\log D)(-D_{p\fm'}))).\]
Using the triangle from above we get the following commutative diagram of abelian sheaves for all $q\ge 0$
\[\xymatrix{0\ar[r] & \Omega^q_{X|D_{\fm'+\delta_\nu}}\ar[r]\ar[d]_{\simeq}^{ C^{-1}} &
                              \Omega^q_{X|D_{\fm'}}\ar[r]\ar[d]_{\simeq}^{ C^{-1}} &
                             \omega^q_{\fm',\nu}\ar[d]\ar[r] & 0 &\\
              \cdots\ar[r]  &  \sH^q(\Omega^\bullet_{X|D_{p(\fm'+\delta_\nu)}})\ar[r]
               & \sH^q(\Omega^\bullet_{X|D_{p\fm'}})\ar[r]
            &\sH^q(\omega^\bullet_{\fm,\nu})\ar[r] &\cdots,
}\]
where we use the short hand notation $\Omega^\bullet_{X|D_\fm}=\Omega^\bullet_X(\log(D))(-D_\fm)$.
The statement follows.
\end{proof}

\begin{no}\label{higher-Bs-Zs}
With the notations above write $m_\nu=p^s\cdot m_\nu'$ with $s\ge 0$ and $(p,m_\nu')=1$. 
We inductively define sheaves of subgroups on $D_\nu$
\[B^q_{r,\fm,\nu},\, Z^q_{r,\fm,\nu}\subset \omega^q_{\fm,\nu},\quad \text{for }r\in[1, s+1], q\ge 0,\]
by the formulas
\[B^q_{1,\fm,\nu}:= B^q_{\fm,\nu},\quad Z^q_{1,\fm,\nu}:=Z^q_{\fm,\nu}\]
and 
\[B^q_{r,\fm',\nu}\xrightarrow[\simeq]{C_{\fm,\nu}^{-1}} B^q_{r+1,\fm,\nu}/B^q_{\fm,\nu},\quad
  Z^q_{r,\fm',\nu}\xrightarrow[\simeq]{C_{\fm,\nu}^{-1}} Z^q_{r+1,\fm,\nu}/B^q_{\fm,\nu}, 
 \quad r\in [1,s].\]
We obtain a chain of inclusions
\[B^q_{\fm,\nu}=B^q_{1,\fm,\nu}\subset\ldots\subset B^q_{s+1,\fm,\nu}\subset Z^q_{s+1,\fm,\nu}\subset
     \ldots \subset Z^q_{1,\fm,\nu}=Z^q_{\fm, \nu}\subset \omega^q_{\fm,\nu}.\]
\end{no}

\begin{prop}\label{prop:Bs-ZsCM}
For $\fm\in \N^\Lambda$, $\nu\in \Lambda$, with $m_\nu\ge 1$,
$q\ge 0$ and  $T\subset D_\nu$ a closed subset of codimension $c$ we have
\[\sH^i_T(\omega^q_{\fm,\nu}/B^q_{\fm,\nu})=0=\sH^i_T(\omega^q_{\fm,\nu}/Z^q_{\fm,\nu}),
\quad \text{for all }i<c.\]
Furthermore if $k$ has characteristic $p>0$ and $m_\nu= p^s m_\nu'$, with $s\ge 0$ and $(m_\nu',p)=1$,
then also
\eq{prop:Bs-ZsCM0}
{\sH^i_T(\omega^q_{\fm,\nu}/B^q_{r,\fm,\nu})=0=\sH^i_T(\omega^q_{\fm,\nu}/Z^q_{r,\fm,\nu}),
\quad \text{for all }i<c,\text{ and } r\in [1,s+1].}
\end{prop}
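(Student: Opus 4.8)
The plan is to reduce the whole statement to a single local cohomology vanishing for \emph{locally free} sheaves on the smooth divisor $D_\nu$, and then to propagate that vanishing through the short exact sequences defining $B^\bullet,Z^\bullet$ and their higher analogues. First I would record the base facts. Since $\Omega^q_X(\log D)$ is locally free, so is the twist $\Omega^q_X(\log D)(-D_\fm)$, and hence $\omega^q_{\fm,\nu}=(\Omega^q_X(\log D)(-D_\fm))_{|D_\nu}$ is a locally free $\sO_{D_\nu}$-module. All sheaves in the statement are supported on $D_\nu$; since $i_\nu$ is a closed immersion and $T\subset D_\nu$, one has $\sH^i_T(i_{\nu*}\sG)=i_{\nu*}\sH^i_T(\sG)$, so I may compute everything on the regular scheme $D_\nu$. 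As $T$ has codimension $c$ there, the standard depth/local-cohomology vanishing for a locally free sheaf on a regular scheme gives $\sH^i_T(\omega^q_{\fm,\nu})=0$ for $i<c$. By Theorem \ref{thm:Cartier-mod} the cohomology sheaf $\sH^q(\omega^\bullet_{\fm,\nu})$ is isomorphic, when $p\mid m_\nu$, to the locally free sheaf $\omega^q_{\fm',\nu}$ via the inverse Cartier operator, and it vanishes when $(m_\nu,p)=1$ or $\Char(k)=0$ by \cite[Lem 6.2]{BS14} (cf. \eqref{prop:higher-gr-map-char01}); in all cases $\sH^i_T(\sH^q(\omega^\bullet_{\fm,\nu}))=0$ for $i<c$.

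For the case $r=1$ I would first prove, by descending induction on $q$ (the base case being $\omega^q_{\fm,\nu}=0$ for $q>\dim X$), that $\sH^i_T(B^q_{\fm,\nu})=0=\sH^i_T(Z^q_{\fm,\nu})$ for all $i<c$. The step uses the sequences $0\to Z^q_{\fm,\nu}\to\omega^q_{\fm,\nu}\xr{d}B^{q+1}_{\fm,\nu}\to 0$ and $0\to B^q_{\fm,\nu}\to Z^q_{\fm,\nu}\to\sH^q(\omega^\bullet_{\fm,\nu})\to 0$: the long exact sequence of the first, with $\sH^i_T(\omega^q_{\fm,\nu})=0$ and the inductive vanishing of $\sH^{i-1}_T(B^{q+1}_{\fm,\nu})$, gives $\sH^i_T(Z^q_{\fm,\nu})=0$ for $i<c$, and then the second, with $\sH^{i-1}_T(\sH^q)=0$, gives $\sH^i_T(B^q_{\fm,\nu})=0$. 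To deduce the quotient statements I would avoid quotienting naively (which fails at the top degree $i=c-1$) and instead use $\omega^q_{\fm,\nu}/Z^q_{\fm,\nu}\cong B^{q+1}_{\fm,\nu}$ together with $0\to\sH^q(\omega^\bullet_{\fm,\nu})\to\omega^q_{\fm,\nu}/B^q_{\fm,\nu}\to B^{q+1}_{\fm,\nu}\to 0$; both outer terms have vanishing $\sH^i_T$ for $i<c$, hence so does the middle, yielding $\sH^i_T(\omega^q_{\fm,\nu}/B^q_{\fm,\nu})=0=\sH^i_T(\omega^q_{\fm,\nu}/Z^q_{\fm,\nu})$ for $i<c$.

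For the higher $B_r,Z_r$ in characteristic $p$ I would induct on $r$, the case $r=1$ being the previous paragraph. The inverse Cartier isomorphism $C^{-1}_{\fm,\nu}\colon\omega^q_{\fm',\nu}\xr{\simeq}Z^q_{\fm,\nu}/B^q_{\fm,\nu}$ carries $Z^q_{r,\fm',\nu}$ (resp. $B^q_{r,\fm',\nu}$) onto $Z^q_{r+1,\fm,\nu}/B^q_{\fm,\nu}$ (resp. $B^q_{r+1,\fm,\nu}/B^q_{\fm,\nu}$) by the definitions in \ref{higher-Bs-Zs}, and hence induces isomorphisms of abelian sheaves $\omega^q_{\fm',\nu}/Z^q_{r,\fm',\nu}\cong Z^q_{\fm,\nu}/Z^q_{r+1,\fm,\nu}$ and $\omega^q_{\fm',\nu}/B^q_{r,\fm',\nu}\cong Z^q_{\fm,\nu}/B^q_{r+1,\fm,\nu}$. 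As the absolute Frobenius is the identity on $|D_\nu|$, local cohomology with supports transports along $C^{-1}_{\fm,\nu}$, so by the inductive hypothesis applied to $\fm'$ (whose $\nu$-component has $p$-adic valuation $s-1$, so that $r+1\in[2,s+1]$ matches $r\in[1,s]$) the sheaves $Z^q_{\fm,\nu}/Z^q_{r+1,\fm,\nu}$ and $Z^q_{\fm,\nu}/B^q_{r+1,\fm,\nu}$ have vanishing $\sH^i_T$ for $i<c$. Feeding these into $0\to Z^q_{\fm,\nu}/Z^q_{r+1,\fm,\nu}\to\omega^q_{\fm,\nu}/Z^q_{r+1,\fm,\nu}\to\omega^q_{\fm,\nu}/Z^q_{\fm,\nu}\to 0$ and $0\to Z^q_{\fm,\nu}/B^q_{r+1,\fm,\nu}\to\omega^q_{\fm,\nu}/B^q_{r+1,\fm,\nu}\to\omega^q_{\fm,\nu}/Z^q_{\fm,\nu}\to 0$, whose outer terms vanish in degrees $<c$ by the $r=1$ case, gives the desired vanishing for $r+1$.

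The main obstacle I expect is twofold. First, the quotient vanishing in the critical top degree $i=c-1$: a long exact sequence starting from the subobjects $B^q_r,Z^q_r$ only controls $\sH^{<c}_T$ of the subobjects, which does not suffice for the quotients at $i=c-1$, so one is forced to sandwich $\omega^q/B^q$ and $\omega^q/Z^q$ between sheaves whose vanishing is already established (using the identification $\omega^q/Z^q\cong B^{q+1}$ and, in the inductive step, the Cartier-induced identifications $Z^q_{\fm,\nu}/Z^q_{r+1,\fm,\nu}\cong\omega^q_{\fm',\nu}/Z^q_{r,\fm',\nu}$). Second, one must argue cleanly that the $p$-semilinear operator $C^{-1}_{\fm,\nu}$ transports $\sH^i_T$; this is legitimate precisely because it is an isomorphism of sheaves of abelian groups over the identity homeomorphism of $|D_\nu|$ underlying the absolute Frobenius, so $\sH^i_T$ of source and target agree for the same closed set $T$.
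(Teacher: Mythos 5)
Your proof is correct, and all the essential ingredients coincide with the paper's: local freeness of $\omega^q_{\fm,\nu}$ on the regular scheme $D_\nu$ giving $\sH^i_T=0$ for $i<c$, the short exact sequence \eqref{prop:Bs-ZsCM1} (equivalently $d:\omega^q_{\fm,\nu}/Z^q_{\fm,\nu}\xr{\simeq}B^{q+1}_{\fm,\nu}$), the Cartier isomorphism of Theorem \ref{thm:Cartier-mod} transported as an isomorphism of abelian sheaves, \cite[Lem 6.2]{BS14} in the tame case, and descending induction on $q$. Where you genuinely diverge is in the architecture of the induction. The paper splits into two cases: when $p=0$ or $(m_\nu,p)=1$ it exploits $B^q_{\fm,\nu}=Z^q_{\fm,\nu}$ to turn \eqref{prop:Bs-ZsCM1} into a clean dimension-shift identity $\sH^{i-1}_T(\omega^{q+1}_{\fm,\nu}/B^{q+1}_{\fm,\nu})=\sH^i_T(\omega^q_{\fm,\nu}/B^q_{\fm,\nu})$ directly on the quotients, and when $s\ge 1$ it runs an induction on $s$ with an inner descending $q$-induction, obtaining $\sH^i_T(\omega^q_{\fm,\nu}/Z^q_{\fm,\nu})=0$ at level $q$ from the inductive hypothesis at $q+1$ and finishing with $0\to\omega^q_{\fm',\nu}\to\omega^q_{\fm,\nu}/B^q_{\fm,\nu}\to\omega^q_{\fm,\nu}/Z^q_{\fm,\nu}\to 0$. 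You instead first prove $\sH^i_T(B^q_{\fm,\nu})=0=\sH^i_T(Z^q_{\fm,\nu})$ for the \emph{subsheaves}, uniformly in all characteristics (using that $\sH^q(\omega^\bullet_{\fm,\nu})$ is either zero or Cartier-isomorphic to the locally free $\omega^q_{\fm',\nu}$), then deduce the quotient statements by sandwiching via $\omega^q_{\fm,\nu}/Z^q_{\fm,\nu}\cong B^{q+1}_{\fm,\nu}$ and $0\to\sH^q(\omega^\bullet_{\fm,\nu})\to\omega^q_{\fm,\nu}/B^q_{\fm,\nu}\to B^{q+1}_{\fm,\nu}\to 0$, and finally run a single induction on $r$ using the identifications $\omega^q_{\fm',\nu}/B^q_{r,\fm',\nu}\cong Z^q_{\fm,\nu}/B^q_{r+1,\fm,\nu}$ and $\omega^q_{\fm',\nu}/Z^q_{r,\fm',\nu}\cong Z^q_{\fm,\nu}/Z^q_{r+1,\fm,\nu}$ (the paper's \eqref{prop:Bs-ZsCM2}), with no inner $q$-induction needed since the $r=1$ quotient vanishing is already available for all $q$. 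Your reorganization buys a uniform treatment of all characteristics at $r=1$, eliminates the nested double induction of the paper's $s\ge 1$ case, and makes the delicate boundary degree $i=c-1$ (where naive quotienting from the subobject vanishing fails) explicitly visible and explicitly handled; the paper's version buys the slicker dimension-shift identity in the tame case and fewer auxiliary statements about the subsheaves. Two small hygiene points that you got right and that are worth keeping in any write-up: the induction on $r$ must be quantified over all pairs $(\fm,\nu)$ with $r\le v_p(m_\nu)+1$ so that the hypothesis applies to $\fm'$ (whose $\nu$-component has valuation $s-1$), and the transport of $\sH^i_T$ along $C^{-1}_{\fm,\nu}$ is justified exactly as you say, because it is an isomorphism of sheaves of abelian groups over the identity map of the underlying space $|D_\nu|$.
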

\begin{proof}
First we observe that $\omega^q_{\fm,\nu}$ is a locally free sheaf on the regular scheme $D_\nu$.
Hence $\sH^i_T(\omega^q_{\fm,\nu})=0$, for all $i<c$ and $q\ge 0$.
Set $p:={\rm char}(k)$.
Now assume either $p=0$ or  $p>0$ and $(m_\nu,p)=1$.
By \cite[Lem 6.2]{BS14} we have $B^q_{\fm ,\nu}=Z^q_{\fm,\nu}$, for all $q\ge 0$.
Therefore the exact sequence 
\eq{prop:Bs-ZsCM1}{0\to \omega^q_{\fm,\nu}/Z^q_{\fm,\nu}\to \omega^{q+1}_{\fm,\nu}
               \to \omega^{q+1}_{\fm,\nu}/B^{q+1}_{\fm,\nu}\to 0}
yields,  for all $i<c$,
\[\sH^{i-1}_T(\omega^{q+1}_{\fm,\nu}/Z^{q+1}_{\fm,\nu})= 
   \sH^{i-1}_T(\omega^{q+1}_{\fm,\nu}/B^{q+1}_{\fm,\nu})=
    \sH^i_{T}(\omega^q_{\fm,\nu}/Z^q_{\fm,\nu})= 
 \sH^i_T(\omega^q_{\fm,\nu}/B^q_{\fm,\nu}). \]
Since $\omega^q_{\fm,\nu}=0$ for $q>>0$ we get by descending induction over $q$ that 
\[\sH^i_{T}(\omega^q_{\fm,\nu}/Z^q_{\fm,\nu})= \sH^i_T(\omega^q_{\fm,\nu}/B^q_{\fm,\nu})=0,\quad
 \text{for all }q\ge 0,\, i<c.\]
In particular the statement is proven if $p=0$. Furthermore, if $p>0$, then \eqref{prop:Bs-ZsCM0} is proven in the case $s=0$.
To finish the proof we assume $p>0$ and $s\ge 1$. 
Let $\fm'\in \N^\Lambda$ be the smallest tuple such that $p\cdot\fm'\ge \fm$. By induction on $s$ we have 
$\sH^i_T(\omega^q_{\fm',\nu}/B^q_{r,\fm',\nu})=\sH^i_T(\omega^q_{\fm',\nu}/Z^q_{r,\fm',\nu})=0$, 
for all $r\in[1,s]$, $q\ge 0$ and $i<c$.
An application of the Cartier operator yields 
\eq{prop:Bs-ZsCM2}{\sH^i_T(Z^q_{\fm,\nu}/B^q_{r+1,\fm,\nu})=
          \sH^i_T(Z^q_{\fm,\nu}/Z^q_{r+1,\fm,\nu})=0,\quad \text{for all }r\in[1,s], \,q\ge 0,\, i<c.}
Now assume that the vanishing \eqref{prop:Bs-ZsCM0} holds for $q+1$; we want to show that it also holds for $q$.
The exact sequence \eqref{prop:Bs-ZsCM1} gives the vanishing $\sH^i(\omega^q_{\fm,\nu}/Z^q_{\fm,\nu})$
for all $i<c$. Therefore the exact sequence
\[0\to Z^q_{\fm,\nu}/ B^q_{r+1,\fm,\nu}\to \omega^q_{\fm,\nu}/ B^q_{r+1,\fm,\nu}\to 
      \omega^q_{\fm,\nu}/Z^q_{\fm,\nu}\to 0\]
together with \eqref{prop:Bs-ZsCM2} yields 
\[\sH^i_T(\omega^q_{\fm,\nu}/ B^q_{r+1,\fm,\nu})=0,\quad \text{for all } i<c, \,r\in [1,s].\]
Similarly we also get
\[\sH^i_{T}(\omega^q_{\fm,\nu}/ Z^q_{r+1,\fm,\nu})=0,\quad \text{for all } i<c,\, r\in [1,s].\]
Finally the exact sequence
\[0\to \omega^q_{\fm',\nu}\xr{C^{-1}_{\fm,\nu}} \omega^q_{\fm,\nu}/B^q_{\fm,\nu}\to 
\omega^q_{\fm,\nu}/Z^q_{\fm,\nu}\to 0\]
yields $\sH^i_T(\omega^q_{\fm,\nu}/B^q_{\fm,\nu})=0$, for all $i< c$.
This finishes the proof.
\end{proof}

\begin{rmk}\label{rmk:Bs-Zs-free}
One can show that $\omega^q_{\fm,\nu}/B^q_{r,\fm,\nu}$ and $\omega^q_{\fm,\nu}/Z^q_{r, \fm,\nu}$
are locally free $\sO_{D_\nu}$-modules, where the $\sO_{D_\nu}$-module structure is induced by
the one from $F_{X*}^r \omega^q_{\fm,\nu}$, where $F_X: X\to X$ is the absolute Frobenius
(cf. \cite[0, Prop 2.2.8]{IlDRW}). This immediately implies \eqref{prop:Bs-ZsCM0}.
\end{rmk}

\begin{thm}\label{thm:higher-gr-map-poschar}
Assume $k$ has characteristic $p>0$ and $m_\nu=p^s m_{\nu}'$, with $s\ge 0$ and
$(m_\nu',p)=1$. Then the map \eqref{prop:higher-gr-map0} factors to give an 
isomorphism
\eq{thm:higher-gr-map-poschar1}{\omega^{r-1}_{\fm,\nu}/B^{r-1}_{s+1,\fm,\nu}\xr{\simeq}
            \gr^{\fm,\nu}\sK^M_{r,X}.}
\end{thm}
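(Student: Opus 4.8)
The map \eqref{prop:higher-gr-map0}, which I denote $\psi_\fm$, is already surjective by Proposition \ref{prop:higher-gr-map}, so the whole point is to identify its kernel with $B^{r-1}_{s+1,\fm,\nu}/B^{r-1}_{\fm,\nu}$. The plan is to argue by induction on $s$. The case $s=0$, i.e. $(m_\nu,p)=1$, is exactly Proposition \ref{prop:higher-gr-map-char0}, since then $B^{r-1}_{s+1,\fm,\nu}=B^{r-1}_{\fm,\nu}$. For the inductive step let $\fm'$ be the smallest tuple with $p\fm'\ge\fm$, as in Theorem \ref{thm:Cartier-mod}; since $p\mid m_\nu$ the $\nu$-th exponent of $\fm'$ equals $m_\nu/p=p^{s-1}m_\nu'$, so $\fm'$ carries the parameter $s-1$ and the inductive hypothesis reads $\ker\psi_{\fm'}=B^{r-1}_{s,\fm',\nu}/B^{r-1}_{\fm',\nu}$.

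The key local computation is that in characteristic $p$ one has $(1+t^{\fm'}b)^p=1+t^{p\fm'}b^p$ and $p\,\{1+t^{\fm'}b,\ul{x}\}=\{(1+t^{\fm'}b)^p,\ul{x}\}$. Combined with $p\fm'\ge\fm$ and the explicit formula of Theorem \ref{thm:Cartier-mod} for $C^{-1}_{\fm,\nu}$ (which raises $a$ to $a^p$), this shows $\psi_\fm\circ C^{-1}_{\fm,\nu}=\Phi\circ\psi_{\fm'}$, where $\Phi\colon\gr^{\fm',\nu}\sK^M_{r,X}\to\gr^{\fm,\nu}\sK^M_{r,X}$ is the homomorphism induced by multiplication by $p$. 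One checks directly, using the same identity, that multiplication by $p$ sends $\sK^M_{r,X|D_{\fm'}}$ into $\sK^M_{r,X|D_{p\fm'}}\subseteq\sK^M_{r,X|D_\fm}$ and $\sK^M_{r,X|D_{\fm'+\delta_\nu}}$ into $\sK^M_{r,X|D_{\fm+\delta_\nu}}$, so that $\Phi$ is well defined. Unwinding the definitions in \ref{higher-Bs-Zs} gives $B^{r-1}_{s+1,\fm,\nu}/B^{r-1}_{\fm,\nu}=C^{-1}_{\fm,\nu}(B^{r-1}_{s,\fm',\nu})$, and the identity together with the inductive hypothesis (that $\psi_{\fm'}$ kills $B^{r-1}_{s,\fm',\nu}$) yields $\psi_\fm(B^{r-1}_{s+1,\fm,\nu})=0$. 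This produces the factorization \eqref{thm:higher-gr-map-poschar1}.

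For injectivity I would first cut the kernel down using the dlog map $\gr^{\fm,\nu}\sK^M_{r,X}\to\omega^r_{\fm,\nu}$ from \eqref{prop:higher-gr-map-char02}: exactly as in Proposition \ref{prop:higher-gr-map-char0}, and without using $(m_\nu,p)=1$, its composite with $\psi_\fm$ is the differential, whose kernel on $\omega^{r-1}_{\fm,\nu}/B^{r-1}_{\fm,\nu}$ is $Z^{r-1}_{\fm,\nu}/B^{r-1}_{\fm,\nu}$; hence $\ker\psi_\fm\subseteq Z^{r-1}_{\fm,\nu}/B^{r-1}_{\fm,\nu}$. By Theorem \ref{thm:Cartier-mod} this is $C^{-1}_{\fm,\nu}(\omega^{r-1}_{\fm',\nu})$, and under this identification together with the inductive isomorphism $\psi_{\fm'}$ the map \eqref{thm:higher-gr-map-poschar1} becomes $\Phi$; so injectivity of \eqref{thm:higher-gr-map-poschar1} is equivalent to injectivity of $\Phi$. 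To prove the latter I would pass to generic points: by Proposition \ref{prop:Bs-ZsCM} the source $\omega^{r-1}_{\fm,\nu}/B^{r-1}_{s+1,\fm,\nu}$ has no nonzero section supported in codimension $\ge 1$ on $D_\nu$, so the kernel of \eqref{thm:higher-gr-map-poschar1}, being a subsheaf, injects into its stalk at the generic point $\eta_\nu$ of each component of $D_\nu$. It therefore suffices to prove injectivity there, and at $\eta_\nu$ only the single component $D_\nu$ is present.

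The main obstacle is this last, local, injectivity statement. At $\eta_\nu$ I would complete (Lemma \ref{lem:DVR-cDVR}), reducing to a single complete discrete valuation field with residue field of characteristic $p$, in which $\gr^{\fm,\nu}\sK^M_{r,X}$ becomes the graded quotient $U^{m_\nu}K^M_r/U^{m_\nu+1}K^M_r$ of the unit filtration of \ref{U-for-DVR}. Matching $\psi_\fm$ with the classical explicit description of these graded quotients in terms of $\omega^{r-1}/B_{s+1}$ (Bloch--Kato; this is the computation underlying \cite{Kato-Saito-Sato} and Theorem \ref{thm:Cartier-mod}) then gives injectivity, and hence the isomorphism \eqref{thm:higher-gr-map-poschar1}. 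I expect this matching to be the genuinely hard part, requiring the symbol calculus of Lemma \ref{lem:symbol-formula} and careful control of how the $p$-power (exponential) map interacts with the Cartier filtration; it is the only place where positive-characteristic input beyond the formal Cartier yoga enters.
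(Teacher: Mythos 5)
Your proposal follows the paper's proof essentially step for step: the same induction on $s$ via the commutative diagram intertwining $C^{-1}_{\fm,\nu}$ with multiplication by $p$, the same use of the dlog/Cartier argument to trap the kernel in $Z^{r-1}_{\fm,\nu}/B^{r-1}_{\fm,\nu}$, the same reduction to the generic point $\eta_\nu$ via Proposition \ref{prop:Bs-ZsCM}, and the same passage to the completion via Lemma \ref{lem:DVR-cDVR}, reducing everything to injectivity of $\cdot p\colon U^mK^M_r(K)/U^{m+1}K^M_r(K)\to U^{pm}K^M_r(K)/U^{pm+1}K^M_r(K)$ for $K\cong K_0((t))$. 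The final ``hard'' matching with the Bloch--Kato description that you leave as a sketch is precisely what the paper isolates as Corollary \ref{cor-mult-p-gradedK}, proven later and independently via the de Rham--Witt comparison of Theorem \ref{thm-DRW-MilnorK} (using Illusie's injectivity of lift-and-multiply-by-$p$ on $\W_m\Omega^{r-1}$), which in turn rests on the same Bloch--Kato isomorphism ${\rm gr}^q_m(R)\cong U^m/U^{m+1}$ you invoke.
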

\begin{proof}
For $s=0$ this is Proposition \ref{prop:higher-gr-map-char0}.
Now assume $s\ge 1$ and take $\fm\in\N^\Lambda$ minimal with $p\cdot\fm'\ge \fm$.
Clearly the multiplication with $p$ on $\sK^M_{r,X}$ induces maps
\[\sK^M_{r, X|D_{\fm'}}\xr{\cdot p} \sK^M_{r, X|D_\fm}, \quad 
                                 \sK^M_{r, X|D_{\fm'+\delta_\nu}}\xr{\cdot p} \sK^M_{r, X|D_{\fm+\delta_\nu}}.\]
It is direct to check that we obtain a commutative diagram
\[\xymatrix{  
0\ar[r] &B^{r-1}_{s,\fm',\nu}\ar[r]\ar[d]^{C^{-1}_{\fm,\nu}}_{\simeq} & 
 \omega^{r-1}_{\fm',\nu}\ar[d]^{C^{-1}_{\fm,\nu}}\ar[r]^{\eqref{prop:higher-gr-map0}} &
   \gr^{\fm',\nu}\sK^M_{r,X}\ar[d]^{\cdot p}\ar[r] & 0\\
      0\ar[r] &B^{r-1}_{s+1,\fm,\nu}/B^{r-1}_{\fm,\nu}\ar[r] & 
     \omega^{r-1}_{\fm,\nu}/B^{r-1}_{\fm,\nu}\ar[r]_{\eqref{prop:higher-gr-map0}}&
   \gr^{\fm,\nu}\sK^M_{r,X}\ar[r] & 0. }\]
By induction on $s$, the upper horizontal sequence is exact and we have to show that so is the lower one.
Clearly, the lower sequence is exact on the left and on the right.  The exactness of the upper sequence implies
that the lower is a complex.  It remains to show that the induced map \eqref{thm:higher-gr-map-poschar1}
is injective. By Proposition \ref{prop:Bs-ZsCM} it suffices to check this at the generic point $\eta_\nu$ of $D_\nu$.
Since the composition \eqref{prop:higher-gr-map-char03} is equal to the differential,
the kernel of $\omega^{r-1}_{\fm,\nu}/B^{r-1}_{\fm,\nu}\to \gr^{\fm,\nu}\sK^M_{r,X}$
is contained in $Z^{r-1}_{\fm,\nu}/B^{r-1}_{\fm,\nu}$ and therefore lies in the image of $C^{-1}_{\fm,\nu}$.
Thus it remains to show that the map
$(\gr^{\fm',\nu}\sK^M_{r,X})_{\eta_\nu}\xr{\cdot p} (\gr^{\fm,\nu}\sK^M_{r,X})_{\eta_\nu}$ is injective.
Set $A:=\sO_{X,\eta_\nu}$, $K:={\rm Frac}(A)$ and write $m_\nu=p m$.
Then $A$ is a DVR which is essentially smooth over $k$ and 
we have to show that the map
\eq{thm:higher-gr-map-poschar2}{
    U^m K^M_r(K)/U^{m+1} K^M_r(K)\xr{\cdot p} U^{pm} K^M_r(K)/U^{pm+1} K^M_r(K)   }
induced by multiplication with $p$ is injective (here we use the notations of \ref{U-for-DVR}).
To this end we may replace $A$ and $K$ by their completions, where now $A$ is formally smooth over $k$,
see Lemma \ref{lem:DVR-cDVR}. Denote by $K_0$ the residue field of $A$; it is separable over $k$ 
(since $D_\nu$ is smooth over $k$). By \cite[Thm (19.6.4)]{EGAIV1} there is an isomorphism of $k$-algebras
$A\cong K_0[[t]]$; hence $K\cong K_0((t))$. Therefore the injectivity of \eqref{thm:higher-gr-map-poschar2}
follows from Corollary \ref{cor-mult-p-gradedK}, proven later independently. This finishes the proof.
\end{proof}

\begin{cor}\label{cor:higher-gr-map}
Let $k$ be a field of characteristic $p\ge 0$ and assume  $m_\nu\ge 1$. 
Set
\[s:=\begin{cases} 0, &\text{if } p=0\\
                            v_p(m_\nu), & \text{if }p>0,\end{cases}\]
where $v_p:\Q\to \Z$ is the $p$-adic valuation.
Then there is a distinguished triangle in $\sD^b(X_\Zar)$ (with the notation from \ref{Nis-to Zar})
\[R\e_*(\sK^M_{r, X|D_{\fm+\delta_\nu}, {\rm Nis}})\to R\e_*(\sK^M_{r, X|D_\fm, {\rm Nis}})\to
     \omega^{r-1}_{\fm,\nu}/B^{r-1}_{s+1,\fm,\nu}\xr{[1]}.\]
Furthermore, the canonical map
\[\gr^{\fm,\nu}\sK^M_{r,X}\xr{\simeq} R\e_*\gr^{\fm,\nu}\sK^M_{r,X,\Nis}\]
is an isomorphism.
\end{cor}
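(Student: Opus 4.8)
The plan is to deduce both assertions at once from the short exact sequence of Nisnevich sheaves
\[0\to \sK^M_{r,X|D_{\fm+\delta_\nu},\Nis}\to \sK^M_{r,X|D_\fm,\Nis}\to \gr^{\fm,\nu}\sK^M_{r,X,\Nis}\to 0,\]
which is exact by Corollary \ref{cor:rel-K-in-K} (giving the injectivity on the left) together with the very definition of $\gr^{\fm,\nu}\sK^M_{r,X,\Nis}$ in \ref{gr-rel-K}. Applying the functor $R\e_*$ of \ref{Nis-to Zar} produces a distinguished triangle in $\sD^b(X_\Zar)$ whose first two terms are already those appearing in the statement. Thus the entire corollary reduces to identifying the third term, that is, to proving the ``furthermore'' isomorphism $\gr^{\fm,\nu}\sK^M_{r,X}\xr{\simeq}R\e_*\gr^{\fm,\nu}\sK^M_{r,X,\Nis}$ together with the isomorphism $\omega^{r-1}_{\fm,\nu}/B^{r-1}_{s+1,\fm,\nu}\cong \gr^{\fm,\nu}\sK^M_{r,X}$ of Zariski sheaves.

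This last isomorphism of Zariski sheaves is exactly Proposition \ref{prop:higher-gr-map-char0} when $p=0$ (where $s=0$, so $B^{r-1}_{s+1,\fm,\nu}=B^{r-1}_{\fm,\nu}$) and Theorem \ref{thm:higher-gr-map-poschar} when $p>0$; the two cases agree when $p\nmid m_\nu$. For the ``furthermore'' part I would proceed as follows. The map \eqref{prop:higher-gr-map0} underlying these isomorphisms is given by an explicit symbol formula that is manifestly compatible with pullback along an \'etale morphism $v\colon V\to X$. Applying the same Proposition or Theorem to each such $V$ (which is again smooth with $v^*D_{\rm red}$ an SNCD) shows that the presheaf \eqref{gr-rel-K1} computing $\gr^{\fm,\nu}\sK^M_{r,X,\Nis}$ is isomorphic, functorially in $V$, to the presheaf of sections of the $\sO_{D_\nu}$-module $\omega^{r-1}_{\fm,\nu}/B^{r-1}_{s+1,\fm,\nu}$. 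Passing to associated Nisnevich sheaves then identifies $\gr^{\fm,\nu}\sK^M_{r,X,\Nis}$ with $\e^{-1}\bigl(\omega^{r-1}_{\fm,\nu}/B^{r-1}_{s+1,\fm,\nu}\bigr)$.

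By Remark \ref{rmk:Bs-Zs-free} the sheaf $\omega^{r-1}_{\fm,\nu}/B^{r-1}_{s+1,\fm,\nu}$ is a locally free, in particular quasi-coherent, $\sO_{D_\nu}$-module, hence quasi-coherent as an $\sO_X$-module via $i_{\nu*}$. For a quasi-coherent sheaf $\sF$ the higher Nisnevich cohomology vanishes and its sections are unchanged under Nisnevich sheafification, so the unit of adjunction $\sF\to R\e_*\e^{-1}\sF$ is an isomorphism. Combining the three identifications yields $R\e_*\gr^{\fm,\nu}\sK^M_{r,X,\Nis}\cong \omega^{r-1}_{\fm,\nu}/B^{r-1}_{s+1,\fm,\nu}\cong \gr^{\fm,\nu}\sK^M_{r,X}$, which proves the ``furthermore'' statement and, inserted into the triangle obtained above, gives the asserted distinguished triangle.

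The main obstacle is the passage from a sheaf isomorphism on the Zariski site to the corresponding statement after applying $R\e_*$ on the Nisnevich site: one must verify that the isomorphism of Theorem \ref{thm:higher-gr-map-poschar} and Proposition \ref{prop:higher-gr-map-char0} is genuinely \'etale-local, so that it survives Nisnevich sheafification, and that the target is truly quasi-coherent, so that the standard comparison between Nisnevich and Zariski cohomology applies and $R\e_*$ contributes no higher terms. Everything else is a formal manipulation of the short exact sequence and the vanishing of higher Nisnevich cohomology of quasi-coherent sheaves.
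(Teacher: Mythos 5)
Your overall strategy coincides with the paper's: both reduce the triangle to the defining short exact sequence of $\gr^{\fm,\nu}\sK^M_{r,X,\Nis}$, sheafify the isomorphism of Proposition \ref{prop:higher-gr-map-char0} resp.\ Theorem \ref{thm:higher-gr-map-poschar} on $X_\Nis$ (using that the symbol map is compatible with \'etale pullback, together with the decomposition \eqref{gr-rel-K2}), and then prove that the natural map $\omega^{r-1}_{\fm,\nu}/B^{r-1}_{s+1,\fm,\nu}\to R\e_*\bigl(\omega^{r-1}_{\fm,\nu,\Nis}/B^{r-1}_{s+1,\fm,\nu,\Nis}\bigr)$ is an isomorphism. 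In characteristic $p>0$ your argument is essentially the paper's: there $\omega^{r-1}_{\fm,\nu}/B^{r-1}_{s+1,\fm,\nu}$ is a quotient of the quasi-coherent $\sO_X$-module $F^{s+1}_{X*}\omega^{r-1}_{\fm,\nu}$ ($F_X$ the absolute Frobenius), hence quasi-coherent, and the comparison $E\xr{\simeq}R\e_*E_\Nis$ for quasi-coherent $E$ applies. One notational caveat: the relevant Nisnevich sheaf is $E_\Nis\colon (v\colon V\to X)\mapsto H^0(V,v^*E)$ and not the abelian inverse image $\e^{-1}E$ (these have different sections over \'etale $V$, e.g.\ $B^{r-1}_{s+1,\fm,\nu,\Nis}$ is formed from differentials of sections over $V$, not from restricted Zariski sections); accordingly the correct comparison is Milne's $E\xr{\simeq}R\e_*E_\Nis$, not the adjunction unit $\sF\to R\e_*\e^{-1}\sF$, which is not the statement you need.

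The genuine gap is in characteristic $0$. There Remark \ref{rmk:Bs-Zs-free} does not apply: the $\sO_{D_\nu}$-module structure invoked in that remark is induced from $F^{r}_{X*}\omega^q_{\fm,\nu}$ and only exists in positive characteristic. In characteristic $0$ the differential $d$ is merely $k$-linear, so $B^{r-1}_{\fm,\nu}=\im(d^{r-2})$ is \emph{not} an $\sO$-submodule of $\omega^{r-1}_{\fm,\nu}$, the quotient $\omega^{r-1}_{\fm,\nu}/B^{r-1}_{\fm,\nu}$ carries no natural quasi-coherent structure, and your appeal to local freeness and the quasi-coherent Zariski--Nisnevich comparison collapses at exactly the step you yourself single out as the main obstacle. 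The paper closes this case differently: using $B^q_{\fm,\nu}=Z^q_{\fm,\nu}$ in characteristic $0$ (equation \eqref{prop:higher-gr-map-char01}, via \cite[Lem 6.2]{BS14}) it runs a descending induction on $q$ with the short exact sequences
\[0\to Z^q_{\fm,\nu,\Nis}\to\omega^q_{\fm,\nu,\Nis}\to B^{q+1}_{\fm,\nu,\Nis}\to 0,\]
starting from $\omega^q_{\fm,\nu}=0$ for $q\gg 0$ and from the isomorphism $\omega^q_{\fm,\nu}\cong R\e_*\omega^q_{\fm,\nu,\Nis}$ (the form sheaves themselves \emph{are} quasi-coherent), to conclude $R\e_*B^q_{\fm,\nu,\Nis}\cong B^q_{\fm,\nu}$ and hence the comparison for the quotient. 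To repair your proof you would need to substitute this inductive argument (or some equivalent device) in the characteristic-$0$ case; as written, that case is unproven.
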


\begin{proof}
The assignment 
\[X_\Nis\ni(v:V\to X)\mapsto H^0(V, v^*i_{\nu*}\omega^q_{\fm,\nu})\]
defines a sheaf on $X_\Nis$ which we denote by $\omega^q_{\fm,\nu,\Nis}$.
We define sheaves on $X_\Nis$ by
\[Z^q_{\fm,\nu,\Nis}:=\Ker(\omega^q_{\fm,\nu,\Nis}\xr{d^q}\omega^{q+1}_{\fm,\nu,\Nis}), \quad
    B^q_{\fm,\nu,\Nis}:=\im(\omega^{q-1}_{\fm,\nu,\Nis}\xr{d^{q-1}}\omega^q_{\fm,\nu,\Nis}).\]
Furthermore, if $p>0$, then the Cartier isomorphism from Theorem \ref{thm:Cartier-mod}
induces an isomorphism 
$C^{-1}_{\fm,\nu,\Nis}: \omega^q_{\fm,\nu,\Nis}\xr{\simeq} \sH^q(\omega^\bullet_{\fm,\nu,\Nis})$
and we can define the sheaves $Z^q_{r,\fm ,\nu,\Nis}$ and $B^q_{r,\fm,\nu,\Nis}$ as in 
\ref{higher-Bs-Zs}.  Proposition \ref{prop:higher-gr-map-char0} and Theorem \ref{thm:higher-gr-map-poschar}
yield an isomorphism between sheaves on $X_\Nis$
\[\omega^{r-1}_{\fm,\nu,\Nis}/B^{r-1}_{s+1,\fm,\nu,\Nis}\xr{\simeq}
            \gr^{\fm,\nu}\sK^M_{r,X,\Nis}.\]
Therefore it suffices to show that the natural map
\eq{cor:higher-gr-map1}{\omega^{r-1}_{\fm,\nu}/B^{r-1}_{s+1,\fm,\nu}\to 
             R\e_*(\omega^{r-1}_{\fm,\nu,\Nis}/B^{r-1}_{s+1,\fm,\nu,\Nis})}
is an isomorphism. To this end we note that for a quasi-coherent sheaf $E$ on $X$
we have $R\e_*E_{\Nis}=E$, where $E_\Nis$ is the Nisnevich sheaf 
$X_\Nis\ni (v:V\to X)\mapsto H^0(V, v^*E)$ (cf. \cite[III, Prop 3.7]{MilneEt}).
If $p>0$ and $F_X$ denotes the absolute Frobenius on $X$, 
then $\omega^{r-1}_{\fm,\nu}/B^{r-1}_{s+1,\fm,\nu}$ is a  quotient of the quasi-coherent $\sO_X$-module
$F^{s+1}_{X*}\omega^{r-1}_{\fm,\nu}$ and hence is quasi-coherent.
We get \eqref{cor:higher-gr-map1} in this case. 
If $p=0$ we have the natural isomorphism $\omega^{q}_{\fm,\nu}\cong R\e_*\omega^{q}_{\fm,\nu,\Nis}$,
for all $q\ge 0$. Furthermore, $Z^q_{\fm,\nu,\Nis}\cong B^q_{\fm,\nu,\Nis}$, 
see \eqref{prop:higher-gr-map-char01}. Hence descending induction on $q$ and the exact sequence on $X_\Nis$
\[0\to Z^q_{\fm,\nu\Nis}\to\omega^q_{\fm,\nu\Nis}\to B^{q+1}_{\fm,\nu,\Nis}\to 0\] 
give
\[R\e_*Z^q_{\fm,\nu,\Nis}\cong Z^q_{\fm,\nu}\cong B^q_{\fm,\nu}\cong 
R\e_*B^q_{\fm,\nu,\Nis}.\]
The isomorphism \eqref{cor:higher-gr-map1} follows.
\end{proof}

\begin{cor}\label{cor:relK-smDiv}
Assume that $D_{\rm red}$ is smooth ($D=0$ is allowed). Then the natural map
\eq{cor:relK-smDiv1}{\sK^M_{r, X|D}\xr{\simeq} R\e_*\sK^M_{r, X|D,\Nis}}
is an isomorphism.
\end{cor}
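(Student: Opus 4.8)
The plan is to prove \eqref{cor:relK-smDiv1} by induction, enlarging the divisor one step at a time and comparing, on $X_\Zar$, the short exact sequences of relative Milnor $K$-sheaves with the distinguished triangles for $R\e_*$ supplied by Corollaries \ref{cor:gr0-map} and \ref{cor:higher-gr-map}. Isomorphy of \eqref{cor:relK-smDiv1} is local on $X_\Zar$ and $R\e_*$ commutes with restriction to Zariski opens; since $D_{\rm red}$ is smooth its irreducible components $\{D_\lambda\}$ are pairwise disjoint, so $X$ is covered by opens meeting at most one component, and away from $D$ the two sheaves agree with $\sK^M_{r,X}$ by Remark \ref{rmk:MilnorModulus}. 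Hence I would reduce to $D=D_\fm$ with $\fm=m\,\delta_\nu$ for a single smooth connected component $E=D_\nu$, and induct on $m\ge 0$.

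For the step from $\fm$ to $\fm+\delta_\nu$, Corollary \ref{cor:rel-K-in-K} together with the definition of the graded sheaf gives a short exact sequence on $X_\Zar$
\[0\to \sK^M_{r,X|D_{\fm+\delta_\nu}}\to \sK^M_{r,X|D_\fm}\to \gr^{\fm,\nu}\sK^M_{r,X}\to 0.\]
Via the natural maps $\sK^M_{r,X|?}\to R\e_*\sK^M_{r,X|?,\Nis}$ this triangle maps into the triangle of Corollary \ref{cor:gr0-map} when $m_\nu=0$, resp. of Corollary \ref{cor:higher-gr-map} when $m_\nu\ge 1$, yielding a morphism of distinguished triangles in $\sD^b(X_\Zar)$. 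The middle vertical arrow is an isomorphism by the inductive hypothesis, so by the two-out-of-three property it suffices to show that the comparison map on the graded piece is an isomorphism. For $m_\nu\ge 1$ this is precisely the last assertion of Corollary \ref{cor:higher-gr-map}. For $m_\nu=0$ one has $D_{\nu,\fm}=0$, since disjointness gives $D_\nu\cap D_\lambda=\0$ for $\lambda\neq\nu$, and Proposition \ref{prop:gr0-map} (in the smooth case) identifies the Zariski graded piece with $i_{\nu*}\sK^M_{r,D_\nu}$; the required isomorphism $i_{\nu*}\sK^M_{r,D_\nu}\xr{\simeq} R(\e\circ i_\nu)_*\sK^M_{r,D_\nu,\Nis}$ is then obtained by applying the exact functor $i_{\nu*}$, which commutes with $R\e_*$ because $i_\nu$ is a closed immersion, to statement \eqref{cor:relK-smDiv1} for the smooth scheme $D_\nu$ with zero divisor.

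Thus everything rests on the base case $D=0$, that is $\sK^M_{r,X}\xr{\simeq} R\e_*\sK^M_{r,X,\Nis}$ for every smooth $k$-scheme $X$ --- which is also what the $m_\nu=0$ step invokes on the lower-dimensional $D_\nu$ --- and I expect this to be the main obstacle. By \ref{K-Nis}, $\sK^M_{r,X}$ is a homotopy invariant Nisnevich sheaf with transfers, so the Gersten resolution \eqref{Gersten} is a flasque resolution of $\sK^M_{r,X}$ on $X_\Zar$ and of $\sK^M_{r,X,\Nis}$ on $X_\Nis$. The terms of the Nisnevich Gersten complex are sums of pushforwards $i_{x*}K^M_{r-p}(k(x))$ from points $x\in X^{(p)}$; since $\Spec k(x)$ is a single point, its Nisnevich and Zariski topoi coincide, so $R\e_*$ carries each such term to the corresponding term of the Zariski Gersten complex with vanishing higher direct images. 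Applying $R\e_*$ to the Nisnevich resolution therefore recovers the Zariski resolution and yields the base-case isomorphism, which closes the induction.
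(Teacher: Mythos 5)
Your devissage is exactly the paper's own argument: the proof of Corollary \ref{cor:relK-smDiv} deduces the statement from the isomorphisms on graded pieces $\gr^{\fm,\nu}\sK^M_{r,X}\xr{\simeq}R\e_*\gr^{\fm,\nu}\sK^M_{r,X,\Nis}$, supplied by Proposition \ref{prop:gr0-map} (case $m_\nu=0$, where smoothness of $D_{\rm red}$ makes \eqref{prop:gr0-map1} an isomorphism) and by Corollary \ref{cor:higher-gr-map} (case $m_\nu\ge 1$), with the case of a smooth scheme and zero divisor as the remaining input. Your treatment of the $m_\nu\ge 1$ step, the identification $R\e_*i_{\nu*}=R(\e\circ i_\nu)_*$ for the closed immersion $i_\nu$, and the two-out-of-three reduction are all fine; the preliminary reduction to a single connected component via disjointness is harmless but unnecessary.

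The genuine gap is in your base case $D=0$. The paper settles it in one stroke: since $Y\mapsto H^0(Y,\sK^M_{r,Y})$ is a homotopy invariant presheaf with transfers (\ref{K-Nis}), the theorem \cite[Thm 5.1, 2.]{VoDM} gives that its Zariski and Nisnevich cohomologies agree, i.e.\ $\sK^M_{r,X}\xr{\simeq}R\e_*\sK^M_{r,X,\Nis}$. You instead assert that the Gersten complex \eqref{Gersten} ``is a flasque resolution \dots of $\sK^M_{r,X,\Nis}$ on $X_\Nis$''. This does not follow from what you cite: \ref{K-Nis} invokes \cite[Thm 3.1.12]{VoDM} only to show the presheaf restricts to a Nisnevich sheaf. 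Stalkwise, your claim is the exactness of the Gersten complex of the henselian local rings $\sO_{X,x}^h$, which are \emph{not} essentially smooth over $k$, so Kerz's theorem as quoted in \ref{MilnorK} does not apply directly; one would need a N\'eron--Popescu/filtered-colimit argument (as the paper runs elsewhere, cf.\ the proof of Theorem \ref{thm-DRW-MilnorK}) or Voevodsky's machinery --- at which point one may as well quote \cite[Thm 5.1, 2.]{VoDM} as the paper does. A second, literal error: the Nisnevich and Zariski topoi of $\Spec k(x)$ do \emph{not} coincide --- the small Nisnevich site of a field spectrum has all finite separable extensions of $k(x)$ as objects. What your argument actually needs, and what is true, is that every Nisnevich cover of a henselian local scheme splits, so $H^i_{\Nis}(\Spec K,-)=0$ for $i>0$; with this, the Gersten terms are $\e_*$-acyclic and $\e_*$ recovers the Zariski terms, so once exactness over henselizations is supplied by one of the routes above, your computation of $R\e_*$ does close the induction.
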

\begin{proof}
By \ref{K-Nis} and \cite[Thm 5.1, 2.]{VoDM} the natural map 
 $\sK^M_{r,D_{\rm red}}\to R\e_*\sK^M_{r,D_{\rm red},\Nis}$ is an isomorphism.
Thus by Proposition \ref{prop:gr0-map} and Corollary \ref{cor:higher-gr-map}, the natural maps
$\gr^{\fm,\nu}\sK^M_{r,X}\to R\e_*\gr^{\fm,\nu}\sK^M_{r,X,\Nis}$ are isomorphism for all
$\fm$, $\nu$, $r$. Hence the statement.
\end{proof}

\subsection{The Cousin resolution of relative Milnor {$K$}-sheaves}
\begin{thm}\label{thm:relK-is-CM}
Let $D$ be an effective divisor on $X$ and assume that $D_{\rm red}$ is a simple normal crossing divisor.
Then for all closed subschemes $T\subset X$ of codimension $c$, and for all $i<c$, we have 
\[\sH^i_T(R\e_*\sK^M_{r, X|D,\Nis})=0.\]
\end{thm}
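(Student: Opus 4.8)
The plan is to prove the vanishing by a double induction, an outer induction on $d=\dim X$ and an inner induction on $|\fm|=\sum_\lambda m_\lambda$, where we write $D=D_\fm=\sum_\lambda m_\lambda D_\lambda$ with the $D_\lambda$ the irreducible components of $D_{\rm red}$ and $\fm=(m_\lambda)\in\N^\Lambda$. Throughout I would use two elementary observations. First, if a sheaf or complex is supported on a closed subset $Z\subset X$, then its local cohomology with support in $T$ agrees with that with support in $T\cap Z$. Second, since $T\cap D_\nu$ is a closed subset of $T$ contained in the smooth divisor $D_\nu$, we have the codimension shift $\codim_{D_\nu}(T\cap D_\nu)=\codim_X(T\cap D_\nu)-1\ge c-1$.

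For the base case $\fm=0$ we have $\sK^M_{r,X|0,\Nis}=\sK^M_{r,X,\Nis}$, and Corollary \ref{cor:relK-smDiv} (with $D=0$) gives $R\e_*\sK^M_{r,X,\Nis}\cong \sK^M_{r,X}$. The Gersten resolution \eqref{Gersten} is a flasque resolution whose $j$-th term is supported on points of codimension $j$; applying $\ul{\Gamma}_T$ (equivalently, invoking Corollary \ref{cor:MilnorKcompSNCD}\eqref{cohsuppnotc} with $E=\0$) yields $\sH^i_T(\sK^M_{r,X})=0$ for $i<c$, as desired.

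For the inductive step I would pass from $\fm$ to $\fm+\delta_\nu$ and apply the distinguished triangle provided either by Corollary \ref{cor:gr0-map} (when the $\nu$-component of $\fm$ is $0$) or by Corollary \ref{cor:higher-gr-map} (when it is $\ge 1$):
\[R\e_*\sK^M_{r,X|D_{\fm+\delta_\nu},\Nis}\to R\e_*\sK^M_{r,X|D_\fm,\Nis}\to \mathcal{Q}\xr{[1]},\]
where $\mathcal{Q}$ is $R(\e\circ i_\nu)_*\sK^M_{r,D_\nu|D_{\nu,\fm},\Nis}$ in the first case and $\omega^{r-1}_{\fm,\nu}/B^{r-1}_{s+1,\fm,\nu}$ in the second. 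Taking the long exact sequence of $\sH^\bullet_T$ and using that the middle term vanishes for $i<c$ by the inner induction hypothesis (here $|\fm|<|\fm+\delta_\nu|$ while the dimension is unchanged), it suffices to show $\sH^j_T(\mathcal{Q})=0$ for all $j<c-1$. Both choices of $\mathcal{Q}$ are supported on $D_\nu$, so by the codimension shift it is enough to prove the vanishing in the range $j<\codim_{D_\nu}(T\cap D_\nu)$ on $D_\nu$. When the $\nu$-component of $\fm$ is $\ge 1$ this is exactly Proposition \ref{prop:Bs-ZsCM}. When it is $0$, I would use $R(\e\circ i_\nu)_*=i_{\nu*}R\e_{D_\nu*}$ (since $i_\nu$ is a closed immersion, $i_{\nu*}$ is exact and commutes with local cohomology) to identify $\mathcal{Q}=i_{\nu*}\big(R\e_{D_\nu*}\sK^M_{r,D_\nu|D_{\nu,\fm},\Nis}\big)$; as $D_\nu$ is smooth of dimension $d-1$ and $D_{\nu,\fm}$ has simple normal crossing support, the needed vanishing is precisely the assertion of the theorem for the pair $(D_\nu,D_{\nu,\fm})$, which holds by the outer induction hypothesis.

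The argument is thus a formal dévissage once the distinguished triangles of Corollaries \ref{cor:gr0-map} and \ref{cor:higher-gr-map} and the graded vanishing of Proposition \ref{prop:Bs-ZsCM} are in hand. The one point requiring care is the bookkeeping of the codimension shift: it is what makes the range $j<c-1$ needed for the graded quotient $\mathcal{Q}$ match exactly what Proposition \ref{prop:Bs-ZsCM} provides in the higher-multiplicity case and what the lower-dimensional instance of the theorem provides in the multiplicity-zero case. The main subtlety is therefore organizing the two nested inductions so that each reduction genuinely decreases one of the two induction parameters, the $m_\nu\ge 1$ steps lowering $|\fm|$ within a fixed dimension and the $m_\nu=0$ steps reducing to the smooth divisor $D_\nu$ of dimension $d-1$.
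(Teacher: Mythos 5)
Your proof is correct and follows essentially the same route as the paper: the paper's proof also reduces, via the Gersten resolution for $\sK^M_{r,X}$ and the codimension shift $c-1\le\codim(T\cap D_\nu,D_\nu)$, to the vanishing of $\sH^{i-1}_{T\cap D_\nu}$ of the graded pieces $\gr^{\fm,\nu}\sK^M_{r,X,\Nis}$, handled by Corollary \ref{cor:gr0-map} with induction on $\dim X$ when $m_\nu=0$ and by Corollary \ref{cor:higher-gr-map} together with Proposition \ref{prop:Bs-ZsCM} when $m_\nu\ge 1$. The only difference is organizational: the paper uses a single triangle $R\e_*\sK^M_{r,X|D,\Nis}\to\sK^M_{r,X}\to R\e_*(\sK^M_{r,X}/\sK^M_{r,X|D,\Nis})$ and treats the quotient as a successive extension of all graded pieces at once, whereas you peel off one multiplicity at a time via an explicit inner induction on $|\fm|$ — the same d\'evissage repackaged.
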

\begin{proof}
Corollary \ref{cor:rel-K-in-K} and Corollary \ref{cor:relK-smDiv} (for $D=0$) give a distinguished triangle
in $\sD^b(X_{\Zar})$ 
\[R\e_*\sK^M_{r,X|D,\Nis}\to \sK^M_{r,X}\to R\e_*(\sK^M_{r,X}/\sK^M_{r, X|D,\Nis})\xr{[1]}.\]
By the exactness of the Gersten resolution \ref{Gersten} we have 
$\sH^i_T(\sK^M_{r,X})=0$, for all $i<c$. Hence it suffices to show that 
    $\sH^{i-1}_T(R\e_*(\sK^M_{r,X}/\sK^M_{r, X|D,\Nis}))=0$, for all $i<c$.
With the notation from \ref{gr-rel-K} we have
\[\sH^{i-1}_T(R\e_*\gr^{\fm,\nu}\sK^M_{r,X,\Nis})=
          \sH^{i-1}_{T\cap D_\nu}(R\e_*\gr^{\fm,\nu}\sK^M_{r,X,\Nis}).\]
Since $c-1\le\codim(T\cap D_\nu, D_\nu)$ it follows from Corollary \ref{cor:gr0-map} together 
with induction on the dimension of $X$, Corollary \ref{cor:higher-gr-map} 
and Proposition \ref{prop:Bs-ZsCM} that these groups vanish for $i<c$.
Now the theorem follows since $\sK^M_{r,X}/\sK^M_{r, X|D,\Nis}$ is a successive extension of the sheaves
$\gr^{\fm,\nu}\sK^M_{r,X,\Nis}$.
\end{proof}

\begin{no}\label{CousinComplex-relK}
{\em The Cousin complex.}
Let $D$ be an effective divisor on $X$. We denote by $C^\bullet_{r, X|D}$ the Cousin complex of 
$\sK^{M}_{r, X|D}$ (see \cite[IV, 2.]{Ha}). It has the following shape (with the notation from \ref{cech-symbol})
\mlnl{C^\bullet_{r, X|D}: i_{\eta*}H^0_\eta(\sK^M_{r, X|D})\to
                              \bigoplus_{x\in X^{(1)}} i_{x*}H^1_x(\sK^M_{r, X|D})\to
          \ldots \\ \to\bigoplus_{x\in X^{(i)}} i_{x*}H^i_x(\sK^M_{r, X|D})\to\ldots .}
Here  $i_x: x\inj X$ denotes the immersion.  Similarly,
we denote by $C^{h,\bullet}_{r,X|D}$ the Cousin complex of $R\e_*\sK^M_{r, X|D,\Nis}$
\mlnl{C^{h,\bullet}_{r, X|D}: i_{\eta*}H^0_\eta(R\e_*\sK^M_{r, X|D,\Nis})\to
                              \bigoplus_{x\in X^{(1)}} i_{x*}H^1_x(R\e_*\sK^M_{r, X|D,\Nis})\to
          \ldots \\  \to\bigoplus_{x\in X^{(i)}} i_{x*}H^i_x(R\e_*\sK^M_{r, X|D,\Nis})\to\ldots .}
In particular these are complexes of flasque sheaves. The restriction of $C^\bullet_{r,X|D}$ to 
$U=X\setminus D$ equals  the Gersten resolution of $\sK^M_{r,U}$ by Corollary \ref{cor:MilnorK-coh-in-c}:
\eq{CousinComplex-relK0}{(C^\bullet_{r, X|D})_{|U}= C^\bullet_{r, U}.}
If furthermore $D_{\rm red}$ has  simple normal crossings, then by Corollary \ref{cor:relK-smDiv}
(for $(X,D)=(U,0)$) we also have
\[(C^{h,\bullet}_{r, X|D})_{|U}= C^\bullet_{r, U}.\]
The natural map $\sK^M_{r, X|D}\to R\e_*\sK^M_{r,X|D,\Nis}$ induces a natural map of complexes on $X_\Zar$
\eq{CousinComplex-relK1}{C^\bullet_{r,X|D}\to C^{h,\bullet}_{r, X|D}.}

Finally we give an alternative description of the terms appearing in $C^{h,\bullet}_{r,X|D}$:
If $Z\subset X$ is closed we have $R\e_*R\ul{\Gamma}_Z=R\ul{\Gamma}_ZR\e_*$, 
by \cite[V, Prop 4.9, Prop 4.11]{SGA4II}. Hence for $x\in X^{(c)}$ we have
\[H^c_x(R\e_*\sK^M_{r, X|D,\Nis})=\varinjlim_{x\in V} H^c_{\ol{x}\cap V}(V_{\Nis}, \sK^M_{r,X|D,\Nis}),\]
where the limit ranges over all Zariski open neighborhoods $V\subset X$ of $x$. 
Let $X_{(x)}^h=\Spec \sO_{X,x}^h$ be the henselization of $X$ at $x$ and denote by $i^h_x: X^h_{(x)}\to X$
the canonical map. Then the above together with \cite[1.27 and 1.29.3]{Nis} yield
\[H^c_x(R\e_*\sK^M_{r, X|D,\Nis})= H^c_x(X^h_{(x),\Nis}, (i_x^h)^{-1}\sK^M_{r, X|D,\Nis}).\]
\end{no}

\begin{cor}\label{cor:CousinComplex-relK}
Assume that $D_{\rm red}$ has simple normal crossings. Then there is an isomorphism
\[R\e_*\sK^M_{r, X|D,\Nis}\xr{\simeq} C^{h,\bullet}_{r, X|D}\quad \text{in }\sD^b(X_\Zar).\]
Furthermore if $D_{\rm red}$ is smooth the natural morphisms
\[\sK^M_{r,X|D}\to C^\bullet_{r,X|D}\xr{\eqref{CousinComplex-relK1}} C^{h,\bullet}_{r,X|D}\]
are quasi-isomorphism of complexes. 
\end{cor}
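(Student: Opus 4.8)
The plan is to deduce both parts from the single statement that $F^\bullet:=R\e_*\sK^M_{r,X|D,\Nis}$ is \emph{Cohen--Macaulay} for the codimension filtration on $X$, i.e. that $H^i_x(F^\bullet)=0$ for every point $x$ and every $i\neq \codim x$. Granting this, part (1) is the standard dévissage (cf. \cite[IV]{Ha}): writing $Z^p\subset X$ for the set of points of codimension $\ge p$, the localization triangles for the filtration $X=Z^0\supset Z^1\supset\cdots$ have graded pieces
\[
\bigoplus_{x\in X^{(p)}} i_{x*}H^p_x(F^\bullet)[-p]=C^{h,p}_{r,X|D}[-p],
\]
the identification using exactly that $H^i_x(F^\bullet)$ is concentrated in the single degree $i=\codim x$. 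Splicing these triangles into a Postnikov system identifies $F^\bullet=R\underline{\Gamma}_{Z^0}F^\bullet$ with $C^{h,\bullet}_{r,X|D}$ in $\sD^b(X_\Zar)$, the connecting maps being precisely the Cousin differentials.

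The Cohen--Macaulay property has two halves. The vanishing $H^i_x(F^\bullet)=0$ for $i<\codim x$ is Theorem \ref{thm:relK-is-CM}, localized at $x$. For the complementary vanishing $H^i_x(F^\bullet)=0$ for $i>\codim x$ I would run the same dévissage, now reading off the upper index. Using the triangle $F^\bullet\to\sK^M_{r,X}\to R\e_*(\sK^M_{r,X}/\sK^M_{r,X|D,\Nis})\xr{[1]}$ together with the exactness of the Gersten resolution \eqref{Gersten} (so that $H^i_x(\sK^M_{r,X})=0$ for all $i\neq\codim x$), the long exact sequence shows that for $i>\codim x$ the group $H^i_x(F^\bullet)$ is a quotient of $H^{i-1}_x$ of the quotient complex. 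Since $\sK^M_{r,X}/\sK^M_{r,X|D,\Nis}$ is a successive extension of the graded pieces $\gr^{\fm,\nu}\sK^M_{r,X,\Nis}$, it suffices to prove $H^j_x(R\e_*\gr^{\fm,\nu}\sK^M_{r,X,\Nis})=0$ for $j\ge\codim x$. This is clear for $x\notin D_\nu$, and for $x\in D_\nu$ one has $\codim_{D_\nu}x=\codim_X x-1$, while $H^j_x$ of a sheaf pushed forward from $D_\nu$ is computed on $D_\nu$.

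It then remains to treat each graded piece, in the two cases of Proposition \ref{prop:gr0-map}. When $m_\nu\ge 1$, Corollary \ref{cor:higher-gr-map} and Theorem \ref{thm:higher-gr-map-poschar} identify $R\e_*\gr^{\fm,\nu}\sK^M_{r,X,\Nis}$ with the sheaf $\omega^{r-1}_{\fm,\nu}/B^{r-1}_{s+1,\fm,\nu}$, which by Remark \ref{rmk:Bs-Zs-free} is a locally free $\sO_{D_\nu}$-module; being free over the regular local rings of $D_\nu$ it is Cohen--Macaulay in both directions, so $H^j_x=0$ for $j\neq\codim_{D_\nu}x=\codim_X x-1$, a fortiori for $j\ge\codim_X x$. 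When $m_\nu=0$, Proposition \ref{prop:gr0-map} and Corollary \ref{cor:gr0-map} identify the piece with $i_{\nu*}R\e_*\sK^M_{r,D_\nu|D_{\nu,\fm},\Nis}$, and the needed vanishing is the upper half of the Cohen--Macaulay property for the pair $(D_\nu,D_{\nu,\fm})$. Since $D_{\nu,\fm}$ is again an effective divisor with simple normal crossing support on the smooth scheme $D_\nu$ of dimension $\dim X-1$, this holds by induction on $\dim X$. This establishes full Cohen--Macaulayness and hence part (1).

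Part (2) is then formal: if $D_{\rm red}$ is smooth, Corollary \ref{cor:relK-smDiv} gives an isomorphism $\sK^M_{r,X|D}\xr{\simeq}R\e_*\sK^M_{r,X|D,\Nis}$, so this object lies in degree $0$ and the two sheaves computed by $H^i_x$ underlying $C^\bullet_{r,X|D}$ and $C^{h,\bullet}_{r,X|D}$ coincide; hence \eqref{CousinComplex-relK1} is an isomorphism of complexes and $\sK^M_{r,X|D}\to C^\bullet_{r,X|D}$ is a quasi-isomorphism by part (1). The main obstacle is precisely the upper bound $H^i_x(F^\bullet)=0$ for $i>\codim x$: it is not recorded in Theorem \ref{thm:relK-is-CM}, yet it is indispensable, because when $D_{\rm red}$ genuinely has crossings $F^\bullet$ is an honest complex rather than a sheaf, and only the two-sided Cohen--Macaulay condition forces $F^\bullet$ to agree with its Cousin complex.
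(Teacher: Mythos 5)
Your proof is correct in substance, and its skeleton is exactly the paper's: part (1) is the Cohen--Macaulay criterion for Cousin complexes, \cite[IV, Prop 3.1]{Ha}, fed with the vanishing of Theorem \ref{thm:relK-is-CM}, and part (2) follows from part (1) together with Corollary \ref{cor:relK-smDiv}, since that corollary makes \eqref{CousinComplex-relK1} an isomorphism of complexes. Where you genuinely diverge is the complementary vanishing $H^i_x(R\e_*\sK^M_{r,X|D,\Nis})=0$ for $i>\codim x$, which you single out as the ``main obstacle'' and establish by a full d\'evissage through the graded pieces $\gr^{\fm,\nu}\sK^M_{r,X,\Nis}$ with induction on $\dim X$. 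That vanishing is in fact automatic, which is why the paper does not record it: by the identification at the end of \ref{CousinComplex-relK} one has
\[H^i_x(R\e_*\sK^M_{r,X|D,\Nis})= H^i_x\bigl(X^h_{(x),\Nis}, (i_x^h)^{-1}\sK^M_{r,X|D,\Nis}\bigr),\]
and the local cohomology sequence squeezes this group between $H^{i-1}\bigl((X^h_{(x)}\setminus \{x\})_\Nis,-\bigr)$, which vanishes for $i-1>\codim(x)-1$ because the punctured henselian spectrum has Krull dimension $\codim(x)-1$ and Nisnevich cohomological dimension is bounded by Krull dimension (\cite[1.32]{Nis}), and $H^i\bigl(X^h_{(x),\Nis},-\bigr)$, which vanishes for $i>0$ since the scheme is henselian local. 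So the two-sided Cohen--Macaulay hypothesis of \cite[IV, Prop 3.1]{Ha} reduces to its lower half, i.e.\ to Theorem \ref{thm:relK-is-CM}; your page of d\'evissage, while essentially sound (it mirrors the proof of Theorem \ref{thm:relK-is-CM}), buys nothing that Grothendieck vanishing does not already give.

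One local slip in that d\'evissage is worth flagging: in the case $m_\nu\ge 1$ you invoke Remark \ref{rmk:Bs-Zs-free} to conclude that $\omega^{r-1}_{\fm,\nu}/B^{r-1}_{s+1,\fm,\nu}$ is a locally free $\sO_{D_\nu}$-module. That remark is a characteristic-$p$ statement (the module structure comes from $F^{s+1}_{X*}$); in characteristic $0$ the differential is not $\sO$-linear, $B^{r-1}_{\fm,\nu}$ is not an $\sO_{D_\nu}$-submodule, and the quotient is merely an abelian sheaf. The step is harmless in your argument only because the range you actually need there is $j>\codim_{D_\nu}x$, where the vanishing holds for an arbitrary abelian sheaf by the same dimension count as above --- but as written it is misattributed.
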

\begin{proof}
The first part follows from Theorem \ref{thm:relK-is-CM} and \cite[IV, Prop 3.1]{Ha}, the second part from the first 
and Corollary \ref{cor:relK-smDiv}.
\end{proof}

\begin{no}\label{CousinComplex-relK-Nis}
{\em The Cousin Complex in the Nisnevich topology.}
Consider the presheaf of complexes
\[X_\Nis\ni (v: V\to X)\mapsto \Gamma(V, C^h_{r, V|v^*D}).\]
The explicit description of $C^{\bullet,h}_{r, X|D}$ in \ref{CousinComplex-relK} above and 
excision for local Nisnevich cohomology (see \cite[1.27 Thm]{Nis}) implies that this presheaf 
is a sheaf of complexes on $X_\Nis$, which we denote by 
\[C^\bullet_{r, X|D,\Nis}.\]
By construction there are natural maps of complexes $\sK^M_{r, X|D}(V)\to C^\bullet_{r, X|D,\Nis}(V)$
where we use the notation from Definition \ref{defn:MilnorModulus},\eqref{defn:MilnorModulus(2)}.
This yields a morphism 
\eq{CousinComplex-relK-Nis1}{\sK^M_{r, X|D,\Nis}\to C^\bullet_{r, X|D,\Nis}\quad \text{on } X_{\Nis}.}
\end{no}

\begin{cor}\label{cor:CousinComplex-relK-Nis}
Assume $D_{\rm red}$ has simple normal crossings. Then 
\eqref{CousinComplex-relK-Nis1} is a quasi-isomorphism.
\end{cor}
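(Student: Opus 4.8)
The plan is to verify that \eqref{CousinComplex-relK-Nis1} is a quasi-isomorphism stalkwise for the Nisnevich topology, reducing the needed exactness to the Zariski-local statement of Theorem \ref{thm:relK-is-CM}. Since the Nisnevich topos has enough points given by the henselizations $\sO^h_{X,x}$, $x\in X$, it suffices to treat the stalk at each $x$. First I would identify this stalk: using the construction of $C^\bullet_{r,X|D,\Nis}$ in \ref{CousinComplex-relK-Nis}, the henselization description of the terms of $C^{h,\bullet}_{r,X|D}$ from \ref{CousinComplex-relK}, and Nisnevich excision \cite[1.27 Thm]{Nis}, the stalk of $C^\bullet_{r,X|D,\Nis}$ at $x$ should be the Cousin complex, with respect to the codimension filtration, of the Nisnevich sheaf $\sK^M_{r,X|D,\Nis}$ on the henselian local scheme $S:=X^h_{(x)}$, receiving the stalk $(\sK^M_{r,X|D,\Nis})_x$ in degree $0$. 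By the same Cousin-resolution criterion \cite[IV, Prop 3.1]{Ha} used in Corollary \ref{cor:CousinComplex-relK}, this complex is a resolution of $(\sK^M_{r,X|D,\Nis})_x$ once one knows the Cohen--Macaulay vanishing $H^i_z(\sK^M_{r,X|D,\Nis})=0$ for all $i\neq\codim(z)$ and all points $z$ of $S$, where $H^i_z$ is Nisnevich local cohomology on $S$.

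To feed the earlier results into this, I would pass from $S_\Nis$ back to $X_\Zar$. Exactly as in the last paragraph of \ref{CousinComplex-relK}, the identity $R\e_*R\ul{\Gamma}_Z=R\ul{\Gamma}_ZR\e_*$ of \cite[V, Prop 4.9, Prop 4.11]{SGA4II} together with \cite[1.27, 1.29.3]{Nis} identifies, for a point $z$ of codimension $c$,
\[H^i_z(\sK^M_{r,X|D,\Nis})\cong \sH^i_{\ol{z}}(R\e_*\sK^M_{r,X|D,\Nis})_z.\]
For $i<c$ the right-hand side vanishes by Theorem \ref{thm:relK-is-CM}, which is precisely the lower half of the Cohen--Macaulay condition. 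For the upper range $i>c$ I would invoke the distinguished triangle
\[R\e_*\sK^M_{r,X|D,\Nis}\to \sK^M_{r,X}\to R\e_*(\sK^M_{r,X}/\sK^M_{r,X|D,\Nis})\xr{[1]}\]
from the proof of Theorem \ref{thm:relK-is-CM}: the Gersten resolution \eqref{Gersten} kills $\sH^i_{\ol{z}}(\sK^M_{r,X})_z$ for $i\neq c$, and the third term is a finite successive extension of the graded pieces $\gr^{\fm,\nu}\sK^M_{r,X}$ by Corollary \ref{cor:higher-gr-map}, so it remains to bound their local cohomology.

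I expect the main obstacle to be twofold. The first point is making the stalk identification of the opening paragraph fully precise, i.e. that the Nisnevich stalk of $C^\bullet_{r,X|D,\Nis}$ really is the codimension Cousin complex on $S$ and that the resolution criterion applies to it. The second is the upper vanishing: for the pieces $\gr^{\fm,\nu}\sK^M_{r,X}$ with $m_\nu\ge 1$ it is immediate, since by Proposition \ref{prop:gr0-map} and Theorem \ref{thm:higher-gr-map-poschar} these are quasi-coherent $\sO_{D_\nu}$-modules and Grothendieck vanishing on the $(c-1)$-dimensional local ring $\sO_{D_\nu,z}$ forces $\sH^i_{\ol{z}}(\gr^{\fm,\nu}\sK^M_{r,X})_z=0$ for $i\ge c$; but the pieces with $m_\nu=0$, which by Proposition \ref{prop:gr0-map} are of the form $i_{\nu*}\sK^M_{r,D_\nu|D_{\nu,\fm}}$, are not quasi-coherent, and for them I would be forced to run the same induction on $\dim X$ as in the proof of Theorem \ref{thm:relK-is-CM}, treating $D_\nu$ as the smaller-dimensional base. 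Finally, I would stress that one cannot shortcut the whole argument by applying $R\e_*$ to \eqref{CousinComplex-relK-Nis1} and invoking Corollary \ref{cor:CousinComplex-relK}, because $R\e_*$ is not conservative and the genuinely Nisnevich-local check on stalks cannot be avoided.
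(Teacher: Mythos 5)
Your plan can be pushed through, but it is substantially heavier than the paper's proof, and your closing claim is incorrect: the paper's entire proof is precisely the deduction you declare impossible. The point you miss is that Corollary \ref{cor:CousinComplex-relK} is available not just for $X$ but for \emph{every} \'etale $v:V\to X$ (the SNC hypothesis is preserved under \'etale pullback), and by construction $\Gamma(V, C^\bullet_{r,X|D,\Nis})=\Gamma(V, C^{h,\bullet}_{r,V|v^*D})$ is a complex of flasque Zariski sheaves resolving $R\e_*\sK^M_{r,V|v^*D,\Nis}$; hence its $i$-th cohomology is $H^i(V_\Nis,\sK^M_{r,V|v^*D,\Nis})$. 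The Nisnevich stalk $\sH^i(C^\bullet_{r,X|D,\Nis})^h_y$ is the filtered colimit of these groups over the neighborhoods $(v,y)$ of \eqref{defn:MilnorModulus3}, i.e.\ the Nisnevich cohomology of the henselian local scheme $X^h_{(y)}$, which vanishes for $i\ge 1$ and equals $\sK^{M,h}_{r,X|D,y}$ for $i=0$. Nowhere is $R\e_*$ applied to the map \eqref{CousinComplex-relK-Nis1}, so the non-conservativity of $R\e_*$ is irrelevant; the stalkwise check is done, but it is immediate once the Zariski statement is known on all Nisnevich neighborhoods. By contrast, you re-derive a Cohen--Macaulay property of the stalk complex on $S=X^h_{(x)}$ from Theorem \ref{thm:relK-is-CM}, which amounts to re-proving, point by point on $S$, what Corollary \ref{cor:CousinComplex-relK} already gives uniformly.

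Two remarks on your route itself. First, the ``upper range'' $i>c$, which you treat via the distinguished triangle and an induction for the $m_\nu=0$ graded pieces, is automatic for \emph{any} abelian Nisnevich sheaf: $H^i_z$ is computed from the henselization at $z$ (acyclic in positive degrees) and its punctured spectrum, whose dimension is $c-1$, so \cite[1.32]{Nis} kills everything above $c$; the same dimension count is what makes the citation of \cite[IV, Prop 3.1]{Ha} in Corollary \ref{cor:CousinComplex-relK} legitimate with only the lower vanishing of Theorem \ref{thm:relK-is-CM} as input. So the second ``main obstacle'' you flag dissolves. Second, if you do run the argument on $S$, note that Theorem \ref{thm:relK-is-CM} is proven for smooth schemes of finite type over $k$, while $S$ is not of finite type; you must transport the vanishing to points of $S$ by the filtered limit over the \'etale neighborhoods $V$ (each smooth over $k$), and the Cousin-resolution criterion has to be transposed from $\sD^b(X_\Zar)$ to the Nisnevich site of $S$ or routed back through $R\e_*$ on $S_\Zar$ -- your sketch gestures at this via the identity $R\e_*R\ul{\Gamma}_Z=R\ul{\Gamma}_ZR\e_*$ but does not pin it down, and this is where the real (though routine) work in your version would sit.
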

\begin{proof}
It suffices to show that for all  \'etale maps $v: V\to X$ and all points $y\in V$ the Nisnevich stalk
$\sH^i(C^\bullet_{r,X|D,\Nis})_y^{h}$ (defined as in \eqref{defn:MilnorModulus3}) 
vanishes for $i\ge 1$ and is isomorphic to $\sK^{M,h}_{r, X|D,y}$ for $i=0$.
This follows directly from Corollary \ref{cor:CousinComplex-relK}.
\end{proof}

\subsection{Pushforward for projections from projective space}
\begin{no}\label{pf}Let $f:Y\to Z$ be a proper morphism between equidimensional finite type $k$-schemes.
              Set $e=\dim Y-\dim Z$. Then there is a morphism of complexes
\[f_*: f_*C^\bullet_{r+e,Y}[e]\to C^\bullet_{r,Z}.\]
See e.g. \cite[Prop 4.6, (1)]{Rost}. (Also notice that the complexes $C^\bullet_{r,Y}$ are  defined 
if $Y$ is not smooth, see e.g. \cite[5]{Rost}.)
If $Y$ and $Z$ are smooth, then this map induces a morphism in the derived category
\[f_*: Rf_*\sK^M_{r+e,Y}[e]\to \sK^M_{r,Z}.\]
\end{no}

\begin{no}\label{chern}
Let $Y$ be a smooth scheme and denote by $\pi: \P^N_Y\to Y$ the projection.
Denote by 
\[c_1(O(1))\in R^1\pi_*\sO_{\P^N_Y}^\times\]
the first Chern class of $\sO_{\P^N_Y}(1)$ and  by
\[c_1(O(1))^i\in R^i\pi_*\sK^M_{i,\P^N_Y}, \quad i\in [0,N],\] 
its $i$-fold cup-product (by convention $c_1(O(1))^0=1\in \Z$).
Finally, 
\[\dlog (c_1(O(1)))^i\in R^i\pi_*\Omega^i_{\P^N_Y/Y}, \quad i\in [0,N],\] 
denotes the image of $c_1(O(1))^i$ under the map 
$\dlog:R^i\pi_*\sK^M_{i,\P^N_Y}\to R^i\pi_*\Omega^i_{\P^N_Y/Y}$. 
\end{no}

\begin{lem}\label{lem-vanishing-hdi-proj-space}
Let $D$ be an effective Cartier divisor on $X$ and assume that $D_{\rm red}$ is a simple normal crossing
divisor. Let $\{ D_\lambda\}_{\lambda\in \Lambda}$ be the union of the irreducible components of $D$.
For a scheme $Y$ set $P_Y:=\P^N_Y$ and denote by $\pi_Y: P_Y\to Y$ the projection. 
For $\fm\in \N^\Lambda$ and $\nu\in\Lambda$
and with the notation from \eqref{restricted-log-differentials1}
we have the sheaves $\omega^q_{X|D,\fm,\nu}$ on $D_\nu$ at our disposal
together with the subsheaves   $B^q_{X|D,r,\fm,\nu}$, for $r\ge 1$, as defined in 
\ref{higher-Bs-Zs}. (In characteristic 0, we set  $B^q_{X|D,r,\fm,\nu}:=B^q_{X|D,\fm,\nu}$ for all $r\ge 1$.)
Then for all $q\ge 0$ and $r\ge 1$ we have on $X_\Zar$
\[   R^i\pi_{D_\nu*}(\omega^q_{P_X|P_D,\fm,\nu}/B^q_{P_X|P_D,r,\fm,\nu})=
     0=R^i\pi_{X*}\sK^M_{q,P_X}, \quad 
\text{for all }i> N,\]
and for $i\in [0,N]$ there are natural isomorphisms
\eq{lem-vanishing-hdi-proj-space1}{
         -\cup c_1(O(1))^i:\sK^M_{q-i,X}\xr{\simeq}R^i\pi_{X*}\sK^M_{q, P_X}}
and
\[-\cup \dlog (c_1(O(1)))^i:\omega^{q-i}_{X|D,\fm,\nu}/B^{q-i}_{X|D,r,\fm,\nu}\xr{\simeq}
R^i\pi_{D_\nu*}(\omega^{q}_{P_X|P_D,\fm,\nu}/B^{q}_{P_X|P_D,r,\fm,\nu}),\]
induced by the  cup product with $(c_1(O(1)))^i$ and $\dlog (c_1(O(1)))^i$, respectively.
Furthermore the corresponding statement on $X_\Nis$ equally holds. 
\end{lem}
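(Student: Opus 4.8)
The plan is to read both assertions as projective bundle formulas and to prove them by separate but parallel arguments: the Milnor $K$-statement via localization and homotopy invariance, and the truncated log--de Rham statement via the relative Hodge cohomology of $\P^N$ together with the Cartier structure. The two maps in the statement are those induced by the Chern class $c_1(O(1))$ (resp. its $\dlog$) from \ref{chern}, and all the content is that they are isomorphisms; the vanishing for $i>N$ is the easy part. Indeed, $\omega^q_{P_X|P_D,\fm,\nu}/B^q_{P_X|P_D,r,\fm,\nu}$ is a quotient of an $r$-fold Frobenius pushforward of $\omega^q_{P_X|P_D,\fm,\nu}$ (cf. Remark \ref{rmk:Bs-Zs-free}), hence a quasi-coherent sheaf on $P_{D_\nu}=\P^N_{D_\nu}$; as $\pi_{D_\nu}$ is proper of relative dimension $N$, Grothendieck vanishing gives $R^i\pi_{D_\nu*}=0$ for $i>N$. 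For $\sK^M_{q,P_X}$ one uses its flasque Gersten resolution \eqref{Gersten} to compute $R\pi_{X*}$, whose $i$-th cohomology has stalk $\varinjlim_U H^i(\P^N_U,\sK^M_q)$ at a point $x$; this vanishes for $i>N$ since $\P^N$ over a field has Milnor-$K$ cohomological dimension $N$.

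For the Milnor $K$-part I would show that $\bigoplus_{i=0}^N(-\cup c_1(O(1))^i)\colon\bigoplus_i\sK^M_{q-i,X}[-i]\to R\pi_{X*}\sK^M_{q,P_X}$ is an isomorphism by induction on $N$, checking it on stalks over the local rings of $X$. Let $\iota\colon\P^{N-1}_X\inj\P^N_X$ be a hyperplane with open complement $\bar\pi\colon\A^N_X\to X$. The purity computation at the start of the proof of Lemma \ref{lem:MilnorKsuppSNCD} gives $R\ul{\Gamma}_{\P^{N-1}_X}\sK^M_{q,P_X}=\iota_*\sK^M_{q-1,\P^{N-1}_X}[-1]$, so applying $R\pi_{X*}$ to the localization triangle produces
\[R\pi_{X*}\iota_*\sK^M_{q-1,\P^{N-1}_X}[-1]\to R\pi_{X*}\sK^M_{q,P_X}\to R\bar\pi_*\sK^M_{q,\A^N_X}\xr{[1]}.\]
Homotopy invariance of $\sK^M_q$ (see \ref{K-Nis}) identifies the last term with $\sK^M_{q,X}$, and the inductive hypothesis identifies the first. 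The class $c_1(O(1))$ splits this triangle, its top power pushing forward to $1$ under the pushforward of \ref{pf}, so the connecting maps vanish and the cup-product map is an isomorphism.

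For the log--de Rham part, note that $P_D=\pi_X^*D$ is horizontal, so there is a locally split sequence $0\to\pi_X^*\Omega^1_X(\log D)\to\Omega^1_{P_X}(\log P_D)\to\Omega^1_{P_X/X}\to0$. Twisting by $\sO(-P_{D_\fm})=\pi_X^*\sO_X(-D_\fm)$, restricting to $P_{D_\nu}$ and taking wedge powers yields a finite filtration of $\omega^q_{P_X|P_D,\fm,\nu}$ with graded pieces $\pi_{D_\nu}^*\omega^a_{X|D,\fm,\nu}\otimes\Omega^{q-a}_{P_{D_\nu}/D_\nu}$. Since $R^i\pi_{D_\nu*}\Omega^{j}_{P_{D_\nu}/D_\nu}=\sO_{D_\nu}$ for $i=j\in[0,N]$ and $0$ otherwise, the projection formula shows that each graded piece contributes $\omega^a_{X|D,\fm,\nu}$ only in degree $i=q-a$; these degrees are pairwise distinct for fixed $q$, so the associated spectral sequence degenerates and $-\cup\dlog(c_1(O(1)))^i$ is an isomorphism onto $R^i\pi_{D_\nu*}\omega^q_{P_X|P_D,\fm,\nu}$. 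This settles the numerators.

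To descend to $\omega^q/B^q_r$ I would bootstrap through the exact sequences defining the subsheaves, namely \eqref{prop:Bs-ZsCM1}, the Cartier sequence $0\to\omega^q_{\fm'}\xr{C^{-1}}\omega^q_\fm/B^q_\fm\to\omega^q_\fm/Z^q_\fm\to0$, and the identifications $B^q_{r,\fm}/B^q_\fm\cong B^q_{r-1,\fm'}$, $Z^q_{r,\fm}/B^q_\fm\cong Z^q_{r-1,\fm'}$ of \ref{higher-Bs-Zs}, arguing by induction on $r$ (and on $s=v_p(m_\nu)$). The main obstacle is exactly here: the subsheaves $B^q_r,Z^q_r$ are built from the absolute de Rham differential and iterated inverse Cartier operators, which mix the base and fibre directions of the bundle, so one must check that these operations are compatible with $\pi_{D_\nu}$ and with $-\cup\dlog(c_1(O(1)))^i$. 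This holds because $\dlog c_1$ is closed (so cup product commutes with $d$ and preserves $Z^q,B^q$) and because $c_1$ is a $\dlog$-class fixed by $C^{-1}$ (so cup product commutes with the Cartier isomorphism of Theorem \ref{thm:Cartier-mod}); applying $R\pi_{D_\nu*}$, using the degree-$>N$ vanishing and the already established numerator isomorphisms, the formula propagates through all the sequences, giving the claim for each $Z^q_r,B^q_r$ and finally, via $0\to B^q_{P,r}\to\omega^q_P\to\omega^q_P/B^q_{P,r}\to0$, for the quotient. The Nisnevich variant follows by running the same arguments \'etale-locally, using the comparisons with $R\e_*$ recorded in Corollaries \ref{cor:higher-gr-map} and \ref{cor:relK-smDiv}.
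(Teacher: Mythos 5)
Your Milnor $K$-half coincides with the paper's argument: the paper also uses the localization sequence for a hyperplane $H_X\subset P_X$ (this is exactly \eqref{lem:MilnorKsuppSNCD2}), identifies $R\pi_{X*}j_*\sK^M_{q,A_X}$ with $\sK^M_{q,X}$ via homotopy invariance and Corollary \ref{cor:MilnorKcompSNCD}, and inducts on $N$; your splitting via $c_1$ and the pushforward is the content of Lemma \ref{lem:pf-MK}. For the differential-form half you take a genuinely different route. The paper does not filter by the base direction at all: it uses $P_{D_\nu}=D_\nu\times_F P_F$ over the prime field $F$ to decompose $\omega^q_{P_X|P_D,\fm,\nu}$ as a direct sum $\bigoplus_{j}\pi_{D_\nu}^{-1}(\omega^{q-j}_{X|D,\fm,\nu})\otimes_F\rho^{-1}\Omega^j_{P_F/F}$ which is compatible with $d$ \emph{and} with the Cartier operator, so the quotients by $B^q_r$ decompose on the nose, and the K\"unneth formula reduces the entire statement to $H^i(P_F,\Omega^j_{P_F}/B^j_{r,P_F})$, settled by the classical projective-space computation ($r=0$), Illusie's vanishing $H^i(P,B^j_r)=0$ in characteristic $p$, and Lemma \ref{lem-DRP} in characteristic $0$. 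Your approach instead proves the numerator formula for $\omega^q$ via the $\pi^*$-filtration (the degeneration you invoke is indeed forced: source and target of every differential cannot be simultaneously nonzero, for degree reasons) and then d\'evisses to $\omega^q/B^q_r$ through \eqref{prop:Bs-ZsCM1}, the Cartier sequence and the identifications of \ref{higher-Bs-Zs}, in the style of Gros. What each buys: the K\"unneth decomposition eliminates the whole d\'evissage at once, at the price of base-changing to the prime field and quoting external computations on $\P^N_F$; your argument stays on $X$ and is self-contained at the numerator level, but the descent step you correctly flag as the main obstacle requires a triple induction (descending on $q$, on $r$, on $s=v_p(m_\nu)$) with vanishing of connecting maps checked via cup-product compatibility at every stage — this can be carried out, mirroring the proof of Proposition \ref{prop:Bs-ZsCM}, but your write-up leaves it schematic.

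Two of your shortcuts are imprecise, though your own inductions repair them. First, the $i>N$ vanishing of $R^i\pi_{X*}\sK^M_{q,P_X}$ does not follow from ``$\P^N$ over a field has Milnor $K$-cohomological dimension $N$'': the stalk at $x\in X$ is the cohomology of $\P^N$ over the local ring $\sO_{X,x}$, whose Gersten complex has length $N+\codim(x)$; the vanishing really comes out of the hyperplane induction (as in the paper), not as a separate ``easy part''. Second, in characteristic $0$ there is no Frobenius: $B^q$ is then only a subsheaf of abelian groups, $\omega^q_{\fm,\nu}/B^q_{\fm,\nu}$ is not quasi-coherent, and your Grothendieck-vanishing argument for $i>N$ fails as stated; but since $m_\nu\ge 1$ one has $B^q_{\fm,\nu}=Z^q_{\fm,\nu}$ by \eqref{prop:higher-gr-map-char01}, so the vanishing follows instead from the exact sequences \eqref{prop:Bs-ZsCM1} and the numerator case by descending induction on $q$.
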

\begin{proof}
We have the exact sequence (see \eqref{lem:MilnorKsuppSNCD2})
\[0\to \sK^M_{q, P_X}\to j_*\sK^M_{q, A_X}\to \sK^M_{q-1, H_X} \to 0,\]
where $H_X\subset P_X$ is a hyperplane with complement $j: A_X\inj P_X$. 
Therefore the statement for $\sK^M_q$ follows by induction from the isomorphism 
\[\sK^M_{q,X}\xr{\simeq} R(\pi_X\circ j)_*\sK^M_{q,A_X}\cong R\pi_{X*}j_*\sK^M_{q, A_X}\]
where the first isomorphism is homotopy invariance (see \cite[Thm 3.1.12]{VoDM} together with \ref{K-Nis}) 
and the second comes from Corollary \ref{cor:MilnorKcompSNCD}.

Now we prove the statement for $\omega^q_{\fm,\nu}$.
Let $F\subset k$ be the prime subfield.  We have 
\[ \omega^{q}_{P_X|P_D,\fm,\nu} =
     \bigoplus_{j=0}^N \pi_{D_\nu}^{-1}(\omega^{q-j}_{X|D,\fm,\nu})
                                  \otimes_{F} \rho^{-1}\Omega^j_{P_F/F},\]
where $\rho: P_{D_\nu}=D_\nu\times_F P_F\to P_F$ is the projection.
This decomposition is compatible with the differential and the Cartier operator in the obvious sense.
We get
\[ \omega^{q}_{P_X|P_D,\fm,\nu}/B^{q}_{P_X,P_D,r,\fm,\nu} =
     \bigoplus_{j=0}^N \pi_{D_\nu}^{-1}(\omega^{q-j}_{X|D,\fm,\nu}/B^{q-j}_{X|D,r,\fm,\nu})
                                  \otimes_{F} \rho^{-1}(\Omega^j_{P_F/F}/B^j_{P_F, r}),\]
where $B^j_{P_F,r}$ is defined as in \ref{BK-graded-group} below.
In the following we write
$P:=P_F$ and $\Omega^q:=\Omega^q_{P_F/F}$ and $B^q_r:= B^q_{r,P_F}$, etc.
By the K\"unneth formula (see \cite[Thm (6.7.8)]{EGAIII2}) it suffices to show that
$H^i(P, \Omega^j/B^j_{r})=0$, for  $i\neq j$,  and 
that the cup product with $\dlog (c_1(\sO(1)))^j$ induces an isomorphism 
$F\xr{\simeq}H^i(P,\Omega^i/B^i_r)$, for $i\in [0,N]$.
This statement holds in the case $r=0$, where we set $B^j_0:=0$ (see e.g. \cite[Exp XI]{SGA7II}).
Hence it suffices to show
\[H^i(P, B^j_r)=0 \quad \text{for all }i, j,r.\]

If ${\rm char}(k)>0$, the vanishing for $r=1$ holds by \cite[Prop 1.4]{IlOrd}. 
For $r\ge 2$ the vanishing follows by induction from the  isomorphism $B^N_{r}\cong B^N_{r+1}/B^N_{1}$
which is induced by the inverse Cartier operator. In characteristic zero the statement follows from 
Lemma \ref{lem-DRP} below. 

Finally the Nisnevich case. In view of the definition of the corresponding Nisnevich sheaves,
see \ref{K-Nis} and the proof of Corollary \ref{cor:higher-gr-map} the statement for the Nisnevich sheaves
follows from the two facts which hold for any smooth $k$-scheme:
\begin{enumerate}
\item $H^i(X_\Nis,\sK^M_{r, X})=H^i(X_\Zar, \sK^M_{r, X})$ (see \ref{K-Nis} and \cite[Thm 3.1.12]{VoDM}).
\item $H^i(X_\Nis, \sF_\Nis)=H^i(X_\Zar, \sF)$, where $\sF$ is any quasi-coherent sheaf and $\sF_\Nis$
        its associated Nisnevich sheaf (cf. \cite[III, Prop 3.7]{MilneEt}).
\end{enumerate}
This finishes the proof of the lemma.
\end{proof}

\begin{lem}\label{lem-DRP}
Let $k$ be a field of characteristic zero. Set $P:=\P^N_k$.
Then 
\[H^i(P, \sH^j(\Omega^\bullet_{P/k}))=0,\quad  i\neq j,\]
and the cup-product with $\dlog(c_1(\sO(1)))^i$ induces an isomorphism 
\[k\xr{\simeq} H^i(P,\sH^i(\Omega^\bullet_{P/k})), \quad i\in [0,N].\]
Furthermore for $B^j:=\im (d:\Omega^{j-1}\to \Omega^j)$
and $Z^i:=\Ker(d:\Omega^j\to \Omega^{j+1})$ we have 
\[H^i(P,B^j)=0,\text{ all } i,j,\quad
H^i(P, Z^j)=0,\text{ all }i\neq j,\quad H^i(P, Z^i)\cong k, \text{ all } i\in [0,N].\]
\end{lem}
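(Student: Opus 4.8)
The starting point is the Hodge cohomology of $P=\P^N_k$: one has $H^i(P,\Omega^j_{P/k})=k$ if $i=j\in[0,N]$ and $=0$ otherwise, with the cup product $-\cup\dlog(c_1(\sO(1)))^i$ providing a generator of $H^i(P,\Omega^i_{P/k})$. This is exactly the quoted ``$r=0$'' case (see \cite[Exp XI]{SGA7II}), and it is the first assertion of the lemma. Since the $E_1$-page of the Hodge--de Rham spectral sequence $E_1^{p,q}=H^q(P,\Omega^p_{P/k})\Rightarrow\H^{p+q}(P,\Omega^\bullet_{P/k})$ is then concentrated on the diagonal $p=q$, it degenerates, so $\H^n(P,\Omega^\bullet_{P/k})=k$ for $n$ even in $[0,2N]$ and $0$ otherwise; in particular $\sum_n\dim_k\H^n(P,\Omega^\bullet_{P/k})=N+1$, a basis being given by the classes $\dlog(c_1(\sO(1)))^i$, $i=0,\dots,N$.

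Next I would compute the cohomology of the sheaves $\sH^j(\Omega^\bullet_{P/k})$ --- which are only sheaves of $k$-vector spaces, as $d$ is not $\sO_P$-linear, so that no coherent-cohomology input applies to them directly --- through the conjugate spectral sequence $E_2^{a,b}=H^a(P,\sH^b(\Omega^\bullet_{P/k}))\Rightarrow\H^{a+b}(P,\Omega^\bullet_{P/k})$. Each class $\dlog(c_1(\sO(1)))^i$ lies in $E_2^{i,i}=H^i(P,\sH^i(\Omega^\bullet_{P/k}))$ and represents the $i$-th power of the hyperplane class in $\H^{2i}(P,\Omega^\bullet_{P/k})$, which is nonzero; hence it survives to $E_\infty$, so $E_\infty^{i,i}\neq 0$ for $i\in[0,N]$, and these $N+1$ diagonal contributions already exhaust the abutment. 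The step I expect to be the main obstacle is to rule out any further contributions, i.e. the degeneration of the conjugate spectral sequence at $E_2$: granting it, $\sum_{a,b}\dim_k E_2^{a,b}=\sum_n\dim_k\H^n(P,\Omega^\bullet_{P/k})=N+1$ matches the diagonal, forcing $H^a(P,\sH^b(\Omega^\bullet_{P/k}))=k\delta_{ab}$. This degeneration holds for smooth proper varieties in characteristic zero; for $P$ it can also be verified by hand by comparing the stupid truncation $\sigma^{\ge j}\Omega^\bullet_{P/k}$ (whose hypercohomology one reads off from the Hodge filtration) with the canonical truncation $\tau_{\ge j}\Omega^\bullet_{P/k}$ via the triangle $\sH^j(\Omega^\bullet_{P/k})[-j]\to\tau_{\ge j}\Omega^\bullet_{P/k}\to\tau_{\ge j+1}\Omega^\bullet_{P/k}\xr{+1}$.

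Finally I would descend from the cohomology sheaves to $Z^j$ and $B^j$ using the two short exact sequences
\[0\to Z^j\to\Omega^j_{P/k}\xr{d}B^{j+1}\to 0,\qquad 0\to B^j\to Z^j\to\sH^j(\Omega^\bullet_{P/k})\to 0,\]
by ascending induction on $j$ anchored at $Z^0=\underline{k}$ (so $H^i(P,Z^0)=k\delta_{i0}$) and $B^0=0$. Assuming $H^i(P,Z^j)=k\delta_{ij}$, the hyperplane class gives an isomorphism $H^j(P,Z^j)\xr{\simeq}H^j(P,\Omega^j_{P/k})$ (both are $k$, and the generator maps to the Hodge generator), whence the long exact sequence of the first sequence yields $H^i(P,B^{j+1})=0$ for all $i$; inserting this into the second sequence in degree $j+1$ together with the already computed $H^i(P,\sH^{j+1}(\Omega^\bullet_{P/k}))$ gives $H^i(P,Z^{j+1})\cong H^i(P,\sH^{j+1}(\Omega^\bullet_{P/k}))=k\delta_{i,j+1}$, closing the induction. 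This simultaneously yields $H^i(P,B^j)=0$ for all $i,j$, as well as $H^i(P,Z^j)=0$ for $i\neq j$ and $H^i(P,Z^i)\cong k$, completing every assertion of the lemma.
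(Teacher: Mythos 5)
There is a genuine gap, and it sits exactly where you flagged it: the degeneration at $E_2$ of the conjugate (second hypercohomology) spectral sequence $E_2^{a,b}=H^a_{\Zar}(P,\sH^b(\Omega^\bullet_{P/k}))$ carries the whole first part of your argument, and neither of your two justifications works. The blanket claim that this degeneration ``holds for smooth proper varieties in characteristic zero'' is in fact \emph{false}: by Bloch--Ogus this spectral sequence agrees from $E_2$ on with the coniveau spectral sequence, one has $H^p_{\Zar}(X,\sH^p)\cong \CH^p(X)/{\sim_{\rm alg}}$, and $E_\infty^{p,p}\cong N^pH^{2p}(X)$ is the image of the cycle class map; since the diagonal entries support no outgoing differentials, any $X$ with nontrivial Griffiths group (e.g.\ a general quintic threefold, already with $\Q$-coefficients) forces nonzero higher differentials \emph{into} $E^{p,p}$. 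Note also that no analytic comparison can help here, since analytically $\sH^q(\Omega^\bullet)=0$ for $q>0$ by the Poincar\'e lemma --- the sequence is interesting only Zariski-locally. So for $P=\P^N$ the degeneration is true only \emph{a posteriori}, i.e.\ once one knows $E_2$ is concentrated on the diagonal, which is precisely the assertion being proved. Your fallback ``by hand'' verification is circular as sketched: the cone of the natural map $\sigma^{\ge j}\Omega^\bullet\to\tau_{\ge j}\Omega^\bullet$ is $B^j[-j+1]$, so transferring the (Hodge-filtration) computation of $\H^*(P,\sigma^{\ge j}\Omega^\bullet)$ to $\H^*(P,\tau_{\ge j}\Omega^\bullet)$ requires $H^*(P,B^j)=0$, but in your scheme that vanishing is only derived afterwards, using the cohomology of the $\sH^j$'s. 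A smaller unacknowledged input: even the survival of $\dlog(c_1(\sO(1)))^i$ in $E_\infty^{i,i}$ uses $E_2^{p,q}=0$ for $p>q$ (the Gersten/Bloch--Ogus vanishing, needed to see $F^{i+1}\H^{2i}=0$), which you never invoke.

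For comparison, the paper replaces the degeneration claim by an induction on $N$: using the Bloch--Ogus theorem that the Cousin complex of $\sH^j(\Omega^\bullet_{P/k})$ is a resolution, purity in the form $R\ul{\Gamma}_H\sH^j(\Omega^\bullet_{P/k})\cong \sH^{j-1}(\Omega^\bullet_{H/k})[-1]$ for a hyperplane $H\subset P$, and the resulting localization sequence with $A=P\setminus H$, together with homotopy invariance of the pretheory $X\mapsto H^j(X,\Omega^\bullet_{X/k})$ (Voevodsky), which gives $H^i(A,\sH^j(\Omega^\bullet_{A/k}))=0$ for $(i,j)\neq(0,0)$. That affine vanishing is exactly the input your dimension count is missing; if you want to keep your structure, you should substitute it (or an equivalent projective-bundle-type computation) for the degeneration claim. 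Your final paragraph --- the ascending induction on $j$ computing $H^i(P,B^{j+1})$ and then $H^i(P,Z^{j+1})$ from the two short exact sequences, anchored at $Z^0=\ul{k}$ --- is correct and is a perfectly good alternative to the paper's descending induction on $i$; but it consumes $H^i(P,\sH^j(\Omega^\bullet_{P/k}))$ as input, so it cannot repair the first part.
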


\begin{proof}
By \cite[(4.2)Thm and (2.2)]{BO} the Cousin complex of $\sH^j(\Omega^\bullet_{P/k})$ is a resolution.
Since de Rham cohomology in characteristic zero satisfies purity we get that for 
$H\subset P$ a hyperplane the complex $\ul{\Gamma}_H(\text{Cousin}(\sH^j(\Omega^\bullet_{P/k})))$
is isomorphic to the complex  $\text{Cousin}(\sH^{j-1}(\Omega^\bullet_{H/k}))$ shifted by $-1$, i.e. 
we have an isomorphism 
\[R\ul{\Gamma}_{H}\sH^j(\Omega^\bullet_{P/k})\cong \sH^{j-1}(\Omega^\bullet_{H/k})[-1]
\quad \text{in }\sD^b(P_\Zar).\]
Hence the long exact localization sequence looks like this
\eq{lem-DRP1}{\ldots\to H^{i-1}(H,\sH^{j-1}(\Omega^\bullet_{H/k}))\to 
       H^i(P,\sH^{j}(\Omega^\bullet_{P/k}))\to
     H^i(A, \sH^j(\Omega^\bullet_{A/k}))\to \ldots,}
where $A=P\setminus H$.  Furthermore the presheaf $X\mapsto H^j(X,\Omega^\bullet_{X/k})$ on $\Sm_{k}$
is a homotopy invariant pretheory (see \cite[3.4]{VoPST}) and hence so is its Zariski sheafification
$X\mapsto \Gamma(X,\sH^j(\Omega^\bullet_{X/k}))$ (see \cite[Prop 4.26]{VoPST}). Hence
\cite[Thm 4.27]{VoPST} implies
\[H^i(A, \sH^j(\Omega^\bullet_{A/k}))=0, \quad \text{for all } (i,j)\neq (0,0),\quad \text{and}\quad 
   H^0(A, \sH^0(\Omega^\bullet_{A/k}))=k.\]
The first two statements of the lemma are direct consequences of this, the exact sequence \eqref{lem-DRP1}
and induction.

We prove the last statement.  Observe that
the natural maps $H^i(P, Z^j)\to H^i(P, \Omega^j)$ and $H^i(P, \sH^j(\Omega^\bullet))$
are surjective for all $i,j$. (Clearly for $i\neq j$ and for $i=j$ it follows from the fact that
the isomorphism $k\cong H^i(P,\Omega^i)$ and $k\cong H^i(P, \sH^i(\Omega^\bullet))$
both given by the cup-product with $\dlog(c_1(\sO(1)))^i$ factor over $H^i(P, Z^i)$.)
We obtain short exact sequences for all $i,j$
\[0\to H^i(P, B^j)\to H^i(P, Z^j)\to H^i(X, \sH^j(\Omega^\bullet))\to 0\]
and 
\[0\to H^i(P, B^{j+1})\to H^{i+1}(P, Z^j )\to H^{i+1}(P,\Omega^j)\to 0.\]
The last statement of the lemma follows directly from this via descending induction
over $i$.
\end{proof}

\begin{lem}\label{lem:pf-MK} 
We keep the notations from above and set $\pi:=\pi_X$. Then the pushforward 
$\pi_*:R\pi_*\sK^M_{r+N,P_X}[N]\to \sK^M_{r,X}$ from \ref{pf} is equal to the
composition of the canonical map $R\pi_*\sK^M_{r+N,P_X}[N]\to R^N\pi_*\sK^M_{r+N, P_X}$
with the inverse of the isomorphism \eqref{lem-vanishing-hdi-proj-space1} (for $(i,q)=(N, r+N)$).
\end{lem}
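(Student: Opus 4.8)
The plan is to show that $\pi_*$, as a morphism in $\sD^b(X_\Zar)$, factors through the canonical projection to top cohomology, and that the resulting sheaf map inverts the cup-product isomorphism \eqref{lem-vanishing-hdi-proj-space1}. First I would recall that $\pi_*$ is realized by the map of complexes $\pi_*C^\bullet_{r+N,P_X}[N]\to C^\bullet_{r,X}$ of \ref{pf}. Since $R^i\pi_*\sK^M_{r+N,P_X}=0$ for $i>N$ and $i<0$ by Lemma \ref{lem-vanishing-hdi-proj-space}, the source has cohomology only in degrees $[-N,0]$, so the good truncation $\tau_{\ge 0}$ identifies it with its top cohomology $R^N\pi_*\sK^M_{r+N,P_X}$ in degree $0$; as $C^\bullet_{r,X}$ is concentrated in non-negative degrees, the chain map above vanishes in negative degrees. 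Hence $\pi_*$ factors in $\sD^b(X_\Zar)$ as
\[R\pi_*\sK^M_{r+N,P_X}[N]\to R^N\pi_*\sK^M_{r+N,P_X}\xr{\bar\pi_*}\sK^M_{r,X},\]
the first arrow being the canonical map of the statement and $\bar\pi_*$ a morphism of Zariski sheaves. It therefore remains to prove $\bar\pi_*\circ(-\cup c_1(O(1))^N)=\id_{\sK^M_{r,X}}$.

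Both sides of this identity are endomorphisms of the sheaf $\sK^M_{r,X}$. By the Gersten resolution \eqref{Gersten} every section group $\sK^M_{r,X}(V)$ embeds into $K^M_r(k(\eta))$ compatibly with restriction, so an endomorphism of $\sK^M_{r,X}$ is the identity as soon as it induces the identity on the generic stalk. Passing to the stalk at $\eta$, using that $R^N\pi_*$ commutes with the limit over open neighbourhoods $V\ni\eta$ (so that its stalk is $H^N(\P^N_L,\sK^M_{r+N})$ for $L=k(\eta)$), and invoking the compatibility of the Rost pushforward and of the Chern class $c_1(O(1))$ with flat base change, I would reduce the claim to the case $X=\Spec L$ with $L$ a field and $\pi_L\colon\P^N_L\to\Spec L$.

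Over $\Spec L$ I would argue through the explicit Cousin/Gersten description. The class $c_1(O(1))\in R^1\pi_{L*}\sK^M_{1,\P^N_L}=\Pic(\P^N_L)$ is that of a coordinate hyperplane; choosing the $N$ coordinate hyperplanes meeting transversally at the rational point $P_0=[0:\cdots:0:1]$ and applying Corollary \ref{cor:MilnorK-coh-in-c} together with the symbol formula \eqref{cor:MilnorK-coh-in-c1}, the image of $\alpha=\{\alpha_1,\dots,\alpha_r\}\in K^M_r(L)$ under $-\cup c_1(O(1))^N$ is the class supported at $P_0$ represented by $\genfrac{[}{]}{0pt}{}{\{\tilde\alpha_1,\dots,\tilde\alpha_r,t_1,\dots,t_N\}}{t_1,\dots,t_N}$, where the $t_i$ are local equations of the hyperplanes at $P_0$; by \eqref{cor:MilnorK-coh-in-c1} this class corresponds to $\alpha\in K^M_r(k(P_0))$. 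Since Rost's $\pi_{L*}$ is the norm $\Nm_{k(P_0)/L}$ on the top Gersten term (\cite{Rost}) and $k(P_0)=L$, it returns $\alpha$, giving $\bar\pi_*(\alpha\cup c_1(O(1))^N)=\alpha$. Conceptually this is the projection formula $\pi_{L*}(\pi_L^*\alpha\cup c_1(O(1))^N)=\alpha\cup\pi_{L*}(c_1(O(1))^N)=\alpha$, the normalization $\pi_{L*}(c_1(O(1))^N)=1$ being the degree-one computation for the class of a rational point.

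The main obstacle I expect is the bookkeeping in this last step: matching the abstractly defined cup product $\alpha\cup c_1(O(1))^N$, built from the ring structure on $R^\bullet\pi_*\sK^M$, with the explicit local-cohomology symbol of Corollary \ref{cor:MilnorK-coh-in-c}. Concretely one must check that $c_1(O(1))$ is represented in the Cousin complex by $\genfrac{[}{]}{0pt}{}{\{t\}}{t}$ (the hyperplane together with its equation) and that cup product corresponds to juxtaposition of symbols compatibly with \eqref{cor:MilnorK-coh-in-c1}, while simultaneously identifying the derived-category map $\bar\pi_*$ with Rost's norm on top-codimensional points. Once these two compatibilities are in place the final computation is immediate.
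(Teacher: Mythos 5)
Your proposal is correct and is essentially the paper's own proof: both factor $\pi_*$ through the canonical map to $R^N\pi_*\sK^M_{r+N,P_X}$ using the vanishing in Lemma \ref{lem-vanishing-hdi-proj-space}, reduce the remaining identity to the generic point of $X$, and settle it there via the explicit symbol description \eqref{cor:MilnorK-coh-in-c1} of $H^N_\eta(\sK^M_{r+N,P_X})$ from Corollary \ref{cor:MilnorK-coh-in-c}. The only cosmetic difference is that the paper inserts a section $i$ of $\pi$ and uses the functoriality $\pi_*\circ R^N\pi_*(i_*)=\id$, so that it suffices to identify $R^N\pi_*(i_*)$ with $-\cup c_1(O(1))^N$ at $\eta$, whereas you compute $\pi_*\bigl(\alpha\cup c_1(O(1))^N\bigr)$ directly at a rational point using that Rost's pushforward is the norm on top Gersten terms --- the same computation, since the symbol class you produce at $P_0$ is exactly $i_*\alpha$ for a suitable section.
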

\begin{proof}
Notice that there is a canonical map $R\pi_*\sK^M_{r+N,P_X}[N]\to R^N\pi_*\sK^M_{r+N, P_X}$ by the
vanishing statement of Lemma \ref{lem-vanishing-hdi-proj-space}. We have to show that
the pushforward $\pi_* : R^N\pi_{*}\sK^M_{r+N, P_X}\to \sK^M_{r, X}$ is the inverse of the isomorphism
$\eqref{lem-vanishing-hdi-proj-space1}$. Let $i: X\inj P_X$ be a section of $\pi$ and consider
the pushforward $i_*: i_*\sK^M_{r,X}[-N]\to \sK^M_{r+N,P_X}$.
The composition
\[\sK^M_{r,X}\xr{R^N\pi_*(i_*)} R^N\pi_*\sK^M_{r,P_X}\xr{\pi_*} \sK^M_{r,X}\]
is the identity. Hence it suffices
to show  that  $R^N\pi_*(i_*)$ is the equal to
\eqref{lem-vanishing-hdi-proj-space1}. Further it suffices to check this in the generic point $\eta\in X$.
The statement now follows directly from the explicit description of the isomorphism 
$K^M_{r}(k(\eta))\cong H^N_\eta(\sK^M_{r+N,P_X})$ given in \eqref{cor:MilnorK-coh-in-c1}.
\end{proof}

\begin{thm}\label{prop:proj-pf-relK}
Let $D$ be an effective Cartier divisor on $X$ and assume that $D_{\rm red}$ is a simple normal crossing
divisor. For a scheme $Y$ set $P_Y:=\P^N_Y$. Denote by $\pi: P_X\to X$ the projection. Then 
for $r\ge 0$ we have on $X_\Nis$
\eq{prop:proj-pf-relK1}{R^i\pi_*\sK^M_{r, P_X|P_D,\Nis }=0,\quad \text{for all }  i>N,}
and for $i\in [0,N]$ the cup product with $c_1(\sO(1))^i\in R^i\pi_*\sK^M_{i,P_X,\Nis}$
induces an isomorphism
\eq{prop:proj-pf-relK2}{-\cup c_1(\sO(1))^i: \sK^M_{r-i,X|D,\Nis}\xr{\simeq} 
           R^i\pi_*\sK^M_{r, P_X|P_D,\Nis}.}
If $D_{\rm red}$ is smooth the same is true on $X_\Zar$ with $\sK^M_{r,X|D,\Nis}$ replaced by $\sK^M_{r, X|D}$.
\end{thm}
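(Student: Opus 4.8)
The plan is to establish the statement on $X_\Nis$ by a double induction, and then to deduce the Zariski version for smooth $D_{\rm red}$ by applying $R\e_*$. We may assume $X$ connected. Everything is driven by the observation that $(P_X,P_D)$ is again a pair of the required type: $(P_D)_{\rm red}=\bigcup_{\lambda}P_{D_\lambda}$ is a simple normal crossing divisor on $P_X$ indexed by the same $\Lambda$ and with the same multiplicities $\fm$, and $\pi$ restricts on each closed stratum to the projective bundle $\pi_{D_\nu}\colon P_{D_\nu}\to D_\nu$. It is convenient to reformulate the two assertions as the single statement that, for every $r$, the cup products $-\cup c_1(\sO(1))^i$ assemble into an isomorphism $\bigoplus_{i=0}^N\sK^M_{r-i,X|D,\Nis}[-i]\xr{\simeq}R\pi_*\sK^M_{r,P_X|P_D,\Nis}$ in $\sD^b(X_\Nis)$; the vanishing \eqref{prop:proj-pf-relK1} and the isomorphisms \eqref{prop:proj-pf-relK2} are then read off from the cohomology sheaves.

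First I would induct on $d=\dim X$, the case $d=0$ (forcing $D=0$) being the Milnor $K$-part of Lemma \ref{lem-vanishing-hdi-proj-space}. Inside a fixed $d$, I would build the modulus up from $D=0$: fixing a chain $0=\fm_0\le\cdots\le\fm_M=\fm$ with $\fm_{k+1}=\fm_k+\delta_{\nu_k}$, it suffices to pass from $\fm$ to $\fm+\delta_\nu$. To this end I apply $R\pi_*$ to the short exact sequence on $P_{X,\Nis}$
\[0\to\sK^M_{r,P_X|P_{D_{\fm+\delta_\nu}},\Nis}\to\sK^M_{r,P_X|P_{D_\fm},\Nis}\to\gr^{\fm,\nu}\sK^M_{r,P_X,\Nis}\to 0.\]
By the inner inductive hypothesis the middle term is understood. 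The graded piece on the right is identified, by Proposition \ref{prop:gr0-map} when the $\nu$-th entry of $\fm$ is $0$, with $\iota_{\nu*}\sK^M_{r,P_{D_\nu}|P_{D_{\nu,\fm}},\Nis}$ (here $\iota_\nu\colon P_{D_\nu}\inj P_X$), and by Theorem \ref{thm:higher-gr-map-poschar} when it is positive, with the quotient $\omega^{r-1}_{P_X|P_D,\fm,\nu,\Nis}/B^{r-1}_{s+1,P_X|P_D,\fm,\nu,\Nis}$ of Lemma \ref{lem-vanishing-hdi-proj-space}. In the first case $R\pi_*\iota_{\nu*}=i_{\nu*}R\pi_{D_\nu*}$ (with $i_\nu\colon D_\nu\inj X$) combined with the outer hypothesis for $(D_\nu,D_{\nu,\fm})$, of dimension $d-1$, computes $R^i\pi_*$ of the graded piece; in the second case this is precisely Lemma \ref{lem-vanishing-hdi-proj-space}. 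Either way $R^i\pi_*$ of the graded piece vanishes for $i>N$ and is carried by the relevant cup product isomorphically onto $\gr^{\fm,\nu}\sK^M_{r-i,X,\Nis}$ for $i\le N$.

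The dévissage is then formal. Comparing the long exact sequence of $R\pi_*$ of the displayed sequence with the tautological short exact sequence downstairs
\[0\to\sK^M_{r-i,X|D_{\fm+\delta_\nu},\Nis}\to\sK^M_{r-i,X|D_\fm,\Nis}\to\gr^{\fm,\nu}\sK^M_{r-i,X,\Nis}\to 0,\]
and using that the $\sK^M_{r,P_X|P_{D_\fm}}$ are graded modules over $\sK^M_{\bullet,P_X}$ with all the identifications given by cup products, the map $R^i\pi_*\sK^M_{r,P_X|P_{D_\fm}}\to R^i\pi_*\gr^{\fm,\nu}\sK^M_{r,P_X}$ corresponds under the cup-product isomorphisms to the surjection of the downstairs sequence. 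Hence it is surjective for all $i\le N$, every connecting homomorphism vanishes, and the long exact sequence breaks into short exact sequences. Reading off kernels then gives the vanishing for $i>N$ and identifies $R^i\pi_*\sK^M_{r,P_X|P_{D_{\fm+\delta_\nu}}}$ with $\ker\big(\sK^M_{r-i,X|D_\fm,\Nis}\to\gr^{\fm,\nu}\sK^M_{r-i,X,\Nis}\big)=\sK^M_{r-i,X|D_{\fm+\delta_\nu},\Nis}$, compatibly with $-\cup c_1(\sO(1))^i$. This closes both inductions. When $D_{\rm red}$ is smooth I would finally apply $R\e_*$, using $R\e_{X*}R\pi_{\Nis*}=R\pi_{\Zar*}R\e_{P_X*}$ together with Corollary \ref{cor:relK-smDiv} for $(X,D)$ and for $(P_X,P_D)$ (whose reduced modulus $\P^N_{D_{\rm red}}$ is again smooth) to transport the isomorphism to $X_\Zar$.

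The step I expect to be the main obstacle is the compatibility underlying the dévissage: one must check that the two kinds of projective bundle isomorphisms — the one for $\sK^M$ and the one for the differential-form quotients — are compatible with the morphisms of the modulus filtration and with the $\sK^M_{\bullet,P_X}$-module structure. Concretely, the cup product with $c_1(\sO(1))^i$ on Milnor $K$-theory must match, through the $\dlog$ map used to identify the graded pieces, the cup product with $\dlog(c_1(\sO(1)))^i$ occurring in Lemma \ref{lem-vanishing-hdi-proj-space}. Once this naturality is in place the cohomological bookkeeping is purely formal, all the substantive computations having already been carried out in Lemma \ref{lem-vanishing-hdi-proj-space}.
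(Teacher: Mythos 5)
Your proof is correct and is essentially the paper's own argument: the proof of Theorem \ref{prop:proj-pf-relK} is precisely the double d\'evissage you describe --- induction on $\dim X$ combined with building up the modulus one $\delta_\nu$ at a time, identifying the graded pieces via Proposition \ref{prop:gr0-map} (when $m_\nu=0$) and Proposition \ref{prop:higher-gr-map-char0}/Theorem \ref{thm:higher-gr-map-poschar} (when $m_\nu\ge 1$), and computing $R\pi_*$ of those pieces by Lemma \ref{lem-vanishing-hdi-proj-space}, with the cup-product compatibility you flag being the implicit glue that makes the connecting maps vanish. The only (harmless) divergence is at the end, where you deduce the Zariski case from the Nisnevich one via $R\e_*$ and Corollary \ref{cor:relK-smDiv}, whereas the paper's intended route is to rerun the same induction directly on $X_\Zar$, using that \eqref{prop:gr0-map1} is already a Zariski isomorphism when $D_{\rm red}$ is smooth; both are valid.
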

\begin{proof}
This follows immediately by induction on the dimension of $X$, Proposition \ref{prop:gr0-map},
Proposition \ref{prop:higher-gr-map-char0}, Theorem \ref{thm:higher-gr-map-poschar}
and Lemma \ref{lem-vanishing-hdi-proj-space}.
\end{proof}

\begin{defn}\label{defn:proj-pf-relK}
In the situation of Theorem \ref{prop:proj-pf-relK} above we define the pushforward
\[\pi_*: R\pi_*\sK^M_{r+N, P_X|P_D,\Nis}[N]\to\sK^M_{r,X|D,\Nis}\]
to be the composition 
\[R\pi_*\sK^M_{r+N, P_X|P_D,\Nis}[N]\xr{\text{can.} \,\eqref{prop:proj-pf-relK1}} 
  R^N\pi_*\sK^M_{r+N, P_X|P_D,\Nis}\xr{\simeq \,\eqref{prop:proj-pf-relK2}} \sK^M_{r, X|D,\Nis}.\]
Notice that by Lemma \ref{lem:pf-MK} this definition of the pushforward is compatible (in the obvious sense)
with the pushforward $\pi_*: R\pi_*\sK^M_{r+N, P_X}[N]\to\sK^M_{r,X}$ from \ref{pf}.
\end{defn}

\section{Cycle Map to cohomology of relative Milnor {$K$}-sheaves}
Let $k$ be a field and $X$ an equidimensional scheme of finite type over $k$. 
\subsection{The classical cycle map}\label{reviewcyclemap}
Everything in this subsection is well known to the experts. We give the proofs for lack of references.
\begin{no}{}\label{CycleMap-MilnorK}
Recall the notations from section \ref{ccmod}. 
In particular for $n\ge 1$ we have 
$\square^n\subset (\P^1)^n\supset (\P^1\setminus\{\infty\})^n=\Spec k[y_1,\ldots, y_n]$. 
By convention $\square^0=\Spec k$. Denote by $\pi_n : X\times \square^n  \to X$ the projection.
Recall that for $r\ge 0$, $n\in [0,r]$ and $Z\subset X\times \square^n$ an integral closed subscheme of
codimension $r$, the dimension formula (see e.g. \cite[Prop (5.6.5)]{EGAIV2}) yields
\[\codim(\ol{\pi_n(Z)}, X)\ge r-n,\]
where $\ol{\pi_n(Z)}$ denotes the closure of $\pi_n(Z)$ in $X$, and
equality holds if and only if $Z$ is generically finite over $\ol{\pi_n(Z)}$.
We can therefore define the a group homomorphism
\[\ul{\varphi}^{r,n}_X: \ul{z}^r(X, n)\to \bigoplus_{x\in X^{(r-n)}} K^M_{n}(k(x))\]
by
\begin{align}
\ul{\varphi}^{r,n}_X(Z) & =
\begin{cases} 
(-1)^{rn}\cdot \Nm_{k(z)/k(\pi_n(z))}\{y_n(z),\ldots, y_1(z)\}, & \text{if } k(z)/k(\pi_n(z)) \text{ is finite},\\
  0, &\text{else}
   \end{cases}\notag\\ 
 &\in K^M_n(k(\pi_n(z))), \notag
\end{align}
where $Z\subset X\times\square^n$ is an integral closed subscheme of codimension $r$ 
which meets all the faces properly 
and has generic point $z\in Z$,   $y_i(z)$ denotes the residue class  of $y_i\in\sO_{X\times\square^n, z}$ and 
$\Nm_{z/\pi_n(z)}: K^M_n(k(z))\to K^M_n(k(\pi_n(z)))$ denotes the norm map on Milnor $K$-theory.
(By convention it equals multiplication with the degree $[k(z):k(\pi_n(z))]$ if $n=0$.)
Clearly $\ul{\varphi}^{r,n}_X$ sends degenerate cycles to 0 and hence it induces a map
\[\varphi^{r,n}_X: z^r(X, n)\to \bigoplus_{x\in X^{(r-n)}} K^M_n(k(x)).\]
For $n\not\in[0,r]$ we define $\varphi^{r,n}_X$ to be the zero map.
\end{no}

\begin{lem}\label{lem:phi-complex-map}
For $r\ge 0$ the collection of maps $(\varphi^{r, 2r-i}_X)_{i\in\Z}$ induces a morphism of complexes
\[\varphi_X^r: z^r(X, 2r-\bullet)\to C^\bullet_{r,X}(X)[-r],\]
where $C^\bullet_{r,X}$ is the Gersten complex, see \ref{MilnorK}.
(It is defined for general $X$, see e.g. \cite[5.]{Rost}, but if $X$ is not smooth it does not need to be a resolution.)
Furthermore this map is compatible with restrictions to open subsets of $X$ in the obvious sense.
\end{lem}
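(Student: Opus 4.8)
The plan is to deduce the assertion from the fact that proper pushforward is a morphism of Gersten complexes, applied to the ``tautological'' symbol built out of the cube coordinates.

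Fix $r$ and set $n=2r-i$; being a morphism of complexes is the family of identities
\[ d_X\circ\varphi^{r,n}_X = \varphi^{r,n-1}_X\circ\partial, \]
where $d_X$ is the differential of $C^\bullet_{r,X}(X)$ and $\partial=\sum_{j=1}^n(-1)^j(\partial^\infty_j-\partial^0_j)$ is the cubical boundary. Both sides are additive in the cycle, so it suffices to test on an integral generator $Z\subset X\times\square^n$ with generic point $z$. Let $\ol Z\subset X\times(\P^1)^n$ be its closure, let $\nu\colon \widetilde Z\to \ol Z$ be the normalisation, and let $f\colon \widetilde Z\to X$ be the proper composite of $\nu$ with $\ol Z\inj X\times(\P^1)^n$ and the projection $\pi_n$. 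The coordinates $y_1,\dots,y_n$ pull back to rational functions on $\widetilde Z$, and I set $\alpha:=\{y_n,\dots,y_1\}\in K^M_n(k(z))$, an element of the degree-zero term of the Gersten complex $C^\bullet_{n,\widetilde Z}$.

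First I would identify $\varphi^{r,n}_X(Z)$, up to the sign $(-1)^{rn}$, with the image of $\alpha$ under the pushforward $f_*$ of \ref{pf}: on the generic point $f_*$ is $\Nm_{k(z)/k(f(z))}$ when the residue extension is finite and is $0$ otherwise, which is exactly the defining formula for $\varphi^{r,n}_X(Z)$. Since $f_*\colon f_*C^\bullet_{n,\widetilde Z}[n-r]\to C^\bullet_{r,X}$ is a morphism of complexes (see \ref{pf}, based on \cite[Prop 4.6]{Rost}), applying it to $d_{\widetilde Z}\alpha=\sum_{z'\in \widetilde Z^{(1)}}\partial_{z'}\alpha$ gives
\[ d_X\bigl(\varphi^{r,n}_X(Z)\bigr)=\pm\, f_*\Bigl(\sum_{z'\in\widetilde Z^{(1)}}\partial_{z'}\alpha\Bigr), \]
where $\partial_{z'}$ is the tame symbol at the valuation $v_{z'}$ of $k(z)$ attached to the codimension-one point $z'$. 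A pleasant feature of this route is that it treats uniformly the case where $Z$ is generically finite over its image (so $\varphi^{r,n}_X(Z)\neq0$ and the contributions land at codimension-one points of $\ol{\pi_n(Z)}$) and the degenerate case where $\varphi^{r,n}_X(Z)=0$ but the faces nevertheless satisfy a Weil-reciprocity relation forcing their symbols to cancel.

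Next I would evaluate the sum on the right. If all $y_j$ are units at $z'$ then $\partial_{z'}\alpha=0$; and condition (1) of Definition \ref{defn:modulus-cycle} forces $\ol Z$ to meet every codimension-two face in codimension two, so no codimension-one $z'$ lies on two of the divisors $\{y_j=0\}$, $\{y_j=\infty\}$ at once. Hence at the remaining $z'$ exactly one coordinate $y_j$ has $v_{z'}(y_j)=m\neq0$, and the tame-symbol formula gives $\partial_{z'}\alpha=\pm\,m\,\{\ol y_n,\dots,\widehat{\ol y_j},\dots,\ol y_1\}$ with $\ol y_i\in k(z')^\times$. Such a $z'$ is precisely a generic point, of multiplicity $|m|$, of a component of the face cycle $\partial^0_j Z$ (if $m>0$) or $\partial^\infty_j Z$ (if $m<0$); pushing forward by $f_*$ (a norm when the image has codimension $r-n+1$, and $0$ otherwise, which matches the vanishing of $\varphi^{r,n-1}_X$ on components not generically finite over their image) identifies the sum with $\varphi^{r,n-1}_X(\partial Z)$. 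Compatibility with restriction to an open $U\subset X$ is then immediate, since $\varphi$, the norms, the tame symbols and the pushforward are all defined through generic points of cycles and their local data, which restriction leaves unchanged.

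I expect the main obstacle to be twofold: verifying that the generic-point description of the pushforward $f_*$ of \ref{pf} really reproduces $\varphi^{r,n}_X$ together with its ``$0$ otherwise'' convention, and the careful bookkeeping of signs, where the reversed ordering $\{y_n,\dots,y_1\}$, the prefactor $(-1)^{rn}$, the sign of $m$, and the face signs $(-1)^j$ in $\partial^\infty_j-\partial^0_j$ must be checked to cancel exactly. Once $f_*$ is known to be a chain map, the geometric matching of residues with faces is forced by the proper-intersection hypothesis and the elementary tame-symbol formula.
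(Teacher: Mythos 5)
Your route is genuinely different from the paper's, and it works in outline. The paper proves the chain-map identity by a two-case analysis at each point of $\ol{\pi_n(Z)}^{(1)}$: when $k(z)/k(\pi_n(z))$ is finite it computes both sides on the normalization of $\ol{\pi_n(Z)}$, using the compatibility of the tame symbol with norms and the projection formula; when the extension has positive transcendence degree it reduces to an integral curve $Z_x$ over the generic point $x$ and invokes Weil reciprocity for the tame symbol. You instead package both cases into the single statement that proper pushforward is a morphism of Gersten complexes (\ref{pf}, \cite[Prop 4.6]{Rost}), applied to the tautological symbol $\alpha=\{y_n,\ldots,y_1\}$ on $\widetilde Z$; the degenerate case then comes for free, because $f_*\alpha=0$ in the relevant degree forces the pushed-forward residues to cancel --- Weil reciprocity is exactly what is hidden inside Rost's proof that $f_*$ commutes with the differential. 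This buys uniformity and less case-splitting; what it costs is that you must still match $f_*(d_{\widetilde Z}\alpha)$ with $\varphi^{r,n-1}_X(\partial Z)$ component by component, which requires the multiplicity formula $\ord_W(y_j)=\sum_{z'\mapsto w}v_{z'}(y_j)[k(z'):k(w)]$ together with $\Nm_{k(z')/k(w)}\circ\,\mathrm{res}=[k(z'):k(w)]\cdot\mathrm{id}$ and norm transitivity --- the same inputs (\cite[Ex 1.2.3]{F}, Rost's R3b) the paper uses; you gesture at this but should spell it out, since several $z'$ can lie over one $w$.

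One supporting claim is wrong as stated and must be repaired. Condition (1) of Definition \ref{defn:modulus-cycle} constrains $Z$ inside $X\times\square^n$ only; it does \emph{not} force $\ol Z$ to meet codimension-two faces of $(\P^1)^n$ properly, and a codimension-one point $z'$ of $\widetilde Z$ lying over $\ol Z\setminus Z$ can perfectly well sit on two of the divisors $\{y_i=0\}$, $\{y_j=\infty\}$ at once. Moreover, even a boundary $z'$ with exactly one coordinate of nonzero valuation is \emph{not} a generic point of a component of a face cycle (the face cycles live in the open cube), so your classification of the contributing points breaks precisely there. The repair is the observation the paper makes explicitly: since $Z$ is closed in $X\times\square^n$, every point of $\widetilde Z$ over $\ol Z\setminus Z$ has some coordinate $y_l$ which is a \emph{unit} with residue $1$, and because $\partial_{z'}(\{y_l\}\cdot\beta)=\pm\{\ol y_l\}\cdot\partial_{z'}(\beta)$, all residues at such points vanish identically, whatever the valuations of the other coordinates. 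With that inserted, the only contributing $z'$ map into $Z$ itself, where your proper-intersection argument correctly rules out two coordinates degenerating simultaneously, and the rest of your matching (together with the sign bookkeeping you flag) goes through.
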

\begin{proof}
The second assertion is clear. For the first assertion
we have to show that for $n\in [1,r+1]$, $Z\subset X\times\square^n$ an integral closed subscheme
of codimension $r$ with generic point $z\in Z$ intersecting all the faces properly  
and $x\in \ol{\{\pi_n(z)\}}\cap X^{(r-n+1)}$ we have the following equality in $K^M_{n-1}(k(x))$
\eq{lem:phi-complex-map1}{(-1)^r\partial^M_x(\varphi^{r,n}_{X,\pi_n(z)}(Z))= 
        \varphi_{X,x}^{r,n-1}(\partial^{\rm cyc}(Z)), }
where we denote by $\varphi^{r,n}_{X,x}$ the composition of $\varphi^{r,n}_X$ with the projection to the 
$x$-summand and $\partial^M_x: K^M_n(k(\pi_n(z)))\to K^M_{n-1}(k(x))$ and 
$\partial^{\rm cyc}: z^r(X,n)\to z^r(X,n-1)$ denote the boundary maps in $C^\bullet_{r,X}$ and $z^r(X,2r-\bullet)$,
respectively. Notice that the factor $(-1)^r$ appears on the left hand side in the equation 
\eqref{lem:phi-complex-map1} since by convention the shifting operation $[-r]$ on complexes multiplies
the boundary maps by this factor. We consider two cases.

{\em 1st case: $k(z)/k(\pi_n(z))$ is finite.} Set $Z_0=\ol{\pi_n(Z)}$. 
We have $x\in Z_0^{(1)}$.  Denote by 
$\ol{Z}\subset X\times (\P^1)^n$  the closures of $Z$. We have a commutative diagram
\[\xymatrix{ \tilde{Z}\ar[d]^{\tilde{j}}\ar[r]^\nu\ar@/_1.5pc/[dd]_{\tilde{\pi}} & Z\ar[d]_j\ar@/^1.5pc/[dd]^{\pi_n}\\
                \tilde{\ol{Z}}\ar[r]^{\ol{\nu}}\ar[d]^{\tilde{\ol{\pi}}} & \ol{Z}\ar[d]_{\ol{\pi}_n}\\
                  \tilde{Z}_0\ar[r]_-{\nu_0} & Z_0,
}\]
in which the horizontal maps are the normalizations, $j$ and $\tilde{j}$ are open immersions and 
the other vertical maps are induced by the projection $X\times (\P^1)^n\to X$. 
 Notice that $\ol{Z}$, $\tilde{\ol{Z}}$ and $\tilde{Z}_0$ are finite over a neighborhood of any point of $Z_0^{(1)}$.
We compute:
\begin{align}
\partial_x^M(\varphi^{r,n}_{X, \pi_n(z)}(Z)) & =(-1)^{nr} \sum_{\tilde{x}_0\in \nu_0^{-1}(x)} 
                        \Nm_{\tilde{x}_0/x}(\partial_{\tilde{x}_0} \Nm_{z/\pi_n(z)}\{y_n(z),\ldots, y_1(z)\})\notag\\
                  &=(-1)^{nr} \sum_{\tilde{x}_0\in \nu_0^{-1}(x)} \,
                            \sum_{\tilde{x}\in\tilde{\ol{\pi}}^{-1}(\tilde{x}_0)} 
                   \Nm_{\tilde{x}/x}(\partial_{\tilde{x}}\{y_n(z),\ldots, y_1(z)\})\notag\\
           &  = (-1)^{nr} \sum_{\tilde{x}_0\in \nu_0^{-1}(x)} \,
                            \sum_{\tilde{x}\in\tilde{\pi}^{-1}(\tilde{x}_0)} 
                   \Nm_{\tilde{x}/x}(\partial_{\tilde{x}}\{y_n(z),\ldots, y_1(z)\}).\notag
\end{align}
Here the first equality holds by definition of $\partial^M_x$,
for the second see e.g. \cite[(1.1), R3b and Thm (1.4)]{Rost},
the third equality holds since a point $\tilde{x}\in \tilde{\ol{Z}}\setminus \tilde{Z}$ has one of the
$y_i$ coordinates equal to 1 and therefore $\partial_{\tilde{x}}\{y_n(z),\ldots, y_1(z)\}=0$ in this case.
In particular we can assume $x\in Z_0^{(1)}\cap \pi_n(Z)$. 
Since $Z$ intersects all faces properly only the two following cases can occur:
\begin{enumerate}
\item\label{lem:phi-complex-map2} 
              $x$ is not contained in any of the subsets $\pi_n(\partial^\epsilon_i(Z))$,
              $i=1,\ldots, n$, $\epsilon= 0,\infty$.
\item\label{lem:phi-complex-map3} There exists exactly one $i_0\in \{1,\ldots, n\}$ and 
                one $\epsilon_0\in \{0,\infty\}$ such that $x\in \pi_n(\partial^{\epsilon_0}_{i_0}(Z))$. 
\end{enumerate}
In case \eqref{lem:phi-complex-map2} we get 
   \[\partial_x^M(\varphi^{r,n}_{X, \pi_n(z)}(Z))=0=\varphi^{r,n-1}_{X, x}(\partial^{\rm cyc}(Z)).\]
In case \eqref{lem:phi-complex-map3} we set $\epsilon_0':=1$ if $\epsilon_0=0$ and $\epsilon_0':=-1$
if $\epsilon_0=\infty$ and get 
\[\partial^M_x(\varphi^{r,n}_{X,\pi_n(z)}(Z)) \]
\[ = (-1)^{nr+i_0-1}\sum_{x'\in\pi_n^{-1}(x)} \sum_{\tilde{x}\in \nu^{-1}(x')}
          v_{\tilde{x}}(y_{i_0}(z)^{\epsilon_0'})\cdot
       \Nm_{\tilde{x}/x}(\nu^*\{y_n(x'),\ldots,\widehat{y_{i_0}(x')},\ldots, y_1(x')\}) \]
\[= (-1)^{nr} \sum_{x'\in\pi_n^{-1}(x)} (-1)^{{i_0}-1}\cdot\epsilon_0'\cdot{\rm ord}_{x'}(y_{i_0}(z))\cdot
          \Nm_{x'/x}\{y_1(x'),\ldots,\widehat{y_{i_0}(x')},\ldots, y_n(x')\}\]
\[= (-1)^r\varphi^{r,n-1}_{X, x}(\partial^{\rm cyc}(Z)).\]
Here the first equality holds by definition of the tame symbol, the second by the projection formula for the norm map
and \cite[Ex 1.2.3]{F} and the third by the definition of the maps involved.
This proves \eqref{lem:phi-complex-map1} in this case.

{\em 2nd case: $k(z)/k(\pi_n(z))$ has positive transcendence degree.}
In this case we have to show
\eq{lem:phi-complex-map4}{\varphi^{r,n-1}_{X,x}(\partial^{\rm cyc}(Z))=0.}
This is clearly the case if there is no point in $Z^{(1)}$ which is finite over $x$.
Otherwise if such a point exists and we denote by $W\subset X\times\square^n$ its closure, 
then the dimension formula yields
\[r+1-n=\codim(\ol{\pi_n(W)}, X)=\codim(\ol{\pi_n(W)},\ol{\pi_n(Z)})+\codim(\ol{\pi_n(Z)}, X).\]
By assumption $\codim(\ol{\pi_n(Z)}, X)>r-n$. Hence $\ol{\pi_n(W)}=\ol{\pi_n(Z)}$ and since $x$ is the 
generic point of $\ol{\pi_n(W)}$ we obtain:
\begin{enumerate}
\item The base change $Z_x=Z\times_{X\times \square^n} (x\times \square^n)$ is an affine integral 1-dimensional 
          scheme of finite type over $x$.
\item The natural map $z\to Z$ factors uniquely through the projection $Z_x\to Z$.
 \end{enumerate}
Thus $Z_x\subset x\times\square^n$ is an integral closed subscheme of dimension 1
which intersects all the faces properly and we have
\[\varphi^{r,n-1}_{X,x}(\partial^{\rm cyc}(Z))=\varphi^{n-1,n-1}_{x,x}(\partial^{\rm cyc}(Z_x)),\]
where the maps on the right are
$\partial^{\rm cyc}: z^{n-1}(x,n)\to z^{n-1}(x,n-1)$ and 
$\varphi^{n-1,n-1}_{x,x}: z^{n-1}(x, n-1)\to K^M_{n-1}(k(x))$.
Denote by $\ol{Z}_x$ the closure of $Z_x$ in $(\P^1_x)^n$ and by $\nu: C\to \ol{Z}_x$ the normalization.
Then by definition
\mlnl{\partial^{\rm cyc}(Z_{x})= 
 \sum_{i=1}^n (-1)^i \bigg(
\Big(\sum_{x'\in Z_x\cap (y_i=\infty)}
\big(\sum_{\tilde{x}\in \nu^{-1}(x')}v_{\tilde{x}}(y_i^{-1})[\tilde{x}:x']\big)\cdot x'\Big)
\\ -\Big(\sum_{x'\in Z_x\cap (y_i=0)}
\big(\sum_{\tilde{x}\in \nu^{-1}(x')}v_{\tilde{x}}(y_i)[\tilde{x}:x']\big)\cdot x'\Big)\bigg). }
Applying $\varphi^{n-1,n-1}_{x,x}$  and using that $Z_x$ intersects all faces properly we obtain by a similar
calculation as in the {\em 1st case}
\[\varphi^{n-1,n-1}_{x,x}(\partial^{\rm cyc}(Z_x)) =(-1)^{n-1}
\sum_{\tilde{x}\in C} \Nm_{\tilde{x}/x}(\partial_{\tilde{x}}(\{y_n(z),\ldots, y_1(z)\}).
\]
This is zero by the reciprocity law for the tame symbol (see e.g. \cite[(2.4)]{Rost}).
Hence the vanishing \eqref{lem:phi-complex-map4}.
\end{proof}

\begin{cor}\label{cor:MilnorChow}
Let $X$ be a smooth equidimensional $k$-scheme and $r\ge 0$.  
Then the maps $\{\varphi^r_{U}\}_{U\subset X}$, where $U$ ranges over all open subsets of $X$,
induces a quasi-isomorphism of complexes of Zariski sheaves on $X_\Zar$
\[\phi^r_X: \tau_{\ge r}\Z(r)_X\xr{\rm qis} C^{\bullet}_{r,X}[-r].\]
Here $\Z(r)_X$ is the complex of Zariski sheaves $U\mapsto z^r(U,2r-\bullet)$.
In particular we have an isomorphism in $\sD^b(X_\Zar)$ (also denoted by $\phi^r_X$)
\[\phi^r_X:\tau_{\ge r}\Z(r)_X\xr{\simeq} \sK^M_X[-r].\]
\end{cor}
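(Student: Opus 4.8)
The plan is to check that $\phi^r_X$ is an isomorphism on every cohomology sheaf $\sH^q$; since both sides are bounded complexes of Zariski sheaves this suffices. By the Gersten resolution \eqref{Gersten} the target $C^\bullet_{r,X}[-r]$ has cohomology sheaves $\sK^M_{r,X}$ in degree $r$ and $0$ elsewhere. On the source, $\tau_{\ge r}$ kills $\sH^q$ for $q<r$, and the chain map $\varphi^r_X$ vanishes in cohomological degrees $<r$ (there $n=2r-i>r$, so $\varphi^{r,n}_X=0$ by \ref{CycleMap-MilnorK}), so it genuinely descends to a map of complexes out of $\tau_{\ge r}\Z(r)_X$. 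Thus I am reduced to proving two things: (a) the vanishing $\sH^q(\Z(r)_X)=0$ for $q>r$, and (b) that $\phi^r_X$ induces an isomorphism $\sH^r(\Z(r)_X)\xr{\simeq}\sK^M_{r,X}$ (the target being already known to be $\sH^r(\Z(r)_X)$ abstractly by \ref{K-Nis}).

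Both are ultimately local statements over residue fields, so first I would record the field case. For $F$ a field, a codimension-$r$ cycle in $\Spec F\times\square^n$ has dimension $n-r$, so $z^r(\Spec F,n)=0$ for $n<r$; hence $\sH^q(\Z(r)_F)=\CH^r(F,2r-q)=0$ for $q>r$. Moreover $\varphi^{r,r}_F\colon z^r(\Spec F,r)\to K^M_r(F)$ is precisely the map sending a closed point $z$ to $\pm\Nm_{k(z)/F}\{y_r(z),\ldots,y_1(z)\}$, which is the Nesterenko--Suslin/Totaro isomorphism $\CH^r(F,r)\xr{\simeq}K^M_r(F)$. So for $X=\Spec F$ the statement holds: $\tau_{\ge r}\Z(r)_F\simeq K^M_r(F)[-r]$ and $\varphi^{r,r}_F$ realizes the isomorphism on $\sH^r$.

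To globalize I would invoke Bloch's localization theorem together with homotopy invariance and purity, which show that the coniveau filtration on $\Z(r)_X$ produces a Gersten (Cousin) resolution of its cohomology sheaves: the purity isomorphism identifies the graded piece at a point $x\in X^{(c)}$ with $\CH^{r-c}(k(x),\bullet)$, shifting weight and degree by $c$, so that the resolution computing $\sH^q(\Z(r)_X)$ has codimension-$c$ term $\bigoplus_{x\in X^{(c)}}(i_x)_*\CH^{r-c}(k(x),2r-c-q)$. For $q>r$ one has $2r-c-q<r-c$, so each such group vanishes by the field computation, giving (a). For $q=r$ this resolution is $\bigoplus_{X^{(0)}}K^M_r(k(x))\to\bigoplus_{X^{(1)}}K^M_{r-1}(k(x))\to\cdots$, i.e.\ exactly $C^\bullet_{r,X}$, and Lemma \ref{lem:phi-complex-map} guarantees that $\varphi^r_X$ commutes with its differentials; thus $\phi^r_X$ is a morphism of these resolutions which on each summand is the field isomorphism $\varphi^{r-c,r-c}_{k(x)}$, hence an isomorphism on kernels, which is (b). Combining (a) and (b) with the first-paragraph reduction yields the quasi-isomorphism, and hence the isomorphism $\phi^r_X\colon\tau_{\ge r}\Z(r)_X\xr{\simeq}\sK^M_{r,X}[-r]$ in $\sD^b(X_\Zar)$.

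I expect the main obstacle to be the global input of the third paragraph: setting up the Gersten/coniveau resolution of $\Z(r)_X$ with the correct weight and degree shifts rests on Bloch's localization theorem and the purity isomorphism for higher Chow groups along the regular points $x\in X^{(c)}$, which are the deep ingredients. Matching the explicit cycle map $\varphi$ with the Nesterenko--Suslin/Totaro isomorphism on each graded piece is the other delicate point, and is exactly what the compatibility of $\varphi$ with the Gersten differentials established in Lemma \ref{lem:phi-complex-map} is designed to supply.
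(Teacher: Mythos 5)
Your proposal is correct and takes essentially the same route as the paper: the vanishing $\sH^q(\Z(r)_X)=0$ for $q>r$ and the degree-$r$ identification via the coniveau (localization/purity) resolution are exactly the content of Bloch's Gersten resolution for higher Chow sheaves, which the paper invokes as \cite[Thm (10.1)]{Bloch-HigherChow}, and your matching of $\varphi^{r,r}$ on graded pieces with the Nesterenko--Suslin/Totaro isomorphism $\CH^r(F,r)\cong K^M_r(F)$, with compatibility of boundaries supplied by Lemma \ref{lem:phi-complex-map}, is what the paper compresses into the citation of \cite[3.]{Totaro}. So your argument is the paper's proof with the cited ingredients unpacked.
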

\begin{proof}
By the Gersten resolution for higher Chow groups (see \cite[Thm (10.1)]{Bloch-HigherChow})
we have $\sH^i(\Z(r)_X)=0$ for all $i> r$.
Thus it suffices to show that $\phi^r_X$ induces an isomorphism $\sH^r(\Z(r)_X)\cong \sH^0(C^\bullet_{r,X})$.
This follows directly from the definition of $\varphi^{r,r}_X$, \cite[Thm (10.1)]{Bloch-HigherChow} and
the construction of the isomorphism $CH^r(k(X), r)\xr{\simeq}K^M_r(k(X))$ in \cite[3.]{Totaro}.
\end{proof}

\subsection{The relative cycle map}

\begin{no}{}\label{CycleMap-relMilnorK}
Let $D$ be an effective Cartier divisor on $X$ and denote by $j: U:= X\setminus D\inj X$ the inclusion of the 
complement. For $r\ge 0$ let  $C^\bullet_{r,X|D}$ be the Cousin complex of $\sK^M_{r,X|D}$ and 
$C^{h,\bullet}_{r,X|D}$ the Cousin complex of $R\e_*\sK^M_{r, X|D,\Nis}$, see \ref{CousinComplex-relK}.
For $n\in [0,r]$ we define a morphism
\[\varphi^{r,n}_{X|D}: z^r(X|D,n)\to C^{h, r-n}_{r, X|D}(X)\]
as the precomposition of the natural map 
$C^{r-n}_{r,X|D}(X)    \xr{\eqref{CousinComplex-relK1}} C^{h,r-n}_{r, X|D}(X)$ with
\eq{CycleMap-relMilnorK1}{z^r(X|D,n)\inj z^r(X,n)_U\xr{\varphi^{r,n}_X} \bigoplus_{x\in U^{(r-n)}} K^M_n(k(x))
                     \xr{\eqref{CousinComplex-relK0}} C^{r-n}_{r,X|D}(X) ,}
where $z^r(X,n)_U\subset z^r(X,n)$ is the subgroup of cycles on $X\times \square^n$ supported in
$U\times \square^n$ (i.e. cycles on $X\times \square^n$ whose support is contained in $U\times\square^n$, 
cf. \ref{Chow-mod-properties}, \eqref{Chow-mod-properties1}) and the first map is the natural inclusion from
\ref{Chow-mod-properties}, \eqref{Chow-mod-properties1}. 
For $n\not\in [0,r]$ we define $\varphi^{r,n}_{X|D}$ to be the zero map.
\end{no}

\begin{prop}\label{prop:rel-phi-complex-map}
Let $X$ be a smooth equidimensional scheme and $D$ an effective
divisor such that $D_{\rm red}$ is a simple normal crossing divisor.
For $r\ge 0$ the collection of maps $(\varphi^{r, 2r-i}_{X|D})_{i\in \Z}$ induces a morphism of complexes
\[\varphi^r_{X|D}: z^r(X|D, 2r-\bullet)\to C^{h,\bullet}_{r,X|D}(X)[-r].\]
Furthermore, this map is compatible with restriction to open subsets of $X$ in the obvious sense and hence induces
a morphism between complexes of sheaves on $X_\Zar$
\[\phi_{X|D}^r: \tau_{\ge r}\Z(r)_{X|D}\to C^{h,\bullet}_{r,X|D}[-r].\]
If $D_{\rm red}$ is a smooth divisor  $\phi_{X|D}^r$ factors as a morphism of complexes
\[\tau_{\ge r}\Z(r)_{X|D}\to C^{\bullet}_{r,X|D}[-r]\xr{\eqref{CousinComplex-relK1}} C^{h,\bullet}_{r,X|D}[-r],\]
where the first map is induced by \eqref{CycleMap-relMilnorK1}.
\end{prop}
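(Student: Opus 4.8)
The plan is to check that the collection $(\varphi^{r,2r-i}_{X|D})_{i\in\Z}$ commutes with the differentials; compatibility with restriction to Zariski opens is built into the construction \eqref{CycleMap-relMilnorK1} (everything is natural), so once the chain-map property holds on global sections it sheafifies to $\phi^r_{X|D}$. Fix $Z\in z^r(X|D,n)$ with generic point $z$ and set $x=\pi_n(z)$. By the modulus condition together with Lemma \ref{lem:mod-cond-closed-subschemes}, $\partial^{\mathrm{cyc}}(Z)$ again lies in $z^r(X|D,n-1)$, so both $\varphi^{r,n}_{X|D}(Z)$ and $\varphi^{r,n-1}_{X|D}(\partial^{\mathrm{cyc}}Z)$ are, by \eqref{CycleMap-relMilnorK1}, the images under \eqref{CousinComplex-relK1} of the elements $\alpha:=\varphi^{r,n}_X(Z)$ and $\beta:=\varphi^{r,n-1}_X(\partial^{\mathrm{cyc}}Z)$, each of which is supported on points of $U$. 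Writing $\alpha^h,\beta^h$ for their images in $C^{h,\bullet}_{r,X|D}$, the goal is $\partial\alpha^h=(-1)^r\beta^h$ in $C^{h,r-n+1}_{r,X|D}(X)$.

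First I would split the Cousin differential of $\alpha^h$ according to whether the target point lies in $U$ or in $D$. Over $U$ one has $(C^{h,\bullet}_{r,X|D})_{|U}=C^\bullet_{r,U}$ (the Gersten resolution of $\sK^M_{r,U}$), so the $U$-components of $\partial\alpha^h$ are the Gersten differential, and Lemma \ref{lem:phi-complex-map} applied on $U$ identifies them with $(-1)^r\beta^h$ (which is $U$-supported). Hence $\Delta:=\partial\alpha^h-(-1)^r\beta^h$ is supported on points $y\in D\cap X^{(r-n+1)}$. Next, under the chain map $q\colon C^{h,\bullet}_{r,X|D}\to C^\bullet_{r,X}$ induced by $R\e_*\sK^M_{r,X|D,\Nis}\to R\e_*\sK^M_{r,X,\Nis}=\sK^M_{r,X}$ (Corollary \ref{cor:rel-K-in-K}, Corollary \ref{cor:relK-smDiv} for $D=0$), one has $q\circ\varphi^{r,\bullet}_{X|D}=\varphi^{r,\bullet}_X\circ(\text{incl})$, so $q(\Delta)=0$ by Lemma \ref{lem:phi-complex-map} for $X$. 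Thus at each $y$ the class $\Delta$ lies in $\ker\big(H^{r-n+1}_y(R\e_*\sK^M_{r,X|D,\Nis})\to H^{r-n+1}_y(\sK^M_{r,X})\big)$, which, because $H^{r-n}_y(\sK^M_{r,X})=0$ by the Gersten resolution, is isomorphic to $H^{r-n}_y\big(R\e_*(\sK^M_{r,X}/\sK^M_{r,X|D,\Nis})\big)$ via the connecting map. Note this target need not vanish, so no purely formal argument closes the gap.

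The heart of the matter, and the step I expect to be the main obstacle, is to prove $\Delta=0$, where the modulus condition is indispensable: vanishing of the \emph{absolute} tame symbol (which \eqref{defn:modulus-cycle1} does give, since any $\tilde x$ on the normalisation $\tilde Z$ over a point of $D$ with nonzero residue would satisfy $v_{\tilde x}(\nu_{\ol Z}^*F_n)=0<v_{\tilde x}(\nu_{\ol Z}^*(D\times(\P^1)^n))$, contradicting the inequality) only shows $\alpha$ extends as a section of the full sheaf $\sK^M_{r,X}$. What is needed is the refined statement that $\alpha$ lies in the \emph{relative} sheaf $\sK^M_{r,X|D}$ locally at $y$, i.e.\ in the correct $U^{\bullet}$-filtration level of \ref{U-for-DVR}; this is exactly what the modulus inequality buys, translated through Remark \ref{rmk:MilnorModulus} and the explicit generators \eqref{prop:rel-K-in-K1} of Proposition \ref{prop:rel-K-in-K}. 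Concretely I would represent $\alpha$ by its Cech symbol as in Corollary \ref{cor:MilnorK-coh-in-c}, reduce via the Cohen--Macaulay property (Theorem \ref{thm:relK-is-CM}, Proposition \ref{prop:Bs-ZsCM}) and the structure of the graded sheaves $R\e_*\gr^{\fm,\nu}\sK^M_{r,X,\Nis}$ (Corollary \ref{cor:higher-gr-map}) to the generic points of the components of $D$, and there carry out a symbol computation over a discrete valuation ring using Lemma \ref{lem:symbol-formula}, showing the relative Cousin residue vanishes. This yields $\Delta=0$ and hence the chain-map property and the existence of $\phi^r_{X|D}$.

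Finally, for the factorization when $D_{\rm red}$ is smooth, I would invoke Corollary \ref{cor:relK-smDiv} and Corollary \ref{cor:CousinComplex-relK}, which make $\sK^M_{r,X|D}\to C^\bullet_{r,X|D}\to C^{h,\bullet}_{r,X|D}$ quasi-isomorphisms, together with the fact that for smooth $D_{\rm red}$ the map \eqref{prop:gr0-map1} of Proposition \ref{prop:gr0-map} is already an isomorphism on $X_\Zar$ (no Nisnevich topology needed). Consequently the same modulus computation shows the $D$-components of $\partial\alpha$ vanish already in $C^\bullet_{r,X|D}(X)$, so $\phi^r_{X|D}$ factors as the composite of the morphism of complexes induced by \eqref{CycleMap-relMilnorK1} into $C^\bullet_{r,X|D}[-r]$ followed by \eqref{CousinComplex-relK1}, as claimed.
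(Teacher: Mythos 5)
Your setup matches the paper's: you correctly reduce the chain-map property to showing that the Cousin differential of $\varphi^{r,n}_{X|D}(Z)$ has vanishing components at points $x\in D\cap X^{(r-n+1)}$ (the $U$-components being handled by Lemma \ref{lem:phi-complex-map}), and you correctly observe that the absolute statement only localizes the problem, since the kernel of $H^{r-n+1}_x(R\e_*\sK^M_{r,X|D,\Nis})\to H^{r-n+1}_x(\sK^M_{r,X})$ need not vanish. But your plan for the decisive step does not work. The obstruction $\Delta$ does not live at the generic points of the components of $D$, and no reduction via the graded sheaves $\gr^{\fm,\nu}\sK^M_{r,X,\Nis}$ and the Cohen--Macaulay property can reach it: the class at stake is the symbol $\Nm_{k(z)/k(z_0)}\{y_n(z),\ldots,y_1(z)\}$ sitting at the generic point $z_0$ of $\ol{Z}_0=\ol{\pi_n(Z)}$, a codimension-$(r-n)$ point of $X$ lying in $U$, and the vanishing of its residue at $x\in \ol{Z}_0^{(1)}\cap D$ is equivalent to an \emph{extension-with-supports} statement, namely \eqref{prop:rel-phi-complex-map2}: the class must extend to a section of $\sH^{r-n}_{\ol{Z}_0}(R\e_*\sK^M_{r,X|D,\Nis})$ near $x$. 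The relevant local computation therefore takes place on $\ol{Z}_0$, not on $D$: when $\ol{Z}_0$ is normal one completes $\sO_{\ol{Z}_0,x}$ to a DVR $A$ with fraction field $K$, translates the modulus condition \eqref{defn:modulus-cycle1} on the normalization of $\ol Z$ into $a_1\cdots a_n/f\in B$ for each factor $L$ of $k(z)\otimes_{k(z_0)}K$, applies Lemma \ref{lem:symbol-formula} to land in $U^{m\cdot e}K^M_n(L)$, and then --- this is an indispensable external input entirely absent from your sketch --- invokes Kato's theorem $\Nm_{L/K}(U^{m\cdot e}K^M_n(L))\subset U^m K^M_n(K)$ to descend along the norm. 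Without this compatibility of the norm with the $U$-filtration on Milnor $K$-theory of complete discrete valuation fields, the symbol computation on $X$ you propose cannot close the argument.

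The second, equally essential, omission is the case where $\ol{Z}_0$ is \emph{not} normal, so that $\sO_{\ol{Z}_0,x}$ is not a DVR and the Cech-symbol lifting of Corollary \ref{cor:MilnorK-coh-in-c} is unavailable. The paper handles this by embedding the normalization $\tilde{Z}_0$ in $\P^N_X$, lifting $Z$ to a relative cycle $Z'$ for $(\P^N_X, \P^N_D)$, applying the normal case upstairs, spreading out using Theorem \ref{thm:relK-is-CM}, and then pushing forward with the projective pushforward $\pi_*\colon R\pi_*\sK^M_{r+N,P_X|P_D,\Nis}[N]\to \sK^M_{r,X|D,\Nis}$ of Definition \ref{defn:proj-pf-relK} --- whose construction is precisely where Theorem \ref{prop:proj-pf-relK}, Corollary \ref{cor:higher-gr-map} and the Nisnevich topology are genuinely needed (and which can be run in the Zariski topology when $D_{\rm red}$ is smooth, giving the claimed factorization through $C^\bullet_{r,X|D}$). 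Your proposal cites the right supporting results but assembles them toward a reduction that does not address either the norm-theoretic descent or the non-normality of $\ol{Z}_0$; as written, the proof of $\Delta=0$ is a genuine gap rather than a routine verification.
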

\begin{proof}
Once we know that  $\varphi^r_{X|D}$ is a map of complexes it is clear that
it induces a map between complexes of sheaves $\phi^r_{X|D}$.
For the first statement 
we have to show the following:
For $n\in [1,r+1]$,   $Z\in C^r(X|D,n)$ (see Definition \ref{defn:modulus-cycle}) with generic point $z\in Z$ 
and for all points $x\in \ol{\{\pi_n(z)\}}\cap X^{(r-n+1)}$ 
the following equality holds in $H^{r-n+1}_x(R\e_*\sK^M_{r,X|D,\Nis})$
\eq{prop:rel-phi-complex-map1}{(-1)^r\partial^C_x(\varphi^{r,n}_{X|D,\pi_n(z)}(Z))= 
        \varphi_{X|D,x}^{r,n-1}(\partial^{\rm cyc}(Z)), }
where we denote by $\varphi^{r,n}_{X|D,x}$ the composition of $\varphi^{r,n}_{X|D}$ with the projection to the 
$x$-summand and by 
$\partial^C_x: H^{r-n}_{\pi_n(z)}(R\e_*\sK^M_{r,X|D})\to H^{r-n+1}_x(R\e_*\sK^M_{r,X|D})$ and
$\partial^{\rm cyc}: z^r(X|D,n)\to z^r(X|D,n-1)$ the boundary maps in 
$C^{h,\bullet}_{r,X|D}$ and $z^r(X|D,2r-\bullet)$, respectively.  

Notice that the restriction of $\varphi^{r,n}_{X|D}$ to $U$ equals the map $\varphi^{r,n}_U$ from 
\ref{CycleMap-MilnorK}.
In particular, for $x\in U$ the equality \eqref{prop:rel-phi-complex-map1} follows from 
Lemma \ref{lem:phi-complex-map}. Thus we can assume $x\in D$.
Therefore we have to show the vanishing of the left hand side in \eqref{prop:rel-phi-complex-map1}.
By definition of $\varphi^{r,n}_{X|D}$ we can further assume that $k(z)/k(\pi_n(z))$ is finite.
Taking the definition of the boundary maps in the Cousin complex $C^{h,\bullet}_{r,X|D}$ into account 
we see that it remains to show the following:

Denote by $\ol{Z}_0\subset X$ the closure of $\pi_n(Z)$ and by $z_0=\pi_n(z)\in \ol{Z}_0$ its generic point.
Assume $k(z)/k(z_0)$  is finite and $x\in D\cap \ol{Z}_0\cap X^{(r-n+1)}$. Then we have to show 
\eq{prop:rel-phi-complex-map2}{  \varphi^{r,n}_{X|D,z_0}(Z)\in 
      \text{Im}(\sH^{r-n}_{\ol{Z}_0}(R\e_*\sK^M_{r,X|D})_x \to H^{r-n}_{z_0}(R\e_*\sK^M_{r, X|D}) ).}

Observe that under the above assumptions we have $x\in\ol{Z}_0^{(1)}$.
Denote the composition of the map \eqref{CycleMap-relMilnorK1} with the projection to the $z_0$-summand
by
\[\psi_{z_0}: z^r(X|D,n)\to H^{r-n}_{z_0}(\sK^M_{r,X|D}).\]
Notice that \eqref{prop:rel-phi-complex-map2} holds if
\eq{prop:rel-phi-complex-map2.5}{\psi_{z_0}(Z)\in 
      \text{Im}(\sH^{r-n}_{\ol{Z}_0}(\sK^M_{r,X|D})_x \to H^{r-n}_{z_0}(\sK^M_{r, X|D}) ).}
Also, in case \eqref{prop:rel-phi-complex-map2.5} holds for all $Z$, we actually get that \eqref{CycleMap-relMilnorK1}
induces a morphism of complexes.

In the following we will show that \eqref{prop:rel-phi-complex-map2.5} is satisfied if $\ol{Z}_0$ is normal
or if $D_{\rm red}$ is smooth, and that \eqref{prop:rel-phi-complex-map2} holds in general. This will
prove the proposition.

{\em 1st case: $\bar{Z}_0$ is normal.} In this case $\bar{Z}_0$ is regular at $x$. Hence we find
a regular sequence $t_1,\ldots, t_{r-n}\in \sO_{X, x}$ 
with $\sO_{X,x}/(t_1,\ldots, t_{r-n})\cong \sO_{\ol{Z}_0,x}$. 
Let $f\in \sO_{X,x}$ be a local equation for $D$ and 
denote by $D_0= D_{|\ol{Z}_0}$ the pullback of $D$ to $\ol{Z}_0$. 
The image of $f$ in $\sO_{\ol{Z}_0,x}$ is still denoted by $f$.
We claim that in order to prove \eqref{prop:rel-phi-complex-map2.5} it suffices to show 
\eq{prop:rel-phi-complex-map3}{\Nm_{k(z)/k(z_0)}\{y_n(z),\ldots, y_1(z)\}\in \sK^M_{n, \ol{Z}_0|D_0,x}.}
Indeed set $\nu:=\Nm_{k(z)/k(z_0)}\{y_n(z),\ldots, y_1(z)\}$. If the claim
\eqref{prop:rel-phi-complex-map3} holds we can lift $\nu$ to an element 
$\tilde{\nu}\in \sK^M_{n, X|D,x}$ (using the explicit description from Remark \ref{rmk:MilnorModulus}).
We obtain an element (see \eqref{cech-symbol2})
\[\genfrac{[}{]}{0pt}{}{\tilde{\nu}\cdot\{t_1,\ldots, t_{r-n}\}}{t_1,\ldots,t_{r-n}}\in 
        (\sH_{\ol{Z}_0}^{r-n}(\sK^M_{r, X|D}))_x,\]
which by Corollary \ref{cor:MilnorK-coh-in-c} 
maps under restriction to the generic point of $\ol{Z}_0$ to the element
$\psi_{z_0}(Z)\in H^{r-n}_{z_0}(\sK^M_{r,X|D})\cong K^M_{n}(k(z_0))$.

We have $\sO_{\ol{Z}_0, x}[\frac{1}{f}]=k(z_0)$. Therefore $(\sK^M_{n, \ol{Z}_0|D_0})_x$
is generated by symbols of the form $\{1+fa, \kappa_1,\ldots, \kappa_n\}$, where $a\in \sO_{\ol{Z}_0,x}$
and $\kappa_i\in k(z_0)^\times$ (see Remark \ref{rmk:MilnorModulus}).
Denote by $A$ the completion of $\sO_{\ol{Z}_0,x}$ along the maximal ideal and by $K$ its fraction field;
it is a complete discrete valuation field with $A$ as its ring of integers.
Let $m$ be the valuation of $f\in A$.
Then by Lemma \ref{lem:DVR-cDVR}
the natural map $K^M_n(k(z_0))\to K^M_n(K)$ induces an isomorphism 
\[K^M_n(k(z_0))/(\sK^M_{n, \ol{Z}_0|D})_x\to  K^M_n(K)/U^m K^M_n(K).\]
Therefore it suffices to show that the pullback of $\Nm_{k(z)/k(z_0)}\{y_n(z),\ldots, y_1(z)\}$
to $K^M_n(K)$ lies in $U^m K^M_n(K)$. We have  
$k(z)\otimes_{k(z_0)} K=\prod_i L_i$, where each $L_i$ equals the completion of $k(z)$
along a point in the normalization of the closure of $Z$ in $X\times (\P^1)^n$, which lies above $x$.
Now we fix $i$ and set $L:=L_i$. 
Denote by $B$ the normalization of $A$ in $L$, by $\fm$ its maximal ideal and by $\iota:k(z)\inj L$ the natural inclusion.
We set
\[a_j:=\begin{cases} \iota(y_j(z))-1, & \text{if }\iota(y_j(z))\in B\\
                               \iota(y_j(z)^{-1})-1,  & 
                                                       \text{if }\iota(y_j(z)^{-1})\in B. \end{cases}\]
By the compatibility of the 
norm map with pullback we are reduced to show
\eq{prop:rel-phi-complex-map4}{\Nm_{L/K}\{1+a_1,\ldots, 1+a_n\}\in U^m K^M_n(K).}
The modulus condition \eqref{defn:modulus-cycle1} which the integral cycle $Z$ satisfies translates into
\[a_1\cdots a_n/f \in B.\]
Up to permuting the factors $a_j$ (and therefore changing the element in \eqref{prop:rel-phi-complex-map4}
by a sign) we can assume that there is an integer $\mu\in [1,n]$ such that $a_1,\ldots, a_\mu \in \fm$
and $a_{\mu+1},\ldots, a_{n}\in B^\times$. A fortiori we have 
\[a_1\cdots a_\mu/f\in B.\]
Then we can apply Lemma \ref{lem:symbol-formula}, \eqref{lem:symbol-formula1}, repeatedly
(starting from $s=a_1$, $t=a_2$, $a=b=1$) to obtain
\[\{1+a_1,\ldots, 1+a_n\}=\{1+u a_1\cdots a_\mu, \lambda_2,\ldots, \lambda_n\},\quad 
               u\in B^\times, \lambda_j\in L^\times.\]
In particular, $\{1+a_1,\ldots, 1+a_n\}\in U^{m\cdot e}K^M_n(L)$, where $e$ denotes the ramification index
of $L/K$. Therefore \eqref{prop:rel-phi-complex-map4} follows from  
$\Nm_{L/K}(U^{m\cdot e}K^M_n(L))\subset U^mK^M_n(K)$, see \cite[Prop. 2]{Kato1983}
(also \cite[Thm 1.1]{Morrow12}).

{\em 2nd case: $\ol{Z}_0$ is arbitrary.}
      We denote by $\nu_0:\tilde{Z}_0\to \ol{Z}_0$ the normalization. It is a finite map and hence factors
as  a closed immersion $\tilde{Z}_0\inj \P^N_X:=P_X$ followed by the projection $P_X\to X$.
There is a generic point of $Z\times_{\ol{Z}_0}\tilde{Z}_0$ which maps to the generic point of $Z$
and we denote by $Z'\subset Z\times_{\ol{Z}_0} \tilde{Z}_0$ its closure. 
We can view $Z'$ as a closed subscheme of $P_X\times\square^n$.
By construction  the projection $P_X\times\square^n\to X\times\square^n$ induces a finite
and surjective morphism $Z'\to Z$. It follows that $Z'$ has codimension $N+r$ in $P_X\times \square^n$, 
intersects all faces properly and  satisfies the modulus condition \eqref{defn:modulus-cycle1} 
with respect to the effective divisor $P_D\subset P_X$. Furthermore
the closure of the image of $Z'$ in $P_X$ equals $\tilde{Z}_0$, which has generic point $z_0$.
Thus we can apply the 1st case to obtain
\eq{prop:rel-phi-complex-map4.5}{ \psi_{z_0}(Z')\in \\
      \text{Im}(\sH^{r+N-n}_{\tilde{Z}_0}(\sK^M_{r+N,P_X|P_D})_{x'} 
           \to H^{r+N-n}_{z_0}(\sK^M_{r+N, P_X|P_D}) ),}
where $x'$ is any point in $\tilde{Z}_0^{(1)}\cap P_D$.
A fortiori
\ml{prop:rel-phi-complex-map5}{ \varphi^{r+N,n}_{P_X|P_D,z_0}(Z')\in \\
      \text{Im}(\sH^{r+N-n}_{\tilde{Z}_0}(R\e_*\sK^M_{r+N,P_X|P_D})_{x'} 
           \to H^{r+N-n}_{z_0}(R\e_*\sK^M_{r+N, P_X|P_D}) ).}
Let $x\in D\cap \ol{Z}_0^{(1)}$ be as in \eqref{prop:rel-phi-complex-map2}.
Then there exists an open neighborhood $\tilde{V}$ of $x\times P$ in $P_X$
such that $\tilde{V}\cap \tilde{Z}_0$ contains all 1-codimensional  points of $\tilde{Z}_0$
lying over $x$ and such that $\varphi^{r+N,n}_{P_X|P_D,z_0}(Z')$
comes from an element in
\[H^{r+N-n}_{\tilde{Z}_0\cap \tilde{V}}(\tilde{V}, R\e_*\sK^M_{r+N, P_X|P_D})=
 H^{r+N-n}_{\tilde{Z}_0\cap \tilde{V}}(\tilde{V}_\Nis, \sK^M_{r+N, P_X|P_D,\Nis}).\]
(This follows from \eqref{prop:rel-phi-complex-map5} and the fact that the Cousin complex is a resolution, 
see Corollary \ref{cor:CousinComplex-relK}.)
After possibly shrinking $\tilde{V}$ we find an open neighborhood $V\subset X$ of $x$ such that
$\tilde{V}\subset P_V$ and the complement of 
$\tilde{Z}_0\cap \tilde{V}\subset \tilde{Z}_0\cap P_V$ has codimension 2 in $\tilde{Z}_0$.
It follows from Theorem \ref{thm:relK-is-CM} that $\varphi^{r+N,n}_{P_X|P_D,z_0}(Z')$
spreads out to an element of
\[H^{r+N-n}_{\tilde{Z}_0\cap P_V}(P_{V,\Nis}, \sK^M_{r+N, P_X|P_D,\Nis}).\]
Now the pushforward from Definition \ref{defn:proj-pf-relK} induces a commutative diagram
\[\xymatrix{H^{r+N-n}_{\tilde{Z}_0\cap P_V}(P_{V,\Nis}, \sK^M_{r+N, P_X|P_D,\Nis})\ar[d] &\\
     H^{r+N-n}_{(\ol{Z}_0\cap V)\times_V P_V}(P_{V,\Nis}, \sK^M_{r+N, P_X|P_D,\Nis}) \ar[d]_{\pi_*}\ar[r] &
                H^{r+N-n}_{z_0}(\sK^M_{r+N, P_X|P_D,\Nis})\ar@{=}[d] \\
     H^{r-n}_{\ol{Z}_0\cap V}(V_{\Nis}, \sK^M_{r, X|D,\Nis})\ar[r] &     H^{r-n}_{z_0}(\sK^M_{r, X|D,\Nis}).
                     }\]
For the equality on the right notice that both groups are equal to $K^M_n(k(z_0))$.
By definition of $\varphi^{*,*}_{*|*}$ it is also immediate that
$\varphi^{r+N,n}_{P_X|P_D,z_0}(Z')=\varphi^{r,n}_{X|D,z_0}(Z)$.
Hence \eqref{prop:rel-phi-complex-map2} also holds in the 2nd case. 
If $D_{\rm red}$ is smooth the above proof goes through if we drop the `$\Nis$', hence 
\eqref{prop:rel-phi-complex-map2.5} also holds in the second case.
This finishes the proof of the proposition.
\end{proof}

\begin{cor}\label{cor:cycle-maps}
Let $X$ and $D$ be as in Proposition \ref{prop:rel-phi-complex-map} and 
denote by $j:U:=X\setminus D\inj X$ the inclusion of the complement of $D$. Then for all $r\ge 1$
we have the following commutative diagram in $\sD^b(X_{\rm Zar})$
\[\xymatrix{ \tau_{\ge r}\Z(r)_{X|D}\ar[r]^-{\phi^r_{X|D}}\ar[d] & R\e_*\sK^M_{r, X|D,\Nis}[-r]\ar[d]\\
                   \tau_{\ge r}\Z(r)_X\ar[r]^{\simeq}_{\phi^r_X} & \sK^M_{r,X}[-r],   }\]
in which the lower horizontal map is an isomorphism.
Here the horizontal maps are induced by the maps $\phi^r_X$ and $\phi^r_{X|D}$ from 
Corollary \ref{cor:MilnorChow} and Proposition \ref{prop:rel-phi-complex-map}, respectively,
and the left vertical map is induced by the natural inclusion and the 
right vertical map is induced  by the natural inclusion $\sK^M_{r,X|D,\Nis}\inj \sK^M_{r,X,\Nis}$
and the isomorphism $R\e_*\sK^M_{r, X,\Nis}\cong \sK^M_{r,X}$ from Corollary \ref{cor:relK-smDiv}.
Furthermore if $D_{\rm red}$ is smooth we can replace $R\e_*\sK^M_{r, X|D,\Nis}[-r]$
by $\sK^M_{r, X|D}[-r]$.
\end{cor}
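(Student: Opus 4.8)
The plan is to realize the entire square by genuine complexes of Zariski sheaves and to reduce the commutativity to the single observation that the relative and absolute cycle maps are computed by the same formula and agree on cycles supported over $U$. First I would recall that by Corollary \ref{cor:MilnorChow} the map $\phi^r_X$ is an isomorphism, realized as the quasi-isomorphism $\tau_{\ge r}\Z(r)_X\to C^\bullet_{r,X}[-r]$ onto the Gersten resolution of $\sK^M_{r,X}$, and that by Corollary \ref{cor:CousinComplex-relK} the object $R\e_*\sK^M_{r,X|D,\Nis}$ is represented by its Cousin complex $C^{h,\bullet}_{r,X|D}$. Thus $\phi^r_{X|D}$ of Proposition \ref{prop:rel-phi-complex-map} is precisely the top arrow of the statement, and both horizontal maps are assembled from the cycle maps $\varphi^{r,n}$ of \ref{CycleMap-MilnorK} and \ref{CycleMap-relMilnorK}.

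Next I would describe the right vertical map at the level of these resolutions. The sheaf inclusion $\sK^M_{r,X|D,\Nis}\inj\sK^M_{r,X,\Nis}$ induces, after applying $R\e_*$ and the isomorphism $R\e_*\sK^M_{r,X,\Nis}\cong\sK^M_{r,X}$ of Corollary \ref{cor:relK-smDiv} (case $D=0$), a morphism which, by functoriality of the Cousin construction, is computed by an honest map of complexes $C^{h,\bullet}_{r,X|D}\to C^\bullet_{r,X}$. The key point to verify is that the composite of sheaf maps
\[\sK^M_{r,X|D}\to R\e_*\sK^M_{r,X|D,\Nis}\to\sK^M_{r,X}\]
is the natural inclusion $\sK^M_{r,X|D}\inj\sK^M_{r,X}$; this follows formally from the naturality of $\sF\to R\e_*\sF_\Nis$ together with Corollary \ref{cor:relK-smDiv}. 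Granting this, the composite $C^\bullet_{r,X|D}\xr{\eqref{CousinComplex-relK1}}C^{h,\bullet}_{r,X|D}\to C^\bullet_{r,X}$ is the map of Cousin complexes induced by $\sK^M_{r,X|D}\inj\sK^M_{r,X}$, and on every summand indexed by a point of $U$ it is the identity, by \eqref{CousinComplex-relK0}.

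Then I would check that the square of sheaf complexes commutes; since all maps are compatible with restriction to opens, this reduces to a degreewise identity of group homomorphisms. By the construction in \ref{CycleMap-relMilnorK}, $\varphi^{r,n}_{X|D}$ is \eqref{CycleMap-relMilnorK1} followed by \eqref{CousinComplex-relK1}; postcomposing with the right vertical map $C^{h,r-n}_{r,X|D}(X)\to C^{r-n}_{r,X}(X)$ gives, by the previous paragraph, the composite
\[z^r(X|D,n)\inj z^r(X,n)_U\xr{\varphi^{r,n}_X}\bigoplus_{x\in U^{(r-n)}}K^M_n(k(x))\inj C^{r-n}_{r,X}(X).\]
On the other hand the left vertical map is the inclusion $z^r(X|D,n)\inj z^r(X,n)$, which by \ref{Chow-mod-properties}, \eqref{Chow-mod-properties1}, factors through $z^r(X,n)_U$; since every relative cycle is supported over $U$, the class $\varphi^{r,n}_X$ is concentrated at points of $U^{(r-n)}$, so the two composites coincide. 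Hence the square commutes in $\sD^b(X_\Zar)$, with the lower map an isomorphism by Corollary \ref{cor:MilnorChow}.

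Finally, when $D_{\rm red}$ is smooth, Proposition \ref{prop:rel-phi-complex-map} shows that $\phi^r_{X|D}$ already factors through $C^\bullet_{r,X|D}[-r]$, while Corollary \ref{cor:CousinComplex-relK} gives that $\sK^M_{r,X|D}\to C^\bullet_{r,X|D}\to C^{h,\bullet}_{r,X|D}$ are quasi-isomorphisms; this allows one to replace $R\e_*\sK^M_{r,X|D,\Nis}[-r]$ by $\sK^M_{r,X|D}[-r]$ throughout, the right vertical arrow becoming the one induced directly by $\sK^M_{r,X|D}\inj\sK^M_{r,X}$. I expect the only genuine work to be the bookkeeping of the middle two paragraphs, namely pinning down the right vertical arrow as an actual morphism of Cousin complexes and confirming that the two sheaf maps compose to the inclusion; once this is settled, commutativity is immediate from the common construction of the cycle maps.
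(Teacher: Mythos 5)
Your proposal is correct and follows essentially the same route as the paper, whose proof is just the citation chain Corollary \ref{cor:MilnorChow}, \ref{MilnorK}, Proposition \ref{prop:rel-phi-complex-map} and Corollary \ref{cor:CousinComplex-relK}: realize both rows by the explicit cycle maps into the Cousin complex $C^{h,\bullet}_{r,X|D}$ and the Gersten complex $C^\bullet_{r,X}$, note the right vertical arrow is the induced map of Cousin complexes, and observe that the two composites agree degreewise since relative cycles are supported over $U$ and $\varphi^{r,n}_{X|D}$ is by construction $\varphi^{r,n}_X$ on $z^r(X,n)_U$. Your two ``bookkeeping'' points (the right vertical arrow as an actual morphism of complexes, and the composite $\sK^M_{r,X|D}\to R\e_*\sK^M_{r,X|D,\Nis}\to \sK^M_{r,X}$ being the inclusion) are exactly what the paper leaves implicit, so your write-up is a faithful, slightly more detailed version of its argument.
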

\begin{proof}
This follows directly from
Corollary \ref{cor:MilnorChow}, \ref{MilnorK}, Proposition \ref{prop:rel-phi-complex-map} 
and Corollary \ref{cor:CousinComplex-relK}.
\end{proof}

\begin{rmk}
\begin{enumerate}
\item
The $\dlog$ map induces a natural map 
\[\dlog:\sK^M_{r,X|D}\to \Omega^r_{X}(\log D_{\rm red})(-D),\]
see the proof of Proposition \ref{prop:higher-gr-map-char0}. Clearly it also
induces a map of complexes $\sK^M_{r,X|D}\to \Omega^{\ge r}_{X}(\log D_{\rm red})(-D)$.
The composition in $\sD^b(X_\Zar)$
\mlnl{\Z(r)_{X|D}\to \tau_{\ge r}\Z(r)_{X|D}\xr{\phi^r_{X|D}}R\e_*\sK^M_{r,X|D,\Nis}\\
\xr{\dlog} R\e_*(\Omega^{\ge r}_{X,\Nis}(\log D_{\rm red})(-D))= 
      \Omega^{\ge r}_{X}(\log D_{\rm red})(-D)}
is the regulator map defined in \cite[(7.10)]{BS14}, at least up to sign.
\item Assume $k$ is a perfect field of positive characteristic. 
Denote by $W_n\Omega^\bullet_X(\log D)$ the logarithmic
de Rham-Witt complex for the log scheme $(X, j_*\sO_U^\times\cap \sO_X)$, see \cite[4.]{HK}.
It defines a differential graded algebra  and we denote by 
\[W_n\Omega^\bullet_X(\log D)(-D)\subset W_n\Omega^\bullet_X(\log D)\]
the differential graded ideal generated by $W_n(\sO_X(-D))=\Ker (W\sO_X\to W\sO_D)$.
Then it is not hard to see that there is a natural map
\[\dlog :\sK^M_{r,X|D}\to W_n\Omega^r_X(\log D)(-D), \quad 
     \{a_1,\ldots, a_r\}\mapsto \dlog[a_1]\cdots\dlog[a_r],\]
where $[-]: \sO_X\to W_n\sO_X$ denotes the Teichm\"uller lift.
Since the sheaf $W_n\Omega^r_X(\log D)(-D)$ can be viewed as a coherent sheaf on $W_n X=\Spec W\sO_X$, its
Zariski and its Nisnevich cohomology coincide and as in (1) we obtain a cycle map
\[\Z(r)_{X|D}\to W_n\Omega^{\ge r}_X(\log D)(-D).\]
\end{enumerate}
\end{rmk}

\begin{cor}\label{cor:cycle-maps-Nis}
Assume $D_{\rm red}$ has simple normal crossings. Then  the family 
$\{\varphi^r_{V|v^*D}\}_v$, where $v$ runs through all \'etale maps $v:V\to X$,
induces a morphism of complexes of Nisnevich sheaves
\[\phi^r_{X|D,\Nis}: \tau_{\ge r}\Z(r)_{X|D,\Nis}\to C^\bullet_{r, X|D,\Nis}[-r],\]
see \ref{motivic-coh-mod} and \ref{CousinComplex-relK-Nis} for the notations.
By Corollary \ref{cor:CousinComplex-relK-Nis} we get an induced map (still denoted by the same symbol)
$\phi^r_{X|D,\Nis}: \tau_{\ge r}\Z(r)_{X|D,\Nis}\to\sK^M_{r, X|D,\Nis}[-r]$ in $\sD^b(X_\Nis)$ fitting
in the following commutative diagram 
\[\xymatrix{ \tau_{\ge r}\Z(r)_{X|D,\Nis}\ar[r]^-{\phi^r_{X|D,\Nis}}\ar[d] & \sK^M_{r, X|D,\Nis}[-r]\ar[d]\\
                   \tau_{\ge r}\Z(r)_{X,\Nis}\ar[r]^{\simeq}_{\phi^r_{X,\Nis}} & \sK^M_{r,X,\Nis}[-r],   }\]
in which the lower horizontal map is an isomorphism.
\end{cor}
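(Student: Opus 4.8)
The plan is to produce $\phi^r_{X|D,\Nis}$ by sheafifying, over the small Nisnevich site $X_\Nis$, the Zariski-level construction already carried out in Proposition \ref{prop:rel-phi-complex-map}. First I would note that for every étale $v: V\to X$ the pair $(V,v^*D)$ again satisfies the hypotheses of that proposition---being étale over $X$ preserves smoothness and the simple-normal-crossing condition---so it supplies a morphism of complexes
\[\varphi^r_{V|v^*D}: z^r(V|v^*D, 2r-\bullet)\to C^{h,\bullet}_{r, V|v^*D}(V)[-r].\]
By the definition of the Nisnevich Cousin complex in \ref{CousinComplex-relK-Nis} we have $\Gamma(V,C^\bullet_{r,X|D,\Nis})=C^{h,\bullet}_{r,V|v^*D}(V)$, while by \ref{motivic-coh-mod} the assignment $V\mapsto z^r(V|v^*D,2r-\bullet)$ defines the complex of Nisnevich sheaves $\Z(r)_{X|D,\Nis}$; since $\varphi^{r,n}_{X|D}$ vanishes outside $n\in[0,r]$, these maps factor through $\tau_{\ge r}$. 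Thus on sections over each $V$ one obtains the desired map.

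The crux is to check that this family is compatible with the general \emph{étale} transition maps, and not merely with the open immersions treated in Proposition \ref{prop:rel-phi-complex-map}; this is what promotes the family to a morphism of presheaves of complexes on $X_\Nis$, and hence---both presheaves being sheaves---to a morphism of Nisnevich sheaves. Concretely, for an étale $w: V'\to V$ over $X$ I would verify that pulling back a relative cycle and then applying $\varphi^r$ coincides with applying $\varphi^r$ and then restricting inside $C^\bullet_{r,X|D,\Nis}$. By the construction \eqref{CycleMap-relMilnorK1}, the relative cycle map is the restriction of the absolute cycle map $\varphi^{r,n}_V$ of \ref{CycleMap-MilnorK} to the modulus subgroup, so it suffices to treat $\varphi^{r,n}_V$: the pullback of a cycle $Z$ resolves its generic point into the finitely many points of $V'$ lying above it, the stalks of the target (the henselian local cohomology groups described in \ref{CousinComplex-relK}) pull back by summation over those same preimages equipped with their norm maps, and the required identity is exactly the compatibility of the norm map and the tame symbol with étale base change. \textbf{Verifying this base-change compatibility is the main obstacle}, though it is nothing more than the standard cycle-module functoriality of Milnor $K$-theory.

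Granting this, $\phi^r_{X|D,\Nis}: \tau_{\ge r}\Z(r)_{X|D,\Nis}\to C^\bullet_{r,X|D,\Nis}[-r]$ is a morphism of complexes of Nisnevich sheaves, and composing with the quasi-isomorphism $\sK^M_{r,X|D,\Nis}\to C^\bullet_{r,X|D,\Nis}$ of Corollary \ref{cor:CousinComplex-relK-Nis} yields the map into $\sK^M_{r,X|D,\Nis}[-r]$ in $\sD^b(X_\Nis)$. For the square I would argue as follows: the left vertical arrow is induced by $z^r(-|D,n)\inj z^r(-,n)$ and the right by $\sK^M_{r,X|D,\Nis}\inj\sK^M_{r,X,\Nis}$, and commutativity is immediate from \eqref{CycleMap-relMilnorK1}, which exhibits $\varphi^{r,n}_{X|D}$ as the restriction of $\varphi^{r,n}_X$. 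Finally, the bottom arrow $\phi^r_{X,\Nis}$ is the Nisnevich sheafification of the cycle map of Corollary \ref{cor:MilnorChow}; it is an isomorphism because that map induces an isomorphism on $\sH^r$ while the higher cohomology sheaves of $\Z(r)_X$ vanish by the Gersten resolution, and both the vanishing and the identification $\sH^r(\Z(r)_X)\cong\sK^M_{r,X}$ persist after passage to $X_\Nis$ by \ref{K-Nis}.
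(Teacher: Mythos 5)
Your proposal is correct and follows essentially the same route as the paper: both reduce the only nontrivial point---compatibility of the family $\{\varphi^r_{V|v^*D}\}_v$ with \'etale transition maps---to the case $D=0$ over $U$, using that \eqref{CycleMap-relMilnorK1} exhibits $\varphi^{r,n}_{V|v^*D}$ as the restriction of the absolute cycle map $\varphi^{r,n}_V$, and then invoke the compatibility of the Milnor $K$-theory norm with pullback (Rost's cycle-module axioms, the paper's citation of R1c and Thm (1.4)). The remaining steps you spell out (passing through Corollary \ref{cor:CousinComplex-relK-Nis}, the commutativity of the square, and the bottom isomorphism via \ref{K-Nis}) are exactly what the paper takes from its earlier results.
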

\begin{proof}
It suffices to prove the existence of $\phi^r_{X|D,\Nis}$. That is,
we have to see that for a map $V'\to V$ between \'etale $X$-schemes the following diagram commutes:
\[\xymatrix{ z^r(V'|D_{V'}, 2r-\bullet)\ar[r]^{\varphi^r_{V'|D_{V'}}} & C^{h,\bullet}_{r,V'|D_{V'}}(V')[-r]\\
              z^r(V|D_{V}, 2r-\bullet)\ar[r]^{\varphi^r_{V|D_{V}}}\ar[u] & C^{h,\bullet}_{r,V|D_{V}}(V)[-r],\ar[u]
}\]
where the vertical arrows are the restriction maps. By definition of $\varphi^r_{*|*}$ in \ref{CycleMap-relMilnorK}
it suffices to check this over $U=X\setminus D$. 
Hence we can assume $D=0$. In this case the statement follows
from the definition of $\varphi^{r,n}$ in \ref{CycleMap-MilnorK} and the compatibility of 
the norm on Milnor $K$-theory with pullback, see e.g. \cite[R1c and (1.4)Thm]{Rost}. 
\end{proof}

\begin{prop}\label{prop:rel-cycle-map-surj-Nis}
Let $X$ be a smooth equidimensional scheme and $D$ an effective
divisor such that $D_{\rm red}$ is a simple normal crossing divisor. 
Then the map on $X_\Nis$
\[\sH^r(\Z(r)_{X|D,\Nis})\surj \sK^M_{r,X|D,\Nis}\]
induced by the cycle map $\phi^r_{X|D,\Nis}$ is surjective for all $r\ge 1$.
Furthermore, if $D_{\rm red}$ is smooth, with $\Nis$ replaced by $\Zar$, the same statement holds.
\end{prop}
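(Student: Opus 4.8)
The plan is to check surjectivity on stalks and to reduce to exhibiting one explicit modulus cocycle for each element of a generating set of symbols. A map of Nisnevich (resp. Zariski) sheaves is surjective if and only if it is surjective on all stalks, and by Corollary \ref{cor:CousinComplex-relK-Nis} the map $\phi^r_{X|D,\Nis}$ is realized as an actual map of complexes into the Cousin complex $C^\bullet_{r,X|D,\Nis}$ resolving $\sK^M_{r,X|D,\Nis}$; so I would fix a point $x'$ of an \'etale $X$-scheme and show the induced map on $\sH^r$-stalks hits every element of $\sK^{M,h}_{r,X|D,x'}$. By Remark \ref{rmk:MilnorModulus} this group is generated by symbols $\{1+fa, b_1,\ldots, b_{r-1}\}$ with $f$ a local equation for $D$, $a\in A^h$ and $b_i\in A^h[\tfrac{1}{f}]^\times$; each such symbol is defined over some \'etale neighbourhood $v:V\to X$, so it suffices to produce over $V$ a class in $\sH^r(\Z(r)_{V|v^*D,\Nis})$ mapping to it. (For $x'\notin D$ the stalk is $\sK^M_{r,X,x'}$ and the assertion reduces to the classical Corollary \ref{cor:MilnorChow}, so I focus on $x'\in D$.)

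The cycle I would use is a graph. Set $g_1:=1+fa$ and $g_{i+1}:=b_i$ for $i=1,\ldots,r-1$. Since $fa$ lies in the ideal of $D$ the function $g_1$ is a unit, and each $g_{i+1}$ is a unit on $U:=V\setminus v^*D$; hence $(g_1,\ldots,g_r)$ is a morphism from $U':=U\setminus\bigcup_i\{g_i=1\}$ to $\square^r$ avoiding every face $\{y_i=0\}$ and $\{y_i=\infty\}$. Let $Z\subset U\times\square^r$ be the closure of its graph. Then $Z$ is integral of codimension $r$ and, being disjoint from all faces $U\times\{y_i=\epsilon\}$ ($\epsilon\in\{0,\infty\}$), it meets every face properly and has $\partial Z=0$, so it is a cocycle in $z^r(V|v^*D,r)$. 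I would verify the modulus condition \eqref{defn:modulus-cycle1} on the normalization $\nu:\tilde{Z}\to X\times(\P^1)^r$ of the closure $\bar{Z}\subset X\times(\P^1)^r$: as $\tilde{Z}\to V$ is birational, the pullback of $y_1$ is $g_1$, so $\nu^*\{y_1=1\}$ is the divisor of zeros of $g_1-1=fa$, which dominates the divisor of zeros of $f$ because $a$ is regular; since that latter divisor is $\nu^*(v^*D\times(\P^1)^r)$ and the remaining faces only enlarge the right-hand side, the inequality $\nu^*(v^*D\times(\P^1)^r)\le\nu^*(V\times F_r)$ holds.

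To finish I compute the image. Because $Z$ is the graph of a morphism, $Z\to V$ is birational, so $k(z)=k(\eta)$ at the generic point $z$ of $Z$ and the norm in \ref{CycleMap-MilnorK} is the identity; hence $\varphi^{r,r}_{V|v^*D}(Z)=\pm\{g_r,\ldots,g_1\}=\pm\{1+fa,b_1,\ldots,b_{r-1}\}$. Thus the class of $Z$ in $\sH^r(\Z(r)_{V|v^*D,\Nis})$ maps under $\phi^r_{V|v^*D,\Nis}$ to the prescribed generator up to sign, giving surjectivity of the stalk map. When $D_{\rm red}$ is smooth the same construction works Zariski-locally: the symbols of Remark \ref{rmk:MilnorModulus} generate the Zariski stalk $\sK^M_{r,X|D,x}$, the cycle map factors through the Zariski Cousin complex $C^\bullet_{r,X|D}$, which resolves $\sK^M_{r,X|D}$ by Corollary \ref{cor:CousinComplex-relK}, and the identical cocycle does the job.

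The one place that needs care, and hence the main point of the argument, is verifying that this graph is \emph{simultaneously} admissible, a cocycle, and of modulus $D$. What makes it work with no moving-lemma input is the special shape of the generators: $1+fa$ being a unit keeps the first coordinate away from $0$ and $\infty$, which kills every boundary term, while $g_1-1=fa$ vanishing along $D$ forces the modulus inequality through the single face $\{y_1=1\}$; the $b_i$, though invertible only on $U$, contribute no boundary over $U$ and do not interfere. I would double-check that the graph's closure is genuinely closed in $U\times\square^r$ (points over $\{g_i=1\}$ escape $\square$ and are simply absent) and that the divisor computation on $\tilde{Z}$ is insensitive to the zeros and poles of the $b_i$ along $D$, which it is since only the $y_1$-face enters the estimate.
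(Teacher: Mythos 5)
Your proposal is correct and follows essentially the same route as the paper: the paper likewise reduces to symbols $\{a,b_1,\ldots,b_{r-1}\}$ with $a\in\Ker(\sO_V^\times\to\sO_{D_V}^\times)$ and $b_i\in\sO_V(U_V)^\times$ over an \'etale $V\to X$, takes the graph cycle $Z=\Gamma_{a,b_1,\ldots,b_{r-1}}\cap(U_V\times\square^r)$, which misses all faces (hence is an admissible cocycle), verifies the modulus condition through the single face $\{y_1=1\}$ exactly as you do, and computes $\varphi^{r,r}$ with trivial norm, fixing the sign by taking $(-1)^{r(r+1)/2}[Z]$. Your stalkwise reduction via Remark \ref{rmk:MilnorModulus} and your use of the normalization instead of the paper's smoothness claim for $\bar{Z}$ are only cosmetic variants of the same argument.
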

\begin{proof}
It suffices to show that for $V\to X$ \'etale and elements
$a\in \sK^M_{1,X|D}(V)$ and $b_i\in \sO_V(U_V)^\times$ (where $U_V:=V\times_X U$)
there exists a cycle $\alpha\in z^r(V|D_V, r)$ with $\partial(\alpha)=0$ in $z^r(V|D_V, r-1)$ that satisfies
\eq{prop:rel-cycle-map-surj-Nis1}{\varphi^{r,n}_{V|D_V}(\alpha)= \{a, b_1,\ldots b_{r-1}\}\in \sK^M_{r, X|D}(V)
      \subset K^M_{r}(k(V)).}
We can assume that none of the elements $a$, $b_i$ is equal to 1.
Denote by $\Gamma_{a,b_1\ldots, b_{r-1}}$
the graph of the map $U_V\to (\P^1)^r$ defined by $a, b_1,\ldots, b_{r-1}$ and set
\[Z:=\Gamma_{a,b_1\ldots, b_{r-1}}\cap (U_V\times\square^r).\]
Notice that $Z$ is  isomorphic to $U_V$, it has empty intersection with all faces and its closure  
$\bar{Z}\subset V\times (\P^1)^r$ is smooth and satisfies (with the notation from \ref{cubes})
\[(D\times (\P^1)^r)\cdot \bar{Z} \le  F^r_1\cdot \bar{Z};\]
thus in particular it satisfies the modulus condition \eqref{defn:modulus-cycle1}. 
It is immediate to check that $\alpha:=(-1)^{\frac{r(r+1)}{2}}\cdot [Z]$ satisfies 
\eqref{prop:rel-cycle-map-surj-Nis1}. This proves the proposition.
\end{proof}

\begin{thm}\label{thm:rel-cycle-iso}
Let $X$ be a smooth equidimensional scheme of dimension $d=\dim X$ and $D$ an effective
divisor such that $D_{\rm red}$ is a simple normal crossing divisor.  
Then:
\begin{enumerate}
\item $H^i_{\sM}(X|D,\Z(r))=0=H^i_{\sM,\Nis}(X|D,\Z(r))$ for $i>d+r$.
\item\label{thm:rel-cycle-iso2} The cycle map $\Z(r)_{X|D,\Nis}\to \tau_{\ge r}\Z(r)_{X|D,\Nis}
                           \xr{\phi^r_{X|D,\Nis}} \sK^M_{r,X|D,\Nis}[-r]$
        induces an isomorphism
        \[\phi^{d,r}_{X|D,\Nis}: H^{d+r}_{\sM,\Nis}(X|D,\Z(r))\xr{\simeq} H^d(X_{\Nis}, \sK^M_{r,X|D,\Nis}).\]
      If moreover $D_{\rm red}$ is smooth, then all maps in the following commutative diagram are isomorphisms
     \eq{thm:rel-cycle-iso0}{\xymatrix{ H^{d+r}_{\sM}(X|D,\Z(r))\ar[r]^{\simeq}
                                                                 \ar[d]_{\phi^{d,r}_{X|D}}^\simeq & 
                       H^{d+r}_{\sM,\Nis}(X|D,\Z(r))\ar[d]^{\phi^{d,r}_{X|D,\Nis}}_\simeq\\
                      H^d(X_\Zar, \sK^M_{r,X|D})\ar[r]^-{\simeq}_{\eqref{cor:relK-smDiv1}} & 
                                                    H^d(X_\Nis, \sK^M_{r,X|D,\Nis}).
                         }}
\end{enumerate}
\end{thm}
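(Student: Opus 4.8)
The plan is to deduce both assertions from the cycle map of complexes
$\phi^r_{X|D,\Nis}\colon \Z(r)_{X|D,\Nis}\to\tau_{\ge r}\Z(r)_{X|D,\Nis}\xr{}C^\bullet_{r,X|D,\Nis}[-r]$
provided by Corollary \ref{cor:cycle-maps-Nis}, by comparing its source and target through their coniveau spectral sequences. Since $D_{\rm red}$ has simple normal crossings, Corollary \ref{cor:CousinComplex-relK-Nis} says that $C^\bullet_{r,X|D,\Nis}$ is a complex of flasque sheaves resolving $\sK^M_{r,X|D,\Nis}$; hence the hypercohomology of the target is $H^i(X_\Nis,C^\bullet_{r,X|D,\Nis}[-r])=H^{i-r}(X_\Nis,\sK^M_{r,X|D,\Nis})$, which vanishes for $i>d+r$ by the bound $\le d$ on the Nisnevich cohomological dimension and equals $H^d(X_\Nis,\sK^M_{r,X|D,\Nis})$ for $i=d+r$. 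Everything therefore reduces to controlling the source in degrees $\ge d+r$ and the map into the target.

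First I would set up the coniveau spectral sequence $E_1^{p,q}=\bigoplus_{x\in X^{(p)}}H^{p+q}_x(\mathcal F)\Rightarrow H^{p+q}(X_{\bullet},\mathcal F)$ (for both $\bullet=\Nis$ and $\bullet=\Zar$) with $\mathcal F=\Z(r)_{X|D}$ and with the target. For the target, Theorem \ref{thm:relK-is-CM} (which gives $\sH^i_T(R\e_*\sK^M_{r,X|D,\Nis})=0$ for $i<\codim T$) together with the fact that the Cousin complex is a resolution forces the spectral sequence to be concentrated in the single row $q=r$, where it is the Cousin complex computing $H^\bullet(X_\Nis,\sK^M_{r,X|D,\Nis})$. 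As the cycle map is a morphism of spectral sequences, the whole theorem reduces to two local assertions, for every $x\in X^{(p)}$ and all $p$:
(A) $H^j_x(\Z(r)_{X|D})=0$ for $j>p+r$; and (B) $\varphi^{r,\bullet}_{X|D}$ induces an isomorphism $H^{p+r}_x(\Z(r)_{X|D})\xr{\simeq}H^p_x(\sK^M_{r,X|D})$. Indeed (A) kills all rows $q>r$, which gives $H^i_{\sM}(X|D,\Z(r))=0=H^i_{\sM,\Nis}(X|D,\Z(r))$ for $i>d+r$ (assertion (1), in both topologies, since (A) is a statement at the local rings), and leaves no differential into or out of the corner $E_2^{d,r}$, identifying $H^{d+r}_{\sM,\Nis}(X|D,\Z(r))$ with $E_2^{d,r}$; assertion (B) then identifies the row-$q=r$ complex of the source with that of the target, so that $E_2^{d,r}\xr{\simeq}H^d(X_\Nis,\sK^M_{r,X|D,\Nis})$, which is assertion (2). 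The surjectivity needed in (B) is exactly Proposition \ref{prop:rel-cycle-map-surj-Nis}, so it remains to prove (A) and the injectivity in (B).

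The crux is establishing (A) and (B). Over $U=X\setminus D$ they are classical: there $\Z(r)_{X|D,\Nis}$ restricts to $\Z(r)_{U,\Nis}$, and Corollary \ref{cor:MilnorChow} (via Bloch's Gersten resolution for higher Chow groups) shows its cohomology sheaves are concentrated in degree $r$ with the expected Cohen--Macaulay behaviour of local cohomology, whence both (A) and (B). The substantial part is at points $x\in D$, where no localization sequence for $z^r(X|D,\bullet)$ is available. Here I would extend the local analysis of Proposition \ref{prop:rel-phi-complex-map}: using the normalization $\tilde Z_0\to\ol Z_0$ of the closure of the image of a relative cycle together with the projective--bundle pushforward of Definition \ref{defn:proj-pf-relK}, one reduces to the case in which $\ol Z_0$ is normal, and then to a computation over the complete discrete valuation ring at $x$ on $\ol Z_0$, where the modulus condition \eqref{defn:modulus-cycle1} and Kato's norm estimate $\Nm_{L/K}(U^{me}K^M_n(L))\subset U^mK^M_n(K)$ pin down the cycle class. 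The vanishing (A) and the injectivity in (B) along $D$ are then obtained by induction on $\dim X$, d\'evissaging $\sK^M_{r,X|D,\Nis}$ through the graded quotients of Corollaries \ref{cor:gr0-map} and \ref{cor:higher-gr-map}, which are supported on the smooth components $D_\nu$ and are governed by the lower-dimensional case and by the local-cohomology vanishing for the sheaves $\omega^q_{\fm,\nu}/B^q_{r,\fm,\nu}$ from Proposition \ref{prop:Bs-ZsCM}. I expect transporting the local vanishing of the \emph{relative} cycle complex across $D$, without a localization theorem, to be the main obstacle, and the place where the geometric input (normalization, projective bundles, and the norm estimate) is indispensable.

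Finally, assume $D_{\rm red}$ is smooth. Then Corollary \ref{cor:relK-smDiv} gives $\sK^M_{r,X|D}\xr{\simeq}R\e_*\sK^M_{r,X|D,\Nis}$, so the bottom map \eqref{cor:relK-smDiv1} of the diagram \eqref{thm:rel-cycle-iso0} is an isomorphism, while Corollary \ref{cor:CousinComplex-relK} shows that the Zariski Cousin complex $C^\bullet_{r,X|D}$ already resolves $\sK^M_{r,X|D}$ and that $\phi^r_{X|D}$ factors through it. Re-running the coniveau argument above on $X_\Zar$ — with (A) and (B) now valid in the Zariski topology by the same local reduction — yields the left vertical isomorphism $\phi^{d,r}_{X|D}$. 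The square commutes by the compatibility of the Zariski and Nisnevich cycle maps recorded in Corollary \ref{cor:cycle-maps}, and since the two vertical maps and the bottom map are isomorphisms, the top comparison map $H^{d+r}_{\sM}(X|D,\Z(r))\to H^{d+r}_{\sM,\Nis}(X|D,\Z(r))$ is an isomorphism as well, completing \eqref{thm:rel-cycle-iso0}.
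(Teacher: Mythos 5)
Your reduction to the two local statements (A) and (B) at \emph{all} points $x\in X^{(p)}$ is where the argument breaks. At points $x\in D$ nothing of the kind is available: (A) asserts a Cohen--Macaulay/Gersten-type bound $H^j_x(\Z(r)_{X|D})=0$ for $j>p+r$ on the local cohomology of the cycle complex with modulus, and (B) asserts the cycle map is an isomorphism on these groups for every $p$. Together these would make $\phi^r_{X|D,\Nis}\colon\tau_{\ge r}\Z(r)_{X|D,\Nis}\to\sK^M_{r,X|D,\Nis}[-r]$ a quasi-isomorphism --- a statement strictly stronger than the theorem, which the paper explicitly does \emph{not} claim: the remark following Theorem \ref{thm:rel-cycle-iso} identifies the non-vanishing of $\sH^0_D(\sH^n(\Z(r)_{X|D,\Nis}))$, $n\ge r$, as an open obstruction to exactly this. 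The tools you invoke cannot supply (A) and (B) across $D$: the d\'evissage of Corollaries \ref{cor:gr0-map} and \ref{cor:higher-gr-map} and the vanishing of Proposition \ref{prop:Bs-ZsCM} filter the \emph{target} $\sK^M_{r,X|D,\Nis}$, but there is no corresponding filtration, localization sequence, or induction structure on the \emph{source} $z^r(X|D,\bullet)$ --- precisely the difficulty flagged in the introduction (``it is not clear whether $z^r(X|D,\bullet)$ has some reasonable localization property at all''). Likewise, the normalization/projective-pushforward/norm arguments of Proposition \ref{prop:rel-phi-complex-map} only show that boundary classes of relative cycles land in the correct local cohomology groups of the Milnor $K$-sheaf, i.e.\ that $\varphi^r_{X|D}$ is a map of complexes; they compute nothing about $H^j_x(\Z(r)_{X|D})$. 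Even the surjectivity half of (B) is not covered by Proposition \ref{prop:rel-cycle-map-surj-Nis}, which is a sheaf-level statement in the single degree $r$ (the $p=0$ spot of your coniveau row), not a statement about $H^{p+r}_x$ at points of $D$ of codimension $p\ge 1$.

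The paper circumvents all of this by never computing local cohomology of $\Z(r)_{X|D}$ at points of $D$ and working only in the top degree. The key substitute for your (A) is Lemma \ref{lem:rel-cycle-global-support}: since the support of any class in $\sH^b(\Z(r)_{X|D})$ is the closure of the image of a relative cycle, it lies in the family $\Phi^{d+r-b}_{X|D}$ of closed sets meeting $D$ properly, so its intersection with $D$ has dimension $\le d+r-b-1$; Grothendieck's (resp.\ Nisnevich) cohomological-dimension bound then kills $H^{i}_D(X,\tau_{\ge r}\Z(r)_{X|D})$ for $i>d+r$ and shows $H^{d+r}_D$ is a quotient of $H^{d-1}(X,\sH^1_D(\sH^r(\Z(r)_{X|D})))$. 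This, with the localization sequence relative to $U=X\setminus D$ and Corollary \ref{cor:MilnorChow} over $U$, gives part (1) and reduces part (2) to showing that the single sheaf map $\sH^1_D(\sH^r(\Z(r)_{X|D,\sigma}))\to\sH^1_D(\sK^M_{r,X|D,\sigma})$ is an isomorphism, which follows from a four-term diagram chase using only the degree-$r$ surjectivity of Proposition \ref{prop:rel-cycle-map-surj-Nis} and the isomorphism $\phi^r_{U,\sigma}$. Your final paragraph on the smooth-$D_{\rm red}$ case (Corollaries \ref{cor:relK-smDiv}, \ref{cor:CousinComplex-relK}, \ref{cor:cycle-maps}) is consistent with the paper, but it rests on the same unproved local input, so the proposal as a whole does not constitute a proof.
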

We need the following two lemmas in the proof of the theorem. In the following we will
freely use basic properties of local cohomology of Nisnevich sheaves, for details see \cite{Nis}.

\begin{lem}\label{lem:rel-cycle-local-support}
Let $k$, $(X,D)$ be as in Theorem \ref{thm:rel-cycle-iso} above.
Then on $X_\Zar$
\[\sH^n_D(\tau_{\ge r}\Z(r)_{X|D})=\begin{cases} 0, & \text{if }n<r, \\
                   \sH^0_D(\sH^n(\Z(r)_{X|D})), & \text{if } n\ge r \text{ and }n\neq r+1,\end{cases}\]
and for $n=r+1$ there is a natural exact sequence
\[0\to \sH^1_D(\sH^r(\Z(r)_{X|D}))\to \sH^{r+1}_D(\tau_{\ge r}\Z(r)_{X|D})\to 
                \sH^0_D(\sH^{r+1}(\Z(r)_{X|D}))\to 0.\]
Furthermore, the same statements hold when we replace $X_\Zar$ and $\Z(r)_{X|D}$ by 
$X_\Nis$ and $\Z(r)_{X|D,\Nis}$, respectively.
\end{lem}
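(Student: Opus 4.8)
The plan is to run the second hypercohomology spectral sequence for the functor $\ul{\Gamma}_D$ of sections with support in $D$, applied to the bounded complex $K^\bullet:=\tau_{\ge r}\Z(r)_{X|D}$ (concentrated in degrees $[r,2r]$):
\[E_2^{p,q}=\sH^p_D(\sH^q(K^\bullet))\Longrightarrow \sH^{p+q}_D(K^\bullet).\]
The goal is to show that $E_2^{p,q}$ is supported in the columns $p=0,1$, with the column $p=1$ contributing only in the single spot $q=r$; the three cases of the lemma then fall out of the degenerate abutment. By construction of the canonical truncation one has $\sH^q(K^\bullet)=0$ for $q<r$ and $\sH^q(K^\bullet)=\sH^q(\Z(r)_{X|D})$ for $q\ge r$, so all rows $q<r$ vanish already.

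First I would dispose of the rows $q>r$. Since restriction along $j:U=X\setminus D\inj X$ is exact and $j^*\Z(r)_{X|D}=\Z(r)_U$ (the modulus condition being vacuous over $U$), we get $j^*\sH^q(\Z(r)_{X|D})=\sH^q(\Z(r)_U)$, and by Bloch's Gersten resolution for the smooth scheme $U$ (recorded in Corollary \ref{cor:MilnorChow}) this vanishes for $q>r$ and equals $\sK^M_{r,U}$ for $q=r$. Hence for $q>r$ the sheaf $\sH^q(\Z(r)_{X|D})$ has $j^*\sH^q(\Z(r)_{X|D})=0$, so the localization triangle $R\ul{\Gamma}_D\sF\to\sF\to Rj_*(j^*\sF)\xr{[1]}$ forces, for such $\sF$, the vanishing $\sH^p_D(\sF)=0$ for all $p\ge 1$ together with $\sH^0_D(\sF)=\sF$. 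This annihilates everything in rows $q\ge r+1$ outside the column $p=0$.

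The hard part, and the only genuinely nonformal input, is the row $q=r$. The same localization triangle gives, for $p\ge 2$, the identification $\sH^p_D(\sH^r(\Z(r)_{X|D}))\cong R^{p-1}j_*\big(j^*\sH^r(\Z(r)_{X|D})\big)=R^{p-1}j_*\sK^M_{r,U}$, and it is exactly here that the simple normal crossing hypothesis enters essentially: by Corollary \ref{cor:MilnorKcompSNCD}, \eqref{jlowerstar} one has $R^i j_*\sK^M_{r,U}=0$ for all $i\ge 1$, so these groups vanish for $p\ge 2$. This confines the whole spectral sequence to the columns $p=0,1$, with $E_2^{1,q}=0$ for $q\ne r$. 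All $d_2$ and higher differentials vanish for degree reasons, so the sequence degenerates at $E_2$. Reading off the associated filtration on the abutment then yields $\sH^n_D(K^\bullet)=0$ for $n<r$, $\sH^n_D(K^\bullet)=\sH^0_D(\sH^n(\Z(r)_{X|D}))$ for $n\ge r$ with $n\ne r+1$ (the potential term $E_2^{1,n-1}$ vanishing since $n-1\ge r+1$), and the displayed short exact sequence $0\to\sH^1_D(\sH^r)\to\sH^{r+1}_D\to\sH^0_D(\sH^{r+1})\to 0$ in degree $n=r+1$, where $E_2^{1,r}$ and $E_2^{0,r+1}$ are the two surviving contributions.

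Finally, the Nisnevich statement follows by the identical argument carried out on $X_\Nis$: one has $j^*\Z(r)_{X|D,\Nis}=\Z(r)_{U,\Nis}$, and since sheafification is exact, $\sH^q(\Z(r)_{U,\Nis})$ is the Nisnevich sheafification of $\sH^q(\Z(r)_U)$, hence still $0$ for $q>r$ and $\sK^M_{r,U,\Nis}$ for $q=r$. The only extra ingredient is the vanishing $R^i j_*\sK^M_{r,U,\Nis}=0$ for $i\ge 1$ on $X_\Nis$, which holds by the Nisnevich counterpart of Corollary \ref{cor:MilnorKcompSNCD} (equivalently, the Nisnevich form of Lemma \ref{lem:MilnorKsuppSNCD}); I expect this bookkeeping, rather than any new idea, to be the only point requiring care in the Nisnevich case.
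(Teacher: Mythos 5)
Your proof is correct and takes essentially the same route as the paper's: the hypercohomology spectral sequence $\sH^p_D(\sH^q(\tau_{\ge r}\Z(r)_{X|D}))\Rightarrow \sH^{p+q}_D(\tau_{\ge r}\Z(r)_{X|D})$, collapsed to the columns $p=0,1$ by the localization triangle together with the Gersten vanishing $\sH^q(\Z(r)_U)=0$ for $q>r$, $\sH^r(\Z(r)_U)=\sK^M_{r,U}$ (Corollary \ref{cor:MilnorChow}), and $R^{i}j_*\sK^M_{r,U}=0$ for $i\ge 1$ (Corollary \ref{cor:MilnorKcompSNCD}), which is where the SNCD hypothesis enters. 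The Nisnevich bookkeeping you anticipate is exactly the paper's one-line remark, identifying $R^{a-1}j_*\sK^M_{r,U,\Nis}$ with the Nisnevich sheaf associated to $V\mapsto H^{a-1}((V\times_X U)_\Nis,\sK^M_{r,U})=H^{a-1}((V\times_X U)_\Zar,\sK^M_{r,U})$ and invoking the Zariski case.
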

\begin{proof}We do the proof for $\Z(r)_{X|D}$; it works the same way for $\Z(r)_{X|D,\Nis}$.
Considering the spectral sequence
\[E_2^{a,b}=\sH^a_D(\sH^b(\tau_{\ge r} \Z(r)_{X|D}))\Rightarrow \sH^*_D(\tau_{\ge r}\Z(r)_{X|D})\]
we see that it suffices to prove the following claim:
\eq{prop:rel-cycle-map-support1}{\sH^a_D(\sH^b(\tau_{\ge r} \Z(r)_{X|D}))=0,\quad
    \text{ for all }(a,b)\not\in \{(1,r)\}\cup(\{0\}\times [r, 2r]).}
Clearly we have the vanishing for all 
$(a,b)\in (\Z\times (-\infty, r-1])\cup (\Z\times [2r+1,\infty])\cup ((-\infty,-1]\times \Z)$. 
For $a\ge 1$ and $b\ge r$ we have surjections (which are isomorphisms for $a\ge 2$)
\[R^{a-1}j_*\sH^b(\Z(r)_U)\surj \sH^a_D(\sH^b(\Z(r)_{X|D})),\]
where $j: U=X\setminus D\inj X$ is the inclusion of the complement.
Hence the claim \eqref{prop:rel-cycle-map-support1} follows from
$\sH^b(\Z(r)_U)=0$ for $b>r$,  $\sH^r(\Z(r)_U)=\sK^M_{r,U}$ (see Corollary \ref{cor:MilnorChow})
and  $R^{a-1}j_*\sK^M_{r,U}=0$, for $a\ge 2$ (by Corollary \ref{cor:MilnorKcompSNCD}). 
(For last vanishing in the Nisnevich case use that $R^{a-1}j_*\sK^M_{r, U,\Nis}$ is the sheaf associated to
$V\mapsto H^{b-1}((V\times_X U)_\Nis,\sK^M_{r,U})= H^{b-1}((V\times_X U)_\Zar,\sK^M_{r,U})$.)
\end{proof}

\begin{lem}\label{lem:rel-cycle-global-support}
Let $k$, $(X,D)$ be as in Theorem \ref{thm:rel-cycle-iso} above.
Then 
\[H^i_D(X_\Zar, \tau_{\ge r}\Z(r)_{X|D})= 0, \quad \text{if } i>d+r,\]
and the natural map 
\[H^{d-1}(X_\Zar, \sH^1_D(\sH^r(\Z(r)_{X|D}))\surj H^{d+r}_D(X_\Zar, \tau_{\ge r}\Z(r)_{X|D})\]
is surjective. Furthermore the same statements hold when we replace $X_\Zar$ and $\Z(r)_{X|D}$
by $X_\Nis$ and $\Z(r)_{X|D,\Nis}$, respectively.
\end{lem}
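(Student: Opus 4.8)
The plan is to compute $H^*_D(X_\Zar,\tau_{\ge r}\Z(r)_{X|D})$ by means of the local-to-global spectral sequence
\[E_2^{a,b}=H^a(X_\Zar,\sH^b_D(\tau_{\ge r}\Z(r)_{X|D}))\Rightarrow H^{a+b}_D(X_\Zar,\tau_{\ge r}\Z(r)_{X|D}),\]
feeding in the description of the sheaves $\sH^b_D(\tau_{\ge r}\Z(r)_{X|D})$ provided by Lemma \ref{lem:rel-cycle-local-support} and bounding each $E_2$-term through Grothendieck vanishing in terms of the dimension of the support of the relevant sheaf. The Nisnevich case then runs verbatim, using that $X_\Nis$ has cohomological dimension $\le d$ (\cite[1.32]{Nis}) and that a Nisnevich sheaf supported on a closed subset of dimension $e$ has no cohomology above degree $e$.

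The key input, which I would establish first, is a bound on the codimension of the supports of the higher cohomology sheaves: for $b>r$ the sheaf $\sH^b(\Z(r)_{X|D})$ is supported in codimension $\ge b-r+1$, i.e.\ on a closed subset of dimension $\le d-b+r-1$. This is a pure dimension count. For a point $x\in D$ of codimension $c$ the scheme $\Spec\sO_{X,x}\setminus D$ is the principal open complement of a Cartier divisor passing through the closed point, hence has dimension $c-1$; therefore $(\Spec\sO_{X,x}\setminus D)\times\square^n$ carries no codimension-$r$ integral subschemes once $c-1+n<r$, so $z^r(\sO_{X,x}|D,n)=0$ for $n\le r-c$ and the stalk complex of $\Z(r)_{X|D}$ at $x$ is concentrated in cohomological degrees $\le r+c-1$. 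Thus $\sH^b(\Z(r)_{X|D})_x=0$ for all $b\ge r+c$; together with $\sH^b(\Z(r)_{X|D})_{|U}=\sH^b(\Z(r)_U)=0$ for $b>r$ (Corollary \ref{cor:MilnorChow}), this yields the asserted support bound. In particular $\sH^{r+1}(\Z(r)_{X|D})$ vanishes at the generic points of $D$ and so is supported in dimension $\le d-2$.

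Now I would run the spectral sequence. By Lemma \ref{lem:rel-cycle-local-support}, $\sH^b_D(\tau_{\ge r}\Z(r)_{X|D})=0$ for $b<r$; for $b=r$ it is $\sH^0_D(\sH^r(\Z(r)_{X|D}))$, supported on $D$ (dimension $\le d-1$); for $b\ge r+2$ it equals $\sH^0_D(\sH^b(\Z(r)_{X|D}))=\sH^b(\Z(r)_{X|D})$, supported in dimension $\le d-b+r-1$ by the previous paragraph; and for $b=r+1$ there is the exact sequence
\[0\to\sH^1_D(\sH^r(\Z(r)_{X|D}))\to\sH^{r+1}_D(\tau_{\ge r}\Z(r)_{X|D})\to\sH^0_D(\sH^{r+1}(\Z(r)_{X|D}))\to 0,\]
whose outer terms are supported in dimensions $\le d-1$ and $\le d-2$ respectively. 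Grothendieck vanishing then forces $E_2^{a,b}=0$ whenever $a+b>d+r$: for $b\ge r+2$ one has $a+b\le(d-b+r-1)+b=d+r-1$; for $b=r$ one has $a\le d-1$; and for $b=r+1$ both outer terms have vanishing $H^a$ for $a>d-1$. Hence $H^i_D(X_\Zar,\tau_{\ge r}\Z(r)_{X|D})=0$ for $i>d+r$.

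For the surjectivity I would examine total degree $d+r$, where the only possibly nonzero term is $E_2^{d-1,r+1}$ (the entries $E_2^{d,r}$ and $E_2^{d+r-b,b}$ for $b\ge r+2$ vanish by the dimension bounds). From the displayed exact sequence, since $\sH^0_D(\sH^{r+1})$ has support of dimension $\le d-2$ its $H^{d-1}$ vanishes, and since $\sH^1_D(\sH^r)$ has support of dimension $\le d-1$ its $H^d$ vanishes; the long exact cohomology sequence therefore gives a surjection $H^{d-1}(X_\Zar,\sH^1_D(\sH^r(\Z(r)_{X|D})))\surj E_2^{d-1,r+1}$. As $E_\infty^{d-1,r+1}$ is a quotient of $E_2^{d-1,r+1}$ (all outgoing differentials land in groups that vanish for degree reasons) and coincides with $H^{d+r}_D(X_\Zar,\tau_{\ge r}\Z(r)_{X|D})$, composing produces the asserted surjection. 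The point I expect to require the most care is the sharp codimension bound of the second paragraph: the naive bound ``$\sH^b$ is supported on $D$'' only gives vanishing above $d+2r-1$ and would allow $\sH^0_D(\sH^{r+1})$ to contribute to $H^{d+r}_D$, spoiling the clean surjectivity statement; it is precisely the vanishing of $\sH^{r+1}(\Z(r)_{X|D})$ at the codimension-one points of $D$ that isolates the contribution of $\sH^1_D(\sH^r)$.
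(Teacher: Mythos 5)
Your proposal is correct and follows the paper's skeleton — the same local-to-global spectral sequence $E_2^{a,b}=H^a(X,\sH^b_D(\tau_{\ge r}\Z(r)_{X|D}))$ fed by Lemma \ref{lem:rel-cycle-local-support}, with Grothendieck vanishing closing the argument — but you derive the key support bound differently. The paper never computes stalks: it introduces the family of supports $\Phi^{d+r-b}_{X|D}$ of closed subsets $A\subset X$ of dimension $\le d+r-b$ meeting $D$ properly, observes that the support of any cycle contributing to $\sH^b(\Z(r)_{X|D})$ projects into such an $A$ (so that $A\cap D$ has dimension $\le d+r-b-1$), and writes $\sH^0_D(\sH^b(\Z(r)_{X|D}))=\varinjlim_{A\in\Phi^{d+r-b}_{X|D}\cap D}i_{A*}i_A^!(\sH^b(\Z(r)_{X|D}))$, after which cohomology commutes with the filtered colimit and Grothendieck vanishing (resp. \cite[1.24, 1.32]{Nis} in the Nisnevich case) applies to each $A$. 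Your stalkwise bound — $z^r(\sO_{X,x}|D,n)=0$ for $n\le r-c$ because $\Spec\sO_{X,x}\setminus D$ has dimension $c-1$, hence $\sH^b(\Z(r)_{X|D})_x=0$ for $b\ge r+c$ — is the same dimension count in local form, and it is correct; combined with Corollary \ref{cor:MilnorChow} on $U$ it gives exactly the codimension $\ge b-r+1$ bound you state, and your bookkeeping in total degrees $>d+r$ and $=d+r$, including the extraction of the surjection from the $b=r+1$ exact sequence and the vanishing of outgoing differentials at $E^{d-1,r+1}$, matches the paper's conclusions.

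One point in your write-up needs tightening: the phrase ``supported on a closed subset of dimension $\le d-b+r-1$'' is not literally true. The stalk-support $\{x:\sH^b(\Z(r)_{X|D})_x\neq 0\}$ is only stable under specialization (the set of points of codimension $\ge b-r+1$ is not closed), so you cannot directly invoke Grothendieck vanishing for a sheaf on a closed subset. The repair is routine: the support of any single section over an open $V$ \emph{is} closed in $V$ and consists of points of codimension $\ge b-r+1$, hence its closure in $X$ has dimension $\le d+r-b-1$; therefore the sheaf is the filtered colimit $\varinjlim_A i_{A*}i_A^!$ over closed subsets $A\subset X$ of dimension $\le d+r-b-1$, and since cohomology on a Noetherian (resp. Nisnevich) site commutes with filtered colimits, \cite[Thm 3.6.5]{Tohoku} (resp. \cite[1.32]{Nis}) gives the vanishing. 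This colimit-over-closed-supports step is precisely what the paper's $\Phi^n_{X|D}$ device packages; with it inserted, your argument is a complete proof, and your closing remark correctly identifies the essential point — the vanishing of $\sH^{r+1}(\Z(r)_{X|D})$ at codimension-one points of $D$ is what prevents $\sH^0_D(\sH^{r+1})$ from contributing in degree $d+r$.
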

\begin{proof}
We do the proof for $\Z(r)_{X|D}$; it works the same way for $\Z(r)_{X|D,\Nis}$.
Considering the spectral sequence
\[E_2^{a,b}= H^a(X, \sH^b_D(\tau_{\ge r}\Z(r)_{X|D}))
         \Rightarrow H^*_D(X,\tau_{\ge r}\Z(r)_{X|D})\]
we see that by Lemma \ref{lem:rel-cycle-local-support} it suffices to show
\eq{lem:rel-cycle-global-support1}{H^a(X, \sH^0_D(\sH^b(\Z(r)_{X|D})))= 0,\quad 
                                                                     \text{for }b\ge r \text{ and }a+b\ge r+d}
and
\eq{lem:rel-cycle-global-support2}{H^a(X,\sH^1_D(\sH^r(\Z(r)_{X|D})))=0, \quad \text{for }a\ge d.}
For a closed immersion $i_A:A\inj X$ denote by 
$i_A^!: (\text{abelian sheaves on } X)\to (\text{abelian sheaves on } A)$ the unique functor 
which satisfies $i_{A*}i_A^!=\ul{\Gamma}_A=\sH^0_A$, see \cite[Exp. I, 1.]{SGA2}. 
(For the Nisnevich case, see \cite[1.23]{Nis}.) We obtain 
\[H^a(X,\sH^1_D(\sH^r(\Z(r)_{X|D})))= H^a(D, i^!_D\sH^1_D(\sH^r(\Z(r)_{X|D})).\]
Hence the vanishing \eqref{lem:rel-cycle-global-support2} follows directly 
from Grothendieck's general vanishing theorem \cite[Thm 3.6.5]{Tohoku} by which
the cohomological dimension of a noetherian scheme is less or equal to its Krull dimension.
(For the Nisnevich case, see \cite[1.32]{Nis}.)

Next we prove \eqref{lem:rel-cycle-global-support1}.
Denote by $\Phi_{X|D}^n$ the family of supports on $X$ consisting of all closed
subschemes $A\subset X$ of dimension $\dim(A)\le n$ which intersect $D$ properly.
Denote by $\Phi_{X|D}^n\cap D$ the smallest family of supports which contains all
closed subsets of the form $A\cap D$, with $A\in \Phi^n_{X|D}$.
Notice that  
\eq{lem:rel-cycle-global-support3}{\dim(A)\le n-1,\quad  \text{for } A\in \Phi^n_{X|D}\cap D.}
If $Z\subset U\times \square^{2r-b}$ is an integral closed subscheme of codimension $r$,
then the closure $Z_0\subset X$ of its image under the projection to $U$ lies in
$\Phi^{d+r-b}_{X|D}$. Since $\sH^b(\Z(r)_{X|D})$ is the sheaf on $X_\Zar$ associated to
$V\mapsto CH^r(V|D_{V}, 2r-b)$ we obtain
\[\sH^b(\Z(r)_{X|D})=\sH^0_{\Phi^{d+r-b}_{X|D}}(\sH^b(\Z(r)_{X|D}))\] 
and
\[\sH^0_D(\sH^b(\Z(r)_{X|D}))=\sH^0_{\Phi^{d+r-b}_{X|D}\cap D}(\sH^b(\Z(r)_{X|D}))
                 = \varinjlim_{A\in \Phi^{d+r-b}_{X|D}\cap D }i_{A*} i_A^!(\sH^b(\Z(r)_{X|D})).\]
Thus 
\[H^a(X, \sH^0_D(\sH^b(\Z(r)_{X|D})))= 
 \varinjlim_{A\in \Phi^{d+r-b}_{X|D}\cap D} H^a(A, i_A^!(\sH^b(\Z(r)_{X|D}))),\]
which is zero for $a+b\ge d+r$ since
 the cohomological dimension of $A$ is $\le d+r-b-1$ by \eqref{lem:rel-cycle-global-support3}
and  \cite[Thm 3.6.5]{Tohoku}. (For the Nisnevich case, use \cite[1.24]{Nis} to get the equality above
and then apply \cite[1.32]{Nis} to obtain the vanishing.)
\end{proof}

\begin{proof}[Proof of Theorem \ref{thm:rel-cycle-iso}.]
In the following, the subscript $\sigma\in\{\Zar,\Nis\}$ indicates in which topology we are.
Denote by $j: U=X\setminus D\inj X$ the inclusion of the complement of $D$.
First notice that each abelian sheaf $\sF$ on $X_\sigma$ has a $\Gamma(X_\sigma,-)$-acyclic resolution 
of length $d$.
(Take $\tau_{\le d}$ of the Godement resolution of $\sF$; it is $\Gamma(X_\sigma,-)$-acyclic by 
\cite[Thm 3.6.5]{Tohoku} (resp. \cite[1.32]{Nis}); see \cite[2.18]{Nis} for the Godement resolution in case
$\sigma=\Nis$.)
This shows that $H^i(X_\sigma,\tau_{<r}\Z(r)_{X|D})=0$, for all $i\ge d+r$, and hence
\[H^i_{\sM,\sigma}(X|D,\Z(r))= H^i(X_\sigma,\tau_{\ge r}\Z(r)_{X|D,\sigma}),\quad \text{for }i\ge d+r.\]
We have an exact sequence 
\[H^{i+r}_D(X_\sigma, \tau_{\ge r}\Z(r)_{X|D,\sigma})\to H^{i+r}(X_\sigma, \tau_{\ge r}\Z(r)_{X|D,\sigma})\to 
                 H^{i+r}(U_\sigma, \tau_{\ge r}\Z(r)_{U,\sigma}).\]
For $i>d$ the left hand side vanishes by Lemma \ref{lem:rel-cycle-global-support} and 
the right hand side is isomorphic to $H^i(U_\sigma,\sK^M_{r,U,\sigma})$ and hence also vanishes.
This yields the first part of the theorem. 

It remains to prove that $\phi^{d,r}_{X|D,\sigma}$ is an isomorphism for $\sigma=\Nis$
and if $D_{\rm red}$ is smooth also for $\sigma=\Zar$. {\em In the following $\sigma=\Zar$
is only allowed in the case where $D_{\rm red}$ is smooth.}
By Corollary \ref{cor:cycle-maps}  and Corollary \ref{cor:cycle-maps-Nis} we have a commutative diagram 
\[\xymatrix{ H^{d+r-1}(U_\sigma, \sZ_U)\ar[r]\ar[d]^{\phi^{d-1,r}_{U,\sigma}}_\simeq & 
                 H^{d+r}_D(X_\sigma, \sZ_{X|D})\ar[r]\ar[d]^{\phi^{d,r}_{D\subset X,\sigma}} &
                H^{d+r}(X_\sigma, \sZ_{X|D})\ar[r]\ar[d]^{\phi^{d,r}_{X|D,\sigma}}& 
                 H^{d+r}(U_\sigma, \sZ_{U})\ar[d]^{\phi^{d,r}_{U,\sigma}}_\simeq\\
                H^{d-1}(U_\sigma,\sK_{U})\ar[r] & H_D^d(X_\sigma,\sK_{X|D})\ar[r] & 
                            H^d(X_\sigma,\sK_{X|D})\ar[r] &
                  H^d(U_\sigma,\sK_{U}),}\]
where we use the short hand notations $\sZ_{U}=\tau_{\ge r}\Z(r)_{U,\sigma}$,
$\sZ_{X|D}=\tau_{\ge r}\Z(r)_{X|D,\sigma}$, $\sK_U=\sK^M_{r,U,\sigma}$ and 
$\sK_{X|D}=\sK^M_{r,X|D,\sigma}$,
 the rows are the localization exact sequences and the  
maps $\phi^{i,r}_{U,\sigma}$ are isomorphisms (see Corollary \ref{cor:MilnorChow}).
Since $H^{d+r+1}_D(X_\sigma,\sZ_{X|D})=0$ by Lemma \ref{lem:rel-cycle-global-support}
the map $H^{d+r}(X_\sigma, \sZ_{X|D})\to  H^{d+r}(U_\sigma, \sZ_{U})$ is surjective.
Hence it suffices to show that $\phi^{d,r}_{D\subset X,\sigma}$ is an isomorphism.
For $b\ge 2$ we have by Corollary \ref{cor:MilnorKcompSNCD} 
\[\sH^b_D(\sK^M_{r,X|D,\sigma})\cong R^{b-1}j_*\sK^M_{r, U,\sigma}=0.\]
Also $\sH^0_D(\sK^M_{r,X|D,\sigma})=0$, since $\sK^M_{r,X|D,\sigma}$ is by definition a subsheaf
of $j_*\sK^M_{r,U,\sigma}$. Thus 
$H^d_D(X_\sigma,\sK^M_{r,X|D,\sigma})=H^{d-1}(X_\sigma,\sH^1_D(\sK^M_{r, X|D,\sigma}))$.
We have a commutative diagram
\[\xymatrix{H^{d-1}(X_\sigma, \sH^1_D(\sH^r(\Z(r)_{X|D,\sigma})))\ar@{->>}[r]\ar[d] &
                H^{d+r}_D(X_\sigma,\tau_{\ge r}\Z(r)_{X|D,\sigma})\ar[d]^{\phi^{r,d}_{D\subset X,\sigma}}\\
              H^{d-1}(X_\sigma,\sH^1_D(\sK^M_{r, X|D,\sigma}))\ar[r]^{\simeq} &
                     H^d_D(X_\sigma,\sK^M_{r,X|D,\sigma}),  }\]
in which the top horizontal map is surjective by Lemma \ref{lem:rel-cycle-global-support}.
Thus it suffices to show that the map
\eq{thm:rel-cycle-iso1}{\sH^1_D(\sH^r(\Z(r)_{X|D,\sigma}))\to \sH^1_D(\sK^M_{r,X|D,\sigma})}
induced by $\phi^r_{X|D,\sigma}$ is an isomorphism.
To this end, consider the following commutative diagram
\[\xymatrix{ & \sH^r(\Z(r)_{X|D,\sigma})\ar[r]\ar@{->>}[d]^{\phi^{r}_{X|D,\sigma}} 
                                    & j_*\sH^r(\Z(r)_{U,\sigma})\ar[r]\ar[d]_{\simeq}^{\phi^{r}_{U,\sigma}}&
                   \sH^1_D(\sH^r(\Z(r)_{X|D,\sigma}))\ar[r]\ar[d]^{\eqref{thm:rel-cycle-iso1}} & 0\\
               0\ar[r] & \sK^M_{r,X|D,\sigma}\ar[r] & j_*\sK^M_{r,U,\sigma}\ar[r] & 
               \sH^1_D(\sK^M_{r,X|D,\sigma})\ar[r] &0.
 }\]
Here the rows are the localization exact sequences, the map $\phi^{r}_{U,\sigma}$ is an isomorphism
(see Corollary \ref{cor:MilnorChow}) and $\phi^{r}_{X|D,\sigma}$ is surjective 
by Proposition \ref{prop:rel-cycle-map-surj-Nis}. It follows that $\eqref{thm:rel-cycle-iso1}$
is an isomorphism. This finishes the proof of the theorem.
\end{proof}

\begin{rmk}
\begin{enumerate}
\item It follows from Corollary \ref{cor:cycle-maps-Nis} and Lemma \ref{lem:rel-cycle-local-support}
that the obstruction for the cycle  map $\phi^r_{X|D,\Nis}: \tau_{\ge r}\Z(r)_{X|D,\Nis}\to \sK^M_{r,X|D,\Nis}[-r]$
to be an isomorphism is the non-vanishing of 
\[\sH^0_D(\sH^n(\Z(r)_{X|D,\Nis}))=\Ker(\sH^n(\Z(r)_{X|D,\Nis})\to j_*\sH^n(\Z(r)_{U,\Nis})),
      \quad \text{for } n\ge r.\]
Indeed, if this vanishing holds, then $\sH^n_D(\Z(r)_{X|D,\Nis})=0$, for all $n\ge r$ and $n\neq r+1$, and
$\sH^{r+1}_D(\Z(r)_{X|D,\Nis})=\sH^1_D(\sH^r(\Z(r)_{X|D,\Nis}))$ by Lemma \ref{lem:rel-cycle-local-support}. 
Hence $\sH^n(\Z(r)_{X|D,\Nis})= \sH^n(Rj_*\Z(r)_{U,\Nis})=0$, for all $n\ge r+2$. For $n=r$ we obtain a commutative diagram 
\[\xymatrix{0\ar[r]&  \sH^r(\Z(r)_{X|D,\Nis})\ar[r]\ar@{->>}[d] & 
                         j_*\sH^r(\Z(r)_{U,\Nis})\ar[r]\ar[d]^\simeq & \sH^1_D(\sH^r(\Z(r)_{X|D,\Nis}))\ar[r] & 0\\
                  0\ar[r]&    \sK^M_{r,X|D,\Nis}\ar[r] & j_*\sK^M_{r,U,\Nis}. & &
}\] 
Here the rows are exact, the left vertical map is surjective by Proposition \ref{prop:rel-cycle-map-surj-Nis} and
the right vertical map is bijective by the Nisnevich version of 
Corollary \ref{cor:MilnorChow}. It follows that the left vertical map is an 
isomorphism.  Furthermore the right exactness of the top row yields that the natural map
$\sH^{r+1}_D(\Z(r)_{X|D,\Nis})\to \sH^{r+1}(\Z(r)_{X|D,\Nis})$ is the zero map and hence
$\sH^{r+1}(\Z(r)_{X|D,\Nis})\subset \sH^{r+1}(Rj_*\Z(r)_{U,\Nis})=0$.

If we assume that $D_{\rm red}$ is smooth, then a similar remark applies for the corresponding  Zariski statement.

\item Going back through the proofs, one easily checks that the commutative diagram \eqref{thm:rel-cycle-iso0} 
            of isomorphisms exists for all divisors $D$ 
      whose support has simple normal crossings and which satisfies that
      \eqref{prop:gr0-map1} is an isomorphism (for all $(\fm,\nu)$).
\end{enumerate}
\end{rmk}

\begin{cor}\label{cor:modDiv}
Let $X$ be a smooth curve over $k$ and $D$ an effective divisor on $X$. Then we have isomorphisms
\[H^{2}_{\sM,\Nis}(X|D,\Z(1))\cong H^1(X_\Zar, \sO_{X|D}^\times)\cong \CH^1(X|D, 0).\]
\end{cor}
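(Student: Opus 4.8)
The plan is to reduce everything, using $\dim X = 1$, to Theorem~\ref{thm:rel-cycle-iso} for the left isomorphism and to the relative Picard group for the right one.

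First I would observe that, $X$ being a smooth curve, the reduced divisor $D_{\mathrm{red}}$ is a finite set of closed points and hence a \emph{smooth} (in particular simple normal crossing) divisor. Theorem~\ref{thm:rel-cycle-iso} therefore applies with $d=r=1$ and $D_{\mathrm{red}}$ smooth, and part~(ii) gives
\[ H^2_{\sM,\Nis}(X|D,\Z(1)) \cong H^1(X_\Nis,\sK^M_{1,X|D,\Nis}) \cong H^1(X_\Zar,\sK^M_{1,X|D}), \]
where the first isomorphism is $\phi^{1,1}_{X|D,\Nis}$ and the second is the comparison isomorphism \eqref{cor:relK-smDiv1} (available because $D_{\mathrm{red}}$ is smooth). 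Since $\sK^M_{1,X|D}=\Ker(\sO_X^\times\to\sO_D^\times)=\sO_{X|D}^\times$ by Definition~\ref{defn:MilnorModulus}, this is already the first isomorphism of the corollary.

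For the second isomorphism I would identify both sides with the relative Picard group $\Pic(X,D):=H^1(X_\Zar,\sO_{X|D}^\times)$. On the cycle side, the third property listed in~\ref{Chow-mod-properties} identifies $\CH^1(X|D,0)$ with the group $\CH^1(X|D)$ of codimension-one cycles on $U$ modulo rational equivalence with modulus $D$; alternatively, since $X$ is normal, the fourth property gives $\Z(1)_{X|D}\cong \sO_{X|D}^\times[-1]$, so that the natural map \eqref{CHHMXD} becomes a map $\CH^1(X|D,0)\to H^1(X_\Zar,\sO_{X|D}^\times)$. On the cohomological side I would use the short exact sequence of Zariski sheaves
\[ 0\to \sO_{X|D}^\times \to \sO_X^\times \to i_{D*}\sO_D^\times \to 0 \]
(surjective because units lift modulo the ideal of $D$); as $D$ is a finite $k$-scheme we have $H^1(D,\sO_D^\times)=0$, so the long exact sequence presents $H^1(X_\Zar,\sO_{X|D}^\times)$ as an extension of $\Pic(X)$ by $\Coker(\Gamma(X,\sO_X^\times)\to \Gamma(D,\sO_D^\times))$, i.e. as line bundles on $X$ rigidified along $D$. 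Sending a closed point $x\in U$ to $\sO_X(\overline{x})$ with its canonical trivialization along $D$ then yields the desired isomorphism $\CH^1(X|D)\cong \Pic(X,D)$.

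The main obstacle is this last identification: one must check that the cycle-theoretic rational equivalence with modulus corresponds exactly to the divisors of rational functions $f$ with $f\equiv 1 \bmod D$, i.e. to the image of $\Gamma(D,\sO_D^\times)$ under the coboundary. Unwinding the modulus condition~\eqref{defn:modulus-cycle1} for cycles of the form $\Gamma_f\cap(U\times\square)$ shows that it is precisely this congruence, so the matching becomes routine once it is set up. I would also stress that the appearance of the \emph{Zariski} cohomology of $\sO_{X|D}^\times$ next to the \emph{Nisnevich} motivic cohomology in the first isomorphism is exactly what the smoothness of $D_{\mathrm{red}}$ supplies, through the comparison of Corollary~\ref{cor:relK-smDiv}.
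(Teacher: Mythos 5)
Your proof is correct, and its first half coincides with the paper's: the paper likewise deduces $H^{2}_{\sM,\Nis}(X|D,\Z(1))\cong H^1(X_\Zar,\sO_{X|D}^\times)$ from Theorem \ref{thm:rel-cycle-iso} together with the Zariski--Nisnevich comparison (the paper cites ``Hilbert 90'' where you invoke \eqref{cor:relK-smDiv1}, i.e.\ the bottom row of diagram \eqref{thm:rel-cycle-iso0}; this is the same content). For the second isomorphism you take a genuinely different route. The paper's argument is a one-liner given its machinery: since $D_{\rm red}$ is smooth, the two-term complex $K^\times\to \bigoplus_{x\in U^{(1)}} i_{x*}(K^\times/\sO^\times_{X,x})\oplus\bigoplus_{x\in |D|} i_{x*}(K^\times/(1+\fm_x^{n_x}))$ is the Cousin \emph{resolution} of $\sO^\times_{X|D}$ (a special case of Corollary \ref{cor:CousinComplex-relK}), so $H^1(X_\Zar,\sO^\times_{X|D})$ is by inspection the group of divisors on $U$ modulo $\Div(f)$ with $f\equiv 1 \bmod D$, which is $\CH^1(X|D)$ on the nose --- no separate well-definedness, surjectivity or injectivity checks are needed. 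You instead pass through the relative Picard group: the sequence $0\to \sO^\times_{X|D}\to\sO_X^\times\to i_{D*}\sO_D^\times\to 0$ plus $H^1(D,\sO_D^\times)=0$ exhibits $H^1(X_\Zar,\sO^\times_{X|D})$ as rigidified line bundles, and you then match cycles with rigidified bundles. This classical (Serre-style) argument works, but it carries two verifications the Cousin approach avoids: surjectivity --- every rigidified bundle is $\sO_X(E)$ for a divisor $E$ avoiding $|D|$, which requires a semilocal moving step (adjust a rational section by a function invertible near $|D|$ lifting the unit comparing it with the rigidification) that you do not spell out --- and the translation of rational equivalence with modulus into the congruence $f\equiv 1\bmod D$, which you correctly isolate as the remaining check and which is indeed routine on a curve, being exactly what property (3) of \ref{Chow-mod-properties} (via \cite[3.]{BS14}) supplies. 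One shared caveat: your parenthetical that $D_{\rm red}$ is automatically smooth tacitly assumes the residue fields of the points of $D$ are separable over $k$; the paper's own proof makes the same tacit assumption when it applies Theorem \ref{thm:rel-cycle-iso}.
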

\begin{proof}
The first isomorphism follows from Theorem \ref{thm:rel-cycle-iso} and Hilbert 90.
The second isomorphism is classical and follows from the fact that the two term complex
\[ K^\times\to \bigoplus_{x\in (X\setminus|D|)^{(1)}}i_{x*} (K^\times/\sO^\times_{X,x} )
       \oplus\bigoplus_{x\in |D|^{(0)}}    i_{x*}( K^\times/(1+\fm_x^{n_x}))\]
is the Cousin resolution of $\sO_{X|D}^\times$.
\end{proof}

\begin{rmk}\label{rmk:not-iso}
Let $(X,D)$ be as in Theorem  \ref{thm:rel-cycle-iso} with $d=\dim X$.
We have a natural map
\eq{rmk:not-iso1}{\CH^1(X|D,1-d)\to H^{d+1}_{\sM,\Nis}(X|D,\Z(1)).}
If $d=1$, this is an isomorphism  by Corollary \ref{cor:modDiv}.
But Theorem \ref{thm:rel-cycle-iso} implies that it is in general not an isomorphism
for $d\ge 2$. Indeed, assume $d= 2$, then the left-hand side vanishes, whereas the right-hand side is equal
to $H^2(X_\Nis, \sO_{X|D}^\times)$. The short exact sequence of Nisnevich sheaves 
$0\to \sO_{X|D}^\times\to \sO_X^\times\to i_*\sO_D^\times\to 0$ induces an isomorphism of
$H^2(X_{\Nis},\sO_{X|D}^\times)$ with the cokernel of $\Pic(X)\to \Pic(D)$.
But in general, this cokernel will not be zero, since not every line bundle on $D$ lifts to a line bundle  on $X$.
Note that this non-vanishing already occurs for reduced and irreducible $D$.
In particular, also the Zariski version of \eqref{rmk:not-iso1} is not an isomorphism in general.
For further counter examples in this spirit see Theorem \ref{thm:Anvan}, \eqref{thm:Anvan1}.
\end{rmk}

\section{Motivic cohomology of {$(\A^1,(m+1)\cdot\{0\})$}}

\subsection{Big de Rham-Witt Complex}
A truncation set $S$ is a subset of the positive integers with the property that 
 a positive integer $s$ is an element of $S$ if and only if all positive divisors of $s$ are contained in $S$.
Examples are the sets $\{1,2,\ldots, m\}$ and $P=\{1, p, p^2, \ldots\}$, for $p$ a prime number.  
For a truncation set $S$  and $n\in \N$ we define the new truncation set  
$S/n:=\{s\in S\,|\, ns\in S\}$. Notice that $S/n$ is the empty set if and only if $n\not\in S$.
We denote by $J$ the category of truncation sets, where the morphisms are inclusions.
We denote by $(\bf dga_\Z)$ the category of differential graded $\Z$-algebras in the sense of \cite[0, 3.1]{IlDRW}.

Let $R$ be a ring containing a field. 
Recall (see e.g. \cite[4]{He}) that the big de Rham-Witt complex of $R$ is a functor
\[J^{\rm op}\to  ({\bf dga_\Z}),\quad S\mapsto \W_S\Omega^\cdot_R,\]
that takes limits to colimits and
which is equipped with graded ring homomorphisms, called Frobenius morphisms,
\[F_n: \W_S\Omega^\cdot_R\to \W_{S/n}\Omega^\cdot_R , \quad S\in J, n\in \N,\]
and homomorphisms of graded groups, called Verschiebung morphisms,
\[V_n: \W_{S/n}\Omega^\cdot_R\to \W_S\Omega^\cdot_R,\quad S\in J, n\in \N.\]
These maps are in fact natural transformations between functors on $J$ (in the obvious sense)
and satisfy various relations, see \cite[Def 4.1]{He}. 
Notice that since $R$ is defined over a field we have $\dlog[-1]=0\in \W_S\Omega^1_R$ for all $S$ 
(see \cite[Rmk 4.2, (c)]{He}) and $\W_S\Omega^\cdot_R$  is a quotient of $\Omega^\cdot_{\W_S(R)/\Z}$.
This implies that $\W_S\Omega^\cdot_R$ is really a differential graded algebra in this case; in particular
the relation $x\cdot x=0$ for a homogeneous element $x\in \W_S\Omega^\cdot_R$ of odd degree holds.

Some facts: $\W_\emptyset\Omega^\cdot=0$, $\W_S\Omega^0_R=\W_S(R)=$ the ring of big Witt vectors,
$\W_{\{1\}}\Omega^\cdot_R=\Omega^\cdot_{R/\Z}=:\Omega^\cdot_R$ and for a finite truncation set $S$  
the dga $\W_S\Omega^\cdot_R$ is a quotient of $\Omega^\cdot_{\W_S(R)/\Z}$.
It follows that the restriction maps $\W_S\Omega^\cdot_R\to \W_T\Omega^\cdot_R$ ($T\subset S$) are surjective.
Finally if $R$ is defined over a field of positive characteristic $p$, we have
  $\W_{\{1,p,\ldots, p^{n-1}\}}\Omega^\cdot_R=W_n\Omega^\cdot_R$
 the $p$-typical de Rham-Witt complex of length $n$ of Bloch-Deligne-Illusie. When working with
the  $p$-typical de Rham-Witt complex we write $F^s=F_{p^s}$ and $V^s= V_{p^s}$. We set 
$\W_m\Omega^\cdot_R:=\W_{\{1,2,\ldots, m\}}\Omega^\cdot_R$.

\begin{lem}\label{lem:deRham-direct-limit}
Let $k$ be a field  and $(R_i)_{i\in I}$ a direct system  system of $k$-algebras.
Set $R=\varinjlim_{i\in I} R_i$.
Then for all finite truncation sets $S$ we have
\[\W_S\Omega^q_{R}=\varinjlim_{i\in I} \W_S\Omega^q_{R_i}.\]
\end{lem}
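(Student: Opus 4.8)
The plan is to reduce everything to the presentation of the big de Rham--Witt complex as a quotient of an ordinary de Rham complex, which is available for finite truncation sets by the facts recalled before the lemma. Fix a finite truncation set $S$. The proof rests on two elementary compatibilities with filtered colimits. First, the functor of big Witt vectors $\W_S(-)$ commutes with filtered colimits: since $S$ is finite, the underlying set of $\W_S(R)$ is the finite product $\prod_{s\in S} R$ and the ring operations are given by universal integral polynomials, so $\W_S(\varinjlim_i R_i)=\varinjlim_i \W_S(R_i)$ as rings. Second, K\"ahler differentials commute with filtered colimits, and so do their exterior powers, because $\Omega^1_{A/\Z}$ and $\bigwedge^q$ are finitary constructions. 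Combining the two we obtain a natural identification
\[\Omega^q_{\W_S(R)/\Z}=\varinjlim_{i\in I}\Omega^q_{\W_S(R_i)/\Z}.\]

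Next I would exploit the surjection $\Omega^q_{\W_S(R)/\Z}\surj \W_S\Omega^q_R$ guaranteed by the fact that for finite $S$ the dga $\W_S\Omega^\cdot_R$ is a quotient of $\Omega^\cdot_{\W_S(R)/\Z}$. This surjection is natural in $R$; write $N_R$ for its kernel, so that there is a short exact sequence $0\to N_R\to \Omega^q_{\W_S(R)/\Z}\to \W_S\Omega^q_R\to 0$, functorial in $R$. Applying the (exact) functor $\varinjlim_i$ to the system of these sequences for the $R_i$ and using the previous paragraph yields a short exact sequence $0\to \varinjlim_i N_{R_i}\to \Omega^q_{\W_S(R)/\Z}\to \varinjlim_i \W_S\Omega^q_{R_i}\to 0$, together with a natural comparison map $\varinjlim_i \W_S\Omega^q_{R_i}\to \W_S\Omega^q_R$ compatible with the quotient of $\Omega^q_{\W_S(R)/\Z}$. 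By the five lemma (or a direct diagram chase) this comparison map is an isomorphism as soon as the canonical map $\varinjlim_i N_{R_i}\to N_R$ is an isomorphism.

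Thus everything is reduced to showing that the kernel of the de Rham--Witt quotient commutes with filtered colimits, i.e. $\varinjlim_i N_{R_i}=N_R$; this is the step I expect to be the main obstacle. Injectivity of $\varinjlim_i N_{R_i}\to N_R$ is automatic, since $N_{R_i}\inj \Omega^q_{\W_S(R_i)/\Z}$ and filtered colimits are exact, so the colimit embeds into $\Omega^q_{\W_S(R)/\Z}$ with image inside $N_R$. Surjectivity is the real content and must be extracted from the explicit description of the defining relations of $\W_S\Omega^\cdot_R$ in \cite[Def 4.1]{He}: the group $N_R$ is generated by the images of a fixed list of relations (involving the maps $d$, $F_n$, $V_n$ and the Teichm\"uller representatives $[a]$ applied to elements $a\in R$). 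Each such generator involves only finitely many elements of $R=\varinjlim_i R_i$, so by filteredness it is the image of the corresponding relation already defined over some $R_i$; hence $N_R$ is generated by the images of the $N_{R_i}$, giving surjectivity.

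An alternative, more conceptual route would bypass the analysis of $N_R$ by invoking the universal property of $\W_S\Omega^\cdot_R$ as the initial Witt complex over $R$: one checks that $\varinjlim_i \W_S\Omega^\cdot_{R_i}$ carries a Witt complex structure over $R$ (all structure maps and the defining relations pass to the filtered colimit, being natural and equational), and that it is initial, since a map out of the colimit is the same as a compatible family of Witt complex maps out of the $\W_S\Omega^\cdot_{R_i}$, each of which exists and is unique by restricting any Witt complex over $R$ along $R_i\to R$. Either way the key difficulty is the same finitary descent of relations, so I would present the quotient argument as it directly uses the facts recalled in the text.
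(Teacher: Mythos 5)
Your primary argument has a genuine gap at exactly the step you flagged as the main obstacle: the surjectivity of $\varinjlim_i N_{R_i}\to N_R$. You justify it by appealing to ``the explicit description of the defining relations of $\W_S\Omega^\cdot_R$ in \cite[Def 4.1]{He}'', but that definition is purely axiomatic: it lists the properties a Witt complex must satisfy and does \emph{not} present the initial object as $\Omega^\cdot_{\W_S(R)/\Z}$ modulo a fixed list of relations. Indeed, the operations $F_n$ and $V_n$ are not even defined on $\Omega^\cdot_{\W_S(R)/\Z}$ ($V_n$ is merely additive, and identities such as $F_ndV_n=d$ are precisely what the quotient must be built to enforce), so ``relations involving $d$, $F_n$, $V_n$'' are not elements of $\Omega^q_{\W_S(R)/\Z}$ that you can lift to some $R_i$ by filteredness. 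The kernel $N_R$ is characterized only implicitly, as the kernel of the canonical map to the initial Witt complex; that it is generated in a finitary, universal way compatible with the system $(R_i)$ is not a citation but is essentially equivalent to the lemma being proved. One could repair this by redoing the construction of the initial object as an inductively defined quotient and checking each stage is finitary, but none of that is in the quoted reference, so as written the argument does not close.

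Your ``alternative, more conceptual route'' is in fact the paper's proof, and it is the one you should have led with. The paper sets $E^\cdot_T:=\bigoplus_{q\ge 0}\varinjlim_i\W_T\Omega^q_{R_i}$ for finite $T$, extends to arbitrary truncation sets by $E^\cdot_S:=\varprojlim_{T\subset S}E^\cdot_T$ over finite $T\subset S$ --- a step your sketch omits but which is needed, since a Witt complex in the sense of \cite[Def 4.1]{He} is a functor on \emph{all} truncation sets taking limits to colimits --- then checks that $S\mapsto E^\cdot_S$ is a Witt complex over $R$ (the axioms are equational and pass to filtered colimits), and finally invokes initiality of $\W_{-}\Omega^\cdot_R$ to obtain a map $\W_S\Omega^\cdot_R\to E^\cdot_S$ that is inverse to the natural map for finite $S$. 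Your initiality argument for the colimit (restrict any Witt complex over $R$ along $R_i\to R$ to get a unique compatible family of maps out of the $\W_S\Omega^\cdot_{R_i}$) is a correct variant of this, though it is not even necessary to prove the colimit initial: it suffices that it is a Witt complex over $R$, after which the two canonical maps are mutually inverse by uniqueness. Note also that here the universal-property route avoids the ``finitary descent of relations'' entirely, so your closing remark that both routes face the same difficulty is not accurate --- that difficulty is an artifact of the quotient presentation, which is why the paper does not take that route.
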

\begin{proof}
For a finite truncation set $S$,  we put $E^\cdot_S:=\bigoplus_{q\ge 0}\varinjlim_{i\in I}\W_S\Omega^q_{R_i}$.
We have a natural map of graded rings $E_S^\cdot\to \W_S\Omega^\cdot_R$.
Furthermore for a general truncation set $S$ we define $E^\cdot_S:=\varprojlim_{T\subset S} E^\cdot_{T}$,
where the limit is over all finite truncation sets $T$ contained in $S$ and the transition maps are 
induced by the obvious restriction maps. It then straightforward to check that  $S\mapsto E_S^\cdot$
is a Witt complex over $R$ (in the sense of \cite[Def 4.1]{He}). Since  $\W_{-}\Omega^\cdot_R$ is the initial
object in the category of Witt complexes, we obtain a morphism of graded rings
$\W_S\Omega^\cdot_R\to E_S^\cdot$ for all truncation sets $S$. For a finite truncation set
this map is clearly inverse to the natural map above.
\end{proof}

\begin{no}\label{big-vs-p-typical} 
{\em Relation big - and {p}-typical de Rham-Witt.}
Let $R$ be a ring containing a field of characteristic exponent $p\ge 1$ and $S\in J$ be a finite truncation set. 
Set
\[\epsilon_S:=
    \prod_{\genfrac{}{}{0pt}{}{\text{primes }\ell\in S}{\ell\neq p}}(1-\tfrac{1}{\ell}V_{\ell}(1))\in \W_S(R),\]
where the product is over all primes $\ell\in S$ different from $p$.
Then for all $q\ge 0$  there is an isomorphism of abelian groups
\[\W_S\Omega^q_R\xr{\simeq} \prod_{\genfrac{}{}{0pt}{}{j\in S}{(j,p)=1}} \W_{S/j\cap P}\Omega^q_R,\quad
   \alpha\mapsto (F_j(\alpha)_{|S/j\cap P})_j,\]
where $P=\{1,p,p^2,\ldots\}$, with inverse map given by
\eq{big-vs-p-typical1}{\prod_{\genfrac{}{}{0pt}{}{j\in S}{(j,p)=1}}\W_{S/j\cap P}\Omega^q_R
                  \xr{\simeq} \W_S\Omega^q_R,
                    \quad  (\alpha_j)\mapsto \sum_{j}\frac{1}{j} V_j(\epsilon_{S/j}\tilde{\alpha}_j),}
where $\tilde{\alpha}_j\in \W_{S/j}\Omega^q_R$ is some lift of $\alpha_j\in \W_{S/j\cap P}\Omega^q_R$.
These isomorphisms are functorial in $S$ in the obvious sense.
(See \cite[1.2]{HeMa01} or \cite[Thm 1.11]{Kay}.)
\end{no}

\begin{no}{}\label{big-de-Rham-Witt-sheaf}
Let $X$ be a scheme over a field and $S$ a truncation set. 
Then there is a unique sheaf of groups $\W_S\Omega^q_{X}$ on $X$
such that for any open affine $U=\Spec R\subset X$ we have $\Gamma(X, \W_S\Omega^q)=\W_S\Omega^q_R$.
Indeed, this is true for the $p$-typical de Rham-Witt  and therefore if $S$ is a finite truncation set 
we have to set 
\[\W_S\Omega^q_X:=\prod_{\genfrac{}{}{0pt}{}{j\in S}{(j,p)=1}} \W_{S/j\cap P}\Omega^q_X\]
and if $S$ is infinite then $\W_S\Omega^q_X:=\varprojlim_{T\subset S }\W_T\Omega^q_X$, where the
limit is over all finite subsets. Clearly all the structure maps sheafify. 
Notice that $\W_S\Omega^0_X=\W_S\sO_X$ is the sheaf of big Witt vectors over $X$.
\end{no}

\begin{rmk}\label{rmk:DRW-graded-structure-char0}
In case $p=1$ the isomorphism from \ref{big-vs-p-typical} above
 has the shape $\W_m\Omega^q_R\cong \prod_{j=1}^m \Omega^q_R$.
It is direct to check that under this isomorphism the restriction $\W_m\Omega^q_R\to \W_{m-1}\Omega^q_R$
is given by projecting to the first $m-1$-components. In particular we have an exact sequence
\[0\to \Omega^q_R\xr{\frac{1}{m}V_m} \W_m\Omega^q_R\to\W_{m-1}\Omega^q_R\to 0. \]
\end{rmk}

\begin{no}\label{BK-graded-group}
Let $k$ be a perfect field of characteristic $p>0$ and $R$ an essentially smooth $k$-algebra.
Let $C^{-1}: \Omega^q_R\to \Omega^q_R/B_1^q$ be the inverse Cartier operator,
where $B_1^q=d\Omega^{q-1}_R$. Recall that it is injective with image $Z_1^q/B_1^q$, where
$Z_1^q=\Ker( d:\Omega^q_R\to \Omega^{q+1}_R)$. 
We obtain a chain of subgroups (see e.g. \cite[(1.3)]{BK})
\[0=B_0^q\subset B_1^q\subset\ldots \subset B^q_n\subset B^q_{n+1}\subset\ldots \subset Z_{r+1}^q\subset Z_r^q
\subset \ldots\subset Z_1^q\subset Z_0^q:=\Omega^q_R,\]
where  by definition $C^{-1}(B_i^q)= B^q_{i+1}/B^q_1$ and $C^{-1}(Z_i^q)=Z_{i+1}^q/B_1^q$, for $i\ge 0$.
Notice that we can iterate the inverse Cartier operator $n$-times to obtain a morphism
\[C^{-n}: \Omega^q_R\to \Omega^q_R/B^q_n,\]
which is injective and has image equal to $Z_n^q/B_n^q$. By convention $C^{-0}=\id$.

Let $m\ge 1$ be an integer and write $m=m_1p^s$ with $(m_1,p)=1$ and $s\ge 0$. 
Following \cite[(4.7)]{BK} we define 
\[\theta:\Omega^{q-1}_R\to(\Omega^q_R/B^q_s)\oplus (\Omega^{q-1}_R/B^{q-1}_s),
\quad \alpha\mapsto (C^{-s}(d\alpha), (-1)^{q-1}m_1 C^{-s}(\alpha))\]
and 
\[{\rm gr}^q_m(R):=\Coker(\theta:\Omega^{q-1}_R \to(\Omega^q_R/B^q_s)\oplus (\Omega^{q-1}_R/B^{q-1}_s)).\]
(This is the group denoted by $^mG^{q+1}_n$ in \cite[(4.7)]{BK}, for $n>s$.) 
\end{no}

\begin{prop}[cf. {\cite[I, Cor 3.9]{IlDRW}, \cite[Thm (4.4)]{HK}}]\label{prop-DRW-graded-structure}
In the above situation let $m$ be a positive integer and write $m=m_1p^s$ with $(m_1,p)=1$ and $s\ge 0$.
Then there is an exact sequence of groups
\[0\to {\rm gr}^q_m(R)\to \W_m\Omega^q_R\to\W_{m-1}\Omega^q_R\to 0,\]
where the map on the right is given by restriction and the map on the left is induced by 
\[\Omega^q_R\oplus\Omega^{q-1}_R\to \W_m\Omega^q_R, \quad 
          (\alpha, \beta)\mapsto V_m(\alpha)+(-1)^q dV_m(\beta).\]
\end{prop}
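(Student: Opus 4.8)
The plan is to prove the two nontrivial assertions separately: that the restriction $R\colon\W_m\Omega^q_R\to\W_{m-1}\Omega^q_R$ is surjective with kernel equal to the image of the stated map, and that the induced surjection $\Omega^q_R\oplus\Omega^{q-1}_R\to\Ker(R)$ has kernel exactly $\im(\theta)$, so that $\Ker(R)\cong{\rm gr}^q_m(R)=\Coker(\theta)$. Surjectivity of $R$ is already recorded in the text, and the left-hand map is well defined because $\{1,\dots,m\}/m=\{1\}$, whence $\W_{\{1,\dots,m\}/m}\Omega^q_R=\Omega^q_R$ and $V_m\colon\Omega^q_R\to\W_m\Omega^q_R$, $dV_m\colon\Omega^{q-1}_R\to\W_m\Omega^q_R$ make sense.

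First I would identify $\Ker(R)$ through the decomposition of \ref{big-vs-p-typical}. Writing $m=m_1p^s$ with $(m_1,p)=1$, functoriality of the isomorphism of \ref{big-vs-p-typical} in the truncation set shows that, on passing from $\{1,\dots,m\}$ to $\{1,\dots,m-1\}$, every factor $\W_{(\{1,\dots,m\}/j)\cap P}\Omega^q_R$ with $(j,p)=1$ is unchanged except the one indexed by $j=m_1$, on which $R$ is the $p$-typical restriction $W_{s+1}\Omega^q_R\xr{R}W_s\Omega^q_R$ (and if $s=0$ that factor is simply dropped). Indeed, for $(j,p)=1$ the set $\{1,\dots,\lfloor m/j\rfloor\}\cap P$ changes only when $m/j$ is an integral power of $p$, which by coprimality forces $j=m_1$. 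Hence $\Ker(R)$ is canonically the $j=m_1$ summand and is isomorphic to $\Ker(W_{s+1}\Omega^q_R\xr{R}W_s\Omega^q_R)$.

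Next I would transport the left-hand map into this summand. Using $V_m=V_{m_1}V^s$, the relations $F_{m_1}V_{m_1}=m_1$ and $F_{m_1}dV_{m_1}=d$ (the latter since $\dlog[-1]=0$ over a field), together with the fact that the restriction of $V_n(-)$ to a $P$-truncation vanishes whenever $n$ is not a power of $p$, one computes that under the decomposition $\alpha\mapsto V_m\alpha$ and $\beta\mapsto dV_m\beta$ become $\alpha\mapsto m_1V^s\alpha$ and $\beta\mapsto dV^s\beta$ in the $W_{s+1}\Omega^q_R$ coordinate. Since $(m_1,p)=1$, $m_1$ is a unit in $R$, so the image is unchanged; this asymmetry is exactly where the factor $m_1$ in the definition of $\theta$ in \ref{BK-graded-group} enters, so that the single syzygy in the big complex, $V_mC^{-s}(d\gamma)=m_1\,dV_mC^{-s}(\gamma)$, transports to the clean $p$-typical relation $V^sC^{-s}(d\gamma)=dV^sC^{-s}(\gamma)$. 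At this point I would invoke the known $p$-typical graded structure, \cite[I, Cor 3.9]{IlDRW} (equivalently \cite[Thm (4.4)]{HK}, in the form involving the Bloch--Kato group ${}^{m}G$ of \ref{BK-graded-group}), which yields the exact sequence $0\to{\rm gr}^q_{p^s}(R)\to W_{s+1}\Omega^q_R\to W_s\Omega^q_R\to 0$ with left map built from $V^s,dV^s$ and kernel $\im(\theta|_{m_1=1})$; combining with the unit $m_1$ gives the claimed sequence. The case $p=1$ is the elementary computation already contained in Remark \ref{rmk:DRW-graded-structure-char0}, where \ref{big-vs-p-typical} degenerates to $\W_m\Omega^q_R\cong\prod_{j=1}^m\Omega^q_R$.

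The main obstacle is the bookkeeping in this reduction: the isomorphism of \ref{big-vs-p-typical} is not compatible with restriction factor-by-factor in a naive way, because its inverse \eqref{big-vs-p-typical1} mixes components through the Verschiebungs $V_j$ and the elements $\epsilon_S$. I must therefore verify carefully that $V_m\alpha$ and $dV_m\beta$ really are supported in the single $j=m_1$ summand (which rests on the vanishing of $\mathrm{Res}\circ V_n$ onto a $P$-truncation for $n$ not a power of $p$), and track all signs and the factor $m_1$ so that the two parametrizations of $\Ker(R)$ match the precise $\theta$ of \ref{BK-graded-group}. A self-contained alternative, avoiding \ref{big-vs-p-typical}, would work directly in the big complex: show that $V_m$ and $dV_m$ annihilate $B^q_s$ and $B^{q-1}_s$ (via iterated inverse Cartier) and that their only weight-$m$ relation is $\theta$; but establishing completeness of this single relation is essentially the computation of \cite[I, Cor 3.9]{IlDRW}, so reduction to that result is the most economical route.
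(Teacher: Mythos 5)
Your proposal follows essentially the same route as the paper's proof: the paper likewise reduces via the functorial decomposition of \ref{big-vs-p-typical} to the single factor that changes under restriction, namely $j=m_1$ (its bookkeeping with the integers $n(j,m)$ is your observation that $\{1,\dots,\lfloor m/j\rfloor\}\cap P$ changes only for $j=m_1$), identifies $\Ker(\W_m\Omega^q_R\to\W_{m-1}\Omega^q_R)$ with ${\rm gr}^s W\Omega^q_R=\Ker(W_{s+1}\Omega^q_R\to W_s\Omega^q_R)$, invokes \cite[I, Cor 3.9]{IlDRW}, and matches the stated map $(\alpha,\beta)\mapsto V_m\alpha+(-1)^q dV_m\beta$ with $(\alpha,\beta)\mapsto m_1V^s\alpha+(-1)^q dV^s\beta$ (the paper verifies this in the inverse direction, through \eqref{big-vs-p-typical1} and the idempotents $\epsilon_{S/m_1}$, while your forward computation via $F_{m_1}V_{m_1}=m_1$ and $F_{m_1}dV_{m_1}=d$, using $\dlog[-1]=0$, is an equivalent and arguably cleaner check, with the vanishing of the other components following for free since $V_m\alpha$ and $dV_m\beta$ visibly lie in $\Ker(R)$). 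The one caveat is that \cite[I, Cor 3.9]{IlDRW} does not hand you the kernel in the Bloch--Kato $\theta$-form you cite: it provides the surjection $\psi(\alpha,\beta)=m_1V^s(\alpha)+(-1)^q dV^s(\beta)$ on $\Omega^q_R/B^q_s\oplus\Omega^{q-1}_R/Z^{q-1}_{s+1}$ with kernel described only implicitly, as the pairs $(\alpha,\beta)\in B^q_{s+1}/B^q_s\oplus Z^{q-1}_s/Z^{q-1}_{s+1}$ satisfying $m_1V^s(\alpha)=(-1)^{q-1}dV^s(\beta)$, and the core of the paper's argument is exactly the conversion of this into $\im\,\theta$ --- writing $(\alpha,\beta)=(C^{-s}(d\alpha'),C^{-s}(\beta'))$, deducing $\beta'\equiv m_1(-1)^{q-1}\alpha'$ mod $Z^{q-1}_1$ from \cite[I, Prop 3.3, Prop 3.4]{IlDRW}, and then showing $\Coker\,\theta\cong\Coker\,\theta'$ (where $\theta'$ lands in $\Omega^{q-1}_R/Z^{q-1}_{s+1}$) by the Snake Lemma --- so this step must be carried out rather than cited, although it is precisely where you predicted the remaining bookkeeping would lie.
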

\begin{proof}
For $j\in \{1,2,\ldots, m\}$ with $(j,p)=1$ denote by $n(j,m)$ the unique integer $\ge 1$ satisfying
\[j p^{n(j,m)-1}\le m< j p^{n(j,m)}.\]
We get 
\[n(j,m)=\begin{cases} s+1, &\text{if }j=m_1\\
                                  n(j,m-1),& \text{else.} \end{cases} \]
Hence under the isomorphism from \ref{big-vs-p-typical} the restriction $\W_m\Omega^q_R\to \W_{m-1}\Omega^q_R$
becomes
\[\left(\prod_{\genfrac{}{}{0pt}{}{1\le j\le m}{j\neq m_1, (j,p)=1}} W_{n(j,m)}\Omega^q_R\right)
                 \times W_{s+1}\Omega^q_R  \to 
   \left(\prod_{\genfrac{}{}{0pt}{}{1\le j\le m}{j\neq m_1, (j,p)=1}} W_{n(j,m)}\Omega^q_R\right)
                    \times W_s\Omega^q_R, \]
which is the identity on the first component and the restriction $W_{s+1}\to W_s$ on the second.
(Here  $W_s\Omega^q_R=0$ for $s=0$, by convention.) Thus the kernel of 
$\W_m\Omega^q_R\to \W_{m-1}\Omega^q_R$ is given by the image of 
\[{\rm gr}^s W\Omega^q_R:=\Ker(W_{s+1}\Omega^q_R\to W_s\Omega^q_R)\]
under the isomorphism of \ref{big-vs-p-typical}. By \cite[I, Cor. 3.9]{IlDRW} there is a surjection
\[\psi:\frac{\Omega^q_R}{B_s^q}\oplus \frac{\Omega^{q-1}_R}{Z^q_{s+1}}\to {\rm gr}^s W\Omega^q_R,
\quad (\alpha,\beta)\mapsto m_1V^s(\alpha)+ (-1)^q dV^s(\beta)\]
with 
\[\Ker\, \psi= 
\left\{(\alpha,\beta)\in \frac{B^q_{s+1}}{B^q_s}\oplus \frac{Z_s^{q-1}}{Z_{s+1}^{q-1}}\,|\,
                                                    m_1V^s(\alpha)=(-1)^{q-1}dV^s(\beta)\right\}.\]
It follows that for any $(\alpha,\beta)\in \Ker \,\psi$ there exist elements $\alpha', \beta'\in \Omega^{q-1}_R$ 
with
\[(\alpha, \beta)= (C^{-s}(d\alpha'), C^{-s}(\beta')).\]
Now take any $\alpha'', \beta''\in W_{s+1}\Omega^{q-1}_R$ lifting $\alpha', \beta'$. Then by \cite[I, Prop. 3.3]{IlDRW}
\[\alpha=C^{-s}(d\alpha')\equiv F^s(d\alpha'') \text{ mod } B^q_s, \quad 
     \beta\equiv C^{-s}(\beta')\equiv F^s(\beta'') \text{ mod }Z^{q-1}_{s+1}.\]
Now $m_1V^s(\alpha)=(-1)^{q-1}dV^s(\beta)$ yields
\[m_1p^s d\alpha''= (-1)^{q-1}p^sd\beta''\quad \text{in } W_{s+1}\Omega^q_R.\]
Since the map $\Omega^{q-1}_R\to W_{s+1}\Omega^{q-1}_R$ given by 
lifting and multiplying with $p^s$ is injective by \cite[I, Prop. 3.4]{IlDRW}, we obtain
\[\beta'\equiv m_1(-1)^{q_1}\alpha' \text{ mod } Z^{q-1}_1.\]
Define 
\[\theta':\Omega^{q-1}_R\to (\Omega^q_R/B^q_s)\oplus (\Omega^{q-1}_R/Z^{q-1}_{s+1}),
\quad \alpha\mapsto (C^{-s}(d\alpha), m_1(-1)^{q-1}C^{-s}(\alpha)).\]
We obtain
\[\Ker\,\psi= \im\, \theta'.\]
There is a natural surjection 
\[{\rm gr}^q_m(R)=\Coker\,\theta\surj \Coker\,\theta'.\]
This map is in fact an isomorphism, as
follows directly from the observation $\Ker\, \theta'= Z^{q-1}_1$ and the Snake Lemma.
Altogether we see that $\psi$ induces an isomorphism ${\rm gr}^q_m(R)\xr{\simeq}  {\rm gr}^s W\Omega^q_R$.
Finally the composition of $\psi$ with the isomorphism \eqref{big-vs-p-typical1} 
 sends $(\alpha,\beta)\in \Omega^q_R\oplus\Omega^{q-1}_R$ to 
\begin{align*}
      &\tfrac{1}{m_1}V_{m_1}(\epsilon_{S/m_1}m_1V_{p^s}(\alpha)) +
                              \tfrac{1}{m_1}V_{m_1}(\epsilon_{S/m_1}(-1)^q d V_{p^s}(\beta) )\\
       =& V_{m_1p^s}(F_{p^s}(\epsilon_{S/m_1})\alpha) 
           +(-1)^q d V_{m_1p^s}(F_{p^s}(\epsilon_{S/m_1})\beta)    \\
      = &V_m(\alpha)+(-1)^q dV_m(\beta),         
\end{align*}
where we set $S:=\{1,\ldots, m\}$ and view $V_{p^s}$ as map $\W_{\{1\}}=\W_{S/m_1p^s}\to \W_{S/m_1}$.
This finishes the proof.
\end{proof}

\begin{prop}[cf. {\cite[Prop. 4.6]{HK}}]\label{prop-big-DRW-presentation}
Let $k$ be a field, $X$ a regular scheme over $k$ and $S$ a finite truncation set. 
Then there is a surjective morphism
\eq{prop-big-DRW-presentation1}{
(\W_S\sO_X\otimes_\Z \wedge^q_\Z \sO_X^\times)\oplus 
 (\W_S\sO_X\otimes_\Z \wedge^{q-1}_\Z \sO_X^\times)\to       
                    \W_S\Omega^q_X,}
which on local sections is defined by
\[(w\otimes a_1\wedge\ldots\wedge a_q, 0)\mapsto w\dlog[a_1]\cdots \dlog[a_q]\]
and 
\[(0, w\otimes a_1\wedge\ldots\wedge a_{q-1})\mapsto d w\dlog[a_1]\cdots \dlog[a_{q-1}],\]
where $[-] : \sO_X^\times \to \W_S\sO_X^\times$ denotes the Teichm\"uller lift.
Furthermore, if $F\subset k$ is the prime field of $k$, the kernel of this map is the 
sheaf of $\W_S(F)$-modules generated by the local sections
\eq{prop-big-DRW-presentation2}{
(V_n([a_1])\otimes a_1\wedge\ldots\wedge a_q,0)- n(0, V_n([a_1])\otimes a_2\wedge\ldots \wedge a_q),\quad
                   a_i\in \sO_X^\times, n\in S.}
\end{prop}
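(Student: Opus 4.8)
The plan is to prove surjectivity and the description of the kernel simultaneously, by induction on the cardinality of the finite truncation set $S$, using the fundamental short exact sequences of Proposition \ref{prop-DRW-graded-structure}. Since the assertion is local on $X$ and, by Lemma \ref{lem:deRham-direct-limit} together with \ref{big-de-Rham-Witt-sheaf}, compatible with filtered colimits of the structure sheaf, I would first reduce to the affine case $X=\Spec R$ and to the situation in which Proposition \ref{prop-DRW-graded-structure} and \ref{BK-graded-group} apply. Write $\Psi_S\colon P^q_S\to \W_S\Omega^q_R$ for the map \eqref{prop-big-DRW-presentation1}, where $P^q_S$ denotes its source, and let $N_S\subset P^q_S$ be the $\W_S(F)$-submodule generated by the elements \eqref{prop-big-DRW-presentation2}. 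The inclusion $N_S\subseteq\Ker\Psi_S$ I would record first: using $[a]\dlog[a]=d[a]$, the Teichmüller identity $F_n\dlog[a]=\dlog[a]$ and the projection formula $V_n(x)\cdot\eta=V_n(x\,F_n\eta)$, one computes that $\Psi_S$ sends the generator \eqref{prop-big-DRW-presentation2} to $V_n(d[a_1])\dlog[a_2]\cdots\dlog[a_q]-n\,dV_n([a_1])\dlog[a_2]\cdots\dlog[a_q]$, which vanishes by the de Rham--Witt relation $V_n(d[a_1])=n\,dV_n([a_1])$.

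For the \emph{base case} $S=\{1\}$ one has $\W_{\{1\}}\Omega^q_R=\Omega^q_R$, $V_1=\id$ and $[a]=a$, so that the statement reduces exactly to the presentation of the module of differentials by $\dlog$ of units modulo the relation $a\,\dlog a=da$; this is the description of (logarithmic) differentials of \cite[(1.7)]{Kato-log} in the form already used in Proposition \ref{prop-rest-log-diff-gen-rel}. For the \emph{inductive step} I choose a maximal element $m\in S$ and put $S'=S\setminus\{m\}$, again a truncation set. Restriction yields a commutative diagram with exact rows
\[\xymatrix{ & K_m\ar[r]\ar[d] & P^q_S\ar[r]\ar[d]^{\Psi_S} & P^q_{S'}\ar[r]\ar[d]^{\Psi_{S'}} & 0\\ 0\ar[r] & {\rm gr}^q_m(R)\ar[r] & \W_S\Omega^q_R\ar[r] & \W_{S'}\Omega^q_R\ar[r] & 0, }\]
where the bottom row is Proposition \ref{prop-DRW-graded-structure} (combined with the $p$-typical decomposition \ref{big-vs-p-typical} to pass from $\W_m/\W_{m-1}$ to a general maximal $m=m_1p^s$), and $K_m=\Ker(P^q_S\to P^q_{S'})$ is the part of $P^q_S$ supported on the $m$-th Verschiebung component, generated by $V_m([a])\otimes \ul a$ in each summand. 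By induction $\Psi_{S'}$ is surjective with kernel $N_{S'}$.

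Surjectivity of $\Psi_S$ then follows by a diagram chase once I show $\Psi_S(K_m)={\rm gr}^q_m(R)$: writing a class of ${\rm gr}^q_m(R)$ as $V_m(\alpha)+(-1)^q dV_m(\beta)$ with $\alpha\in\Omega^q_R$, $\beta\in\Omega^{q-1}_R$ (the explicit generators of Proposition \ref{prop-DRW-graded-structure}) and expanding $\alpha,\beta$ in $\dlog$-generators via the base case, the identities $V_m(x\,\dlog[a])=V_m(x)\,\dlog[a]$ and $V_m(dx)=m\,dV_m(x)$ rewrite these classes as images of the $V_m$-generators of $K_m$.

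Finally, for the kernel the snake lemma reduces $\Ker\Psi_S=N_S$ to the statements that the lift of $N_{S'}$ accounts for the relations \eqref{prop-big-DRW-presentation2} with $n\in S'$, and that $\Ker\bigl(K_m\to {\rm gr}^q_m(R)\bigr)$ is generated by the relations \eqref{prop-big-DRW-presentation2} with $n=m$. I expect this last identification to be the main obstacle: it requires matching the relation module $\im\theta$ defining ${\rm gr}^q_m(R)=\Coker(\theta)$ in \ref{BK-graded-group} --- i.e. the relations coming from the iterated inverse Cartier operator $C^{-s}$ and the subgroups $B^q_s$ --- with the $n=m$ Teichmüller--Verschiebung relations, after expressing $\Omega^q_R$ and $\Omega^{q-1}_R$ through their $\dlog$-presentations. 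I would carry this out by reusing the explicit isomorphism $\psi$ and its kernel computation from the proof of Proposition \ref{prop-DRW-graded-structure}, which already presents $\im\theta$ through $C^{-s}(d\alpha')$ and $C^{-s}(\alpha')$; translating $C^{-s}$ into the language of $\dlog$ of Teichmüller lifts produces precisely the generator \eqref{prop-big-DRW-presentation2} for $n=m$, closing the induction.
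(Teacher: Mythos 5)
Your overall strategy is genuinely different from the paper's: the paper does not induct on $S$ at all, but splits source and target into $p$-typical components via the idempotents $\epsilon_{S/j}$ of \ref{big-vs-p-typical} (the isomorphism \eqref{prop-big-DRW-presentation3} on the source matching \eqref{big-vs-p-typical1} on the target), quotes the $p$-typical presentation \cite[Prop.\ (4.6)]{HK} for positive characteristic, and disposes of characteristic zero with a short universal-property argument for $\Omega^1_R$. Your induction through the graded pieces of Proposition \ref{prop-DRW-graded-structure} therefore amounts to re-proving the Hyodo--Kato presentation rather than citing it. Your preliminary reductions (locality, Popescu together with Lemma \ref{lem:deRham-direct-limit}, the inclusion $N_S\subseteq \Ker\Psi_S$, and the surjectivity of $K_m\to {\rm gr}^q_m(R)$) are all sound; note only that Proposition \ref{prop-DRW-graded-structure} is stated for $S=\{1,\ldots,m\}$, so its extension to an arbitrary finite truncation set with maximal element $m$ (true, by the same $\epsilon_{S/m_1}$-computation) has to be redone, as you indicate in passing.

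The genuine gap is the final step, which you yourself flag as ``the main obstacle'' but do not carry out: the containment $\Ker\bigl(K_m\to {\rm gr}^q_m(R)\bigr)\subseteq N_S$. This is not a routine translation, and your formulation --- that this kernel ``is generated by the relations \eqref{prop-big-DRW-presentation2} with $n=m$'' --- is not the right statement. First, $N_S$ is a $\W_S(F)$-module and the scalar action mixes Verschiebung levels: multiplying an $n$-relation by $V_r([\lambda])$ produces an element supported at level $nr'/c$ rather than $n$ (see the computation in Remark \ref{rmk-kernel-generators}), so $N_S\cap K_m$ is in general strictly larger than the span of the untwisted $n=m$ generators, and at minimum the $[\lambda]$-twisted generators \eqref{rmk-kernel-generators1} must enter the snake-lemma comparison. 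Second, and more seriously, in characteristic $p$ with $p\mid m$ the untwisted $n=m$ generator degenerates: since $S/m=\{1\}$ one has $m\,V_m([a])=V_m(ma)$ with $ma=0$ in $R$, so its second component vanishes, whereas the kernel $\Ker\,\psi=\im\,\theta'$ computed in the proof of Proposition \ref{prop-DRW-graded-structure} consists of pairs $\bigl(C^{-s}(d\alpha'),\,m_1(-1)^{q-1}C^{-s}(\alpha')\bigr)$ with both entries generally nonzero modulo $B^q_s\oplus Z^{q-1}_{s+1}$; moreover one must separately show that elements of $K_m$ mapping into $B^q_s\oplus Z^{q-1}_{s+1}$ already lie in $N_S$. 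Matching the inverse-Cartier data $C^{-s}$, $B^q_s$, $Z^{q-1}_{s+1}$ against Teichm\"uller--$\dlog$ relations is precisely the nontrivial content of \cite[(4.7)--(4.8)]{BK} and of the proof of \cite[Prop.\ (4.6)]{HK}; until that computation is actually performed (or those results are invoked, which is how the paper proceeds), your induction does not close.
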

\begin{proof}
Denote by $E_S$ the sheaf on the left-hand side of \eqref{prop-big-DRW-presentation1} and by
$K_S$ the sheaf of $\W_S(F)$-modules generated by the elements $\eqref{prop-big-DRW-presentation2}$.
Clearly there is a well-defined and unique morphism $E_S\to \W_S\Omega^q_X$ as in the statement.
Further the relations $d\W_S(F)=0$, $n d V_n=V_n d$ and $V_n(\alpha \dlog[a])=V_n(\alpha)\dlog[a]$ 
imply that $K_S$ lies in the kernel of this map. The rest of the statement is local. Hence we may assume
that $X$ is the spectrum of a regular local $k$-algebra $R$.
By \cite[(2.7) Cor]{Popescu86} $R$ is a filtered direct limit of local rings which are essentially smooth over $F$.
Hence by Lemma \ref{lem:deRham-direct-limit} we can assume that $R$ is essentially
 smooth over $F$. Consider the following group homomorphism
\eq{prop-big-DRW-presentation3}{\prod_{\genfrac{}{}{0pt}{}{j\in S}{(j,p)=1}} E_{S/j\cap P}\to E_S}
given by 
\[(x_j\otimes a_j, y_j\otimes b_j)_{j}\to 
\sum_j\bigg(\tfrac{1}{j} V_j(\epsilon_{S/j}\tilde{x}_j)\otimes a_j, \, 
                    \tfrac{1}{j} V_j(\epsilon_{S/j}\tilde{y}_j)\otimes b_j\bigg),\]
where $x_j, y_j\in \W_{S/j\cap P}(R)$, $\tilde{x}_j, \tilde{y}_j\in \W_{S/j}(R)$ are lifts of $x_j,y_j$ and
$a_j\in \wedge^q R^\times$, $b_j\in \wedge^{q-1}R^\times$, the $\epsilon_{S/j}$'s
are the ones from \ref{big-vs-p-typical} and $p$ is the characteristic exponent of $F$. 
The isomorphism \eqref{big-vs-p-typical1} for $q=0$ immediately gives that 
\eqref{prop-big-DRW-presentation3} is an isomorphism. We obtain a commutative square
\[\xymatrix{ E_S\ar[r] & \W_S\Omega^q_R\\
   \prod_{\genfrac{}{}{0pt}{}{j\in S}{(j,p)=1}} E_{S/j\cap P}
         \ar[u]^{\eqref{prop-big-DRW-presentation3}}_\simeq\ar[r] &
   \prod_{\genfrac{}{}{0pt}{}{j\in S}{(j,p)=1}} 
                            \W_{S/j\cap P}\Omega^q_R. \ar[u]_{\eqref{big-vs-p-typical1}}^\simeq
}\] 
In case $p=1$ it is straightforward to check that the bottom horizontal map is surjective with
kernel equal to $\prod_j K_{S/j \cap P}$.
(It suffices to show $E^1_{\{1\}}/K^1_{\{1\}}\cong \Omega^1_R$, which is easily done using the universal
 property of $\Omega^1_R$.) In case $p>1$, this follows from \cite[Prop. (4.6)]{HK}.
(Notice that $\W_{S/j\cap P}(\F_p)$ is a quotient of $\Z$ and hence $K_{S/j\cap P}$ is equal to the
{\em group} generated by the elements \eqref{prop-big-DRW-presentation2}.)
Hence the top map is surjective. It is a direct computation that the vertical arrow on the left-hand side maps
$\prod_j K_{S/j \cap P}$ into $K_S$ (use $\frac{1}{j}V_j(xy)=\frac{1}{j^2}V_j(x)V_j(y)$).
This finishes the proof.
\end{proof}

\begin{rmk}\label{rmk-kernel-generators}
Let $S$ be a finite truncation set and $p\ge 1$ the characteristic exponent of the perfect field $F$.
The $\W_S(F)$-submodule of 
$(\W_S\sO_X\otimes_\Z \wedge^q_\Z \sO_X^\times)\oplus 
(\W_S\sO_X\otimes_\Z \wedge^{q-1}_\Z \sO_X^\times)$
generated by the elements \eqref{prop-big-DRW-presentation2} is actually equal to the 
{\em group} generated by the elements
\eq{rmk-kernel-generators1}{(V_n([\lambda a_1])\otimes a_1\wedge\ldots\wedge a_q,0)- 
                                            n(0, V_n([\lambda a_1])\otimes a_2\wedge\ldots \wedge a_q),}
for $a_i\in \sO_X^\times, \lambda\in F,  n\in S$.

Indeed, take $n,r\in S$, $\lambda\in F$, $a\in \sO_X^\times$ and write $n=n'p^t$ with $(n',p)=1$ and 
$r=r'p^s$ with $(r',p)=1$ and $c:={\rm gcd}(r',n)={\rm gcd}(r',n')$.
Notice $[\lambda]=F_{p^s}[\lambda]\in \W_S(F)$. Then on the one hand we get
\begin{align*}
V_r([\lambda])\cdot\bigg(V_n([a])\otimes a, \,-n V_n(a)\bigg)  = 
                   p^s V_{r'}([\lambda])\cdot\bigg(V_n([a])\otimes a,\,-n V_n([a])\bigg)\\
                  = p^s\tfrac{c^2}{r'}\bigg(V_{\tfrac{nr'}{c}}([\lambda^{\tfrac{n}{c}}a^{\tfrac{r'}{c}}])\otimes 
                          a^{\tfrac{r'}{c}},\,
                      - \tfrac{nr'}{c}V_{\tfrac{nr'}{c}}([\lambda^{\tfrac{n}{c}}a^{\tfrac{r'}{c}}])\bigg).
\end{align*}
On the other hand we have
\begin{align*}
\bigg(V_n([\lambda a])\otimes a, \,- n V_n([\lambda a])\bigg)  = 
                 \tfrac{1}{n'} V_{n'}([\lambda])\cdot  \bigg(V_n([a])\otimes a,\, - n V_n([a])\bigg).	
\end{align*}
\end{rmk}

\subsection{De Rham-Witt and relative Milnor {$K$}-sheaves}\label{dRWrelML}
\begin{no}{}
Let $R$ be a noetherian local domain containing a field. We denote $R((T)):=R[[T]][\frac{1}{T}]$.
 By definition the $r$-th Milnor $K$ group of $R((T))$ 
 is the quotient of $R((T))^{\otimes_\Z r}$  by the subgroup generated by the elements
\[b_1\otimes\ldots \otimes b_{i-1}\otimes a\otimes (1-a)\otimes b_{i+2}\otimes\ldots \otimes b_r,\]
$b_i\in R((T))^\times$, $a, 1-a\in R((T))^\times$. Notice that $R((T))$ is a local ring containing an
infinite field. Hence the relations $\{a,-a\}=0$ and $\{a,b\}=-\{b,a\}$ hold, see e.g. \cite[Lem 2.2]{Kerz09}.
In particular our definition of $K^M_r(R((T)))$  coincides with the one from  \cite[4.]{BK}
and also with $\hat{K}^M_r(R((T)))$ defined in \cite{Kerz09}. Let $K$ be the fraction field of $R$
then the natural map $K^M_{r}(R((T)))\inj K^M_r(K((T)))$ is injective, see \cite[Prop 10]{Kerz09}.

We denote by 
\[U^m K^M_r(R((T)))\]
the subgroup of $K^M_r(R((T)))$ generated by symbols of the form 
\[\{1+xT^m, y_1,\ldots, y_{r-1}\}, \quad x\in R[[T]], y_i\in R((T))^\times.\]
\end{no}

\begin{notation}\label{not:MilnorK-regularscheme}
Let $X$ be a regular connected scheme over a field and $\A^1=\Spec \Z[T]$. For $m\ge 0$ we set 
\[A_X|m:=(X\times_\Z \A^1 , m\cdot( X\times\{0\})).\]
We define $\sK^M_{r,  X\times \A^1}$ as in \ref{MilnorK} (there for a smooth scheme)
and $\sK^M_{r, A_X|m}$ as in Definition \ref{defn:MilnorModulus}.
\end{notation}

\begin{lem}\label{lem-additiveMilnorK-global-local}
We keep the notations from above. 
Let $j:X\times (\A^1\setminus\{0\})\inj X\times \A^1$ be the open immersion and 
$x\in X$  a point. Set $R:=\sO_{X,x}$.
Then for all $m\ge 1$ there is a natural isomorphism
\[(j_*\sK^M_{r, X\times (\A^1\setminus\{0\})}/\sK^M_{r, A_X|m})_x\xr{\simeq}
 K^M_r(R((T)))/U^m K^M_r(R((T))),\]
where we view $x$ via $X\cong X\times\{0\}\inj X\times \A^1$ as a point on $X\times\A^1$.
\end{lem}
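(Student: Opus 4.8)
The plan is to reduce everything to commutative algebra at the point $(x,0)\in X\times\A^1$ and then to build the asserted isomorphism together with an explicit inverse by a lift-and-truncate procedure modelled on the proof of Lemma \ref{lem:DVR-cDVR}. First I would set $A:=\sO_{X\times\A^1,(x,0)}=R[T]_{(\fm_x,T)}$ and record the structural facts I will use. Since $X$ is regular and connected, $R=\sO_{X,x}$ is a regular local domain, hence so is $A$, and by Auslander--Buchsbaum $A$ is a UFD; moreover $T$ is a prime element because $A/TA\cong R$ is a domain. Two consequences are crucial: (i) $A[\tfrac1T]^\times=A^\times\cdot T^{\Z}$, and (ii) the localization map $A\to R[[T]]$ (which is injective and identifies $R[[T]]$ with the $T$-adic completion of $A$) induces an isomorphism $A/T^mA\xrightarrow{\simeq}R[[T]]/T^mR[[T]]$, so that $A\cap T^mR[[T]]=T^mA$. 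Using the explicit local descriptions from the previous subsections (Corollary \ref{cor:MilnorKcompSNCD} and Remark \ref{rmk:MilnorModulus}, applied to the regular divisor $X\times\{0\}=\{T=0\}$) I would identify the two stalks: $G:=(j_*\sK^M_{r,X\times(\A^1\setminus\{0\})})_{(x,0)}$ is the subgroup of $K^M_r(\Frac A)$ generated by symbols $\{a_1,\ldots,a_r\}$ with $a_i\in A[\tfrac1T]^\times$, and the relative stalk $H:=(\sK^M_{r,A_X|m})_{(x,0)}$ is generated by the symbols $\{1+T^ma,b_1,\ldots,b_{r-1}\}$ with $a\in A$ and $b_i\in A[\tfrac1T]^\times$.

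Next I would produce the forward map and check surjectivity. The localization $A[\tfrac1T]\to R((T))$ sends $A[\tfrac1T]^\times$ into $R((T))^\times$ and carries the generators of $H$ to the generators $\{1+T^m\bar a,\ldots\}$ of $U^mK^M_r(R((T)))$; hence it induces the natural map $\Phi:G/H\to K^M_r(R((T)))/U^mK^M_r(R((T)))$. For surjectivity I would use truncation: writing $U^{(m)}=1+T^mR[[T]]$, every $f\in R((T))^\times$ is congruent mod $U^{(m)}$ to a Laurent polynomial $T^n(u_0+u_1T+\cdots+u_{m-1}T^{m-1})$ with $u_0\in R^\times$, and by fact (i) such an element lies in $A[\tfrac1T]^\times$. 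Since replacing an entry of a symbol by a $U^{(m)}$-equivalent one alters the symbol only by an element of $U^mK^M_r(R((T)))$, every symbol is the image of a symbol of $A[\tfrac1T]^\times$-units, so $\Phi$ is surjective.

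For injectivity I would construct an inverse $\Psi$ by the same truncation: for $f_i\in R((T))^\times$ choose $\tilde f_i\in A[\tfrac1T]^\times$ with $\tilde f_i\equiv f_i\bmod U^{(m)}$ and set $\Psi(\{f_1,\ldots,f_r\}):=[\{\tilde f_1,\ldots,\tilde f_r\}]\in G/H$. Independence of the lifts modulo $H$ is where facts (i) and (ii) enter: two lifts differ by a factor in $A[\tfrac1T]^\times$ of $T$-valuation $0$ reducing to $1$ in $R[[T]]/T^m$, hence by (i) it lies in $A^\times$ and by (ii) in $1+T^mA$, so the two symbols agree mod $H$; the same computation shows $\Psi$ kills $U^mK^M_r(R((T)))$.

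The one genuinely delicate point, which I expect to be the \emph{main obstacle}, is that $\Psi$ respects the Steinberg relation. In the field $\Frac A$ one always has $\{\tilde f,1-\tilde f\}=0$, so for $f,1-f\in R((T))^\times$ the defect to control is $\{\tilde f,\widetilde{1-f}\}-\{\tilde f,1-\tilde f\}=\{\tilde f,c\}$, where $c:=\widetilde{1-f}/(1-\tilde f)\in\Frac(A)^\times$ maps into $1+T^mR[[T]]$ under $A[\tfrac1T]\to R((T))$. The trouble is that $1-\tilde f$ need not be a unit in $A[\tfrac1T]$, so this discrepancy is not manifestly in $H$. I would resolve it by a case analysis on the $T$-valuations and residues of $f$ and $1-f$ at $(x,0)$, mirroring the two cases in the proof of Lemma \ref{lem:DVR-cDVR}: when one of the two entries has nonzero residue, the element $1-\tilde f$ is itself a unit in $A[\tfrac1T]$ and may be taken as the lift $\widetilde{1-f}$, killing the symbol outright; in the remaining case, where both entries reduce to $1$, I would invoke the local symbol identities of Lemma \ref{lem:symbol-formula} to rewrite $\{\tilde f,c\}$ as a sum of symbols $\{1+T^ma,\ast\}$ with $a\in A$, which lie in $H$. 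Granting this, $\Psi$ descends to a homomorphism $K^M_r(R((T)))/U^m\to G/H$, and verifying $\Phi\circ\Psi=\id$ and $\Psi\circ\Phi=\id$ on generators is then routine, which finishes the proof.
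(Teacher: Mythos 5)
Your overall route is the paper's own: identify the two stalks inside $K^M_r(\Frac(A))$, obtain the forward map from $A[\tfrac{1}{T}]\to R((T))$, and build the inverse by a lift-and-truncate procedure modelled on Lemma \ref{lem:DVR-cDVR} (the paper's proof says exactly this, leaving the details implicit). Your facts (i) and (ii), the surjectivity by truncation, the independence of the lift modulo $H$, and the observation that the same computation kills $U^m$ are all correct. The gap sits in the one step you yourself single out, the Steinberg relation, where your case division fails as stated. First, the ``remaining case, where both entries reduce to $1$'' is vacuous: since $f+(1-f)=1$, both residues equal to $1$ would force $1\equiv 0$ in the residue field. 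Second, your first case draws a false conclusion precisely in the genuinely delicate configuration: if $f\in 1+TR[[T]]$, $f\neq 1$, then $f$ has nonzero residue, and for the truncation lift $\tilde f=f_{<m}$ the element $1-\tilde f=(1-f)_{<m}$ is indeed a unit of $A[\tfrac{1}{T}]$ (its leading coefficient is that of $1-f$, a unit of $R$ because $R$ is a domain and $1-f\in R((T))^\times$), but it is \emph{not} an admissible lift of $1-f$: with $j:=v_T(1-f)>0$ one only gets $(1-\tilde f)/(1-f)=1+(f-\tilde f)(1-f)^{-1}\in 1+T^{m-j}R[[T]]$, because $f-\tilde f\in T^mR[[T]]$ while $(1-f)^{-1}\in T^{-j}R[[T]]$. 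So ``killing the symbol outright'' is unjustified exactly where the argument is under strain, and the case that was meant to catch this configuration never fires.

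The correct repair is the asymmetric swap constituting the second case of Lemma \ref{lem:DVR-cDVR}, and it renders Lemma \ref{lem:symbol-formula} superfluous. Since $R$ is a domain, $g\in R((T))^\times$ if and only if $g=T^{v_T(g)}h$ with $h(0)\in R^\times$; in particular, if $v_T(f)=0$ and $f(0)\neq 1$, then $1-f(0)\in R^\times$ automatically. Checking the three subcases $v_T(f)<0$, $v_T(f)>0$, and $v_T(f)=0$ with $f(0)\neq 1$, one finds $v_T(f)\ge v_T(1-f)$, that $1-\tilde f\in A[\tfrac{1}{T}]^\times$ for the truncation lift $\tilde f$, and that $(1-\tilde f)/(1-f)\in 1+T^{\,m+v_T(f)-v_T(1-f)}R[[T]]\subseteq U^{(m)}$; hence $\widetilde{1-f}:=1-\tilde f$ is admissible and $\{\tilde f,\,1-\tilde f\}=0$ in $K^M_2(\Frac(A))$ on the nose. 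The failure mode above occurs only when $v_T(1-f)>0=v_T(f)$, i.e.\ $f\in 1+TR[[T]]$, and there you must lift $g:=1-f$ by truncation and set $\tilde f:=1-\tilde g$, which is admissible because $g-\tilde g\in T^{m+j}R[[T]]$ and $f^{-1}\in R[[T]]^\times$ (if $f\in U^{(m)}$, simply take $\tilde f=1$). For what it is worth, your fallback via Lemma \ref{lem:symbol-formula}, (1), could in fact be completed: in the bad case the defect $c$ lies in $(1+T^{m-j}A)\cap A^\times$ while $\tilde f\in 1+T^jA$ with unit cofactor, and the identity produces a symbol in $\{1+T^mA, A[\tfrac{1}{T}]^\times\}\subseteq H$. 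But you left it unexecuted at the very step you call the main obstacle, and any such rewriting must keep all remaining entries in $A[\tfrac{1}{T}]^\times$, since $H$ is generated by symbols with entries there; a symbol $\{1+T^ma,\ast\}$ with $\ast$ merely in $\Frac(A)^\times$ need not lie in $H$.
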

\begin{proof}
Set $A:=\sO_{X\times \A^1, x\times\{0\}}$ and $K=\Frac(A)$.
As in Lemma \ref{lem:MilnorKsuppSNCD} and Remark \ref{rmk:MilnorModulus} we have
the following equalities of subgroups of $K^M_r(K(T))$
\[(j_*\sK^M_{r, X\times (\A^1\setminus\{0\})})_{x\times\{0\}}= 
\{(A[\tfrac{1}{T}])^\times,\ldots, (A[\tfrac{1}{T}])^\times\},\] 
\[\sK^M_{r,A_X|m, \,x\times\{0\}}= \{1+T^m A, (A[\tfrac{1}{T}])^\times,\ldots, (A[\tfrac{1}{T}])^\times\}.\]
Since under the natural map $K(T)\inj K((T))$ the ring $A[\frac{1}{T}]$ is mapped into
$R((T))$ and $(1+T^m A)$ is mapped into $(1+T^m R[[T]])$, we obtain a natural map as in the statement.
The inverse map is constructed  in the same way as in Lemma \ref{lem:DVR-cDVR}.
\end{proof}

\begin{no}\label{big-Witt-power series}
We recall (see e.g. \cite[Ex. 1.16]{He}) that for all $m\ge 1$  and all rings $R$, 
there is an isomorphism of groups
\[\gamma: \W_m(R)\xr{\simeq} \frac{1+TR[[T]]}{1+T^{m+1}R[[T]]}, \quad 
            \sum_{n=1}^m V_n([a_n])\mapsto \prod_{n=1}^m (1+a_nT^n).\]
There are different conventions for this isomorphism (see \cite[before Add 1.15]{He}), we pick the one
which is compatible with \cite{BK}.
\end{no}
The following theorem generalizes the above isomorphism to higher degree and
is reminiscent of Bloch's original construction of the $p$-typical de Rham-Witt complex in \cite{BlochDRW}.

\begin{thm}\label{thm-DRW-MilnorK}
Let $X$ be a regular scheme over a field. 
For $r\ge 0$ and $m\ge 1$ there is an isomorphism of sheaves of abelian groups on $X$ 
\eq{thm-DRW-MilnorK1}{\W_m\Omega^r_X\xr{\simeq} \frac{\sK^M_{r+1,A_X|1}}{\sK^M_{r+1,A_X|(m+1)}},}
which sends 
\[w \dlog[a_1]\cdots\dlog[a_r]\mapsto \{\gamma(w), a_1,\ldots, a_r\}\]
and 
\[dw \dlog[a_1]\cdots\dlog[a_{r-1}]\mapsto (-1)^r\{\gamma(w), a_1, \ldots, a_{r-1}, T\}, \]
where $ w\in \W_m\sO_X, a_i\in \sO_X^\times$.
\end{thm}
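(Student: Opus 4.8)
The plan is to build the map \eqref{thm-DRW-MilnorK1} from the generators-and-relations presentation of the big de Rham--Witt sheaf and then prove it is an isomorphism by comparing the two evident filtrations, whose graded pieces have already been computed in the paper. Since the cited structure results (Proposition \ref{prop:higher-gr-map-char0}, Theorem \ref{thm:higher-gr-map-poschar}) are phrased for smooth $X$, I would first reduce the regular case to the smooth one exactly as in the proof of Proposition \ref{prop-big-DRW-presentation}: by Popescu's theorem \cite{Popescu86} write the relevant local ring as a filtered colimit $R=\varinjlim R_i$ with each $R_i$ essentially smooth over the (perfect) prime field $F$, and use Lemma \ref{lem:deRham-direct-limit} together with the fact that Milnor $K$-theory commutes with filtered colimits of rings to descend to the $R_i$. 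On each such $R_i$ the scheme $X_i\times\A^1$ is smooth and the divisor $n\cdot(X_i\times\{0\})$ has smooth support, so all the graded computations apply.

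To construct the map I would use Proposition \ref{prop-big-DRW-presentation}, which writes $\W_m\Omega^r_X$ as the quotient of $(\W_m\sO_X\otimes_\Z\wedge^r\sO_X^\times)\oplus(\W_m\sO_X\otimes_\Z\wedge^{r-1}\sO_X^\times)$ by the submodule generated by the elements \eqref{prop-big-DRW-presentation2}, and send $(w\otimes a_1\wedge\cdots\wedge a_r,0)\mapsto\{\gamma(w),a_1,\ldots,a_r\}$ and $(0,w\otimes a_1\wedge\cdots\wedge a_{r-1})\mapsto(-1)^r\{\gamma(w),a_1,\ldots,a_{r-1},T\}$, with $\gamma$ as in \ref{big-Witt-power series}. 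Because the target is taken modulo $\sK^M_{r+1,A_X|(m+1)}$, the symbols depend only on $\gamma(w)$ modulo $1+T^{m+1}R[[T]]$, which is precisely the ambiguity in $\gamma(w)\in\W_m(R)$, and additivity in $w$ is clear since $\gamma$ is a homomorphism. What must still be checked is that the assignment is alternating in the $a_i$ (so as to factor through $\wedge^\bullet$) and that it kills the relations \eqref{prop-big-DRW-presentation2}. Both reduce to the single vanishing
\[\{v,-1,b_2,\ldots,b_r\}=0,\qquad v\in 1+TR[[T]],\ b_i\in R((T))^\times,\]
which I would verify after passing to $K^M_{r+1}(R((T)))$ via Lemma \ref{lem-additiveMilnorK-global-local}: in characteristic $2$ one has $-1=1$, and otherwise $2\in R^\times$, so $v=u^2$ for a unique $u\in 1+TR[[T]]$ (extract the square root $T$-adically), whence $\{v,-1,b_2,\ldots,b_r\}=2\{u,-1,b_2,\ldots,b_r\}=\{u,1,b_2,\ldots,b_r\}=0$. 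A direct computation with \eqref{prop-big-DRW-presentation2}, using $\gamma(V_n([a_1]))=1+a_1T^n$ and the Steinberg relation $\{1+a_1T^n,-a_1T^n\}=0$, shows that the image of the generator is exactly $\{1+a_1T^n,-1,a_2,\ldots,a_r\}$, which vanishes for this reason.

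I would then prove the map is an isomorphism by induction on $m$, using compatible finite filtrations. On the left, restriction gives the short exact sequence $0\to\gr^r_m(R)\to\W_m\Omega^r_R\to\W_{m-1}\Omega^r_R\to0$ of Proposition \ref{prop-DRW-graded-structure}; on the right, $\sK^M_{r+1,A_X|1}/\sK^M_{r+1,A_X|(m+1)}$ surjects onto $\sK^M_{r+1,A_X|1}/\sK^M_{r+1,A_X|m}$ with kernel the graded sheaf $\gr^{(m),0}\sK^M_{r+1}$ (in the notation of \ref{gr-rel-K}, with $\fm=(m)$ and the single component $\nu=0$). Since $\gamma$ sends the $V_{\ge n}$-terms into $1+T^{\ge n}R[[T]]$, the map respects the two filtrations and, after checking compatibility of the restrictions, defines a morphism of these short exact sequences. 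The inductive hypothesis identifies the quotient terms, so by the five lemma it remains to show the induced map $\gr^r_m(R)\to\gr^{(m),0}\sK^M_{r+1}$ is an isomorphism.

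This final step is the main obstacle. The source is described by Proposition \ref{prop-DRW-graded-structure} through $V_m$, $dV_m$ and the inverse Cartier operator (the group $\gr^r_m(R)$ of \ref{BK-graded-group}, with $s=v_p(m)$), while the target is identified by Proposition \ref{prop:higher-gr-map-char0} and Theorem \ref{thm:higher-gr-map-poschar} with $\omega^{r}_{(m),0}/B^{r}_{s+1,(m),0}$ via the $\dlog$-symbol map \eqref{prop:higher-gr-map0}. Under the splitting $\omega^r_{(m),0}\cong\Omega^r_X\oplus\Omega^{r-1}_X$ coming from $\Omega^r_{X\times\A^1}(\log(X\times\{0\}))=\Omega^r_X\oplus\Omega^{r-1}_X\,\dlog T$, the $V_m$-summand matches the symbols $\{1+aT^m,a_1,\ldots,a_r\}$ and the $dV_m$-summand matches the symbols carrying a final entry $T$ (the $\dlog T$-direction). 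The work is to check that the inverse-Cartier and $B^\bullet_{s+1}$ bookkeeping agree on the two sides, i.e.\ that the explicit presentation of $\gr^r_m(R)$ in Proposition \ref{prop-DRW-graded-structure} and the group $\omega^r_{(m),0}/B^r_{s+1,(m),0}$ are identified compatibly with $\gamma$ and $\dlog$. Conceptually both are incarnations of the same Bloch--Kato graded group, so the substantive input is already present; what remains is a careful but mechanical matching of the two descriptions. Establishing this isomorphism of graded pieces completes the induction and hence the proof.
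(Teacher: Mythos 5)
Your skeleton coincides with the paper's (presentation via Proposition \ref{prop-big-DRW-presentation}, Popescu reduction, induction on $m$ comparing the sequence of Proposition \ref{prop-DRW-graded-structure} with the $U^\bullet$-filtration), but there are two genuine gaps. First, well-definedness: the kernel of \eqref{prop-big-DRW-presentation1} is the $\W_m(F)$-\emph{module} generated by the elements \eqref{prop-big-DRW-presentation2}, which by Remark \ref{rmk-kernel-generators} is the \emph{group} generated by the twisted elements \eqref{rmk-kernel-generators1} involving $V_n([\lambda a_1])$ with $\lambda\in F$. Running your Steinberg computation on such a generator leaves the residual term $\{1+\lambda a_1T^n,-\lambda,a_2,\ldots,a_q\}$, so you must prove $\{1+\lambda a_1T^n,\lambda',a_2,\ldots\}=0$ for \emph{all} $\lambda'$ in the prime field, which is exactly Lemma \ref{lem:vanF} in the paper's proof, not just the case $\lambda'=-1$. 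In characteristic $p$ this is easy ($\lambda'=\lambda'^{p^s}$ gives $\{v,\lambda'\}=\{v^{p^s},\lambda'\}\in U^{m+1}K^M_{r+1}(R((T)))$ for $s\gg0$), but in characteristic $0$, where $F=\Q$, your square-root trick only reaches $\lambda'=\pm1$: unique divisibility of $1+TR[[T]]$ shows $\{v,\lambda'\}$ is divisible by every integer, which does not force it to vanish in the quotient $U^1/U^{m+1}$ (itself uniquely divisible). The paper's proof of this vanishing is non-elementary: it uses that the $\dlog$ maps are injective on the graded pieces $\sK^M_{r+1,A_X|\nu}/\sK^M_{r+1,A_X|(\nu+1)}$ (Proposition \ref{prop:higher-gr-map-char0} together with \eqref{prop:higher-gr-map-char01}), combined with $\dlog\lambda'=0$.

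Second, your identification of the graded pieces in positive characteristic is circular. You propose to identify $\gr^{(m),0}\sK^M_{r+1}$ with $\omega^{r}_{(m),0}/B^{r}_{s+1,(m),0}$ via Theorem \ref{thm:higher-gr-map-poschar}; but the injectivity in that theorem is deduced from Corollary \ref{cor-mult-p-gradedK}, whose proof invokes Theorem \ref{thm-DRW-MilnorK} itself — this is why the paper flags Corollary \ref{cor-mult-p-gradedK} as ``proven later independently'': the proof of Theorem \ref{thm-DRW-MilnorK} must not pass through Theorem \ref{thm:higher-gr-map-poschar}. Relatedly, the final step you call ``careful but mechanical matching'' is not mechanical when $p\mid m$: the map $\gr^r_m(R)\to U^mK^M_{r+1}(R((T)))/U^{m+1}K^M_{r+1}(R((T)))$ is the Bloch--Kato map $\rho_m$ of \cite[(4.3)]{BK}, and its bijectivity is a theorem, \cite[Rmk (4.8)]{BK}; this external input is what the paper quotes to close the induction in characteristic $p$, precisely to avoid the circularity above. (In characteristic $0$ your route is sound and agrees with the paper's explicit computation of $\omega^r_{A_R|m,m,1}/B^r_{A_R|m,m,1}$ via Proposition \ref{prop:higher-gr-map-char0}.) To repair the proposal: add the $F$-scalar vanishing (Lemma \ref{lem:vanF}) to the well-definedness check, and replace the appeal to Theorem \ref{thm:higher-gr-map-poschar} by the Bloch--Kato isomorphism or some other argument independent of Theorem \ref{thm-DRW-MilnorK}.
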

\begin{proof}
Denote by $F\subset \sO_X$ the prime field and by $p\ge 1$ its characteristic exponent.  
We will need the following lemma:
\begin{lem}\label{lem:vanF} 
With the above notations we have in $\sK^M_{r+1,A_X|1}/\sK^M_{r+1,A_X|(m+1)}$
\[\{1+a_1T^n, \lambda , a_2,\ldots, a_r\}=0,\]
for all $a_i\in \sO_X^\times$, $\lambda\in F$, $1\le n\le m$ and  $r\ge 1$.
\end{lem}
\begin{proof}[Proof of Lemma \ref{lem:vanF}] 
If $p>1$, then $F=\F_p$ and the statement follows directly from $\lambda=\lambda^{p^s}$, 
for $\lambda\in \F_p$ and $s\ge 0$.
Thus we assume $p=1$, i.e. $F=\Q$. 
For all $\nu\ge 1$ we have the map 
\[\dlog: \sK^M_{r+1, A_X|\nu}\to \Omega^{r+1}_{\A^1_X}(\log \{0\}_X)(-\nu\{0\}_X),\]
where $\{0\}_X=X\times \{0\}$,
at our disposal, see the proof of Proposition \ref{prop:higher-gr-map-char0}.
By Proposition \ref{prop:higher-gr-map-char0}, \eqref{prop:higher-gr-map-char01}
and the fact that the composition 
\eqref{prop:higher-gr-map-char03} is equal to the differential, the $\dlog$ map
induces injective maps on the graded pieces
\[\frac{\sK^M_{r+1, A_X|\nu}}{\sK^M_{r+1, A_X|(\nu+1)}}\inj 
\frac{\Omega^{r+1}_{\A^1_X}(\log \{0\}_X)(-\nu\{0\}_X)}
    { \Omega^{r+1}_{\A^1_X}(\log \{0\}_X)(-(\nu+1)\{0\}_X)},\quad \nu\ge 1.\]
Hence also the induced map
\[\dlog_{m+1}:\frac{\sK^M_{r+1, A_X|1}}{\sK^M_{r+1, A_X|(m+1)}}
     \inj 
    \frac{\Omega^{r+1}_{\A^1_X}(\log \{0\}_X)(- \{0\}_X)}
                          { \Omega^{r+1}_{\A^1_X}(\log \{0\}_X)(-(m+1)\{0\}_X)} \]
is injective. Since $\dlog_{m+1}(\{1+a_1T^n, \lambda , a_2,\ldots, a_r\})=0$ the Lemma is proven.
\end{proof}
We resume with the proof of Theorem \ref{thm-DRW-MilnorK}.
By the above Lemma 
the following equality holds in $\sK^M_{r+1,A_X|1}/\sK^M_{r+1,A_X|(m+1)}$,
 for all $a_i\in \sO_X^\times$, $\lambda\in F$ and $1\le n\le m$,
\[\{1+\lambda a_1T^n, a_1, a_2,\ldots, a_r\} \]
\[= \{1+\lambda a_1T^n, \lambda a_1, a_2,\ldots, a_r\}- 
            \{1+\lambda a_1T^n, -\lambda a_1T^n, a_2,\ldots, a_r\}\]
\[=(-1)^{r} n\cdot\{1+\lambda a_1T^n, a_2,\ldots, a_r, T\}.\]
This together with Proposition \ref{prop-big-DRW-presentation} and Remark \ref{rmk-kernel-generators}
directly implies that there is a well-defined map as in the statement.
To show that it is an isomorphism, we may assume that $X$ is the spectrum of a regular local ring
and by \cite[(2.7) Cor]{Popescu86} and Lemma \ref{lem:deRham-direct-limit} we may further assume
that $X=\Spec R$, with $R$ a local ring which is essentially smooth over $F$.

We first assume $p>1$. In view of Lemma \ref{lem-additiveMilnorK-global-local} 
the map defined above has the shape
\[\W_m\Omega^r_R\to U^1K^M_{r+1}(R((T)))/U^{m+1}K^M_{r+1}(R((T))):= U^1/U^{m+1}.\]
This map clearly induces a morphism from the exact sequence from Proposition \ref{prop-DRW-graded-structure},
to the exact sequence 
\[0\to U^m/U^{m+1}\to U^1/U^{m+1}\to U^1/U^m\to 0.\]
The map on the kernels ${\rm gr}^q_m(R)\to U^m/U^{m+1}$ precomposed with the natural surjection
$\Omega^r_R\oplus \Omega^{r-1}_R\to {\rm gr}^q_m(R)$ is given by 
\[(a \dlog b_1\wedge\ldots\wedge \dlog b_r, 0)\mapsto \{1+aT^m, b_1,\ldots, b_r\}\]
and
\[(0, a\dlog b_1\wedge\ldots\wedge\dlog b_{r-1})\mapsto \{1+aT^m, b_1,\ldots, b_{r-1},T\},\]
where $a\in R$, $b_i\in R^\times$.
This is the map $\rho_m$ from \cite[(4.3)]{BK}, which by \cite[Rmk (4.8)]{BK} induces an isomorphism
${\rm gr}^q_m(R)\xr{\simeq} U^m/U^{m+1}$, for all $m\ge 1$. 
Hence \eqref{thm-DRW-MilnorK1} is an isomorphism by induction on $m$.

Now assume $p=1$, i.e. $F=\Q$. In this case the map \eqref{thm-DRW-MilnorK1} induces 
a morphism from the exact sequence  of Remark \ref{rmk:DRW-graded-structure-char0}
to the exact sequence
\[0\to \frac{\sK^M_{r+1,A_R|m}}{\sK^M_{r+1,A_R|(m+1)}}\to 
          \frac{\sK^M_{r+1,A_R|1}}{\sK^M_{r+1,A_R|(m+1)}}\to
         \frac{ \sK^M_{r+1,A_R|1}}{\sK^M_{r+1,A_R|m}} \to 0,\]
where we abuse notations and write $R$ instead of $\Spec R$.
The map  on the kernels is given by
\eq{thm-DRW-MilonorK2}{\Omega^r_R\to \sK^M_{r+1,A_R|m}/\sK^M_{r+1,A_R|(m+1)}}
\[a\dlog b_1\wedge\ldots\wedge \dlog b_r\mapsto \{1+\tfrac{1}{m}aT^m, b_1,\ldots, b_r\},\]
$a\in R$, $b_i\in R^\times$, and it suffices to show that this map is an isomorphism.
With the notation from  
 \eqref{restricted-log-differentials1} the global sections over $\Spec R$ of 
the sheaf $\omega^{r}_{A_R|m, m,1}$ are
\[T^m\cdot R\otimes_R (\Omega^r_R\oplus (\Omega^{r-1}_R\wedge \dlog T))\]
and the differential $d: \omega^{r-1}_{A_R|m,m,1}\to \omega^r_{A_R|m, m ,1}$ is given by
\[d(T^m\otimes (\alpha, \beta\wedge\dlog T))= 
             T^m\otimes(d\alpha, (-1)^{r-1}m  \alpha+ d\beta)\wedge\dlog T).\]
It is direct to check that $\Omega^r_R\to \omega^r_{A_R|m,m,1}/B^r_{A_R|m, m,1}$, 
 $\alpha\mapsto T^m\otimes (\frac{1}{m}\alpha, 0)$ is an isomorphism.  
Hence, by Proposition \ref{prop:higher-gr-map-char0}, the map \eqref{thm-DRW-MilonorK2} 
is an isomorphism as well.  This finishes the proof.
\end{proof}

\begin{cor}\label{cor-mult-p-gradedK}
Let $p$ be a prime number and $R$ be a regular local  $\F_p$-algebra.
Then the multiplication with $p$ on $K^M(R((T)))$ induces an injective homomorphism
\[U^m K^M_r(R((T)))/U^{m+1}K^M_r(R((T)))\xr{p\cdot} U^{pm} K^M_r(R((T)))/U^{pm+1}K^M_r(R((T))),\]
for all $r, m\ge 1$, .
\end{cor}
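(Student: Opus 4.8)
The plan is to transport multiplication by $p$ through the de Rham--Witt description of the graded pieces furnished by Theorem \ref{thm-DRW-MilnorK}, and to recognise it there as the inverse Cartier operator $C^{-1}$, whose injectivity is built into the definitions in \ref{BK-graded-group}. First I would reduce to the case where $R$ is essentially smooth over $\F_p$. By \cite[(2.7) Cor]{Popescu86} a regular local $\F_p$-algebra $R$ is a filtered colimit of local rings $R_i$ essentially smooth over $\F_p$. By Theorem \ref{thm-DRW-MilnorK} and Lemma \ref{lem-additiveMilnorK-global-local} the restriction map identifies $U^m K^M_r(R((T)))/U^{m+1}K^M_r(R((T)))$ with $\Ker(\W_m\Omega^{r-1}_R\to \W_{m-1}\Omega^{r-1}_R)$, and the latter commutes with filtered colimits in $R$ by Lemma \ref{lem:deRham-direct-limit} together with the exactness of filtered colimits. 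Since multiplication by $p$ is natural and a filtered colimit of injections is injective, it suffices to treat each $R_i$; so I assume $R$ essentially smooth over $\F_p$. Writing $m=m_1p^s$ with $(m_1,p)=1$, the combination of Theorem \ref{thm-DRW-MilnorK} and Proposition \ref{prop-DRW-graded-structure} then gives natural isomorphisms $U^m K^M_r(R((T)))/U^{m+1}\cong {\rm gr}^{r-1}_m(R)=\Coker(\theta_s)$ and, since $pm=m_1p^{s+1}$, $U^{pm}K^M_r(R((T)))/U^{pm+1}\cong {\rm gr}^{r-1}_{pm}(R)=\Coker(\theta_{s+1})$, with $\theta_s$ as in \ref{BK-graded-group}.

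Next I would compute multiplication by $p$ on symbols. Because $R$ is an $\F_p$-algebra one has $(1+aT^m)^p=1+a^pT^{pm}$ in $R[[T]]$, whence $p\cdot\{1+aT^m,b_1,\dots,b_{r-1}\}=\{1+a^pT^{pm},b_1,\dots,b_{r-1}\}$ for $b_i\in\sO_X^\times$ or $b_i=T$. Under the isomorphism $\rho_m$ recalled in the proof of Theorem \ref{thm-DRW-MilnorK} these symbols are exactly the images of the forms $a\,\dlog b_1\wedge\cdots$, and since $C^{-1}(a\,\dlog b_1\wedge\cdots)=a^p\,\dlog b_1\wedge\cdots$ modulo $B^\bullet_1$, the map $p\cdot$ is induced by $C^{-1}\oplus C^{-1}$ on the presenting modules $(\Omega^{r-1}_R/B^{r-1}_s)\oplus(\Omega^{r-2}_R/B^{r-2}_s)$. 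I would then verify, directly from $\theta_s(\gamma)=(C^{-s}d\gamma,(-1)^{r}m_1C^{-s}\gamma)$ and $C^{-1}C^{-s}=C^{-(s+1)}$, the compatibility $(C^{-1}\oplus C^{-1})\circ\theta_s=\theta_{s+1}$. This shows that $C^{-1}\oplus C^{-1}$ descends to the cokernels and realises $p\cdot\colon{\rm gr}^{r-1}_m(R)\to{\rm gr}^{r-1}_{pm}(R)$, and moreover that the resulting square of cokernel presentations commutes with left vertical map the \emph{identity} on $\Omega^{r-2}_R$, middle vertical map $C^{-1}\oplus C^{-1}$, and right vertical map $p\cdot$.

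For injectivity I would invoke \ref{BK-graded-group}: since $C^{-1}$ is injective with $C^{-1}(B^q_s)=B^q_{s+1}/B^q_1$, the induced operator $C^{-1}\colon \Omega^q_R/B^q_s\to \Omega^q_R/B^q_{s+1}$ is injective, hence so is the middle vertical map $C^{-1}\oplus C^{-1}$. A short diagram chase finishes the proof: if $x\in\Coker(\theta_s)$ satisfies $p\cdot x=0$, lift $x$ to $\tilde x$ in the presenting module; then $(C^{-1}\oplus C^{-1})\tilde x=\theta_{s+1}(\gamma)=(C^{-1}\oplus C^{-1})\theta_s(\gamma)$ for some $\gamma$, and injectivity of the middle map forces $\tilde x=\theta_s(\gamma)$, i.e. $x=0$. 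The main obstacle is the middle step, namely pinning down that multiplication by $p$ is genuinely induced by $C^{-1}$ at the level of the presentation of \ref{BK-graded-group}, including the relation $(C^{-1}\oplus C^{-1})\theta_s=\theta_{s+1}$; once this is in place the reduction and the final injectivity argument are formal.
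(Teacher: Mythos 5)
Your proof is correct, and while it shares the paper's opening reduction, the injectivity mechanism is genuinely different. The paper's proof is essentially two lines: after the same Popescu reduction to $R$ local and essentially smooth over $\F_p$, it notes that under the isomorphism $\W_m\Omega^{r-1}_R\cong U^1K^M_r(R((T)))/U^{m+1}K^M_r(R((T)))$ of Theorem \ref{thm-DRW-MilnorK}, multiplication by $p$ corresponds to the map $\ul{p}\colon \W_m\Omega^{r-1}_R\to \W_{pm}\Omega^{r-1}_R$ (lift along the surjective restriction, then multiply by $p$), which is injective on the \emph{whole} group by \cite[I, Prop 3.4]{IlDRW} together with the big/$p$-typical decomposition \ref{big-vs-p-typical}; restricting this injection to the subgroup $U^m/U^{m+1}$ finishes. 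You instead work entirely on the graded level: via Proposition \ref{prop-DRW-graded-structure} — whose composite with Theorem \ref{thm-DRW-MilnorK} the paper identifies with Bloch--Kato's $\rho_m$, so your use of it is legitimate — you present $U^mK^M_r/U^{m+1}K^M_r$ as $\Coker(\theta_s)$ and realize $p\cdot$ as induced by $C^{-1}\oplus C^{-1}$, checking the compatibility $(C^{-1}\oplus C^{-1})\circ\theta_s=\theta_{s+1}$ and the injectivity of the induced operator $C^{-1}\colon\Omega^q_R/B^q_s\to\Omega^q_R/B^q_{s+1}$ (which does follow from $C^{-1}(B^q_s)=B^q_{s+1}/B^q_1$ and injectivity of $C^{-1}$, as you say). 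Your symbol computation $p\cdot\{1+aT^m,\ul{b}\}=\{1+a^pT^{pm},\ul{b}\}$ on presentation generators is valid, and checking the square on such generators suffices because $\Omega^q_R$ is generated by forms $a\,\dlog b_1\wedge\cdots$ with $b_i\in R^\times$, $R$ being local. You are also careful on the one delicate point of the reduction — $R((T))$ is \emph{not} the colimit of the $R_i((T))$ — which you circumvent, as the paper implicitly does, by transporting to the de Rham--Witt side (where Lemma \ref{lem:deRham-direct-limit} applies) before passing to the limit. What the paper's route buys is brevity, since Illusie's injectivity statement packages the entire graded analysis; what yours buys is a finer and more self-contained output, namely the explicit identification of $p\cdot$ on the graded pieces with the inverse Cartier operator, which is precisely the shape in which the corollary is consumed in the proof of Theorem \ref{thm:higher-gr-map-poschar}, and, like the paper's argument, your route uses nothing downstream of that theorem, so there is no circularity.
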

\begin{proof}
As above, using Lemma \ref{lem-additiveMilnorK-global-local} and \cite[(2.7) Cor]{Popescu86}
we reduce to the case where $R$ is local and essentially smooth over $\F_p$.
In this case, lifting and multiplying with $p$ induces an injective map
$\ul{p}: \W_m\Omega^{r-1}_R\to \W_{pm}\Omega^{r-1}_R$, by \cite[I, Prop 3.4]{IlDRW}
and \ref{big-vs-p-typical}. Hence the statement follows directly from Theorem \ref{thm-DRW-MilnorK}.
\end{proof}

\subsection{Motivic cohomology of {$(\A^1,(m+1)\cdot\{0\})$} and additive Chow groups}
Let $k$ be a field of characteristic $\neq 2$. We write $\A^1_k=\Spec k[T]$.

\begin{no}\label{DRW-addChow}
Recall from \cite[Thm 3.20]{Kay} that with the notation from \ref{not:MilnorK-regularscheme}
there is an isomorphism for all $m, r\ge 1$
\eq{DRW-addChow1}{\theta: \CH^r(A_k|(m+1), r-1)\xr{\simeq} \W_m\Omega^{r-1}_k}
which sends the class of a closed point 
$P\in (\A^1_k\setminus\{0\})\times (\P^1\setminus\{0,1,\infty\})^{r-1}$ to 
\[\theta([P])= \Tr_{k(P)/k}\left(   \frac{1}{[T(P)]} \dlog [y_1(P)]\cdots \dlog [y_{r-1}(P)]\right),\]
where $\Tr_{k(P)/k}: \W_m\Omega^{r-1}_{k(P)}\to \W_m\Omega^{r-1}_k$ is the trace map 
 from \cite[Thm 2.6]{Kay}. 
Let $f\in 1+Tk[T]$ be an irreducible polynomial of degree $\le m$ and denote by $w(f)\in \W_m(k)$ 
the corresponding Witt vector, see \ref{big-Witt-power series}. 
Let $P,Q\in (\A^1\setminus\{0\})\times (\P^1\setminus\{0,1,\infty\})^{r-1}$ be two closed points defined
by the following vanishing sets
\eq{DRW-addChow2}{P=V(f, y_1- b_1,\ldots, y_{r-1}- b_{r-1}), \quad b_i\in k^\times,}
\eq{DRW-addChow3}{Q= V(f, 1-Ty_1 ,y_2 -b_1,\ldots, y_{r-1}-b_{r-2}), \quad b_i\in k^\times.}
Then 
\[\theta(P)=w(f) \dlog [b_1]\cdots \dlog [b_{r-1}]\in \W_m\Omega^{r-1}_k \]
and 
\[\theta(Q)=dw(f) \dlog [b_1]\cdots \dlog [b_{r-2}]\in \W_m\Omega^{r-1}_k.\]
(Indeed, set $L:=k[T]/(f)$ and denote by $t\in L$ the residue class of $T$. Then the above formulas
follow immediately from the fact that $\Tr_{L/k}: \W_{m}\Omega^\bullet_L\to \W_m\Omega^\bullet_k$
is a map of differential graded $\W_m\Omega^\bullet_k$-modules (see \cite[Thm 2.6]{Kay})
and the fact that $\Tr_{L/K}(1/[t])=w(f)$, see\cite[(3.7.3)]{Kay}.)
\end{no}

\begin{lem}\label{lem:add-mot-coh-MilnorK}
The cycle map $\phi^r_{A_k|(m+1)}: \tau_{\ge r}\Z(r)_{A_k|(m+1)}\to \sK^M_{r,A_k|(m+1)}$
(see Corollary \ref{cor:cycle-maps} in the case where $D_{\rm red}$ is smooth) 
induces an isomorphism 
\eq{lem:add-mot-coh-MilnorK1}{H^{r+1}_\sM(A_k|(m+1), \Z(r))\xr{\simeq} U^1K^M_r(k((T)))/U^{m+1}K^M_r(k((T))),}
for all $r,m\ge 1$.
\end{lem}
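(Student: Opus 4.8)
The plan is to combine the cycle map with Theorem \ref{thm:rel-cycle-iso} and then compute the resulting relative Milnor $K$-cohomology group directly. Here $X=\A^1_k$ has dimension $d=1$ and $D=(m+1)\{0\}$ has smooth (one-point) support $D_{\rm red}=\{0\}$, so part \eqref{thm:rel-cycle-iso2} of Theorem \ref{thm:rel-cycle-iso} applies: the cycle map $\phi^r_{A_k|(m+1)}$ induces an isomorphism
\[H^{r+1}_\sM(A_k|(m+1),\Z(r))\xr{\simeq} H^1(\A^1_{k,\Zar},\sK^M_{r,A_k|(m+1)}).\]
It therefore remains to identify this $H^1$ with $U^1K^M_r(k((T)))/U^{m+1}K^M_r(k((T)))$ in a way induced by the natural maps.

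First I would set up a localization sequence. Writing $j\colon U=\A^1\setminus\{0\}\inj\A^1_k$ and $i\colon\{0\}\inj\A^1_k$, the sheaf $\sK^M_{r,A_k|(m+1)}$ is a subsheaf of $j_*\sK^M_{r,U}$ agreeing with it on $U$, so the quotient is a skyscraper at the origin whose stalk is computed by Lemma \ref{lem-additiveMilnorK-global-local} (with base $\Spec k$ and modulus $m+1$). This yields the short exact sequence
\[0\to\sK^M_{r,A_k|(m+1)}\to j_*\sK^M_{r,U}\to i_*\big(K^M_r(k((T)))/U^{m+1}K^M_r(k((T)))\big)\to 0.\]
Taking Zariski cohomology on $\A^1_k$, using $R^qj_*\sK^M_{r,U}=0$ for $q\ge 1$ (Corollary \ref{cor:MilnorKcompSNCD}) and the vanishing $H^1(U,\sK^M_{r,U})=H^1(\G_m,\sK^M_r)=0$, I reduce to a cokernel: $H^1(\A^1_k,\sK^M_{r,A_k|(m+1)})=\Coker(\rho)$, where
\[\rho\colon H^0(\G_m,\sK^M_r)\to Q_0:=K^M_r(k((T)))/U^{m+1}K^M_r(k((T)))\]
is the evaluation map sending a global symbol to its class in the completion at $0$.

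The heart of the argument is then to compute $H^0(\G_m,\sK^M_r)$ and the map $\rho$. Both the vanishing $H^1(\G_m,\sK^M_r)=0$ and the splitting $H^0(\G_m,\sK^M_r)\cong K^M_r(k)\oplus K^M_{r-1}(k)\cdot\{T\}$ follow from the localization sequence for $\{0\}\inj\A^1\hookleftarrow\G_m$, combined with homotopy invariance $H^\ast(\A^1_k,\sK^M_r)=H^\ast(\Spec k,\sK^M_r)$ (cf. \ref{K-Nis}) and the purity computation in the proof of Lemma \ref{lem:MilnorKsuppSNCD}. With this in hand, $\rho$ carries the constant symbols $\{a_1,\dots,a_r\}$ ($a_i\in k^\times$) into $U^0$ and the symbols $\{b_1,\dots,b_{r-1},T\}$ ($b_i\in k^\times$) to classes with tame part $\{b_1,\dots,b_{r-1}\}$. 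Hence the composite
\[H^0(\G_m,\sK^M_r)\xr{\rho}Q_0\surj K^M_r(k((T)))/U^1K^M_r(k((T)))\cong K^M_r(k)\oplus K^M_{r-1}(k)\]
is an isomorphism, so $\rho$ splits the reduction modulo $U^1$ and $Q_0=\mathrm{Im}(\rho)\oplus\big(U^1K^M_r(k((T)))/U^{m+1}K^M_r(k((T)))\big)$. Therefore $\Coker(\rho)\cong U^1K^M_r(k((T)))/U^{m+1}K^M_r(k((T)))$, which together with the first displayed isomorphism proves \eqref{lem:add-mot-coh-MilnorK1}.

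The delicate point is this last step: matching the evaluation map $\rho$ on the global symbols of $\sK^M_{r,U}$ over $\G_m$ with the unit/residue decomposition of $K^M_r(k((T)))/U^1$, i.e. verifying that $\mathrm{Im}(\rho)$ is exactly a complement to the filtration piece $U^1/U^{m+1}$ inside $Q_0$. Everything else is a formal consequence of Theorem \ref{thm:rel-cycle-iso}, Lemma \ref{lem-additiveMilnorK-global-local}, and standard localization and homotopy invariance for Milnor $K$-cohomology.
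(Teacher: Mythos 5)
Your proof is correct, and it follows a genuinely parallel but distinct route from the paper's. Both arguments start the same way: Theorem \ref{thm:rel-cycle-iso} (applicable since $D_{\rm red}=\{0\}$ is smooth) reduces everything to identifying $H^1(\A^1_k,\sK^M_{r,A_k|(m+1)})$. The paper then uses the short exact sequence with $\sK^M_{r,\A^1_k}$ as ambient sheaf, so the skyscraper quotient is the stalk $\sK^M_{r,\A^1_k,x}/\sK^M_{r,A_k|(m+1),x}$ and homotopy invariance on $\A^1$ shows that only the constants $K^M_r(k)$ need to be quotiented out; the result is identified with $\sK^M_{r,A_k|1,x}/\sK^M_{r,A_k|(m+1),x}\cong U^1/U^{m+1}$ via Proposition \ref{prop:gr0-map} (evaluation $T\mapsto 0$) and Lemma \ref{lem-additiveMilnorK-global-local}. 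You instead take $j_*\sK^M_{r,\G_m}$ as ambient sheaf, so your skyscraper is the full $K^M_r(k((T)))/U^{m+1}$ and the group to be quotiented is the larger $H^0(\G_m,\sK^M_r)\cong K^M_r(k)\oplus K^M_{r-1}(k)\cdot\{T\}$; in effect the paper strips off the residue part at the sheaf level (working inside the ``$U^0$-part'' $\sK^M_{r,\A^1_k}$), while you strip it off at the cohomology level via the symbols $\{b_1,\ldots,b_{r-1},T\}$ over $\G_m$. This costs you two extra inputs the paper avoids --- the vanishing $H^1(\G_m,\sK^M_r)=0$ and the splitting of $H^0(\G_m,\sK^M_r)$, both of which do follow as you say from \eqref{lem:MilnorKsuppSNCD2}, homotopy invariance and the purity computation --- together with the standard graded identifications $U^0K^M_r/U^1K^M_r\cong K^M_r(k)$ and $K^M_r(k((T)))/U^0K^M_r\cong K^M_{r-1}(k)$, which make $\rho$ followed by reduction modulo $U^1$ the identity matrix with respect to your decompositions; this settles the ``delicate point'' you flag, since surjectivity of that composite gives $Q_0=\im(\rho)+U^1/U^{m+1}$ and its injectivity gives $\im(\rho)\cap(U^1/U^{m+1})=0$. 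What each approach buys: the paper's is leaner, using nothing beyond homotopy invariance on $\A^1$ and the modulus-one graded piece, whereas yours yields an explicit presentation of $H^1$ as $\Coker(\rho)$ for the completion map on global symbols over $\G_m$, with the isomorphism realized as $U^1/U^{m+1}\inj Q_0\surj \Coker(\rho)$ --- essentially the description \eqref{lem:add-mot-coh-MilnorK2} that the paper extracts separately and then exploits in the proof of Theorem \ref{thm:add-Zar-descent}.
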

\begin{proof}
By Theorem \ref{thm:rel-cycle-iso} the cycle map induces an isomorphism
\[H^{r+1}_{\sM}(A_k|(m+1),\Z(r))\xr{\simeq} H^1(\A^1_k, \sK^M_{r, A_k|(m+1)}).\]
Set $\sQ:=\sK^M_{r,\A^1_k}/\sK^M_{r, A_k|(m+1)}$. We obtain an exact sequence
\[H^0(\A^1_k, \sK^M_{r,\A^1_k})\to H^0(\A^1_k,\sQ)\to H^1(\A^1_k,\sK^M_{r, A_k|(m+1)})
      \to H^1(\A^1_k, \sK^M_{r, \A^1_k}).\]
Now the term on the very right vanishes by homotopy invariance and for the same reason the term on the very
left equals $K^M_r(k)$. Furthermore  $\sQ$ is supported at the closed point $x:=\{0\}\in \A^1_k$
and therefore $H^0(\A^1_k,\sQ)=\sK^M_{r,\A^1_k,x}/\sK^M_{r, A_k|(m+1),x}$. 
We obtain an isomorphism
\eq{lem:add-mot-coh-MilnorK2}{
         H^1(\A^1,\sK^M_{A_k|(m+1)})\cong \sK^M_{r,\A^1_k,x}/(\sK^M_{r, A_k|(m+1),x}+K^M_r(k))}
The statement follows from Lemma \ref{lem-additiveMilnorK-global-local} and the observation
that the right hand side is canonically isomorphic to $\sK^M_{r, A_k|1,x}/\sK^M_{r, A_k|(m+1),x}$.
For the latter it suffices to show that $T\mapsto 0$ induces an isomorphism
$\sK^M_{r,\A^1_k,x}/\sK^M_{r,\A_k|1,x}\xr{\simeq} \sK^M_{r, k}$, which is a special case
of Proposition \ref{prop:gr0-map}.
\end{proof}

\begin{thm}\label{thm:add-Zar-descent}
Let $k$ be a field of characteristic $\neq 2$. 
The following diagram is commutative for all $r,m\ge 1$
\[\xymatrix{ 
 \CH^r(\A^1_k|(m+1)\{0\}, r-1)\ar[d]^\simeq_{(-1)^{r(r-1)/2}\cdot\eqref{DRW-addChow1}}
                                           \ar[r]^-{\rm nat.} & 
                 H^{r+1}_{\sM}(\A^1_k|(m+1)\{0\},\Z(r))\ar[d]_\simeq^{\eqref{lem:add-mot-coh-MilnorK1}}\\
              \W_m\Omega^{r-1}_k\ar[r]_-\simeq^-{\cdot\eqref{thm-DRW-MilnorK1}} & 
                     U^1K^M_r(k((T)))/U^{m+1}K^M_r(k((T))).
            }\]
In particular the natural maps 
\[\CH^r(\A^1_k|(m+1)\{0\}, r-n)\xr{\simeq} H^{r+n}_\sM(\A^1_k|(m+1)\{0\}, \Z(r)),\quad  n \ge 1,\]
are isomorphisms. (Notice that for $n\ge 2$ the left hand side is clearly zero and the right hand side
is zero by Theorem \ref{thm:rel-cycle-iso}.)
\end{thm}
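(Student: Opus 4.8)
The plan is to establish the displayed square, handle $n\ge 2$ by a vanishing argument, and then deduce the isomorphism \eqref{zariski-descent.intro} for $n=1$ from the commutativity of the square together with the fact that the other three edges are already isomorphisms.

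First I dispose of $n\ge 2$. A codimension-$r$ integral cycle on $\A^1_k\times\square^j$ has dimension $1+j-r\ge 0$, so $z^r(\A^1_k|(m+1)\{0\},j)=0$ for $j<r-1$; hence $\CH^r(\A^1_k|(m+1)\{0\},r-n)=0$ as soon as $r-n<r-1$, i.e. for $n\ge 2$. On the other hand $\dim \A^1_k=1$, so Theorem \ref{thm:rel-cycle-iso}, (1), forces $H^{r+n}_\sM(\A^1_k|(m+1)\{0\},\Z(r))=0$ whenever $r+n>r+1$, again for $n\ge 2$. Thus both sides vanish for $n\ge 2$ and the map is trivially an isomorphism; it remains to treat $n=1$.

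For $n=1$ three of the four edges of the square are isomorphisms: the left vertical $(-1)^{r(r-1)/2}\cdot\eqref{DRW-addChow1}$ by \cite[Thm 3.20]{Kay}; the right vertical \eqref{lem:add-mot-coh-MilnorK1} by Lemma \ref{lem:add-mot-coh-MilnorK}; and the bottom edge \eqref{thm-DRW-MilnorK1} (in degree $r-1$) by Theorem \ref{thm-DRW-MilnorK} combined with the local identification of Lemma \ref{lem-additiveMilnorK-global-local}. Hence, once the square is shown to commute, the top edge---which is precisely the natural map of \eqref{zariski-descent.intro} for $n=1$---is an isomorphism as well. So everything reduces to commutativity.

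I would verify commutativity on explicit generators. By Proposition \ref{prop-big-DRW-presentation} the group $\W_m\Omega^{r-1}_k$ is generated by elements $w\dlog[b_1]\cdots\dlog[b_{r-1}]$ and $dw\dlog[b_1]\cdots\dlog[b_{r-2}]$ with $w\in\W_m(k)$ and $b_i\in k^\times$, and one may take $w=w(f)$ for $f\in 1+Tk[T]$ irreducible of degree $\le m$; by \eqref{DRW-addChow1} these are the $\theta$-images of the classes of the closed points $P$ and $Q$ of \eqref{DRW-addChow2} and \eqref{DRW-addChow3}, whose $\theta$-values are recorded in \ref{DRW-addChow}. For $[P]$, the explicit cycle map of \ref{CycleMap-MilnorK} produces the class $\{y_{r-1}(P),\ldots,y_1(P)\}=\{b_{r-1},\ldots,b_1\}$ (with prefactor $(-1)^{r(r-1)}=1$) supported at the codimension-one point $V(f)\subset\A^1_k\setminus\{0\}$. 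To transport this to the origin inside $H^1(\A^1_k,\sK^M_{r,A_k|(m+1)})\cong\mathrm{coker}(C^0_{r,A_k|(m+1)}\to C^1_{r,A_k|(m+1)})$, I would use that the Cousin differential annihilates the global symbol $\{f,b_1,\ldots,b_{r-1}\}\in C^0$: its component at $V(f)$ is $(-1)^{r-1}\{b_1,\ldots,b_{r-1}\}$, while its component at $\{0\}$, via the completion isomorphism of Lemma \ref{lem-additiveMilnorK-global-local} and the identification of Lemma \ref{lem:add-mot-coh-MilnorK}, is exactly the class of $\{f,b_1,\ldots,b_{r-1}\}$ in $U^1K^M_r(k((T)))/U^{m+1}K^M_r(k((T)))$. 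Thus the cycle-map class at $V(f)$ equals, up to sign, the class $\{f,b_1,\ldots,b_{r-1}\}$ at the origin; since $\gamma(w(f))=f$ by \ref{big-Witt-power series}, this matches the image of $w(f)\dlog[b_1]\cdots\dlog[b_{r-1}]$ under \eqref{thm-DRW-MilnorK1}. The point $Q$ is handled identically, the coordinate $y_1=T^{-1}$ placing $T$ in the last slot of the symbol and hence, under \eqref{thm-DRW-MilnorK1}, producing the $dw$-term. I expect the genuine obstacle to be purely the sign bookkeeping: reversing the $r-1$ unit entries contributes $(-1)^{(r-1)(r-2)/2}$ and moving $f$ into uniformizer position in the tame symbol contributes $(-1)^{r-1}$, and these combine to $(-1)^{r(r-1)/2}$, which is exactly the normalization attached to the left vertical map so that the square commutes on the nose.
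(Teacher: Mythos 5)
Your proposal is correct and takes essentially the same route as the paper's own proof: both reduce, via Proposition \ref{prop-big-DRW-presentation} and \ref{DRW-addChow}, to checking the two compositions on the classes of the points $P$ and $Q$ from \eqref{DRW-addChow2} and \eqref{DRW-addChow3}, and both match them by relating the cycle-map class supported at $V(f)$ to the class at the origin through the Cousin boundary of the global symbol $\{f,b_1,\ldots,b_{r-1}\}$ (which the paper packages in the cone complex $D^\bullet$ computing $U^1/U^{m+1}$). Your sign bookkeeping $(-1)^{(r-1)(r-2)/2}\cdot(-1)^{r-1}=(-1)^{r(r-1)/2}$ agrees exactly with the paper's computation of the images of $P$ and $Q$ in $U^1K^M_r(k((T)))/U^{m+1}K^M_r(k((T)))$.
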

\begin{proof}
We show that the two compositions
\begin{align}
 \alpha: \CH^r(A_k|(m+1), r-1) & \xr{\rm nat.} H^{r+1}(\A^1_k,\Z(r)_{A_k|(m+1)})\notag \\
                             &  \xr{\phi^{1,r}_{A_k|(m+1)}} H^1(\A^1_k,\sK^M_{r,A_k|(m+1)})\notag
\end{align}
and 
\begin{align}
\beta: \CH^r(A_k|(m+1), r-1) & \xr{\eqref{DRW-addChow1}} \W_m\Omega^{r-1}_k \notag\\
       &  \xr{\eqref{thm-DRW-MilnorK1}} U^1K^M_r(k((T)))/U^{m+1}K^M_r(k((T)))=:U^1/U^{m+1}   \notag\\
                  & \xr{{\rm via} \eqref{lem:add-mot-coh-MilnorK2}}  H^1(\A^1_k,\sK^M_{r,A_k|(m+1)})\notag
\end{align}
coincide. By Proposition \ref{prop-big-DRW-presentation}  and \ref{DRW-addChow} 
it suffices to  show
\[\alpha(P)=\beta(P), \quad \alpha(Q)=\beta(Q),\]
 where $P$ and $Q$ are the points defined in \eqref{DRW-addChow2} and \eqref{DRW-addChow3}, respectively. 
In the following we fix the elements $f\in 1+Tk[T]$ and $b_i\in k^\times$ defining $P$ and $Q$.
Using the  Cousin resolution of $\sK^M_{r, A_k|(m+1)}$, see \ref{CousinComplex-relK}, we get 
a surjection
\[H^1_{\{0\}}(\A^1_k, \sK^M_{r,A_k|(m+1)})\oplus \bigoplus_{x\in \A^1\setminus\{0\}} K^M_{r-1}(k(x))
                             \surj H^1(\A^1_k,\sK^M_{r,A_k|(m+1)}).\]
Set $L=k[T]/(f)$ and denote by $t\in L$ the residue class of $T$.
We denote by $\imath_L: K^M_{r-1}(L)\to H^1(\A^1_k,\sK^M_{r,A_k|(m+1)})$ the map induced by the
above surjection. Then by definition of $\phi^r_{A_k|(m+1)}$, 
see \ref{CycleMap-relMilnorK} and \ref{CycleMap-MilnorK}, we have 
\[\alpha(P)=\imath_L(\{b_{r-1},\ldots, b_{1}\}) \quad \text{and}\quad 
\alpha(Q)=\imath_L(\{b_{r-2},\ldots, b_1, \tfrac{1}{t}\}).\]
On the other hand the images of $P$ and $Q$ in $U^1/U^{m+1}$
under the composition $\eqref{thm-DRW-MilnorK1}\circ ((-1)^{r(r-1)/2}\cdot\eqref{DRW-addChow1})$
equal
\[P\mapsto \{b_{r-1},\ldots, b_1,f\}  \text{ mod } U^{m+1}\]
and 
\[Q\mapsto -\{b_{r-2},\ldots, b_1,T,f\} \text{ mod } U^{m+1}.\]
We have to compute the images  of these elements under the connecting homomorphism
\eq{thm:add-Zar-descent1}{U^1/U^{m+1}\to H^1(\A^1_k,\sK^M_{r,A_k|(m+1)}). }
To this end, let $C^\bullet:=\Gamma(\A^1,C^\bullet_{r, \A^1_k})$ and 
$C^\bullet_{m+1}:=\Gamma(\A^1,C^\bullet_{r, A_k|(m+1)})$ 
be the global sections of the Cousin complexes of 
$\sK^M_{r, \A^1_k}$ and $\sK^M_{r, A_k|(m+1)}$, respectively, and $\nu:C^\bullet_{m+1}\to C^\bullet$
the natural map between them. Notice that $C^0=K^M_r(k(t))=C^0_{m+1}$.
Set $D^\bullet={\rm cone}(C^\bullet_{m+1}\to C^\bullet)$, i.e. $D^\bullet$ 
is the complex sitting in degrees $[-1,1]$ 
\[ C^0_{m+1}\xr{d^{-1}}C^0\oplus C^1_{m+1} \xr{d^0} C^1 \]
with $d^{-1}(a)= (a, - d^0_{C_{m+1}}(a))$ and $d^0(b,c)=d^0_{C}(b)+\nu(c)$.
Then $D^\bullet$ is quasi-isomorphic to $U^1/U^{m+1}$ (see after \eqref{lem:add-mot-coh-MilnorK2}).
The boundary map \eqref{thm:add-Zar-descent1}
is given by:  
\begin{enumerate}
\item[1.] Lift an element from $U^1/U^{m+1}$ to $\Ker( d^0)\subset C^0\oplus C^1_{m+1}$.
\item[2.] Apply $-\pi$, with $\pi: C^0\oplus C^1_{m+1}\to C^1_{m+1}$  the projection. 
\item[3.] Consider the class of the resulting element 
modulo the image of $d^0_{C_{m+1}}:C^0_{m+1}\to C^1_{m+1}$.
\end{enumerate}
The boundary $d^0_C$ is given by the tame symbols $\partial_x$ along the various
points  $x\in\A^1_k$. We have 
\[\partial_x(\{b_{r-1},\ldots, b_1,f\})=\begin{cases} \{b_{r-1},\ldots, b_1\}\in K^M_{r-1}(L), & \text{if }x=V(f),\\
                                                                                     0, & \text{else,}
                                                       \end{cases}\]
and 
\[\partial_x(-\{b_{r-1},\ldots, b_1,T,f\})=\begin{cases}
                     \{b_{r-1},\ldots, b_1, \tfrac{1}{t}\}\in K^M_{r-1}(L), & \text{if }x=V(f),\\
                                                                                     0, & \text{else.}
                                                       \end{cases}\]
All together we obtain $\beta(P)=\imath_L(\{b_{r-1},\ldots, b_{1}\})$ and 
$\beta(Q)=\imath_L(\{b_{r-2},\ldots, b_1, \tfrac{1}{t}\})$. This finishes the proof.
\end{proof}

\section{A vanishing result}

\begin{thm}\label{thm:Anvan}
Let $k$ be a field and $X$ a smooth equidimensional $k$-scheme of dimension $d$, $D$ an effective Cartier divisor on $X$ such that $D_{\rm red}$ is a simple normal crossing divisor. For $n\ge 1$ and $\fm=(m_1,\ldots, m_n)\in \N^n$ define the divisor
$E_{\fm}:=\sum_{i=1}^n m_i\cdot q_i^*\{0\}$ on $\A^n_k$ ,
where $q_i:\A^n_k\to \A^1_k$ denotes the projection to the $i$-th factor. 
Denote by $p:X\times \A^n\to X$ the projection map and set $E_{\fm, X}:= X\times E_\fm$.  Then:
\begin{enumerate}
\item\label{thm:Anvan1}
                \[H^{d+r+1}_{\sM}(X\times \A^1| E_{(m+1), X},\Z(r))=
                \begin{cases}  0, & \text{if } m=0,\\ H^d(X,\W_m\Omega^{r-1}_X), &\text{if } m\ge 1.\end{cases}\]
\item\label{thm:Anvan2} For all $n\ge 2$ and all $\fm\in \N^d$, 
                    \[H^{d+r+n}_{\sM,\Nis}(X\times \A^n| (p^*D+E_{\fm, X}), \Z(r))=0.\]
 
\end{enumerate}
\end{thm}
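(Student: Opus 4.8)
The plan is to derive both parts from a single computation of the higher direct images of the relative Milnor $K$-sheaf under the projection forgetting the last coordinate of $\A^n$. By Theorem \ref{thm:rel-cycle-iso} the cycle map reduces everything to Milnor $K$-cohomology: writing $\Delta_{\rm tot}=p^*D+E_{\fm,X}$, whose reduced support is a simple normal crossing divisor on the $(d+n)$-dimensional scheme $X\times\A^n$, one has
\[
H^{d+r+n}_{\sM,\Nis}(X\times\A^n|\Delta_{\rm tot},\Z(r))\cong H^{d+n}\!\big((X\times\A^n)_\Nis,\sK^M_{r,X\times\A^n|\Delta_{\rm tot},\Nis}\big),
\]
and likewise for $H^{\bullet}_{\sM}$ in the Zariski topology when the divisor is smooth, which is the situation of \eqref{thm:Anvan1}. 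I would then put $Y:=X\times\A^{n-1}$ and $D_Y:=p_Y^*D+\sum_{i<n}m_i\,q_i^*\{0\}$ on $Y$, let $Z:=Y\times\{0\}$ be the zero-section of the last factor, and regard $X\times\A^n=Y\times\A^1$ with divisor $\Delta=\pi^*D_Y+m_n Z$, where $\pi\colon Y\times\A^1\to Y$ is the projection and $\dim Y=d+n-1$.

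The core is to compute $R\pi_*\sK^M_{r,\A^1_Y|\Delta,\Nis}$. First I would compactify through $j\colon\A^1_Y\hookrightarrow\P^1_Y$ with $\bar\pi\colon\P^1_Y\to Y$ and boundary $i_\infty\colon Y_\infty\hookrightarrow\P^1_Y$, and set $\bar\Delta=\bar\pi^*D_Y+m_nZ$ on $\P^1_Y$. Since $Y_\infty$ is a smooth divisor transverse to $\bar\pi^*D_Y$ and disjoint from $Z$, the Cousin resolution of $\sK^M_{r,\P^1_Y|\bar\Delta,\Nis}$ (Theorem \ref{thm:relK-is-CM} and Corollary \ref{cor:CousinComplex-relK-Nis}) shows that $R\ul{\Gamma}_{Y_\infty}\sK^M_{r,\P^1_Y|\bar\Delta}$ is concentrated in degree $1$, where the residue identifies it with $i_{\infty*}\sK^M_{r-1,Y|D_Y}$; equivalently $R^{\ge1}j_*\sK^M_{r,\A^1_Y|\Delta}=0$, and applying $R\bar\pi_*$ to the localization triangle (using $\bar\pi\circ i_\infty=\id_Y$) yields a distinguished triangle
\[
R\bar\pi_*\sK^M_{r,\P^1_Y|\bar\Delta}\longrightarrow R\pi_*\sK^M_{r,\A^1_Y|\Delta}\longrightarrow\sK^M_{r-1,Y|D_Y}\xrightarrow{\ [1]\ }.
\]
To control the first term I would filter $\sK^M_{r,\P^1_Y|\bar\Delta}$ along $Z$: its bottom piece is the pullback modulus $\sK^M_{r,\P^1_Y|\bar\pi^*D_Y}$, to which Theorem \ref{prop:proj-pf-relK} (with $N=1$) applies and gives $R^{\ge2}\bar\pi_*=0$, $R^0\bar\pi_*=\sK^M_{r,Y|D_Y}$ and $R^1\bar\pi_*=\sK^M_{r-1,Y|D_Y}$, while the successive graded quotients $\gr^{(l),Z}$ for $1\le l\le m_n-1$ are supported on $Z\cong Y$ and push forward isomorphically, contributing only in degree $0$. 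In particular $R^{\ge2}\pi_*\sK^M_{r,\A^1_Y|\Delta}=0$.

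The decisive point is that all relative Milnor $K$-contributions cancel, so that $R^1\pi_*$ becomes quasi-coherent. Indeed, the residue map $\sK^M_{r-1,Y|D_Y}\to R^1\bar\pi_*\sK^M_{r,\P^1_Y|\bar\Delta}$ of the triangle, composed with the projection onto the pullback part $R^1\bar\pi_*\sK^M_{r,\P^1_Y|\bar\pi^*D_Y}=\sK^M_{r-1,Y|D_Y}$, is the pushforward $\bar\pi_*$, which is an isomorphism by Lemma \ref{lem:pf-MK} and Theorem \ref{prop:proj-pf-relK} (this is where $\bar\pi\circ i_\infty=\id$ enters); dually, restriction to $Z$ identifies $R^0\bar\pi_*\sK^M_{r,\P^1_Y|\bar\pi^*D_Y}$ isomorphically with the degree-zero graded piece $\gr^{(0),Z}\cong\sK^M_{r,Y|D_Y}$ through the specialization of Proposition \ref{prop:gr0-map}. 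Threading these two isomorphisms through the long exact sequences of the triangle and of the $Z$-filtration makes every relative Milnor $K$-sheaf drop out and leaves
\[
R^1\pi_*\sK^M_{r,\A^1_Y|\Delta,\Nis}\ \cong\ \bigoplus_{l=1}^{m_n-1}\gr^{(l),Z}\sK^M_{r,\P^1_Y,\Nis},
\]
a direct sum of the de Rham--Witt--type sheaves $\omega^{r-1}_{\,\cdot,Z}/B^{r-1}_{\,\cdot,Z}$ of Corollary \ref{cor:higher-gr-map}, which are quasi-coherent $\sO_Y$-modules (possibly $0$, when $m_n\le 1$).

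Both statements then follow by the Leray spectral sequence for $\pi$. As $R^{\ge2}\pi_*=0$ and $H^{d+n}(Y,R^0\pi_*)=0$ (because $\dim Y=d+n-1$), one obtains $H^{d+n}(\A^1_Y,\sK^M_{r,\A^1_Y|\Delta})\cong H^{d+n-1}(Y,R^1\pi_*\sK^M_{r,\A^1_Y|\Delta})$. For part \eqref{thm:Anvan2}, where $n\ge2$, the projection $\rho\colon Y=X\times\A^{n-1}\to X$ is affine, so for the quasi-coherent sheaf $R^1\pi_*$ one has $H^{d+n-1}(Y,R^1\pi_*)=H^{d+n-1}(X,\rho_*R^1\pi_*)$, and this vanishes since $d+n-1>d=\dim X$; this is exactly the asserted vanishing. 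For part \eqref{thm:Anvan1} one has $Y=X$, $D_Y=0$, $m_n=m+1$, and the very same computation together with Theorem \ref{thm-DRW-MilnorK} identifies $R^1\pi_*\sK^M_{r,\A^1_X|(m+1)Z}$ with $\sK^M_{r,A_X|1}/\sK^M_{r,A_X|(m+1)}\cong\W_m\Omega^{r-1}_X$, so the group is $H^d(X,\W_m\Omega^{r-1}_X)$ for $m\ge1$ and $0$ for $m=0$; here no additional affine direction is present, which is precisely why the answer need not vanish. The main obstacle is the cancellation in the third paragraph: one must verify that the residue along $Y_\infty$ and the specialization along $Z$ are inverse, respectively, to cup product with $c_1(\sO(1))$ and to the comparison of graded pieces, so that $R^1\pi_*$ retains only its quasi-coherent de Rham--Witt part.
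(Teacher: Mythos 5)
Your route is genuinely different from the paper's (which never compactifies: it works directly with the quotient sheaves $\sK^M_{r,\A^2_X}/\sK^M_{r,\A^2_X|(p^*D+E_{\fm,X})}$, kills the reduced-modulus part by induction on components, homotopy invariance and Grothendieck vanishing, and kills the graded $\omega^{r-1}_{\fn,\nu}/B^{r-1}_{s+1,\fn,\nu}$ parts by a K\"unneth splitting along the $\A^1$-factor plus flat base change), but your argument rests on a lemma that the paper does not contain and that your citations do not deliver. You claim that Theorem \ref{thm:relK-is-CM} and Corollary \ref{cor:CousinComplex-relK-Nis} show $R\ul{\Gamma}_{Y_\infty}\sK^M_{r,\P^1_Y|\bar\Delta}$ is concentrated in degree $1$ and identified by a residue with $i_{\infty*}\sK^M_{r-1,Y|D_Y}$. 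Theorem \ref{thm:relK-is-CM} only gives $\sH^i_{Y_\infty}=0$ for $i<c=1$, i.e.\ $\sH^0_{Y_\infty}=0$; the vanishing of $\sH^{\ge 2}_{Y_\infty}$ is equivalent to $R^{\ge 1}j_*\sK^M_{r,\A^1_Y|\Delta}=0$, and together with the identification of $\sH^1_{Y_\infty}$ this is a \emph{relative} analogue of Lemma \ref{lem:MilnorKsuppSNCD} and Corollary \ref{cor:MilnorKcompSNCD} -- a localization sequence for $\sK^M_{r,-|\bar\Delta}$ along a smooth divisor transverse to the modulus. The Cousin complex being a resolution does not give this: exactness of its subcomplex with supports in $Y_\infty$ in degrees $\ge 2$ is precisely what you would have to prove (for instance by matching the residue with the filtration by Proposition \ref{prop:gr0-map} and Theorem \ref{thm:higher-gr-map-poschar}), not something you can read off. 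The same gap reappears in your two "compatibility" verifications (that $\delta$ at $\infty$ projects to the pushforward isomorphism, and the specialization at $Z$ splits $R^0\bar\pi_*$), which you do flag, but the triangle they live in is itself unproven.

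A second, repairable, error: the conclusion $R^1\pi_*\sK^M_{r,\A^1_Y|\Delta,\Nis}\cong\bigoplus_{l}\gr^{(l),Z}$ is wrong as stated -- the diagram chase only yields $\bar\pi_*$ of $\sK^M_{r,\P^1_Y|\bar\pi^*D_Y+Z}/\sK^M_{r,\P^1_Y|\bar\Delta}$, an \emph{iterated extension} of the sheaves $\omega^{r-1}_{\fn,Z}/B^{r-1}_{s+1,\fn,Z}$, not their direct sum (already for $r=1$, $\W_m\sO_X$ is not the sum of its graded pieces), and this extension is not a quasi-coherent $\sO_Y$-module in positive characteristic (again $\W_m\sO_X$), exactly consistent with your own identification $R^1\pi_*\cong\W_m\Omega^{r-1}_X$ in part \eqref{thm:Anvan1} via Theorem \ref{thm-DRW-MilnorK}. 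Consequently your final step $H^{d+n-1}(Y,R^1\pi_*)=H^{d+n-1}(X,\rho_*R^1\pi_*)$, which uses $R^{>0}\rho_*=0$ for the affine map $\rho$ and hence quasi-coherence, cannot be applied to $R^1\pi_*$ itself; it must be run on each quasi-coherent graded quotient (each is a quotient of $F^{s+1}_*$ of a locally free sheaf, as in the proof of Corollary \ref{cor:higher-gr-map}) and then propagated through the long exact sequences. With that fix, and once the relative residue sequence is actually proved, your computation of $R\pi_*$ does recover both parts; but as written the proof has a genuine missing lemma at its core, which the paper's more pedestrian argument (short exact sequences from Proposition \ref{prop:gr0-map} and Theorem \ref{thm-DRW-MilnorK} for part \eqref{thm:Anvan1}, dimension counts on $\A^2_X$ for part \eqref{thm:Anvan2}) is designed to avoid.
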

\begin{proof}
By Theorem \ref{thm:rel-cycle-iso}, \eqref{thm:rel-cycle-iso2}, it suffices to prove the corresponding Nisnevich statement of \eqref{thm:Anvan1}. Therefore, 
we will work in the Nisnevich topology and with the Nisnevich sheafification of the relative Milnor $K$-theory for the rest of the proof
and drop the index $\Nis$ everywhere. 
Set
\[Q_{p^*D+E_\fm}:= \sK^M_{r, \A^n_X}/\sK^M_{r, \A^n_X|(p^*D+E_{\fm,X})}.\]
We have
\eq{thm:Anvan3}{H^j(\A^1_X, \sK^M_{r, \A^1_X| E_{1,X}})=0,\quad \text{for all }j.}
Indeed, by Proposition \ref{prop:gr0-map}
$Q_{E_1}\cong i_*\sK^M_{r, X}$, where $i: X\times\{0\}\inj X\times \A^1$ is the closed immersion.
Therefore, the natural map
\[H^j( \A^1_X,\sK^M_{r, \A^1_X})\to H^j(\A^1_X, Q_{E_1})\]
is an isomorphism for all $j$ by homotopy invariance. Hence \eqref{thm:Anvan3}
follows from the long exact cohomology sequence induced by
\[0\to \sK^M_{r, \A^1_X|E_{1,X}}\to \sK^M_{r,\A^1_X}\to Q_{E_1}\to 0.\]
This gives the vanishing for $m=0$ in \eqref{thm:Anvan1}, by Theorem \ref{thm:rel-cycle-iso}. 
By Theorem \ref{thm-DRW-MilnorK} we have an exact sequence
\[0\to \sK^M_{r,\A^1_X|E_{(m+1),X}}\to \sK^M_{r, \A^1_X|E_{1,X}}\to i_* \W_m\Omega^{r-1}_X\to 0.\]
Hence the statement for $m\ge 1$ in \eqref{thm:Anvan1}, follows from Theorem \ref{thm:rel-cycle-iso} and 
\eqref{thm:Anvan3}.

Next we prove \eqref{thm:Anvan2}. Notice that the general case is implied by the case $n=2$.
For $\fm\in\N^2 $ we have an exact sequence
\[H^{d+1}(\sK^M_{r, \A^2_X})\to H^{d+1}(Q_{p^*D+E_\fm})\to H^{d+2}(\sK^M_{r,\A^2_X|(p^*D+E_{\fm,X})})\to
 H^{d+2}(\sK^M_{r,\A^2_X}),\]
where we abbreviate $H^j(\A^2_X, -)$ by $H^j(-)$. Here, the two outer terms vanish by homotopy invariance and the
Nisnevich version of Grothendieck's general vanishing theorem. By Theorem \ref{thm:rel-cycle-iso}, we therefore
have to show 
\[H^{d+1}(\A^2_X, Q_{p^*D+E_\fm})=0.\]
We have an exact sequence 
\[0\longrightarrow \frac{\sK^M_{r,\A^2_X|(p^*D_{\rm red}+E_{\fm, X,{\rm red}})}}{\sK^M_{r,\A^2_X|(p^*D+E_{\fm, X})}}
 \longrightarrow  Q_{p^*D+E_{\fm}}
\longrightarrow \frac{\sK^M_{r, \A^2_X}}{\sK^M_{r,\A^2_X|(p^*D_{\rm red}+E_{\fm, X,{\rm red}})}}
\longrightarrow 0.\]
Thus the statement follows from the two claims:
\eq{thm:Anvan4}{H^{d+1}\left(\A^2_X, 
                  \frac{\sK^M_{r, \A^2_X}}{\sK^M_{r,\A^2_X|(p^*D_{\rm red}+E_{\fm, X,{\rm red}})}}\right)=0.}
\eq{thm:Anvan5}{
H^{d+1}\left(\A^2_X, 
\frac{\sK^M_{r,\A^2_X|(p^*D_{\rm red}+E_{\fm, X,{\rm red}})}}{\sK^M_{r,\A^2_X|(p^*D+E_{\fm, X})}}\right)=0.}

We prove the vanishing \eqref{thm:Anvan4}. We do induction on the number of irreducible
components of $D$. First assume $D=0$. If $\fm=(0,0)$, there is nothing to prove. If $\fm=(1,0)$ or $(0,1)$, then the term
in \eqref{thm:Anvan4} is equal to $H^{d+1}(\A^1_X, \sK^M_{r,\A^1_X})$, by Proposition \ref{prop:gr0-map};
hence it vanishes by homotopy invariance and the Nisnevich version of Grothendieck's general vanishing theorem.
If $\fm=(1,1)$, we have by Proposition \ref{prop:gr0-map} an exact sequence
\[0\to \sK^M_{r,\A^1_X|E_{1,X}}\to \frac{\sK^M_{r,\A^2_X}}{\sK^M_{r,\A^2_X|E_{(1,1),X}}} \to
    \sK^M_{r, \A^1_X}\to 0.\]
Hence the vanishing of $H^{d+1}(\A^2_X,-)$ of the middle part follows from \eqref{thm:Anvan3} 
and homotopy invariance as before.
If $D\neq 0$, let $D_1$ be one of its irreducible components  and write
$D_{\rm red}=D_1+ D'$, where $D'$ is reduced and effective. 
By Proposition \ref{prop:gr0-map}
\eq{thm:Anvan5.5}
{\frac{\sK^M_{r, \A^2_X|p^*D'+E_{\fm, X_{\rm red}}}}{\sK^M_{r,\A^2_X|(p^*D_{\rm red}+E_{\fm, X,{\rm red}})}}
\cong i_{1*}\left( 
\frac{\sK^M_{r, \A^2_{D_1}}}{\sK^M_{r,\A^2_{D_1}|(p^*(D'\cap D_1)_{\rm red}+E_{\fm, D_1,{\rm red}})}}\right),}
where $i_1: \A^2_{D_1}\inj A^2_{X}$ is the closed immersion. 
We have $H^{d+1}(\A^2_{D_1},\sK^M_{r, \A^2_{D_1}})=0$ by homotopy invariance and
$H^{d+2}(\A^2_{D_1}, \sK^M_{r,\A^2_{D_1}|(p^*(D'\cap D_1)_{\rm red}+E_{\fm, D_1,{\rm red}})})=0$ for
dimension reasons. This implies the vanishing $H^{d+1}(\A^2_X, \eqref{thm:Anvan5.5})=0$.
Hence we are reduced to prove the vanishing \eqref{thm:Anvan4} with $D$ replaced by $D'$. We conclude
by induction.
 
We prove the vanishing \eqref{thm:Anvan5}. Consider the sheaf
\[\omega^{r-1}_{\fn,\nu}:= \omega^{r-1}_{\A^2_X|(p^*D+E_{\fm,X}),\fn,\nu},\]
with the notations from \ref{restricted-log-differentials} and define $B^{r-1}_{s+1,\fn,\nu}$
as in \ref{higher-Bs-Zs}, with $s=0$, in case $k$ has characteristic $0$.
If $(p^*D+E_{\fm,X})_\nu$ is one of the irreducible components of $p^*D$, set $X_\nu:=D_\nu\times \A^1$, 
if it is an irreducible component of $E_{\fm,X}$ set $X_\nu:= X$. Then $\omega^{r-1}_{\fn,\nu}$ is a 
locally free sheaf on $X_\nu\times \A^1$ and
$B^{r-1}_{s+1,\fn,\nu}$ is a subsheaf.
By Proposition \ref{prop:higher-gr-map-char0} and Theorem \ref{thm:higher-gr-map-poschar}
the sheaf 
$\sK^M_{r,\A^2_X|(p^*D_{\rm red}+E_{\fm, X,{\rm red}})}/\sK^M_{r,\A^2_X|(p^*D+E_{\fm, X})}$
 is a successive extension of the sheaves
$\omega^{r-1}_{\fn,\nu}/B^{r-1}_{s+1,\fn,\nu}$, for certain $s,\fn,\nu$. 
Since $H^{d+2}(X_\nu\times\A^1, B^{r-1}_{s+1,\fn,\nu})=0$, for dimension reasons, it suffices to show
\eq{thm:Anvan9}{H^{d+1}(X_\nu\times\A^1, \omega^{r-1}_{\fn,\nu})=0.}
Denote by $a: X_\nu\times\A^1\to X_\nu$ and by $b: X_\nu\times\A^1\to \A^1$ the projection maps.
Since $\Omega^1_{\A^1}(\log \{0\})(-m\cdot\{0\})\cong\sO_{\A^1}$, 
it follows directly from the definition of $\omega^{r-1}_{\fn,\nu}$ in \ref{restricted-log-differentials} that there exist
locally free sheaves $\omega^{r-1}$ and $\omega^{r-2}$ on $X_\nu$, possibly of rank $0$, such that
  \[\omega^{r-1}_{\fn,\nu}\cong a^*\omega^{r-1}\oplus a^*\omega^{r-2}.\]
We have for $i=r-1, r-2$
\[H^{d+1}(X_\nu\times \A^1,a^*\omega^i)= H^0(\A^1, R^{d+1}b_* (a^*\omega^i))
= k[t]\otimes_k H^{d+1}(X_\nu,\omega^i)=0,\]
where the first equality follows from the Leray spectral sequence, the second from flat base change and the vanishing holds
for dimension reasons. This yields the vanishing \eqref{thm:Anvan9} and finishes the proof.
\end{proof}

\begin{rmk}
Let $X$ be an equidimensional $k$-scheme of dimension $d$ and $D$ an effective Cartier divisor on $X$.
By \cite[Thm 5.11]{KP15} we have the vanishing 
$\CH^r(X\times \A^n|(p^*D+E_{\fm}), r-(d+n))=0$,  for all $r$, all $n\ge 2$, and all $\fm\in (\N_{\ge 1})^n$. In particular,
if the assumptions of Theorem \ref{thm:Anvan} are satisfied,  the natural map
\[\CH^r(X\times \A^n|(p^*D+E_{\fm}), r-(d+n))\to H^{r+d+n}_{\sM, \Nis}(X\times \A^n| (p^*D+E_{\fm}),\Z(r))\]
is bijective. 
\end{rmk}

\bibliographystyle{amsalphacustomlabels}
\bibliography{MilnorChowModulus}

\end{document}